\documentclass[10pt,a4]{amsart}
\usepackage{amsfonts,amsmath,amstext,amsbsy,amsthm,amscd,amsmath,amssymb,dsfont}
\usepackage[utf8]{inputenc}
\usepackage{hyperref}
\hfuzz2pt
\usepackage[T1]{fontenc}

\usepackage{tikz}
\usetikzlibrary{decorations.markings,backgrounds}

\arraycolsep 3pt
\setlength\parindent{24pt}
\textheight 23.5cm
\textwidth 15.8cm
\topmargin -0.5cm
\oddsidemargin 0.3cm
\evensidemargin 0.3cm
\newcommand{\norme}[1]{\left\Vert #1\right\Vert}
\newtheorem{Lemme}{Lemma}[section]
\newtheorem{Prop}{Proposition}[section]  

\def\var{\varepsilon}

\newtheorem{Rmq}{Remark}[section]
\newtheorem{Thm}{Theorem}[section]

\theoremstyle{remark}

\usepackage{indentfirst}
\newcommand{\be}{\begin{equation}}
\newcommand{\ee}{\end{equation}}
\newcommand{\ba}{\begin{array}}
\newcommand{\ea}{\end{array}}
\newcommand{\bea}{\begin{eqnarray}}
\newcommand{\eea}{\end{eqnarray}}
\newcommand{\bee}{\begin{eqnarray*}}
\newcommand{\eee}{\end{eqnarray*}}

\renewcommand{\div}{\mbox{\rm div}\;\!}

\newcommand{\R} {\mathbb{R}}    
    
\newcommand{\Z} {\mathbb{Z}} 
\newcommand{\cA} {\mathcal{A}}

\newcommand{\cI} {\mathcal{I}}

\newcommand{\cR} {\mathcal{R}}

\def \with {\quad\!\hbox{with}\!\quad}
\def \andf {\quad\!\hbox{and}\!\quad}
\def\Id{\hbox{Id}}

\def\dZ_1{\delta\!Z_1}

\def\d{\partial}

\def\wt{\widetilde}

\def\ddj{\dot\Delta_j}

\usepackage{color}

\usepackage[foot]{amsaddr}




\title[Navier-Stokes-Cattaneo-Christov system]{The Cattaneo-Christov approximation of Fourier heat-conductive compressible fluids}




\date{}






\author[T. Crin-Barat]{Timothée Crin-Barat$^*$}
\author[S. Kawashima]{Shuichi Kawashima}
\author[J. Xu]{Jiang Xu}





\linespread{1.1}

\subjclass[2020]{35Q35; 76N10}
\keywords{Cattaneo's law, Critical regularity, Relaxation limit, Heat-conductive flows, Navier-Stokes Fourier system, Hyperbolic approximation\\ *Corresponding author: timotheecrinbarat@gmail.com}

\begin{document}
\maketitle
\begin{abstract}
We investigate the Navier-Stokes-Cattaneo-Christov (NSC) system in $\mathbb{R}^d$ ($d\geq3$), a model of heat-conductive compressible flows serving as a finite speed of propagation approximation of the Navier-Stokes-Fourier (NSF) system. 
Due to the presence of Oldroyd's upper-convected derivatives, the system (NSC) exhibits a \textit{lack of hyperbolicity} which makes it challenging to establish its well-posedness, especially in multi-dimensional contexts. In this paper, within a critical regularity functional framework, we prove the global-in-time well-posedness of (NSC) for initial data that are small perturbations of constant equilibria, uniformly with respect to the approximation parameter $\varepsilon>0$. Then, building upon this result, we obtain the sharp large-time asymptotic behaviour of (NSC) and, for all time $t>0$, we derive quantitative error estimates between the solutions of (NSC) and (NSF). To the best of our knowledge, our work provides the first strong convergence result for this relaxation procedure in the three-dimensional setting and for ill-prepared data.

The (NSC) system is partially dissipative and incorporates both partial diffusion and partial damping mechanisms. To address these aspects and ensure the large-time stability of the solutions, we construct localized-in-frequency perturbed energy functionals based on the hypocoercivity theory.
More precisely, our analysis relies on partitioning the frequency space into \textit{three} distinct regimes: low, medium and high frequencies. Within each frequency regime, we introduce effective unknowns and Lyapunov functionals,
revealing the spectrally expected dissipative structures.

\end{abstract}







\section{Introduction}

\subsection{Presentation of the systems}
In the Eulerian description, a general compressible fluid evolving in $\R^d$ $(d\geq3)$ is characterized
at every  material point~$x\in\R^d$ and time  $t\in\R$
by its {\it  density} $\rho=\rho(t,x)\in\R_+,$  {\it velocity field} $u= u(t,x)\in\R^d$ and its  
{\it internal energy} $e=e(t,x)\in\R_+$. Those physical quantities are governed by:
\begin{itemize}
\item The mass conservation:
$$
\d_t\rho+\div(\rho u)=0,
$$
 \item The momentum conservation:
$$
\d_t(\rho u)+\div(\rho u \otimes u)+\nabla P= \div\tau,
$$
 \item The energy conservation:
$$\d_t\left(\rho \left(\dfrac{|u|^2}{2}+e\right)\right)+\div\left(\rho u \left(\dfrac{|u|^2}{2}+e\right)+u P\right)+\div q=\div(\tau\cdot u),$$
\end{itemize}
where  $\tau$ is the viscous stress tensor, $P$ the pressure and $q$ the heat flux.
In the regime of Newtonian fluids, $\tau$ is given by
$$
\tau\triangleq 2\mu D(u)+\lambda\div u\,\Id,
$$
where $\lambda$ and $\mu$ are the shear and bulk viscosities satisfying $\mu>0, \nu:=\lambda+2\mu>0$ and $D( u)\triangleq\frac12(\nabla u+{}^T\!\nabla u)$ is the deformation tensor. We consider a pressure function of the form
\begin{align}\label{pressurelaw}P=P(\rho,T)=T\pi(\rho),
\end{align}
where $\pi$ is a smooth function such that $\pi'(\rho)>0$ for $\rho>0$, and we assume that the fluid obeys Joule's law: the internal energy $e$ is a function of the temperature $T\in \R_+$ only and, for a positive constant $C_v$, we have 
$e=C_vT$. Hence, using Gibbs relations for the internal energy and the Helmholtz free energy, we obtain the following temperature equation 
\begin{align}
\label{eq:temp}
\rho C_v(\d_tT+u\cdot\nabla T)+P\div u+\div q=\div(\tau \cdot u).
\end{align}
 Assuming that the heat flux $q$ follows the Fourier law
\begin{equation} \label{FourierLaw}
    q=-\kappa\nabla T,
\end{equation}
where $\kappa>0$ is the heat conductivity coefficient, we obtain the Navier-Stokes-Fourier equations modelling viscous heat-conductive compressible flows:
\begin{equation}\label{FullNSC}
\left\{
\begin{array}
[c]{l}%
\d_t\rho+\div(\rho u)=0,\\
\d_t(\rho u)+\div(\rho u \otimes u)+\nabla P= \div\tau,\\\rho C_v(\d_tT+u\cdot\nabla T)+P\div u-\kappa\Delta T=\div(\tau \cdot u).
\end{array}
\right.
\end{equation} 

The Fourier law \eqref{FourierLaw} has been widely and successfully used to approximate the phenomenon
of heat propagation in continuous media.
However, its relevance comes into question in various applications where alternative approaches are more appropriate for accurate heat conduction modelling. A notable limitation appears when employing the Fourier law to close the system \eqref{FullNSC}, introducing an intrinsic hypothesis regarding heat transfer —the instantaneous response of the heat flux to a temperature gradient. In other words, this assumption, while \textit{mathematically convenient}, leads to an unrealistic prediction: the infinite speed of heat propagation, commonly referred to as the \textit{paradox of heat conduction}.
In particular, the inadequacy of the Fourier law becomes apparent at the nanoscale or in scenarios with short timescales, as detailed in \cite{zhang}, \cite{LTT} (highlighting high-energy laser technology), and \cite{VGV} (addressing nano-fluid heat transport). In such contexts, the response of the heat flux to the temperature gradient in the material is no longer small enough to be neglected and deemed instantaneous. 

To address this limitation, hyperbolic heat conduction models have been introduced, proposing different constitutive equations for the heat flux. One of the best-known is the Maxwell-Cattaneo \cite{Cattaneo} heat transfer law:
\begin{align}
    \label{MaxwellCattaneo}
 \varepsilon^2\d_t q +q=-\kappa \nabla T, 
\end{align}
where the relaxation parameter $\varepsilon>0$\footnote{Its value can be experimentally determined for different materials \cite{Christov2009,ChristovJordan}.} represents the time lag required to establish steady heat conduction in a volume element once a temperature gradient has been imposed across it. The law \eqref{MaxwellCattaneo} corrects the paradox of heat conduction as it ensures a finite speed of propagation of the thermal signal. Indeed, inserting \eqref{MaxwellCattaneo} in the equation of the temperature \eqref{eq:temp} gives, for $u=0$, $\rho\equiv1$ and $C_v= 1$,
\begin{align}
    \label{heattransport}
    \varepsilon^2 \d_{tt}T+\d_tT-\kappa \Delta T=0,
\end{align}
which is a damped wave equation ensuring the propagation of damped thermal waves with the finite speed $\sqrt{\kappa}/\varepsilon$. Such reformulation is often referred to as \textit{second sound} in the context of thermoelasticity \cite{LandauHelium}. 

Although
the Maxwell-Cattaneo law  \eqref{MaxwellCattaneo} preserves the causality principle for heat propagation in steady continuous media, it is incompatible with the Galilean invariance of frame-indifference when the medium is in motion. To address that, Christov and Jordan \cite{ChristovJordan} replaced the partial time derivative with the standard material derivative, leading to the following formulation
\begin{align}
    \label{MCCJ}
\varepsilon^2(\d_tq+u\cdot \nabla q) + q = -\kappa\nabla T.
\end{align}
The constitutive law \eqref{MCCJ} is Galilean invariant and resolves the moving frame paradox as the wave speeds of thermal disturbance are now $c_{1,2}=u\pm \sqrt{\kappa}/\varepsilon$, which is coherent inside a body moving with velocity $u$. However, this law remains imperfect as it does not lead to a single equation for the temperature field.

To remedy that, Christov \cite{Christov2009} proposed an alternative formulation using the Lie-Oldroyd upper-convected time derivative. The following equation for the heat flux leads to a \textit{truly} frame-indifferent formulation\footnote{Observe that the equations \eqref{MCCJ} and \eqref{MCC} are the same in the one-dimensional setting.}
\begin{align}
    \label{MCC}    \varepsilon^2\left(\d_t\mathfrak{q}^\varepsilon+u^\varepsilon\cdot \nabla \mathfrak{q}^\varepsilon-\mathfrak{q}^\varepsilon\cdot\nabla u^\varepsilon+\mathfrak{q}^\varepsilon \div u^\varepsilon\right) + \mathfrak{q}^\varepsilon = -\kappa\nabla T^\varepsilon.
\end{align}
Coupling the fundamental conservation laws of fluid mechanics with \eqref{MCC} leads to the Navier-Stokes-Cattaneo-Christov equations:
\begin{equation}\label{Cattaneo-NSC} 
\left\{
\begin{array}
[c]{l}%
\d_t\rho^\varepsilon+\div(\rho^\varepsilon u^\varepsilon)=0,\\
\d_t(\rho^\varepsilon u^\varepsilon)+\div(\rho^\varepsilon u^\varepsilon \otimes u^\varepsilon)+\nabla P^\varepsilon= \div\tau^\varepsilon,\\\rho^\varepsilon C_v(\d_tT^\varepsilon+u^\varepsilon\cdot\nabla T^\varepsilon)+P^\varepsilon\div u^\varepsilon+\div \mathfrak{q}^\varepsilon=\div(\tau^\varepsilon \cdot u^\varepsilon),
\\
\varepsilon^2(\d_t\mathfrak{q}^\varepsilon+u^\varepsilon\cdot\nabla \mathfrak{q}^\varepsilon-\mathfrak{q}^\varepsilon\cdot\nabla u^\varepsilon+\mathfrak{q}^\varepsilon\div u^\varepsilon)+\mathfrak{q}^\varepsilon=-\kappa\nabla T^\varepsilon.\\
\end{array}
\right.
\end{equation} 
In summary, the physical significance of system \eqref{Cattaneo-NSC} can be distilled into three aspects:
\begin{itemize}
    \item It corrects the unrealistic feature of heat propagation presented in the Navier-Stokes-Fourier system \eqref{FullNSC}: the thermal signal in \eqref{Cattaneo-NSC} exhibits a finite speed of propagation;
      \item It satisfies the Galilean invariance principle of frame indifference;
      \item Formally, in the relaxation limit $\varepsilon\rightarrow0$, system \eqref{Cattaneo-NSC} can be understood as a partially\footnote{It is a \textit{partially} hyperbolic approximation of system \eqref{FullNSC} as there are still parabolic effects in the equation of the velocity in the approximating system \eqref{Cattaneo-NSC}. Fully hyperbolic approximations of the Navier-Stokes-Fourier system are discussed in Section \ref{sec:ext}.} hyperbolic approximation of \eqref{FullNSC}.
\end{itemize}
For comprehensive reviews on the topic of hyperbolic approximations, interested readers can refer to \cite{Chandrasekharaiah98,JosephPreziosi89,PeshkovRomenski,DhaouadiGavrilyuk} and references therein.
\subsection{Overview of our findings.}
Our paper aims to improve the understanding of the Cattaneo-Christov approximation in addressing the \textit{paradox of heat conduction}. 

First, we establish the uniform-in-$\varepsilon$ existence of unique global-in-time small strong solutions to the Navier-Stokes-Cattaneo-Christov system \eqref{Cattaneo-NSC} in dimensions $d\geq3$\footnote{The dimension restriction is explained in Remark \ref{RqThm}.}, cf. Theorem \ref{thm:exist}. The solutions we construct are small perturbations around the constant equilibria
\begin{align}\label{eq:eq}
(\bar{\rho},\bar{u},\bar{T},\bar{q})=(\bar{\rho},0,\bar{T},0)
\end{align}
where $\bar{\rho},\bar{T}>0$. The choice $\bar{q}=0$ comes from the fact that the equilibrium considered has to satisfy the system \eqref{Cattaneo-NSC} to have a suitable linearized structure (see \cite{Yong,BZ}) and, in the context of fluid mechanics, the assumption $\bar{u}=0$ is natural due to the Galilean transformation, as explained in \cite[p. 6]{Arnold} or \cite{DanchinFullNSC}.

Then, building upon our global existence result, we recover the sharp time-decay rates of the solutions in Theorem \ref{Thm:decay}, and we establish global-in-time quantitative error estimates between the solutions of \eqref{Cattaneo-NSC} and \eqref{FullNSC} as $\varepsilon\to 0$, leading to our relaxation result: Theorem \ref{thm:relax}.



To the best of our knowledge, our work is the first to show that the formal link between \eqref{Cattaneo-NSC} and \eqref{FullNSC} is rigorously valid in a multi-dimensional setting. Additionally, we justify that the convergence between both systems holds in a strong sense and without assuming that the initial data satisfy the limit system constraint, i.e. the Fourier law: $\mathfrak{q}_0^\varepsilon=-\kappa\nabla T_0^\varepsilon$. In this sense, our result holds in an ill-prepared context as we handle the initial-time boundary layer.
Furthermore, as discussed in Section \ref{sec:ext}, our approach is robust enough to be applied in justifying the hyperbolization of other parabolic models.

Overall, our research enhances the understanding of the Cattaneo-Christov law by combining hypocoercive techniques with frequency domain decompositions.


\subsection{Existing literature on Cattaneo-type relaxation model}

In contrast to the attention given to \eqref{FullNSC}, as depicted in the next subsection, there is relatively less research dedicated to the Navier-Stokes-Cattaneo-Christov system \eqref{Cattaneo-NSC}. Angeles, Malaga and Plaza \cite{AngelesMalagaPlaza2020} showed that system \eqref{Cattaneo-NSC} is dissipative in the one-dimensional setting, in the sense of the Shizuta-Kawashima (SK) condition \cite{SK}, and established time-decay estimates for a linearization of the system around constant equilibrium states. Then, some references were interested in fully hyperbolic approximations of the Navier-Stokes Fourier system in 1D. In addition to the equation for the heat flux, one also incorporates an evolution equation for the viscous tensor, thereby transforming the velocity equation into a hyperbolic form; see \eqref{MaxwellLaw} in the Appendix. For such models, in \cite{HuRacke2020}, Hu and Racke established the global existence of smooth solutions for small data in $H^2(\R)$ and the local-in-time relaxation limit towards \eqref{FullNSC} as $\varepsilon\to0$ in $H^5(\R)$. Peng and Zhao \cite{PengZhao2022} established the global uniform existence of smooth solutions and justified the global weak 
 convergence in $H^2(\R)$. Conversely, in \cite{HuRacke2023}, Hu and Racke proved a blow-up result for the fully hyperbolic approximation of the Navier-Stokes-Fourier system when the initial data are large. This observation is consistent with the inherent contrast between hyperbolic and parabolic PDEs.
 
  Recently, Angeles  \cite{Angeles22,FA2023} showed that the coupling between the compressible Euler-Cattaneo-Christov system is unfit to model the propagation of thermal and acoustic waves
in several space dimensions. More precisely, it signifies that the inviscid version of \eqref{Cattaneo-NSC} cannot be written in a conservative form, leading to a lack of hyperbolicity. The well-posedness theory remains unresolved in that case. In the viscous case, Angeles \cite{FA2023-2} developed a metric fixed point theorem for Fibonacci contractions and proved the local-in-time existence and uniqueness of solutions to \eqref{Cattaneo-NSC} in any dimensions. In the present paper, we build upon this result to perform our global-in-time analysis.

We also mention the work of Dhaouadi and Gavrilyuk \cite{DhaouadiGavrilyuk} where, using Hamilton's principle and an augmented Lagrangian procedure, the authors derive a purely hyperbolic approximation of the Euler-Fourier system in every dimension. In Section \ref{sec:ext}, we discuss the extension of our methodology to study models arising from their approach.

\subsection{Some literature on the Navier-Stokes-Fourier system}
So far there is a huge literature on the existence, blow-up and large-time behaviour of solutions to the Navier-Stokes-Fourier system \eqref{FullNSC}. The local existence and uniqueness of smooth solutions away from vacuum were proved by Serrin \cite{S} and Nash \cite{N}. The local existence of strong solutions in Sobolev spaces was constructed by Solonnikov \cite{So}, Valli \cite{V} and Fiszdon and Zajaczkowski \cite{FZ}. Matsumura and Nishida \cite{MN1,MatsumuraNishida} established the global-in-time existence of strong solutions being small perturbations of a linearly stable constant state $\left(\varrho_{\infty},0, \theta_{\infty}\right)$ (with $\varrho_{\infty}>0$) in three dimensions. Moreover, with an additional $L^1$ \textit{smallness assumption} on the initial data, the optimal decay rate coinciding with that of the heat kernel was obtained. Later, those results were generalized to other regions: for example, exterior domains were investigated by Kobayashi \cite{K} and Kobayashi and Shibata \cite{KS}, and the half-space by Kagei and Kobayashi \cite{KK1,KK2}. Results related to wave propagation are also available: Zeng \cite{Z} investigated the $L^1$ convergence to the nonlinear Burgers' diffusion wave. Hoff and Zumbrun \cite{HZ} performed a detailed analysis of the Green function in the multi-dimensional case and obtained the $L^\infty$ decay rates of diffusion waves. In \cite{LW}, Liu and Wang exhibited the pointwise convergence of solutions to diffusion waves with the optimal time-decay rate in odd dimensions, where a weaker Huygens' principle was also shown. For the existence of solutions with arbitrary data, Xin \cite{X} found that any smooth solution to the Cauchy problem of compressible Navier-Stokes system without heat conduction (including the barotropic case) would blow up in finite time if the initial density contains vacuum. Huang, Li and Xin \cite{HLX} established the global existence of classical solutions with small energy that may have large oscillations and contain vacuum states. 
A breakthrough is due to Lions \cite{L}, who obtained the global existence of weak solutions with finite energy when the adiabatic exponent is suitably large. Subsequently, some improvements were achieved by Feireisl, Novotny and Petzeltov\'{a} \cite{FNP} and Jiang and Zhang \cite{JZ}. However, the uniqueness of weak solutions remains an open question.
\smallbreak
Regarding frameworks similar to the one we employ in this paper—specifically, strong solutions being small perturbations of constant equilibria—we refer to the following references. Danchin \cite{NSCL2, DanchinFullNSC} established the global existence of unique strong solutions to \eqref{FullNSC} in critical homogeneous Besov spaces of $L^2$-type. That result was extended to Besov spaces of $L^p$-$L^2$-type by Charve and Danchin \cite{CD} and Chen, Miao and  Zhang \cite{CMZ}. Haspot \cite{Haspot} achieved a similar result by employing a more elementary energy approach based on the viscous effective flux introduced by Hoff in \cite{HD}. In a $L^p$-type critical regularity framework, Danchin and He \cite{HeDanchin} justified the low Mach number convergence to the incompressible Navier–Stokes equations for viscous compressible flows in the ill-prepared data case. Then, Danchin and Xu \cite{DX,XJ} developed a time-weighted energy approach in the Fourier semigroup framework and derived optimal decay rates in $L^p$-type critical spaces. Following this, Xin and Xu \cite{XX} introduced a Lyapunov-type energy method for deriving time-decay rates. In this approach, there is still a requirement for a $\dot{B}^{\sigma}_{2,\infty}$-condition on the low-frequency part of the initial data, though it does not necessarily need to be small. More recently, Danchin and Tolksdorf \cite{DT} investigated the scenario in which the equations are posed on bounded domains of $\mathbb{R}^d$.

\subsection{Outline of the paper}
Our paper is structured as follows. In Sections \ref{sec:reform}-\ref{sec:funcframe}, we give the linearization of system \eqref{Cattaneo-NSC} and introduce the functional framework that we use in our analysis. Sections \ref{sec:mainr}-\ref{sec:strategyofproof} are dedicated to presenting our main results and outlining the methodology employed. In Section \ref{sec:linear}, we prove uniform-in-$\varepsilon$ a priori estimates for the linearization of system \eqref{Cattaneo-NSC}. Section \ref{sec:nonlinear} is devoted to establishing our global well-posedness result, while Section \ref{sec:decay} delves into the study of the large-time behaviour of the solution. In Section \ref{sec:relaxation}, we prove our strong relaxation result. Section \ref{sec:ext} presents a discussion regarding possible extensions of our findings. Some technical lemmas are provided in the Appendix.
    
\section{Reformulation of \eqref{Cattaneo-NSC} and main results}
\subsection{Reformulation of the system}\label{sec:reform}

Let $\bar{\rho}>0$ and $\bar{T}>0$. Linearizing the system \eqref{Cattaneo-NSC} around $(\bar{\rho},0,\bar{T},0)$ and, as in \cite{DanchinFullNSC,DanchinXuFullNSC}, nondimensionalizing it, we obtain
\begin{equation}
\left\{
\begin{array}
[c]{l}%
\d_ta^\varepsilon+\div v^\varepsilon=F^\varepsilon,\\
\d_t v^\varepsilon-\mathcal{A}v^\varepsilon+\nabla a^\varepsilon +\gamma\nabla \theta^\varepsilon=G^\varepsilon,\\
\d_t \theta^\varepsilon+\beta\div q^\varepsilon+\gamma\div v^\varepsilon=H^\varepsilon,\\
\varepsilon^2\d_tq^\varepsilon+\alpha q^\varepsilon+\kappa\nabla\theta^\varepsilon=\var^2I^\varepsilon,
\end{array}
\right.
\label{LinearHypNSCzero}
\end{equation}
where the new unknowns $a^\varepsilon$, $v^\varepsilon$, $\theta^\varepsilon$ and $q^\varepsilon$ are time and space rescaling of\footnote{The particular way we write the perturbation of $\rho^\varepsilon$ is tailored to handle the factor $\rho^\varepsilon$ appearing in front of the time-derivatives in the equations of the velocity and temperature.} $(\rho^\varepsilon-\bar{\rho})/\bar{\rho}$, $u^\varepsilon$, $T^\varepsilon-\bar{T}$ and $\mathfrak{q}^\varepsilon$, respectively, and the Lamé operator is defined as $\mathcal{A}:=(\mu \Delta+(\lambda+\mu)\nabla \div)/\nu$.  The exact rescaling, the positive constant coefficients $\alpha,\beta,\gamma$ and the source terms appearing in \eqref{LinearHypNSCzero} are defined in Appendix \ref{sec:appreform}. Applying a similar procedure to the Navier-Stokes-Fourier system \eqref{FullNSC} leads to
\begin{equation}
\left\{
\begin{array}
[c]{l}%
\d_ta+\div v=F,\\
\d_t v-\mathcal{A}v+\nabla a +\gamma\nabla \theta= G,\\
\d_t \theta-\dfrac{\beta\kappa }{\alpha} \Delta \theta+\gamma\div v=H,\\
\end{array}
\right.
\label{LinearHypNSFzero}
\end{equation}
where $a,v,\theta$$,F$, $G$, and $H$ are defined similarly to $a^\varepsilon,v^\varepsilon,\theta^\varepsilon
,F^\varepsilon$, $G^\varepsilon$ and $H^\varepsilon$. 

\subsection{Functional framework}\label{sec:funcframe}

Due to the dual partially dissipative nature of the Navier-Stokes-Cattaneo-Christov system \eqref{LinearHypNSCzero} (elucidated in Section \ref{sec:strategyofproof}) our strategy relies on the analysis of the solutions in three distinct frequency regimes. Within each of these regimes, the solutions exhibit very different behaviours, necessitating the use of hypocoercive methodologies adapted to each regime. 

A spectral analysis of the model suggests to consider the following thresholds to separate the frequency domain
\begin{align}\label{Thresholds0}
j_0:=K \andf j_{\var}:=\dfrac{k}{\varepsilon},
\end{align}
where $K$ is a suitably large integer and $k$ is a suitably small integer to be determined later\footnote{The values of $K$ and $k$ can be explicitly determined in our computations. It is essential to choose $K$ and $k$ suitably large and small, respectively, to ensure that some linear source terms can be absorbed.}. Given that our analysis will rely on the dyadic Littlewood-Paley decomposition to split the frequencies, we also introduce the thresholds
\begin{align}\label{Thresholds}
J_0:=\log_2K \andf J_{\var}:=-[\log_{2}{\var}]+\log_2k,
\end{align}
 Remark that the thresholds $J_0$ and $J_\varepsilon$ correspond to those employed in the study of partially diffusive systems in \cite{HeDanchin} and partially damped systems in \cite{CBD1,CBD3}, respectively.
Then, to justify our computations, we assume that
\begin{align}\label{ThresholdAssum}
J_0\leq J_\var.
\end{align}
This assumption is crucial in our computations to ensure that \eqref{LinearHypNSCzero} verifies hypocoercive stability properties. In particular, it implies that the well-known (SK) condition\footnote{ We recall that the (SK) condition is a sufficient algebraic criterion ensuring the stability of partially dissipative systems and that it is equivalent to the Kalman rank condition, as pointed out in \cite{BZ}. However, it is not a necessary condition in the multi-dimensional context. Consequently, whether global-in-time solutions exist when \eqref{ThresholdAssum} fails remains an open question.} \cite{SK} is satisfied. It is also in line with \cite[Theorem 2.2]{HuRacke2016} where the authors show that \eqref{LinearHypNSCzero} does not satisfy the (SK) condition when $\varepsilon$ is too large. Since $K$ and $k$ are constants, the assumption \eqref{ThresholdAssum} implies that  $\varepsilon$ must be sufficiently small.

\medbreak
This preliminary analysis suggests working in a functional framework that would facilitate the decomposition of the frequencies. In this regard, the Littlewood-Paley decomposition and homogeneous Besov spaces emerge as natural tools for analyzing our model. We define the following frequency-restricted homogeneous Besov semi-norms corresponding to the decomposition induced by \eqref{Thresholds}:
\begin{equation}
\begin{aligned}
&\|f\|_{\dot{B}^{s}_{p,1}}^{\ell}:=\sum_{j\leq J_{0}}2^{js}\|f_{j}\|_{L^{2}},\quad\|f\|_{\dot{B}^{s}_{p,1}}^{m,\varepsilon}:=\sum_{J_0\leq j\leq J_{\varepsilon}}2^{js}\|f_{j}\|_{L^{2}} \andf \|f\|_{\dot{B}^{s}_{p,1}}^{h,\varepsilon}:=\sum_{j\geq J_{\varepsilon}-1}2^{js}\|f_{j}\|_{L^{2}},
\end{aligned}
\end{equation}
where $f_j:=\ddj f$ and $\ddj$ is the classical Littlewood-Paley frequency-localization operator, see \cite[Chapter 2]{HJR}.
We also introduce the following semi-norms that will be useful in our analysis:
\begin{align}  \|f\|_{\dot{B}^{s}_{p,1}}^{\ell,\varepsilon}:=\sum_{j\leq J_\varepsilon}2^{js}\|f_{j}\|_{L^{p}} \andf \|f\|_{\dot{B}^{s}_{p,1}}^{h}:=\sum_{j\geq J_0}2^{js}\|f_{j}\|_{L^{p}}.
\end{align}
Note that the norms without the $\varepsilon$-dependency correspond to the spaces used to treat the limit system \eqref{FullNSC} in \cite{DanchinFullNSC,HeDanchin,DanchinXuFullNSC}. We emphasize here that our utilization of Besov spaces serves not only to obtain results in a critical regularity setting but is pivotal for deriving our strong relaxation limit result, as it enables us to recover sharp dissipative properties. The key insight is that, without implementing a frequency splitting, only the worst behaviour among the three regimes would prevail, making it impossible to establish uniform bounds to justify the relaxation limit.

\smallbreak
Given the dependence of the norms on $K$, $k$ and $\varepsilon$, it is necessary to monitor these parameters when employing Bernstein-type embeddings within each frequency regime. We recall that, in the context of the Littlewood-Paley theory, Bernstein-type estimates can be applied to each localized-in-frequency component and allow for the control of the behaviour of the function across different frequency scales. For instance, one can show that for a distribution localized in the low-frequency regime $|\xi|\leq1$, the $L^2$ norm of its gradient can be bounded by the $L^2$ norm of the distribution (see the first inequality of \eqref{eq:medBernstein} for $s=1$ and $s'=1$). The following proposition does this seamlessly in our context and directly follows from the standard Bernstein inequalities derived, for instance, in \cite[Chapter 2]{HJR}.
\begin{Prop}[Bernstein-type inequalities] \label{prop:Bernstein}
Let $f$ be a smooth function, $p\in[1,\infty]$, $s\in\R$ and $s'>0$. The following inequalities hold true
\begin{align} \label{eq:medBernstein}
&\|f\|_{\dot{B}^{s}_{p,1}}^{\ell}\lesssim K^{s'} \|f\|_{\dot{B}^{s-s'}_{p,1}}^{\ell}, \quad \:\|f\|_{\dot{B}^{s}_{p,1}}^{h,\var}\lesssim k^{s'}\varepsilon^{s'} \|f\|_{\dot{B}^{s+s'}_{p,1}}^{h,\varepsilon},
\\\|f\|_{\dot{B}^{s}_{p,1}}^{\ell,\varepsilon}\lesssim k^{s'}&\varepsilon^{-s'} \|f\|_{\dot{B}^{s-s'}_{p,1}}^{\ell,\var},\quad  \:
\|f\|_{\dot{B}^{s}_{p,1}}^{m,\varepsilon}\lesssim k^{s'}\varepsilon^{-s'} \|f\|_{\dot{B}^{s-s'}_{p,1}}^{m,\var}\quad \text{and} \quad \:\|f\|_{\dot{B}^{s}_{p,1}}^{m,\var}\lesssim K^{-s'} \|f\|_{\dot{B}^{s+s'}_{p,1}}^{m,\varepsilon}.\label{eq:lowhfBernstein}
\end{align}
\end{Prop}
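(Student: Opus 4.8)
The plan is to derive each of the five estimates in Proposition \ref{prop:Bernstein} directly from the classical Bernstein inequalities for the Littlewood–Paley blocks $\ddj$, namely that for a ball- or annulus-supported spectrum one has $2^{js'}\|f_j\|_{L^p}\lesssim \|\nabla^{s'} f_j\|_{L^p}\lesssim 2^{js'}\|f_j\|_{L^p}$ up to harmless constants depending only on $s'$, $p$ and $d$, and to keep careful track of which dyadic threshold ($J_0$ or $J_\varepsilon$) governs the relevant frequency window. The point is purely bookkeeping: one replaces the abstract $2^{\pm j s'}$ gain by its numerical value at the endpoint of the summation range, and since $J_0=\log_2 K$ and $J_\varepsilon=-[\log_2\varepsilon]+\log_2 k$, this numerical value is, up to a bounded factor, $K^{s'}$ or $(k/\varepsilon)^{s'}$ respectively.

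First I would treat the low-frequency estimate $\|f\|_{\dot B^s_{p,1}}^{\ell}\lesssim K^{s'}\|f\|_{\dot B^{s-s'}_{p,1}}^{\ell}$: writing $2^{js}\|f_j\|_{L^2}=2^{js'}\cdot 2^{j(s-s')}\|f_j\|_{L^2}$ and noting that the sum runs over $j\le J_0$, we have $2^{js'}\le 2^{J_0 s'}=K^{s'}$ since $s'>0$, so pulling this factor out of the sum gives the claim immediately (no Bernstein inequality on the blocks is even needed here — only monotonicity of $2^{js'}$). The medium-high-frequency bounds $\|f\|_{\dot B^s_{p,1}}^{m,\varepsilon}\lesssim K^{-s'}\|f\|_{\dot B^{s+s'}_{p,1}}^{m,\varepsilon}$ and $\|f\|_{\dot B^s_{p,1}}^{\ell}\lesssim K^{s'}\|f\|^{\ell}_{\dot B^{s-s'}_{p,1}}$ are of the same flavour: on the medium range $J_0\le j\le J_\varepsilon$ one has $2^{-js'}\le 2^{-J_0 s'}=K^{-s'}$, so again the estimate follows by extracting the worst power of $2^j$ over the summation window. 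The three remaining estimates — $\|f\|^{h,\varepsilon}_{\dot B^s_{p,1}}\lesssim k^{s'}\varepsilon^{s'}\|f\|^{h,\varepsilon}_{\dot B^{s+s'}_{p,1}}$, $\|f\|^{\ell,\varepsilon}_{\dot B^s_{p,1}}\lesssim k^{s'}\varepsilon^{-s'}\|f\|^{\ell,\varepsilon}_{\dot B^{s-s'}_{p,1}}$ and $\|f\|^{m,\varepsilon}_{\dot B^s_{p,1}}\lesssim k^{s'}\varepsilon^{-s'}\|f\|^{m,\varepsilon}_{\dot B^{s-s'}_{p,1}}$ — involve the threshold $J_\varepsilon$, and here I would use $2^{J_\varepsilon}\simeq k/\varepsilon$, which holds up to a factor in $[1,2)$ because of the integer part in the definition $J_\varepsilon=-[\log_2\varepsilon]+\log_2 k$. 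For the high-frequency sum $j\ge J_\varepsilon-1$ the extremal power is $2^{-(J_\varepsilon-1)s'}\lesssim (\varepsilon/k)^{s'}$, giving the stated $k^{s'}\varepsilon^{s'}$ gain; for the low/medium sums capped at $j\le J_\varepsilon$ the extremal power is $2^{J_\varepsilon s'}\lesssim (k/\varepsilon)^{s'}$, giving the $k^{s'}\varepsilon^{-s'}$ loss.

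The only genuinely delicate point — and the one I would state explicitly rather than sweep under the rug — is that the implicit constants in all these inequalities must be \emph{independent of $K$, $k$ and $\varepsilon$}, since the whole purpose of Proposition \ref{prop:Bernstein} is to let one later optimize over these parameters. This forces one to check that the $\lesssim$ coming from the underlying block-level Bernstein inequality depends only on $s'$, $p$ and the dimension $d$, and that the only $K$-, $k$-, $\varepsilon$-dependence is the explicit power that has been extracted; the integer-part rounding in $J_\varepsilon$ contributes at most a universal multiplicative factor of $2^{s'}$, which is absorbed into $\lesssim$. Once this is observed, each of the five lines is a one-line estimate, so I do not expect any real obstacle — the statement is essentially a normalization lemma packaging the standard Bernstein inequalities in the three-zone decomposition, and the proof is a short direct verification zone by zone.
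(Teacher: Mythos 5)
Your zone-by-zone verification — extracting the extremal dyadic weight $2^{\pm js'}$ at the endpoint of each summation window and using $2^{J_0}=K$, $2^{J_\varepsilon}\simeq k/\varepsilon$, with constants depending only on $s'$ (the rounding in $[\log_2\varepsilon]$ costing at most $2^{s'}$) — is correct and is exactly the argument the paper has in mind, which it leaves unwritten as a direct consequence of the standard Littlewood--Paley/Bernstein estimates of \cite[Chapter 2]{HJR}. Your only cosmetic deviation is that the sharp high-frequency factor you obtain is $(\varepsilon/k)^{s'}=k^{-s'}\varepsilon^{s'}$ rather than the stated $k^{s'}\varepsilon^{s'}$, which is harmless since $k$ is a fixed constant of size at least one, so the stated (weaker) bound follows.
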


\subsection{Main results}\label{sec:mainr}
Before stating our results, we define the functional norm $X_0^\varepsilon$ associated with the initial data:
\begin{align}\label{X0}
X_0^\varepsilon:=X_0^\ell+X_0^{m,\varepsilon}+X_0^{h,\varepsilon},
\end{align}
where
\begin{align*}
   &X_0^\ell= \|(a_0^\varepsilon,v_0^\varepsilon,\theta_0^\varepsilon,\varepsilon q_0^\varepsilon)\|_{\dot{B}^{\frac d2-1}_{2,1}}^\ell, \quad X_0^{m,\varepsilon}=\|(\theta_0^\varepsilon,\varepsilon q_0^\varepsilon)\|^{m,\var}_{\dot{B}^{\frac{d}{p}-2}_{p,1}\cap \dot{B}^{\frac{d}{p}-1}_{p,1}}+\|v_0^\varepsilon\|^{m,\var}_{\dot{B}^{\frac{d}{p}-1}_{p,1}}+ \|a_0^\varepsilon\|^{m,\var}_{\dot{B}^{\frac{d}{p}}_{p,1}},
\\&X_0^{h,\varepsilon}=\varepsilon\|w_0^\varepsilon\|^{h,\var}_{\dot{B}^{\frac{d}{2}}_{2,1}}+\varepsilon\|v_0^\varepsilon\|^{h,\var}_{\dot{B}^{\frac{d}{2}+1}_{2,1}}+\varepsilon\|a_0^\varepsilon\|^{h,\var}_{\dot{B}^{\frac{d}{2}+1}_{2,1}}+\varepsilon\|(\theta_0^\varepsilon,\var q_0^\varepsilon)\|^{h,\var}_{\dot{B}^{\frac{d}{2}+1}_{2,1}}.
\end{align*}
We are now ready to state our first result: The well-posedness of system \eqref{LinearHypNSCzero} within a critical regularity framework, provided that the relaxation parameter $\varepsilon$ is small enough so the (SK) stability condition holds.

\begin{Thm}[Uniform-in-$\varepsilon$ global well-posedness for small data]\label{thm:exist} Let $\varepsilon>0$, $d\geq3$, $p\in[2,d)$ and  $p\in[2,\frac{2d}{d-2}]$. There exist constants $K,k\in\Z$ and $\eta_0>0$ such that for all $\varepsilon$ satisfying \eqref{ThresholdAssum} and if
$$ X_0^\varepsilon \leq \eta_0,$$
then system \eqref{LinearHypNSCzero} admits a unique global-in-time solution $(a^\varepsilon,v^\varepsilon,\theta^\varepsilon,q^\varepsilon)$ satisfying, for all $t>0$,
\begin{align}\label{bound:thmexist}
X^\varepsilon(t)\leq C X_0^\varepsilon, 
\end{align}
where $X^\varepsilon(t)$ is defined in \eqref{X} and $C>0$ is a constant independent of $t$, $\varepsilon$ and the initial data.
\end{Thm}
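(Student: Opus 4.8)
The plan is to prove Theorem~\ref{thm:exist} by a standard continuation (bootstrap) argument built on top of the local-in-time existence of solutions to \eqref{Cattaneo-NSC} established by Angeles in \cite{FA2023-2}, combined with the uniform-in-$\varepsilon$ a priori estimates for the linearized system \eqref{LinearHypNSCzero} announced for Section~\ref{sec:linear}. First I would fix the constants $K,k\in\Z$ produced by the linear analysis (large $K$, small $k$) so that \eqref{ThresholdAssum} makes sense and the linear Lyapunov functionals control the full solution norm $X^\varepsilon(t)$ defined in \eqref{X}. Then, writing the nonlinear system \eqref{LinearHypNSCzero} with the source terms $F^\varepsilon,G^\varepsilon,H^\varepsilon,I^\varepsilon$ from Appendix~\ref{sec:appreform}, I would run the linear a priori estimate with these right-hand sides treated perturbatively, obtaining an inequality of the schematic form
\begin{align*}
X^\varepsilon(t)\leq C\,X_0^\varepsilon + C\,\big(X^\varepsilon(t)\big)^2
\end{align*}
valid on the maximal existence interval, uniformly in $\varepsilon$ subject to \eqref{ThresholdAssum}. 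A standard bootstrap then shows that if $X_0^\varepsilon\leq\eta_0$ with $\eta_0$ small enough (depending only on $C$, hence independent of $\varepsilon$ and the data), the a priori bound $X^\varepsilon(t)\leq 2C X_0^\varepsilon$ propagates, precluding blow-up and yielding the global solution together with \eqref{bound:thmexist}.

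The technical heart of the argument is the nonlinear estimate: one must bound each source term in the norm dual to the space in which the corresponding linear estimate delivers dissipation, and do so with constants independent of $\varepsilon$ and of $K,k$. Concretely, I would decompose each product (e.g. $v^\varepsilon\cdot\nabla a^\varepsilon$, $a^\varepsilon\mathcal Av^\varepsilon$, $\mathfrak q^\varepsilon\cdot\nabla u^\varepsilon$, $\mathfrak q^\varepsilon\operatorname{div}u^\varepsilon$, and the quadratic terms hidden in the nondimensionalization) into its low-, medium- and high-frequency parts according to the thresholds \eqref{Thresholds}, and estimate each piece using the standard Bochner--Besov product laws together with the Bernstein-type inequalities of Proposition~\ref{prop:Bernstein} to convert between the $\varepsilon$-weighted norms appearing in $X^\varepsilon$. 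The restriction $p\in[2,d)$ and $p\in[2,\tfrac{2d}{d-2}]$, and the dimension restriction $d\geq 3$, enter exactly here: they are what make the relevant indices $\tfrac dp-2,\ \tfrac dp-1,\ \tfrac dp,\ \tfrac d2-1$ compatible with the product laws (no index exceeds $\tfrac dp$, differences of regularities stay positive) so that all nonlinear terms close at the level of the solution space without loss of derivatives.

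A subtle point, which I expect to be the main obstacle, is keeping the $\varepsilon$-weights consistent across the three regimes when a product couples frequencies from different regimes — for instance a low-frequency factor multiplying a high-frequency factor — because the high-frequency part of $X^\varepsilon$ carries an extra power of $\varepsilon$ while the low-frequency part does not, and the Cattaneo variable $q^\varepsilon$ always appears paired as $\varepsilon q^\varepsilon$. The strategy is to exploit that the medium regime bridges $J_0\leq j\leq J_\varepsilon$: using the last inequality of \eqref{eq:lowhfBernstein} and the second of \eqref{eq:medBernstein} one gains negative powers of $K$ (absorbable by choosing $K$ large) or positive powers of $k\varepsilon$ (absorbable by choosing $k$ small and $\varepsilon$ constrained by \eqref{ThresholdAssum}), so that the bad weight is always traded against a genuinely small constant. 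Once the nonlinear estimate is shown to be $\varepsilon$-uniform, uniqueness follows from a similar (in fact simpler, one-derivative-lower) energy estimate on the difference of two solutions, and the continuation criterion from \cite{FA2023-2} upgrades the a priori bound to genuine global existence.
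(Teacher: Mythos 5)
Your proposal follows essentially the same route as the paper: uniform-in-$\varepsilon$ linear a priori estimates in the three frequency regimes (with the effective unknowns and Lyapunov functionals of Section \ref{sec:linear}), nonlinear estimates of the source terms in each regime yielding $X^\varepsilon(t)\lesssim X_0^\varepsilon+X^\varepsilon(t)^2$, and then a bootstrap on the local solution of \cite{FA2023-2}, with uniqueness from stability-type estimates. The only caveat is that the products coupling different frequency regimes require the hybrid $L^2$--$L^p$ product, commutator and composition laws of Propositions \ref{LPP} and \ref{prop:PL-low} rather than purely standard Besov product laws, but your plan correctly identifies this as the delicate point and the $K$, $k\varepsilon$ absorption mechanism you describe is exactly how the paper handles it.
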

\begin{Rmq} Some remarks are in order.
\begin{itemize}

\item To the best of our knowledge, Theorem \ref{thm:exist} is the first result to show the uniform global well-posedness of the Navier-Stokes-Cattaneo-Christov equations in a multi-dimensional framework.

\item The condition \eqref{ThresholdAssum} allows to employ a hypocoercivity-based methodology. If not satisfied, the frequency regimes are interchanged, rendering our method for deriving uniform a priori estimates inapplicable. Notably, as elucidated in our computations, the condition \eqref{ThresholdAssum} is equivalent to the (SK) condition (\cite{SK}).
\item The bound \eqref{bound:thmexist} provides uniform-in-$\varepsilon$ and $\mathcal{O}(\varepsilon)$ controls of the solution for all times that will be essential to justify the relaxation limit $(\varepsilon\to0)$ in Theorem \ref{thm:relax}. It results from a sharp linear analysis combined with refined and new product laws, in hybrid Besov spaces of $L^2$-$L^p$-$L^2$-type, to bound the nonlinear terms (see Propositions \ref{LPP}-\ref{prop:PL-low}). 
\item  The limitation to dimensions larger than $3$ arises from technical difficulties in establishing product laws within the critical regularity framework, as apparent in \eqref{R-E955c} for instance. 
An alternative approach to deal with the two-dimensional case, based on a Lagrangian reformulation, is discussed in Section \ref{sec:ext}.
 \end{itemize}
\end{Rmq}

 Building upon Theorem \ref{thm:exist}, we analyze the large-time asymptotic behaviour of the solutions of \eqref{LinearHypNSCzero}.
\begin{Thm}[Large-time behaviour]\label{Thm:decay}
Let the assumption of Theorem \ref{thm:exist} be in force and $(a^\var,v^\var,\theta^\var,q^\var)$ be the corresponding global-in-time solution of \eqref{LinearHypNSCzero} associated to the initial data $(a^\var_0,v^\var_0,\theta^\var_0,q^\var_0)$. If $(a^\var_0,v^\var_0,\theta^\var_0,q^\var_0)^\ell\in \dot{B}^{-\sigma_1}_{2,\infty}$ for $1-\frac d2< \sigma_1\leq \sigma_0=\frac{2d}{p}-\frac d2	$, then, for all $t>0$,
\begin{align}\label{est:thmdecay}
&\|\Lambda^{\sigma}(a^\var,v^\var)(t)\|_{L^p}\leq C(1+t)^{-\frac d2(\frac 12-\frac 1p)-\frac{\sigma+\sigma_1}{2}}\quad \mbox{if}\quad -\tilde{\sigma}_1<\sigma \leq \frac dp-1
\end{align}
and 
\begin{align}\label{est:thmdecay2}
&\|\Lambda^{\sigma}(\theta^\var,\varepsilon q^\var)(t)\|_{L^p}\leq C(1+t)^{-\frac d2(\frac 12-\frac 1p)-\frac{\sigma+\sigma_1}{2}}\quad \mbox{if}\quad -\tilde{\sigma}_1<\sigma \leq \frac dp-2,
\end{align}
where $\tilde{\sigma }_1=\sigma_1+d(\frac 12-\frac 1p) $, the operator $\Lambda^{\sigma}$ is defined by $\Lambda^{\sigma}f\triangleq\mathcal{F}^{-1}\left(|\xi|^{\sigma}\mathcal{F}f\right)$ and the constant $C$ is independent of $t$ and $\varepsilon$.
\end{Thm}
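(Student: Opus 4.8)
\textbf{Proof strategy for Theorem \ref{Thm:decay}.}

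The plan is to derive the decay estimates \eqref{est:thmdecay}--\eqref{est:thmdecay2} by combining the uniform energy bounds of Theorem \ref{thm:exist} with a propagation-of-low-frequency-norm argument and a time-weighted bootstrap, splitting the analysis into the three frequency regimes already introduced. First I would establish that the negative Besov norm $\|(a^\var,v^\var,\theta^\var,\var q^\var)\|_{\dot B^{-\sigma_1}_{2,\infty}}^{\ell}$ is propagated uniformly in time and in $\var$: this follows from the linear low-frequency decay structure (the system \eqref{LinearHypNSCzero} behaves, in low frequencies, like a parabolic system after diagonalization, with the effective heat-type dissipation $\beta\kappa/\alpha$ emerging from the damped flux equation once $|\xi|\ll 1/\var$) together with the nonlinear source terms $F^\var,\dots,I^\var$, which are quadratic and hence controlled via the product laws (Propositions \ref{LPP}--\ref{prop:PL-low}) by $X^\var(t)^2\lesssim (X_0^\var)^2$. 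The key point is that the $\var^2 I^\var$ source in the flux equation carries an $\var^2$ factor so it does not spoil the uniform-in-$\var$ bound, and in the low-frequency regime $\var|\xi|\lesssim \var K \ll 1$, so the relaxation term $\alpha q^\var/\var^2$ dominates and one recovers effective parabolic behaviour.

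Next, with the uniform bound on the $\dot B^{-\sigma_1}_{2,\infty}$-norm in hand, I would run a standard Gagliardo--Nirenberg--type interpolation in low frequencies: for $-\tilde\sigma_1<\sigma\leq \frac dp-1$ one interpolates $\|\Lambda^\sigma f\|_{L^p}^{\ell}$ between the propagated negative norm $\dot B^{-\sigma_1}_{2,\infty}$ and a higher-regularity Besov norm controlled by $X^\var(t)$, using the first Bernstein-type inequality of Proposition \ref{prop:Bernstein} to pass from $L^2$- to $L^p$-based norms at the cost of powers of $K$ (harmless constants). The resulting ODE-type inequality for a well-chosen time-weighted quantity $\mathcal D^\var(t):=\sup_{0\leq \tau\leq t}(1+\tau)^{\frac d2(\frac12-\frac1p)+\frac{\sigma+\sigma_1}{2}}\|\cdot\|$ closes provided the dissipation rate in low frequencies is of heat-kernel type, which is exactly what the condition $1-\frac d2<\sigma_1$ and the range of $\sigma$ guarantee; the medium- and high-frequency parts decay exponentially (at rates $\sim 1$ and $\sim 1/\var^2$ respectively, both $\geq$ some fixed rate since \eqref{ThresholdAssum} holds), hence they are absorbed into the $(1+t)^{-N}$ tail for any $N$ and contribute nothing to the polynomial rate. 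The temperature/flux estimate \eqref{est:thmdecay2} is obtained the same way, with the shifted upper bound $\frac dp-2$ reflecting that $\theta^\var$ and $\var q^\var$ enjoy one extra derivative of smoothing relative to $(a^\var,v^\var)$ in the parabolic block.

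The main obstacle I anticipate is \textbf{uniformity in $\var$} of the medium-frequency contribution: on the band $J_0\leq j\leq J_\var$, i.e. $1\lesssim |\xi|\lesssim 1/\var$, the dispersion relation interpolates between the parabolic regime and the relaxed hyperbolic regime, and one must check that the damping rate stays bounded below by a fixed positive constant (independent of $\var$) so that this block truly decays exponentially at a $\var$-independent rate, rather than at a rate degenerating like $\var^2$ or $|\xi|^{-2}$. This is where the hybrid $L^2$--$L^p$--$L^2$ norm structure in the definition of $X^\var_0$ and $X^\var(t)$, and the careful choice of $K$ large and $k$ small, become essential: the Lyapunov functionals constructed in Section \ref{sec:linear} must be invoked to show the medium-frequency energy dissipates at rate comparable to $2^{2j}/(1+2^{2j}\var^2)\gtrsim 1$ on this band. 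A secondary technical point is verifying that the nonlinear terms, when measured in the time-weighted norms, genuinely close the bootstrap --- this requires that the product laws lose no decay, i.e. that the slowest-decaying factor times the integrable-in-time factor still yields the claimed rate, which constrains $\sigma_1>1-\frac d2$ and is the reason the theorem is stated only for $d\geq 3$.
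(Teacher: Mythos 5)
Your first step coincides with the paper's: Lemma \ref{LemmaDecay} propagates the low-frequency norm $\|(a^\varepsilon,v^\varepsilon,\theta^\varepsilon,\varepsilon Q^\varepsilon)\|^{\ell}_{\dot B^{-\sigma_1}_{2,\infty}}$ uniformly in time and in $\varepsilon$, with the quadratic sources bounded by $X^\varepsilon(t)^2$ exactly as you describe. After that you diverge from the paper, and your route, as sketched, has a genuine gap. You claim that the medium- and high-frequency blocks decay exponentially at $\varepsilon$-independent rates and can be ``absorbed into the $(1+t)^{-N}$ tail for any $N$''. That is true only for the linear semigroup. For the nonlinear solution these blocks are forced, through Duhamel, by quadratic terms containing the low-frequency components, which decay only algebraically; hence the medium/high-frequency parts decay polynomially, not faster than every polynomial, and cannot simply be discarded. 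Closing your time-weighted bootstrap therefore requires estimating all the nonlinear terms inside the time-weighted norms (the technical core of the Danchin--Xu framework you are implicitly invoking), and this is not sketched: your ``secondary technical point'' is in fact the main one, whereas the ``main obstacle'' you anticipate (a uniform lower bound on the linear damping rate on the band $J_0\leq j\leq J_\varepsilon$) is already settled by the linear analysis of Section \ref{sec:linear}.

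The paper takes a different, bootstrap-free route following the Lyapunov method of Xin--Xu: it rewrites the a priori estimates of Sections \ref{sec:linear}--\ref{sec:nonlinear} in differential form in the three regimes, and assembles a single functional $\mathcal{L}_1$ made of the low-frequency $\dot B^{\frac d2-1}_{2,1}$ norm, the medium-frequency norms of $(a^\varepsilon,w^\varepsilon,\theta^\varepsilon,\varepsilon Q^\varepsilon)$ and suitably $\varepsilon$-weighted high-frequency norms. The dissipation is compared to $\mathcal{L}_1$ itself: in low frequencies by interpolating between the propagated $\dot B^{-\sigma_1}_{2,\infty}$ norm and $\dot B^{\frac d2+1}_{2,1}$, and in medium/high frequencies by Bernstein inequalities, the smallness of the solution making the dissipation dominate a superlinear power of the corresponding norms. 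This yields $\frac{d}{dt}\mathcal{L}_1+c_0\,\mathcal{L}_1^{1+\frac{2}{d/2-1+\sigma_1}}\leq 0$, whose solution decays algebraically, and the stated $L^p$ decay of $\Lambda^{\sigma}(a^\varepsilon,v^\varepsilon)$ and $\Lambda^{\sigma}(\theta^\varepsilon,\varepsilon q^\varepsilon)$ follows by the embedding arguments of Xin--Xu; uniformity in $\varepsilon$ is built into the $\varepsilon$-weights of $\mathcal{L}_1$. If you wish to keep the time-weighted strategy you must supply the weighted nonlinear estimates; otherwise the ODE argument is the shorter and safer path here.
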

\begin{Rmq} Some comments are in order.
\begin{itemize}
    \item In the seminal work \cite{MatsumuraNishida}, the $L^1$-control of the initial data was pinpointed as a sufficient condition to derive the large-time behaviour of solutions. Here, our assumption is less restrictive thanks to the embedding $L^1\hookrightarrow\dot{B}^{-\frac{d}{2}}_{2,\infty}$. 

\item The decay rates in \eqref{est:thmdecay} and \eqref{est:thmdecay2} are sharp in the sense that they are uniform-in-$\varepsilon$ and align with the optimal rates achievable for the limit system \eqref{FullNSC} (derived for instance in \cite{DanchinXuFullNSC}).

    \end{itemize}
\end{Rmq}
Before presenting our relaxation limit result, we recall (in a simplified statement) the global well-posedness result for the limit system \eqref{FullNSC} established by Danchin and He in \cite{HeDanchin}.
\begin{Thm}[\cite{HeDanchin}]\label{Thm:ExistHeDanchin}
 Let $d\geq3$, $p\in[2,d)$ and  $p\in[2,\frac{2d}{d-2}]$. There exist $K\in \mathbb{Z}$ and $\eta>0$ such that if
$$ X_0 \leq \eta_0,$$
then system \eqref{LinearHypNSFzero} admits a unique global solution $( a,u,\theta)$ that, for all $t>0$, satisfies
$$X(t)\leq X_0,$$
where
$$X_0=\| a_0\|^h_{\dot{B}^{\frac{d}{p}}_{p,1}}+\|v_0\|^h_{\dot{B}^{\frac{d}{p}-1}_{p,1}}+\|\theta_0\|^h_{\dot{B}^{\frac{d}{p}-1}_{p,1}}+\|( a_0, v_0, \theta_0)\|^\ell_{\dot{B}^{\frac{d}{2}-1}_{2,1}}$$
and
\begin{align}\label{X:HeDanchin}
 X(t)&=\|( a, v, \theta)\|^\ell_{L^\infty_T(B^{\frac{d}{2}-1}_{2,1})}+\|( a, v, \theta)\|^\ell_{L^1_T(B^{\frac{d}{2}+1}_{2,1})}\\&\nonumber\quad+\|a\|^h_{L^\infty_T\cap L^1_T(B^{\frac{d}{p}}_{p,1})}+\|v\|^h_{L^\infty_T(B^{\frac{d}{p}-1}_{p,1})}+\|v\|^h_{L^1_T(B^{\frac{d}{p}+1}_{p,1})}+\|\theta\|^h_{L^\infty_T(B^{\frac{d}{p}-2}_{p,1})}+\|\theta\|^h_{L^1_T(B^{\frac{d}{p}}_{p,1})}.
  \end{align}

\end{Thm}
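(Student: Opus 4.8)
The statement to prove is Theorem \ref{Thm:ExistHeDanchin}, i.e.\ Danchin--He's global well-posedness for the (nondimensionalized) Navier--Stokes--Fourier system \eqref{LinearHypNSFzero} in the hybrid $L^2$-$L^p$ critical Besov framework. The overall strategy is the classical a priori estimate plus continuation scheme: one constructs local solutions (via a standard fixed-point argument on the linearized system, which is not the core difficulty), derives \emph{uniform-in-time} a priori bounds $X(t)\lesssim X_0$ on the maximal interval of existence under the smallness assumption $X_0\le\eta_0$, and then a bootstrap/continuation argument promotes the local solution to a global one. The heart of the matter is thus the a priori estimate, split as always into a \emph{low-frequency} analysis (frequencies $j\le J_0$, where the system behaves like the compressible Navier--Stokes--Fourier system with full parabolic smoothing from the coupled damped-wave structure) and a \emph{high-frequency} analysis (frequencies $j\ge J_0$, where one works in $L^p$-based spaces and exploits the genuine parabolic regularization in $v$ and $\theta$ together with the damping on $a$ coming from the effective-flux unknown).

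\textbf{Key steps.} First I would recall the linear decay structure of \eqref{LinearHypNSFzero}: the spectral analysis shows that at low frequencies all three of $a,v,\theta$ are parabolically smoothed (eigenvalues $\sim -|\xi|^2$), so applying $\ddj$, taking $L^2$ inner products and using Bernstein's inequality yields, for $j\le J_0$, a Lyapunov inequality producing the gain of two derivatives, i.e.\ the $\|(a,v,\theta)\|^\ell_{L^1_T(\dot B^{d/2+1}_{2,1})}$ term controlled by $\|(a,v,\theta)\|^\ell_{L^\infty_T(\dot B^{d/2-1}_{2,1})}$ plus the source contributions. Second, for the high frequencies I would introduce the effective velocity (Hoff's viscous effective flux, or the Danchin-type effective unknown $w=v-\nabla(-\Delta)^{-1}(\text{damping terms})$) to uncover exponential damping on $a$, while $v$ and $\theta$ enjoy the parabolic smoothing of $\mathcal A$ and $\frac{\beta\kappa}{\alpha}\Delta$ respectively; this gives the high-frequency $L^\infty_T\cap L^1_T$ bounds appearing in \eqref{X:HeDanchin} with the regularity shifts ($a\in\dot B^{d/p}_{p,1}$, $v\in\dot B^{d/p-1}_{p,1}$, $\theta\in\dot B^{d/p-2}_{p,1}$). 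Third, I would estimate all the nonlinear source terms $F,G,H$ (convection, pressure nonlinearities, the quadratic viscous heating $\div(\tau\cdot u)$, etc.) using the product and composition laws in hybrid $L^2$-$L^p$ Besov spaces --- here the constraints $p\in[2,d)$ and $p\le \frac{2d}{d-2}$ enter precisely to make the critical product estimates close (so that e.g.\ $\dot B^{d/p}_{p,1}$ is an algebra-like space and the low-high frequency interactions are absorbable). Fourth, I would combine the low- and high-frequency pieces into a closed inequality $X(t)\le C X_0 + C X(t)^2$, and conclude $X(t)\le X_0$ on the maximal interval provided $\eta_0$ is small enough; a standard bootstrap then shows the maximal time is infinite.

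\textbf{Main obstacle.} The delicate point is not the linear analysis but controlling the nonlinear terms in the hybrid framework: one must track each term's behaviour separately in low and high frequencies (the low-frequency part of a product is estimated in $L^2$-based norms, the high-frequency part in $L^p$-based norms, and the cross terms require careful use of Bernstein embeddings between the two scales), and one must be careful that no loss of regularity occurs --- in particular the worst terms are those where a high-frequency factor meets a low-frequency factor, and those involving the temperature at its lowest regularity level $\dot B^{d/p-2}_{p,1}$, where only the $L^1_T(\dot B^{d/p}_{p,1})$ smoothing norm is available to compensate. Getting these product laws to close in the critical topology is exactly where the dimensional restriction $d\ge 3$ and the range of $p$ are used, and it is the technical core of the argument. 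Since this is the theorem of Danchin--He \cite{HeDanchin}, I would for brevity cite their paper for the full nonlinear estimates and only sketch the linear/structural part, emphasizing that all constants are uniform in time.
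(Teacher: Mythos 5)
Your sketch is essentially the right one: the paper does not prove this statement itself but quotes it from Danchin--He \cite{HeDanchin}, and your plan (low/high frequency splitting at $J_0$, parabolic Lyapunov estimates in low frequencies, the effective velocity $w=v+(-\Delta)^{-1}\nabla a$ giving damping on $a$ in high frequencies, hybrid $L^2$-$L^p$ product/composition laws under the stated restrictions on $d$ and $p$, then $X(t)\le CX_0+CX(t)^2$ and a bootstrap) is precisely the approach of that reference, mirrored by the paper's own analysis of the $\varepsilon$-dependent system in Sections 3--4. No gaps worth flagging at this level of detail, given that you defer the full nonlinear estimates to \cite{HeDanchin}.
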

 Finally, we present our result on the global-in-time relaxation limit of the Navier-Stokes-Cattaneo-Christov system \eqref{LinearHypNSCzero} to the Navier-Stokes-Fourier system \eqref{LinearHypNSFzero} as $\varepsilon\to0$.
\begin{Thm}[Strong relaxation limit]\label{thm:relax}
Let $\varepsilon>0$ and assume that the hypotheses of Theorem \ref{thm:exist} are fulfilled. Let $(a^\varepsilon,v^\varepsilon,\theta^\varepsilon,q^\varepsilon)$ be the global solution of \eqref{LinearHypNSCzero} given by Theorem \ref{thm:exist}
supplemented with initial data $(a_0^\varepsilon,v_0^\varepsilon,\theta_0^\varepsilon,q_0^\varepsilon)$ and let $( a,v,\theta)$ be the global solution of \eqref{LinearHypNSFzero} given by Theorem \ref{Thm:ExistHeDanchin}
supplemented with initial data $( a_0,v_0,\theta_0)$.
Define the error unknowns $$(\widetilde{a},\widetilde{v},\widetilde{\theta}):=(a^\varepsilon-a,v^\varepsilon-v,\theta^\varepsilon-\theta) \quad \text{and} \quad (\widetilde{a}_0,\widetilde{v}_0,\widetilde{\theta}_0):=(a^\varepsilon_0-a_0,v^\varepsilon_0-v_0,\theta^\varepsilon_0-\theta_0).$$
If we assume that
\begin{align}
\|(\wt a_0,\wt v_0,\wt\theta_0)\|_{\dot{B}^{\frac{d}{2}-2}_{2,1}}^\ell+\|\wt a_0\|^h_{\dot{B}^{\frac{d}{p}-1}_{p,1}}+\|(\wt v_0,\wt\theta_0)\|_{\dot{B}^{\frac{d}{p}-2}_{p,1}}^h\lesssim \varepsilon,
\end{align}
then, for all $T>0$, we have
$$\wt X(T)\lesssim \varepsilon,$$
where
\begin{align}\label{tildeX}
\wt X(T)&=\|(\wt a,\wt v,\wt \theta)\|^\ell_{L^\infty_T(B^{\frac{d}{2}-2}_{2,1})}+\|(\wt a,\wt v,\wt \theta)\|^\ell_{L^1_T(B^{\frac{d}{2}}_{2,1})} + \|\alpha q^\varepsilon+\kappa\nabla \theta^\varepsilon\|_{L^1_T(B^{\frac{d}{p}-1}_{p,1})}
  \\&\nonumber+
\|\wt a\|^h_{L^\infty_T\cap L^1_T(B^{\frac{d}{p}-1}_{p,1})}+\|(\wt v,\wt \theta)\|^h_{L^\infty_T(B^{\frac{d}{p}-2}_{p,1})}+\|(\wt v,\wt \theta)\|^h_{L^1_T(B^{\frac{d}{p}}_{p,1})}.
  \end{align}
Thus, as $\varepsilon\to0$,
$$\alpha q^\varepsilon+\kappa\nabla \theta^\varepsilon\longrightarrow 0 \quad \text{strongly in} \quad L^1(\R^+;\dot B^{\frac{d}{p}-1}_{p,1}), $$
and
$$(a^\varepsilon-  a, v^\varepsilon-v,\theta^\varepsilon-\theta)  \longrightarrow 0 \quad \text{strongly in } E,
$$
where $E$ is the functional space associated to the norm $\widetilde{X}$.

\end{Thm}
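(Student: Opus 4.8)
\textbf{Proof strategy for Theorem \ref{thm:relax}.}

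The plan is to derive a closed energy estimate for the error unknowns $(\wt a,\wt v,\wt\theta)$ by subtracting \eqref{LinearHypNSFzero} from \eqref{LinearHypNSCzero} and treating the Cattaneo relaxation term as a perturbation of size $\mathcal{O}(\varepsilon)$. First I would write down the error system: the difference of the mass equations gives $\d_t\wt a+\div\wt v=F^\varepsilon-F$; the difference of the velocity equations gives $\d_t\wt v-\mathcal{A}\wt v+\nabla\wt a+\gamma\nabla\wt\theta=G^\varepsilon-G$; and for the temperature one must compare $\beta\div q^\varepsilon$ with $-\frac{\beta\kappa}{\alpha}\Delta\theta$. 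The key algebraic point is to introduce the \emph{relaxation defect} $Z^\varepsilon:=\alpha q^\varepsilon+\kappa\nabla\theta^\varepsilon$, so that $q^\varepsilon=\frac1\alpha(Z^\varepsilon-\kappa\nabla\theta^\varepsilon)$ and hence $\beta\div q^\varepsilon=-\frac{\beta\kappa}{\alpha}\Delta\theta^\varepsilon+\frac\beta\alpha\div Z^\varepsilon$. Then the temperature error equation becomes
\begin{align*}
\d_t\wt\theta-\frac{\beta\kappa}{\alpha}\Delta\wt\theta+\gamma\div\wt v=H^\varepsilon-H-\frac{\beta}{\alpha}\div Z^\varepsilon,
\end{align*}
so the whole error system is exactly the linearized NSF system \eqref{LinearHypNSFzero} with modified source terms, the genuinely new one being $-\frac\beta\alpha\div Z^\varepsilon$. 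From the fourth equation of \eqref{LinearHypNSCzero}, $Z^\varepsilon=-\varepsilon^2\d_tq^\varepsilon+\varepsilon^2 I^\varepsilon=-\varepsilon^2\d_tq^\varepsilon+\varepsilon^2 I^\varepsilon$, which shows $Z^\varepsilon$ is formally $\mathcal{O}(\varepsilon^2)\d_tq^\varepsilon$; controlling it is the crux, since $\d_t q^\varepsilon$ a priori costs a derivative. The trick is to use instead the identity $\varepsilon^2\d_t q^\varepsilon=-Z^\varepsilon+\varepsilon^2I^\varepsilon$ together with the uniform bounds from Theorem \ref{thm:exist}: the norm $X^\varepsilon(t)$ controls $\varepsilon q^\varepsilon$ in $L^\infty_t$ at a regularity one notch below $\theta^\varepsilon$, and the medium-frequency part of $X_0^\varepsilon$ is designed precisely so that $\|\theta^\varepsilon+\tfrac{\kappa}{\alpha}\nabla\cdots\|$-type effective unknowns are $\mathcal{O}(\varepsilon)$; in other words $\|Z^\varepsilon\|_{L^1_T(\dot B^{d/p-1}_{p,1})}\lesssim\varepsilon$ should follow from Theorem \ref{thm:exist} after exploiting the damped structure of the $q^\varepsilon$-equation in low and medium frequencies and the $\varepsilon$-weights in high frequencies.

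Next I would run the standard Danchin-He energy method (as in Theorem \ref{Thm:ExistHeDanchin}) on the error system, but with regularity indices shifted down by one: work in $\dot B^{d/2-2}_{2,1}$ in low frequencies and $\dot B^{d/p-2}_{p,1}$ (resp. $\dot B^{d/p-1}_{p,1}$ for $\wt a$) in high frequencies, matching the norm $\wt X(T)$ in \eqref{tildeX}. The reason for the shift is that the new source term $\div Z^\varepsilon$ contains one derivative, so to keep it at the level of the ambient space one pays one order of regularity; the $\varepsilon$-smallness of $Z^\varepsilon$ compensates the fact that we cannot also absorb $\div$. Concretely: (i) apply $\ddj$, perform the $L^2$ (low freq.) and $L^p$ (high freq.) energy estimates on the linearized NSF system — reusing verbatim the hypocoercive Lyapunov functionals of \cite{HeDanchin} which give the parabolic smoothing $\dot B\to L^1_T(\dot B^{+2})$ for $(\wt a,\wt v,\wt\theta)$; (ii) bound the NSF-type nonlinear remainders $F^\varepsilon-F$, $G^\varepsilon-G$, $H^\varepsilon-H$ by the product and composition laws (Propositions \ref{LPP}--\ref{prop:PL-low}), writing each as a sum of a term linear in $(\wt a,\wt v,\wt\theta)$ with small coefficient — absorbed into the left side using $X^\varepsilon+X\lesssim\eta_0$ — plus a term that is $\mathcal{O}(\varepsilon)$ because it carries a factor like $(\rho^\varepsilon-\rho)$ or a genuine $\varepsilon$-weight from the rescaling in Appendix \ref{sec:appreform}; (iii) bound $\|\div Z^\varepsilon\|_{L^1_T(\dot B^{d/2-2}_{2,1})^\ell+\cdots}$ by $\|Z^\varepsilon\|_{L^1_T(\dot B^{d/p-1}_{p,1})}\lesssim\varepsilon$ from the previous paragraph, noting the gain of two regularity orders in low frequencies from Bernstein (Proposition \ref{prop:Bernstein}) more than pays for the $\div$. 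Summing over $j$ and using $\wt X(0)\lesssim\varepsilon$ by hypothesis, a Gronwall/bootstrap argument closes $\wt X(T)\lesssim\varepsilon$ uniformly in $T$.

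The main obstacle I anticipate is step (iii) combined with the low-frequency analysis: establishing $\|Z^\varepsilon\|_{L^1_T(\dot B^{d/p-1}_{p,1})}\lesssim\varepsilon$ uniformly in time is not immediate from $X^\varepsilon(t)\le CX_0^\varepsilon$ alone, because in low frequencies the $q^\varepsilon$-equation is only weakly damped (the effective damping rate of the full system there is $|\xi|^2$, not $\mathcal{O}(1)$), so one cannot simply write $Z^\varepsilon=\mathcal{O}(\varepsilon^2\d_tq^\varepsilon)$ and hope for the best. One must instead go back to the spectral structure: in low and medium frequencies the good variable is the effective flux $q^\varepsilon+\frac\kappa\alpha\nabla\theta^\varepsilon$ (up to lower-order terms), which decays like the parabolic part, and the $\varepsilon^2\d_t$ term must be handled by an integration-by-parts in time against the test function, transferring the time derivative onto quantities already controlled in $\wt X$. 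A secondary difficulty is the boundary-layer (ill-prepared data) aspect: since we do not assume $q_0^\varepsilon=-\frac\kappa\alpha\nabla\theta_0^\varepsilon$, the defect $Z^\varepsilon(0)$ need not be $\mathcal{O}(\varepsilon)$ pointwise in frequency, and one must verify that the $L^1_T$ (rather than $L^\infty_T$) norm tames the initial layer — this is exactly why $\wt X$ in \eqref{tildeX} measures $Z^\varepsilon$ only in $L^1_T$ and why the hypothesis on the data is imposed at the shifted regularity $\dot B^{d/2-2}_{2,1}$, $\dot B^{d/p-2}_{p,1}$ rather than the native ones. Everything else is a careful but routine adaptation of the proof of Theorem \ref{Thm:ExistHeDanchin} to the perturbed system.
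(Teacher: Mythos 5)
Your proposal follows essentially the same route as the paper: form the error system, rewrite the heat-flux coupling through the relaxation defect $Q^\varepsilon=\alpha q^\varepsilon+\kappa\nabla\theta^\varepsilon$ so that the error equations become the linearized Navier--Stokes--Fourier system with the extra source $\div Q^\varepsilon$, run the hypocoercive low/high-frequency energy estimates at regularity shifted down by one (with the effective velocity $\wt w=\wt v+(-\Delta)^{-1}\nabla\wt a$ in high frequencies), bound the nonlinear differences by the product and composition laws, and close by smallness of $X+X^\varepsilon$ together with $\wt X(0)\lesssim\varepsilon$. The only correction: the obstacle you anticipate in controlling $\|Q^\varepsilon\|_{L^1_T}$ does not arise, since the functional $X^\varepsilon$ in \eqref{X} already contains $\varepsilon^{-1}\|Q^\varepsilon\|_{L^1_T}$ in the low- and medium-frequency regimes (a consequence of the $\varepsilon^{-2}$ damping of the damped mode in \eqref{eq:EffectiveRelaxQ}) and suitably $\varepsilon$-weighted high-frequency norms, so the $\mathcal{O}(\varepsilon)$ bound \eqref{eq:Qepsilon} is immediate from Theorem \ref{thm:exist}; no integration by parts in time is needed, and the ill-prepared initial layer is already absorbed by that $L^1$-in-time damping estimate established in the existence proof.
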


\begin{Rmq}\label{RqThm}
    Some remarks are in order.
    \begin{itemize}
        \item Theorem \ref{thm:relax} is, to the best of our knowledge, the first result establishing the global-in-time strong convergence, in dimensions $d\geq3$, of the Navier-Stokes-Cattaneo-Christov systems towards the Navier-Stokes-Fourier system. \item The convergence holds in an ill-prepared scenario in the sense that we do not assume the initial data of \eqref{LinearHypNSCzero} to satisfy the limit system constraint: the Fourier law $\alpha q_0^\varepsilon=-\kappa \nabla \theta^\varepsilon_0$. To deal with the initial-time boundary layer formation, we show that the quantity $Q:= \alpha q_0^\varepsilon+\kappa \nabla \theta^\varepsilon_0$ behaves as $e^{-t/\varepsilon}$ in suitable norms.
        \item As explained in Section \ref{sec:ext}, our methodology is robust enough to analyze fully hyperbolic approximations of the Navier-Stokes-Fourier systems explored in \cite{DhaouadiGavrilyuk,HuRacke2017, Peng21, PengZhao2022}.
    \end{itemize}
\end{Rmq}

\subsection{Strategies of proof}\label{sec:strategyofproof}We present our strategy to analyze the system \eqref{LinearHypNSCzero}.

\subsubsection{Partial dissipation}

In the Navier-Stokes-Cattaneo-Christov system \eqref{LinearHypNSCzero}, dissipative operators are present in only two of the four equations: there is diffusion for the velocity field $u^\varepsilon$ through the stress tensor $\tau^\varepsilon$ and damping for the heat flux $q^\varepsilon$. However, to justify the stability of the system, it is necessary to recover dissipation for all the components. To that matter, it is important to understand \eqref{LinearHypNSCzero} as a combination of
\begin{itemize}
 \item[(i)] A partially damped coupling between $\theta^\varepsilon$ and $q^\varepsilon$, enabling to recover dissipation for $\theta^\varepsilon$.
    \item[(ii)] A partially diffusive coupling between $a^\varepsilon$ and $v^\varepsilon$, allowing to recover dissipation for $a^\varepsilon$.
\end{itemize}
Both of these coupling are related to the study of partially dissipative hyperbolic systems, a topic initially developed by Shizuta and Kawashima in \cite{SK}. There, the authors developed an algebraic condition, the (SK) condition, ensuring the stability of the system when the hyperbolic eigendirections of the system avoid the kernel of the dissipation. More recently, Beauchard and Zuazua \cite{BZ} have framed the partially dissipative coupling (i) into Villani's hypocoercivity theory \cite{Villani} and enhanced its understanding. Employing tools from the control theory, they show that the interactions between the hyperbolic and dissipative parts of the system can propagate the dissipation to directions that are not affected by the damping operator. Then, inspired by this work, Crin-Barat and Danchin \cite{ThesisCB, CBD3} obtained new results for the relaxation associated with partial damping using frequency-localization arguments. It is their approach that we shall employ to deal with the partially damped coupling inside \eqref{LinearHypNSFzero}.

The analysis of the coupling of type (ii) goes back to the theory developed by Danchin \cite{NSCL2} concerning the compressible Navier-Stokes equations. However, as we will see below, this coupling can also be comprehended as an application of the hypocoercivity theory.
\medbreak

\medbreak
In the following two subsections, we revisit the essential aspects of the strategies employed to investigate linear partially diffusive systems (ii) and the relaxation of linear partially damped systems (i). The interested reader may also consult \cite{DanchinSurveyPartDissip} for a comprehensive survey on these two phenomena.

\subsubsection{Partially diffusive setting}
We examine the simplified model
    \begin{equation}\label{TM:partdiff}
\left\{
\begin{aligned}
 & \d_ta+\div u=0,
        \\ &\d_t u+\nabla a-\Delta u=0.
\end{aligned}\right.
\end{equation}
A spectral analysis of the system reveals the following behaviour:
\begin{itemize}
    \item In the low frequencies regime, $|\xi|\leq K$, the solution $(a,u)$ exhibits characteristics akin to solutions of the heat equation.
    \item In the high frequencies regime, $|\xi|\geq K$, $a$ undergoes a damping effect, and $u$ has a parabolic behaviour.
\end{itemize}
Hence, it is appropriate to analyze these two frequency regimes using different techniques.
\medbreak
\textbf{The low-frequency regime $|\xi|\leq K$.} In the reference \cite{NSCL2}, the author formulates a Lyapunov functional that allows to recover dissipation for the component $a$, it reads
\begin{align}\label{strat:lya1}  \mathcal{L}_1(t)=\|(a,u)(t)\|_{L^2}^2+\frac{1}{2}\int_{\R^d}u\cdot \nabla a.
\end{align}
Taking the time derivative of $\mathcal{L}_1$ and applying Young's inequality yields
\begin{align}
  \dfrac{1}{2}  \dfrac{d}{dt}\mathcal{L}_1+\|(\nabla   a,\nabla  u)\|_{L^2}^2\leq \text{r.h.s},
\end{align}
where the linear terms on the right-hand side can be absorbed in the low-frequency regime with Bernstein-type inequalities. Then, using that $\mathcal{L}_1(t)\sim \|(a,u)(t)\|_{L^2}^2$ leads to the spectrally expected stability estimates. Within this regime, computations are restricted to a $L^2$-in-space framework as the coupling between the equation is necessary to justify the stability of the system. It is worth noting that the utilization of the perturbed energy functional \eqref{strat:lya1}, as well as the others below, aligns with the theory of hypocoercivity developed by Villani \cite{Villani}.
\medbreak
\textbf{The high frequencies $|\xi|\geq K$.} In \cite{Haspot}, Haspot introduces the effective velocity unknown  $w=u+(-\Delta)^{-1}\nabla  a$ and rewrites \eqref{TM:partdiff} as
\begin{equation}\label{TM:partdiffHF}
\left\{
\begin{aligned}
 & \d_t a+ a=\div w,
        \\ &\d_t w-\Delta w=w+-(-\Delta)^{-1}\nabla a^\varepsilon.
\end{aligned}\right.
\end{equation}
Estimating each equation of \eqref{TM:partdiffHF} separately, one obtains
\begin{align}
    &\dfrac{1}{p}\dfrac{d}{dt}\|a(t)\|_{L^p}^p+\|a\|_{L^p}^p\leq \text{r.h.s},
\\
    &\dfrac{1}{p}\dfrac{d}{dt}\|w(t)\|_{L^p}^p+\|\nabla w\|_{L^p}^p\leq \text{r.h.s},
\end{align}
where the linear terms on the right-hand sides can be absorbed in the high-frequency regime with Bernstein-type inequalities. Furthermore, within this regime, due to the partial diagonalization of the system with the effective velocity $w$, computations can be conducted in a $L^p$-in-space framework for $p\geq2$.

\subsubsection{Relaxation of partially damped systems}
Now, let's explore the justification of the relaxation limit for a partially dissipative toy model. We focus on the linear heat equation
 \begin{align}
     \d_t \theta-\Delta \theta=0.
 \end{align}
 Its hyperbolic Cattaneo approximation read, for a $\varepsilon>0$,
\begin{equation}\label{TM:partdiss}
\left\{
\begin{aligned}
 & \d_t\theta^\varepsilon+\div q^\varepsilon=0,
        \\ &\varepsilon^2\d_t q^\varepsilon+\nabla\theta^\varepsilon+q^\varepsilon=0.
\end{aligned}\right.
\end{equation}
An analysis of the spectral properties of the matrix associated with the system:
\begin{align}\begin{pmatrix}0 &  i\xi &\\ i\xi  &\dfrac{1}{\varepsilon}\end{pmatrix}
\end{align}reveals that:
\begin{itemize}
   \item In low frequencies, $|\xi|\ll \dfrac{1}{\varepsilon}$,
   there are two real eigenvalues $\dfrac{1}{\varepsilon}$ and $\varepsilon\xi^2$.
\item In high frequencies, $|\xi|\gg \dfrac{1}{\varepsilon}$, two complex conjugate eigenvalues coexist, whose real parts are asymptotically equal to $\dfrac{1}{2\varepsilon}$.
 \item The threshold between low and high frequencies is at $1/\varepsilon$.
\end{itemize}
This analysis reveals that the solution's behaviour is significantly influenced by the relationship between $\xi$ and $\varepsilon$. Notably, there exists a purely damped mode at low frequencies, which contrasts with the purely parabolic behaviour at low frequencies described in \cite{BZ,SK}. Additionally, as $\varepsilon\to0$, the high-frequency regime disappears, leaving the low-frequency behavior to be dominant in the whole frequency space. This observation is consistent as what persists in the limit exhibits the same behaviour as the limit equation: a parabolic behaviour for $\theta$, and the Fourier-type law $q=-\nabla \theta$.
\smallbreak
In summary, the hyperbolic Cattaneo-type approximation introduces a purely damped regime in high frequencies, while preserving the nature of the limit system in low frequencies.
Next, we revisit the analysis of \eqref{TM:partdiss} in both frequency regimes.

\medbreak
\textbf{The low-frequency regime $|\xi|\leq 1/\varepsilon$.}
In the low-frequency regime, we introduce the effective unknown $Q^\varepsilon=q^\varepsilon+\nabla \theta^\varepsilon$ to rewrite \eqref{TM:partdiff} as
\begin{equation}\label{TM:partdissHF}
\left\{
\begin{aligned}
 & \d_t \theta^\varepsilon-\Delta \theta^\varepsilon=\div Q^\varepsilon,
        \\ &\varepsilon\d_t Q^\varepsilon+\dfrac{Q^\varepsilon}{\varepsilon}=-\varepsilon\Delta Q^\varepsilon-\varepsilon\Delta\nabla  \theta^\varepsilon.
\end{aligned}\right.
\end{equation}
Estimating each equation of \eqref{TM:partdiffHF} separately, we obtain
\begin{align}
    \dfrac{1}{2}\dfrac{d}{dt}\|( \theta^\varepsilon,\varepsilon Q^\varepsilon)(t)\|_{L^2}^2+\|\nabla  \theta^\varepsilon\|_{L^2}^2+\dfrac{1}{\varepsilon}\|Q^\varepsilon\|_{L^2}^2\leq \text{r.h.s},
\end{align}
where the right-hand side linear terms are of high order and can be absorbed in the low-frequency regime using Bernstein-type inequalities. As for the high-frequency regime in the partially diffusive case, the partial diagonalization of the system enables to work in a $L^p$-in-space framework with $p\geq2$.
\medbreak
\textbf{The high-frequency regime $|\xi|\geq 1/\varepsilon$.} Drawing inspiration from hypocoercivity-type arguments (refer to \cite{BZ, CBD2, CBD1, NSCL2, Villani}), we define the perturbed energy functional
\begin{align}
    \mathcal{L}_2(t)=\|( \theta^\varepsilon,\varepsilon q^\varepsilon)(t)\|_{L^2}^2+2^{-2j}\int_{\R^d}q^\varepsilon\cdot \nabla  \theta^\varepsilon.
\end{align}
Taking the time derivative of $\mathcal{L}_2$ and applying Young's inequality, we obtain:
\begin{align}
   \dfrac{1}{2} \dfrac{d}{dt}\mathcal{L}_2+\dfrac{1}{\varepsilon}\|(\theta^\varepsilon,  q^\varepsilon)\|_{L^2}^2\leq \text{r.h.s}.
\end{align}
Here, the right-hand side linear terms are of low order and can be absorbed in the high-frequency regime through Bernstein-type inequalities.  Then, using that $\mathcal{L}_2(t)\sim\|( \theta^\varepsilon,\varepsilon q^\varepsilon)(t)\|_{L^2}^2$ leads to the spectrally expected stability estimates. Again, due to the lack of partial diagonalization for the system, the computations are restricted to a $L^2$-in-space framework in this regime.

\subsubsection{Decomposition of the frequency space for \eqref{LinearHypNSCzero}}
Synthesizing the insights gathered in the preceding two subsections, to analyze \eqref{LinearHypNSCzero}, we partition the frequency space into three regimes as follows (refer to Figure \ref{fig:3regimes}):
\begin{itemize}
    \item Low frequencies: $j\leq J_0$.
\item Medium frequencies: $J_0 \leq j \leq J_\varepsilon$.
\item High frequencies: $J_\varepsilon\leq j.$
\end{itemize}
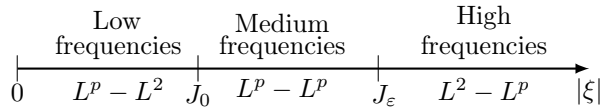
\begin{figure}[!h]
    \centering
	\begin{tikzpicture}[xscale=0.4,yscale=0.4, thick]
	\draw [-latex] (-1,0) -- (18,0) ;
\draw  (18,0)  node [below]  {$|\xi|$} ;
\draw  (11.2,-0.2)  node [below]  {$J_\varepsilon$} ;
\draw  (5,-0.2)  node [below]  {$J_0$};
\draw  (-1,-0.2)  node [below]  {$0$} ;

\draw (11,0) node {$|$};
\draw (5,0) node {$|$};
\draw (-1,0) node {$|$};

\draw (14.5,1) node [above] {High};
\draw (14.5,0) node [above] {frequencies};
\draw (14.5,0) node [below] {$L^2-L^p$};

\draw (7.8,0.9) node [above] {Medium};
\draw (7.8,0) node [above] {frequencies};
\draw (7.8,0) node [below] {$L^p-L^p$};

\draw (2.3,1) node [above] {Low};
\draw (2.35,0) node [above] {frequencies};
\draw (2.35,0) node [below] {$L^p-L^2$};
	\end{tikzpicture}
    \caption{Frequency domain splitting for \eqref{LinearHypNSCzero}}
        \label{fig:3regimes}
\end{figure}
In Figure \ref{fig:3regimes}, for $q, r \in \{2, p\}$, the notation $L^q - L^r$ indicates that the analysis of the partially damped coupling can be conducted in $L^q$-type spaces while the partially diffusive coupling can be studied in $L^r$ spaces.
As $\varepsilon\to0$, $J_\var\to\infty$, so (cf. Figure \ref{fig:2regimes})
\begin{itemize}
    \item The low-frequency regime is not modified.
    \item The medium-frequency regime covers the high-frequency regime.
    \item The high-frequency regime disappears.
\end{itemize}
\begin{figure}[!ht]
    \centering
	\begin{tikzpicture}[xscale=0.4,yscale=0.4, thick]
	\draw [-latex] (-1,0) -- (15,0) ;
\draw  (15,0)  node [below]  {$|\xi|$} ;
\draw  (7,-0.2)  node [below]  {$J_0$};
\draw  (-1,-0.2)  node [below]  {$0$} ;

\draw (7,0) node {$|$};
\draw (-1,0) node {$|$};


\draw (10.7,0.9) node [above] { Medium-high};
\draw (10.7,0) node [above] {frequencies};
\draw (10.7,0) node [below] {$L^p$};

\draw (3.3,1) node [above] {Low};
\draw (3.35,0) node [above] {frequencies};
\draw (3.35,0) node [below] {$L^2$};
	\end{tikzpicture}
    \caption{Frequency decomposition in the limit $\varepsilon\to0$}
        \label{fig:2regimes}
\end{figure}
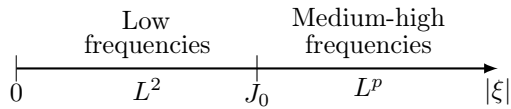
This is consistent as the frequency regimes in the limit correspond to the frequency decomposition used to study the Navier-Stokes-Fourier system \eqref{FullNSC} in \cite{DanchinFullNSC}. Furthermore, the analysis outlined here aligns with the hybrid Besov framework discussed in Section \ref{sec:funcframe}.

\section{Linear analysis: a priori estimates for \eqref{LinearHypNSCzero}}\label{sec:linear}

In this section, we derive a priori estimates for \eqref{LinearHypNSCzero}:
\begin{equation}
\left\{
\begin{array}
[c]{l}%
\d_ta^\varepsilon+\div v^\varepsilon=F^\varepsilon,\\
\d_t v^\varepsilon-\mathcal{A}^\varepsilon v^\varepsilon+\nabla a^\varepsilon +\gamma\nabla \theta^\varepsilon= G^\varepsilon,\\
\d_t \theta^\varepsilon+\beta\div q^\varepsilon+\gamma\div v^\varepsilon=H^\varepsilon,\\
\varepsilon^2\d_tq^\varepsilon+\alpha q^\varepsilon+\kappa\nabla\theta^\varepsilon=\var^2I^\varepsilon,
\end{array}
\right.
\label{LinearHypNSC}
\end{equation}
where the source terms $F^\varepsilon,G^\varepsilon,H^\varepsilon,I^\varepsilon$ are assumed to be smooth functions. In what follows, we assume all constant coefficients to be equal to $1$, except for $\varepsilon$. While the computations for general coefficients can be conducted similarly, we choose this approach to simplify the presentation.

The following proposition gives a priori estimates for the system \eqref{LinearHypNSC}.
\begin{Prop}\label{prop:Aprioriallfreq}
Let $(a^\varepsilon,v^\varepsilon,\theta^\varepsilon,q^\varepsilon)$ be a smooth solution of \eqref{LinearHypNSC} satisfying
\begin{equation}\label{eq:smallZX}X^\varepsilon(t) \ll1.
\end{equation} 
Then, we have
\begin{align} \label{LinearAnalysisEst}
    X^\varepsilon(t)\lesssim &\: X_0+\|(F^\varepsilon,G^\varepsilon,H^\varepsilon,\var I^\varepsilon)\|^\ell_{L^1_T(\dot{B}^{\frac d2-1})}
    \\&+\|F^\varepsilon\|^{m,\var}_{L^1_T(\dot{B}^{\frac{d}{p}}_{p,1})}+\|G^\varepsilon\|^{m,\var}_{L^1_T(\dot{B}^{\frac{d}{p}-1}_{p,1})}+ \| H^\varepsilon\|^{m,\var}_{L^1_T\cap L^2_T( \dot{B}^{\frac{d}{p}-1}_{p,1})}+\varepsilon\|I^\varepsilon\|^{m,\var}_{L^1_T( \dot{B}^{\frac{d}{p}-1}_{p,1})}   
     \nonumber \\&+\|F^\varepsilon\|^{h,\var}_{L^1_T(\dot{B}^{\frac{d}{2}}_{2,1})}+\|G^\varepsilon\|^{h,\var}_{L^1_T(\dot{B}^{\frac{d}{2}-2}_{2,1})}+\|(H^\varepsilon,\varepsilon I^\varepsilon)\|^{h,\var}_{L^1_T(\dot{B}^{\frac{d}{2}}_{2,1})}+\|G^\varepsilon\|^{h,\var}_{L^2_T(\dot{B}^{\frac{d}{2}-1}_{2,1})},  \nonumber
\end{align}
where $X^\varepsilon(t)$ and $X_0^\varepsilon$ are defined in \eqref{X} and \eqref{X0} respectively. 
\end{Prop}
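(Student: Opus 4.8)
The plan is to establish the a priori estimate \eqref{LinearAnalysisEst} by decomposing the frequency space into the three regimes dictated by the thresholds $J_0$ and $J_\var$, and deriving a Lyapunov-type estimate in each regime separately, following the strategies outlined in Section~\ref{sec:strategyofproof}. In each regime the goal is to produce a frequency-localized functional equivalent to the natural $L^2$- or $L^p$-norm of the (suitably rescaled) solution, whose time derivative controls the full expected dissipation, with all remaining linear terms of the ``wrong'' order absorbed via the Bernstein-type inequalities of Proposition~\ref{prop:Bernstein}, and the source terms integrated in time. Summing the three contributions with the dyadic weights $2^{js}$ and taking $\ell^1$ in $j$ yields the claimed bound on $X^\var(t)$.

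\textbf{Low frequencies $j\le J_0$.} Here I would work in $L^2$ and treat \eqref{LinearHypNSC} as the superposition of the partially diffusive coupling $(a^\var,v^\var)$ and the partially damped coupling $(\theta^\var,\var q^\var)$. Applying $\ddj$, I form a perturbed energy of the type
\begin{equation*}
\mathcal{L}_j^\ell = \|(a_j^\var,v_j^\var,\theta_j^\var,\var q_j^\var)\|_{L^2}^2 + \sigma_1\int_{\R^d} v_j^\var\cdot\nabla a_j^\var + \sigma_2\int_{\R^d} q_j^\var\cdot\nabla\theta_j^\var,
\end{equation*}
with $\sigma_1,\sigma_2$ small. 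The cross terms generate $\|\nabla a_j^\var\|_{L^2}^2$ and $\frac1\var\|q_j^\var\|_{L^2}^2$; combined with the intrinsic dissipation $\|\nabla v_j^\var\|_{L^2}^2$ and the coupling-produced $\|\nabla\theta_j^\var\|_{L^2}^2$ (since in low frequencies the damping on $q^\var$ acts like a parabolic term on $\theta^\var$), one gets $\frac{d}{dt}\mathcal{L}_j^\ell + c\,2^{2j}\|(a_j^\var,v_j^\var,\theta_j^\var)\|_{L^2}^2 + \frac{c}{\var}\|q_j^\var\|_{L^2}^2 \lesssim \text{(source)}$, where the off-diagonal linear terms in the equations (the pressure/temperature couplings and the $\var^2 I^\var$, etc.) are of higher frequency order and absorbed using $\|\cdot\|^\ell_{\dot B^s}\lesssim K^{s'}\|\cdot\|^\ell_{\dot B^{s-s'}}$. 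After checking $\mathcal{L}_j^\ell\sim\|(a_j^\var,v_j^\var,\theta_j^\var,\var q_j^\var)\|_{L^2}^2$, one divides by its square root, integrates in time, multiplies by $2^{j(\frac d2-1)}$ and sums.

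\textbf{Medium frequencies $J_0\le j\le J_\var$.} This is the regime where one expects an $L^p$-in-space framework for both couplings. Following Haspot's effective-flux idea and the Crin--Barat--Danchin effective-unknown idea, I introduce $w^\var := v^\var + (-\Delta)^{-1}\nabla a^\var$ (to diagonalize the diffusive block: $a^\var$ becomes damped, $w^\var$ parabolic) and $Q^\var := \alpha q^\var + \kappa\nabla\theta^\var$ (to diagonalize the damped block: $\theta^\var$ becomes parabolic, $Q^\var$ damped at rate $\var^{-2}$). I estimate each rescaled equation in $L^p$ by the usual commutator/transport techniques, obtaining damping for $a^\var$ and $\var^2$-rescaled $q^\var$, and $2^{2j}$-gain for $w^\var$ and $\theta^\var$; the residual linear terms are absorbed either using $\|\cdot\|^{m,\var}_{\dot B^s}\lesssim K^{-s'}\|\cdot\|^{m,\var}_{\dot B^{s+s'}}$ (for the low-order leftovers, since $j\ge J_0$) or $\|\cdot\|^{m,\var}_{\dot B^s}\lesssim k^{s'}\var^{-s'}\|\cdot\|^{m,\var}_{\dot B^{s-s'}}$ (for the high-order $\var$-weighted leftovers, since $j\le J_\var$). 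The key point is that the threshold inequality \eqref{ThresholdAssum}, i.e. $J_0\le J_\var$, guarantees that both absorptions can be performed simultaneously with the same (small/large) constants $k,K$; this is exactly where hypocoercivity / the (SK) condition enters.

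\textbf{High frequencies $j\ge J_\var$.} Here the damped coupling $(\theta^\var,\var q^\var)$ can no longer be diagonalized (the eigenvalues are complex-conjugate), so I revert to an $L^2$-in-space perturbed energy for that block, of the form $\|(\theta_j^\var,\var q_j^\var)\|_{L^2}^2 + \var^2 2^{-2j}\int q_j^\var\cdot\nabla\theta_j^\var$, whose derivative yields $\frac1{\var^2}\|(\theta_j^\var,\var q_j^\var)\|_{L^2}^2$-type damping after absorbing the low-order leftovers via $\|\cdot\|^{h,\var}_{\dot B^s}\lesssim k^{s'}\var^{s'}\|\cdot\|^{h,\var}_{\dot B^{s+s'}}$. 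The diffusive block $(a^\var,v^\var)$ is still handled by the effective flux $w^\var$ in $L^2$, giving damping for $a^\var$ and parabolic smoothing for $w^\var$ (hence for $v^\var$). Combining and rescaling by the $\var$-weights and dyadic weights appearing in $X_0^{h,\var}$ closes this regime.

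\textbf{Main obstacle.} I expect the principal difficulty to be the bookkeeping of the $\var$-weights across the three regimes so that the estimates patch together into a \emph{uniform-in-$\var$} bound: one must choose the effective unknowns and the powers of $2^{j}$, $\var$, $k$, $K$ in the perturbed functionals so that (a) each functional is genuinely equivalent to the corresponding piece of $X^\var$, (b) the cross-regime matching at $j\approx J_0$ and $j\approx J_\var$ loses no power of $\var$, and (c) every residual linear term is absorbable \emph{on the correct side} — which is precisely what forces the constraint $J_0\le J_\var$ and the careful tuning of $K$ large and $k$ small. Once these algebraic choices are fixed, each individual estimate is a routine energy computation plus a Bernstein absorption, and the source-term contributions fall out directly in the norms listed on the right-hand side of \eqref{LinearAnalysisEst}.
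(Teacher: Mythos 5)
Your medium- and high-frequency treatments coincide with the paper's: effective velocity $w^\varepsilon=v^\varepsilon+(-\Delta)^{-1}\nabla a^\varepsilon$ and damped mode $Q^\varepsilon=q^\varepsilon+\kappa\nabla\theta^\varepsilon$ estimated componentwise in $L^p$ in the medium regime, and an $L^2$ perturbed energy with the $2^{-2j}\int q_j^\varepsilon\cdot\nabla\theta_j^\varepsilon$ corrector together with $w^\varepsilon$ in the high regime, with Bernstein absorptions hinging on $J_0\le J_\varepsilon$. (You omit, but these are routine, the second regularity level and the $L^2_T$-in-time bounds for $v^\varepsilon$, $\theta^\varepsilon$, $Q^\varepsilon$ that $X^{m,\varepsilon}$ and $X^{h,\varepsilon}$ contain.)

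The genuine gap is in your low-frequency step. You replace the paper's argument (estimate the equation of $Q^\varepsilon$ \emph{separately}, which turns the $\theta^\varepsilon$-equation into a heat equation with $\div Q^\varepsilon$ as an absorbable source, and then use only the $\int v_j^\varepsilon\cdot\nabla a_j^\varepsilon$ corrector) by a direct hypocoercivity functional with an additional cross term $\sigma_2\int q_j^\varepsilon\cdot\nabla\theta_j^\varepsilon$. Even granting that this can be tuned (the weight $\sigma_2$ must scale like $\varepsilon^2$ to be compatible with $\varepsilon^2\partial_t q^\varepsilon$ and to keep $\mathcal{L}_j^\ell\sim\|(a_j^\varepsilon,v_j^\varepsilon,\theta_j^\varepsilon,\varepsilon q_j^\varepsilon)\|_{L^2}^2$, which you leave unspecified), the dissipation it yields is at best separate control of $\nabla\theta^\varepsilon$ and of $q^\varepsilon$. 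But the quantity $X^\ell(t)$ defined in \eqref{Xlow} — and hence the conclusion \eqref{LinearAnalysisEst} — contains the term $\frac{1}{\varepsilon}\|Q^\varepsilon\|^\ell_{L^1_t(\dot B^{\frac d2-1}_{2,1})}$, i.e. the statement that $q^\varepsilon+\nabla\theta^\varepsilon$ is $\mathcal{O}(\varepsilon)$ in $L^1_t$; this is strictly stronger than any combination of separate bounds on $q^\varepsilon$ and $\nabla\theta^\varepsilon$ and cannot be extracted from your functional. It is exactly the quantity that drives the relaxation theorem, so it cannot be dropped. The fix is what the paper does (and what you already do in the medium regime): write the equation \eqref{eq:EffectiveRelaxQ} for $Q^\varepsilon$, apply the damped-mode estimate of Lemma \ref{maximalL1L2} to get $\varepsilon\|Q^\varepsilon\|_{L^\infty_t}+\frac1\varepsilon\|Q^\varepsilon\|_{L^1_t}$ controlled by $\varepsilon\|(q^\varepsilon,v^\varepsilon)\|_{L^1_t(\dot B^{\frac d2+1}_{2,1})}$ plus sources, absorb the latter using $K<1/\varepsilon$, and then run the perturbed energy only on $(a^\varepsilon,v^\varepsilon,\theta^\varepsilon)$ with $\div Q^\varepsilon$ treated as a source via $\|Q^\varepsilon\|^\ell_{\dot B^{d/2}}\le K\|Q^\varepsilon\|^\ell_{\dot B^{d/2-1}}$; the bound $\|q^\varepsilon\|^\ell_{L^1_t(\dot B^{d/2})}$ then follows from $q^\varepsilon=Q^\varepsilon-\kappa\nabla\theta^\varepsilon$.
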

The proof of the Proposition \ref{prop:Aprioriallfreq} will follow directly from Propositions \ref{prop:linlow}, \ref{prop:linmed} and \ref{prop:linhf} related to the analysis of each frequency region. 

Before stating its proof we define the following quantities that play an important role in justifying our bootstrap argument. Let
\begin{align}\label{X}
    X^\varepsilon(t):=X^\ell(t) + X^{m,\varepsilon}(t) + X^{h,\varepsilon}(t),
\end{align}
where, for $Q:=q+\nabla \theta$ and $w:=u-(-\Delta)^{-1}\nabla  a$, the low frequency part reads
\begin{align}\label{Xlow}
   X^\ell(t)=&\|(a^\varepsilon,v^\varepsilon,\theta^\varepsilon,\varepsilon q^\varepsilon)\|^\ell_{\widetilde L^\infty_t(\dot B^{\frac d2-1}_{2,1})}+\|(a^\varepsilon,v^\varepsilon,\theta^\varepsilon)\|^\ell_{L^1_t(\dot B^{\frac d2+1}_{2,1})}+\| q^\varepsilon\|^\ell_{L^1_t(\dot B^{\frac d2}_{2,1})}+\frac{1}{\varepsilon}\| Q^\varepsilon\|^\ell_{L^1_t(\dot B^{\frac d2-1}_{2,1})},
\end{align}
the medium-frequency part is defined by
\begin{align}\label{Xmedium}
    X^{m,\var}(t):=& \|(\theta^\varepsilon,\varepsilon q^\varepsilon)\|^{m,\var}_{L^\infty_T(\dot{B}^{\frac{d}{p}-2}_{p,1}\cap \dot{B}^{\frac{d}{p}-1}_{p,1})}+ \|\theta\|^{m,\var}_{L^1_T( B^{\frac dp}_{p,1}\cap B^{\frac dp+1}_{p,1})}\\&\nonumber+\|q^\varepsilon\|^{m,\var}_{L^1_T( B^{\frac dp-1}_{p,1}\cap B^{\frac dp}_{p,1})}+\|q^\varepsilon\|^{m,\var}_{L^2_T( B^{\frac dp-2}_{p,1}\cap B^{\frac dp-1}_{p,1})}+ \frac{1}{\varepsilon}\|Q^\varepsilon\|^{m,\var}_{L^1_T( B^{\frac dp-2}_{p,1}\cap B^{\frac dp-1}_{p,1})}\\&+\|w^\varepsilon\|^{m,\var}_{L^\infty_T(\dot{B}^{\frac{d}{p}-1}_{p,1})}+ \|w^\varepsilon\|^{m,\var}_{L^1_T( B^{\frac dp+1}_{p,1})}+\|a^\varepsilon\|^{m,\var}_{L^\infty_T\cap L^1_T(\dot{B}^{\frac{d}{p}}_{p,1})}
    \nonumber\\& + \|v^\varepsilon\|^{m,\var}_{L^\infty_T( B^{\frac dp-1}_{p,1}\cap B^{\frac dp}_{p,1})}+ \|v^\varepsilon\|^{m,\var}_{L^1_T( B^{\frac dp+1}_{p,1})}+ \|v^\varepsilon\|^{m,\var}_{L^2_T( B^{\frac dp+1}_{p,1})}\nonumber
\end{align}
and the high-frequency part reads
\begin{align}\label{Xhigh}
    X^{h,\var}(t)&=\varepsilon\|a^\varepsilon\|^{h,\var}_{L^\infty_T\cap L^1_T(\dot{B}^{\frac{d}{2}+1}_{2,1})}+\|(\varepsilon^2\theta^\varepsilon,\var^3 q^\varepsilon)\|^{h,\var}_{L^\infty_T(\dot{B}^{\frac{d}{2}+1}_{2,1})}+ \|(\theta^\varepsilon,\var q^\varepsilon)\|^{h,\var}_{L^1_T( B^{\frac d2+1}_{2,1})}
\\&\quad+\|Q^\varepsilon\|^{h,\var}_{L^1_T( B^{\frac dp}_{p,1})}+\varepsilon\|w^\varepsilon\|^{h,\var}_{L^\infty_T(\dot{B}^{\frac{d}{2}}_{2,1})}+\varepsilon \|w^\varepsilon\|^{h,\var}_{L^1_T( B^{\frac d2+2}_{2,1})}\nonumber\\&\quad+\varepsilon\|v^\varepsilon\|^{h,\var}_{L^\infty_T( B^{\frac d2+1}_{2,1})}+ \varepsilon\|v^\varepsilon\|^{h,\var}_{L^1_T\cap L^2_T( B^{\frac d2+2}_{2,1})}. \nonumber
\end{align}

\subsection{Low-frequency regime}\label{sec:low} Let $j\leq J_0$.
We introduce a suitable unknown that partially diagonalizes the system. We define the effective unknown\footnote{For general coefficients, one should consider $Q^\varepsilon:=\alpha q^\varepsilon+\kappa\nabla\theta^\varepsilon$ here and in the rest of the paper.}
\begin{equation}\label{EffectiveRelax}
  Q^\varepsilon:=q^\varepsilon+\nabla \theta^\varepsilon,
\end{equation}
which satisfies
\begin{equation}\label{eq:EffectiveRelaxQ}
 \d_t\varepsilon Q^\varepsilon+\frac{Q^\varepsilon}{\varepsilon}=\varepsilon f_1^\varepsilon+\varepsilon I^\varepsilon+\varepsilon\kappa \nabla H^\varepsilon,
\end{equation}
where $f_1^\varepsilon=\kappa (\nabla \div q^\varepsilon + \nabla \div v^\varepsilon)$.
Inserting \eqref{eq:EffectiveRelaxQ} in \eqref{LinearHypNSC}, we obtain
\begin{equation}
\left\{
\begin{array}
[c]{l}%
\d_ta^\varepsilon+\div v^\varepsilon=F,\\
\d_t v^\varepsilon+\nabla a^\varepsilon +\nabla \theta^\varepsilon- \Delta v^\varepsilon=G,\\
\d_t \theta^\varepsilon-\Delta \theta^\varepsilon+\div v^\varepsilon=-\div Q^\varepsilon+H,\\
\d_t\varepsilon Q^\varepsilon+\dfrac{Q^\varepsilon}{\varepsilon}:=\varepsilon f_1^\varepsilon+\varepsilon I+\varepsilon\kappa \nabla H.\\
\end{array}
\right.
\label{LinearEffective}
\end{equation}
In this subsection, we prove the following result.
\begin{Prop}\label{prop:linlow}
Let $(a^\varepsilon,v^\varepsilon,\theta^\varepsilon,q^\varepsilon)$ be a smooth solution of \eqref{LinearEffective} such that \eqref{eq:smallZX} holds. We have
\begin{align}\label{est:lowccl}
    X^\ell(t) \leq& \:X_0^\ell+ \|(F^\varepsilon,G^\varepsilon,H^\varepsilon,\var I^\varepsilon)\|^\ell_{L^1_T(\dot{B}^{\frac d2-1}_{2,1})}.
\end{align}
\end{Prop}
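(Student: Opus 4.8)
The plan is to perform a localized-in-frequency energy analysis on the reformulated system \eqref{LinearEffective} for $j\leq J_0$, combining a Danchin-type Lyapunov functional for the partially diffusive block $(a^\varepsilon,v^\varepsilon)$ with a hypocoercive functional coupling $\theta^\varepsilon$ and $v^\varepsilon$ (to recover dissipation of $\theta^\varepsilon$ from the coupling, since only $v^\varepsilon$ is directly damped among that triple), while treating the relaxation variable $Q^\varepsilon$ by a stand-alone estimate on \eqref{eq:EffectiveRelaxQ}. Concretely, I would apply $\ddj$ to each equation, take $L^2$ inner products, and build, at each dyadic level $j\leq J_0$, a functional of the schematic form
\begin{align*}
\mathcal{L}_j^2 \;\simeq\; \|(a_j^\varepsilon,v_j^\varepsilon,\theta_j^\varepsilon,\varepsilon Q_j^\varepsilon)\|_{L^2}^2 \;+\; \eta\,2^{-2j}\!\int_{\R^d}\! v_j^\varepsilon\!\cdot\!\nabla a_j^\varepsilon \;+\; \eta'\,2^{-2j}\!\int_{\R^d}\! v_j^\varepsilon\!\cdot\!\nabla \theta_j^\varepsilon,
\end{align*}
with $\eta,\eta'$ small absolute constants chosen so that $\mathcal{L}_j^2\simeq \|(a_j^\varepsilon,v_j^\varepsilon,\theta_j^\varepsilon,\varepsilon Q_j^\varepsilon)\|_{L^2}^2$ (the cross terms are lower order after Bernstein since $2^{-2j}\|\nabla a_j^\varepsilon\|_{L^2}\lesssim \|a_j^\varepsilon\|_{L^2}$ for $j\leq J_0$), and so that its time derivative produces the full dissipation $2^{2j}\|(a_j^\varepsilon,v_j^\varepsilon,\theta_j^\varepsilon)\|_{L^2}^2 + \varepsilon^{-2}\|Q_j^\varepsilon\|_{L^2}^2$ plus the extra $\|q_j^\varepsilon\|_{L^2}^2$ (obtained from $\|Q_j^\varepsilon\|_{L^2}$ and $2^j\|\theta_j^\varepsilon\|_{L^2}$), up to right-hand side contributions.

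The key point in the differential inequality is that every troublesome linear term is of \emph{higher} frequency order and hence absorbable in the low-frequency regime $j\leq J_0$ via the Bernstein inequalities of Proposition \ref{prop:Bernstein} (first inequality of \eqref{eq:medBernstein}): the term $\div Q^\varepsilon$ in the $\theta^\varepsilon$-equation contributes $2^j\|Q_j^\varepsilon\|_{L^2}$, controlled by a small fraction of $\varepsilon^{-1}\|Q_j^\varepsilon\|_{L^2}$ using $2^j\lesssim 2^{J_0}=K \lesssim 1/\varepsilon$ under \eqref{ThresholdAssum}; the term $f_1^\varepsilon = \kappa(\nabla\div q^\varepsilon+\nabla\div v^\varepsilon)$ in the $Q^\varepsilon$-equation carries an $\varepsilon$ prefactor and two derivatives, giving $\varepsilon\,2^{2j}(\|q_j^\varepsilon\|_{L^2}+\|v_j^\varepsilon\|_{L^2})\lesssim 2^{j}(\|q_j^\varepsilon\|_{L^2}+\|v_j^\varepsilon\|_{L^2})$, again absorbed by the dissipation for $K$ chosen appropriately; similarly the commutator-free structure here means the source terms $F^\varepsilon,G^\varepsilon,H^\varepsilon,I^\varepsilon$ and the $\nabla H^\varepsilon$ term contribute, after using $\varepsilon\,2^j\|H_j^\varepsilon\|_{L^2}\lesssim \|H_j^\varepsilon\|_{L^2}$, only the combination $\|(F_j^\varepsilon,G_j^\varepsilon,H_j^\varepsilon,\varepsilon I_j^\varepsilon)\|_{L^2}$. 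Dividing by $\sqrt{\mathcal{L}_j^2}$, integrating in time, multiplying by $2^{j(\frac d2-1)}$ and summing over $j\leq J_0$ then yields \eqref{est:lowccl}: the $\widetilde L^\infty_t(\dot B^{\frac d2-1}_{2,1})$ bound on $(a^\varepsilon,v^\varepsilon,\theta^\varepsilon,\varepsilon q^\varepsilon)$ from the prefactor, the $L^1_t(\dot B^{\frac d2+1}_{2,1})$ bounds on $(a^\varepsilon,v^\varepsilon,\theta^\varepsilon)$ and $L^1_t(\dot B^{\frac d2}_{2,1})$ on $q^\varepsilon$ from the dissipation (note $\varepsilon q^\varepsilon$ vs $q^\varepsilon$: the $L^1$-in-time rates gain a full $2^{2j}$, consistent with the parabolic scaling, while the damping gives $\varepsilon^{-1}\|Q^\varepsilon\|$), and the $\frac1\varepsilon\|Q^\varepsilon\|_{L^1_t(\dot B^{\frac d2-1}_{2,1})}$ term directly from the $\varepsilon^{-2}\|Q_j^\varepsilon\|_{L^2}$ dissipation.

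The main obstacle, as usual in this hypocoercive-with-three-mechanisms setting, is the bookkeeping of the two small parameters $\eta,\eta'$ and the threshold constant $K$: one must verify that a single choice of $K$ (large) and $\eta,\eta'$ (small, depending only on the fixed coefficients $\alpha,\beta,\gamma,\kappa,\mu,\lambda,\nu$, here normalized to $1$) simultaneously makes $\mathcal{L}_j^2$ coercive, closes the absorption of $\div Q^\varepsilon$, $f_1^\varepsilon$ and the $\nabla H^\varepsilon$ source, and crucially keeps all constants independent of $\varepsilon$ — the latter being exactly where assumption \eqref{ThresholdAssum} ($J_0\le J_\varepsilon$, i.e. $K\lesssim 1/\varepsilon$) is indispensable, since it guarantees $2^j\varepsilon\lesssim 1$ throughout the low-frequency band so that the Cattaneo correction never spoils the parabolic $a$--$v$--$\theta$ dynamics. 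A secondary technical point is that the cross term $\int v_j^\varepsilon\cdot\nabla\theta_j^\varepsilon$ interacts with the $\gamma\nabla\theta^\varepsilon$ coupling in the $v$-equation and the $\gamma\div v^\varepsilon$ coupling in the $\theta$-equation, which upon differentiation produce a good negative term $-2^{-2j}\|\nabla\theta_j^\varepsilon\|_{L^2}^2\sim -\|\theta_j^\varepsilon\|_{L^2}^2\cdot(\text{bounded})$ together with indefinite terms that must be dominated; handling this is standard but requires care with the relative sizes of $\eta$ and $\eta'$.
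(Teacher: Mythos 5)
Your overall strategy is the paper's: work with the reformulated system \eqref{LinearEffective} built on the damped mode $Q^\varepsilon=q^\varepsilon+\nabla\theta^\varepsilon$, run a dyadic Lyapunov/hypocoercivity argument with the cross term $\int v_j^\varepsilon\cdot\nabla a_j^\varepsilon$, absorb all dangerous linear terms by Bernstein in the band $j\leq J_0$ using $K<1/\varepsilon$ (i.e.\ \eqref{ThresholdAssum}), and recover $q^\varepsilon$ at the end from $q^\varepsilon=Q^\varepsilon-\kappa\nabla\theta^\varepsilon$. Two remarks, one minor and one substantive. The minor one: your stated reason for the second cross term $\eta'2^{-2j}\int v_j^\varepsilon\cdot\nabla\theta_j^\varepsilon$ (``only $v^\varepsilon$ is directly damped among that triple'') misreads \eqref{LinearEffective}: the whole point of substituting $q^\varepsilon=Q^\varepsilon-\nabla\theta^\varepsilon$ is that the $\theta^\varepsilon$-equation becomes parabolic, $\partial_t\theta^\varepsilon-\Delta\theta^\varepsilon+\div v^\varepsilon=-\div Q^\varepsilon+H^\varepsilon$, so $\theta^\varepsilon$ needs no hypocoercive help; the paper's functional \eqref{LyaBF2} carries only the $v\cdot\nabla a$ term. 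Your extra term is harmless but superfluous, and the bookkeeping of $\eta'$ you worry about simply disappears.

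The substantive issue is the treatment of $Q^\varepsilon$. Your opening sentence promises a stand-alone estimate on \eqref{eq:EffectiveRelaxQ}, but the functional you then display lumps $\varepsilon Q_j^\varepsilon$ into $\mathcal{L}_j$ together with $(a_j^\varepsilon,v_j^\varepsilon,\theta_j^\varepsilon)$, and you claim to extract $\frac1\varepsilon\|Q^\varepsilon\|^\ell_{L^1_T(\dot B^{\frac d2-1}_{2,1})}$ from that single functional. As written this step fails: testing \eqref{eq:EffectiveRelaxQ} against $\varepsilon Q_j^\varepsilon$ yields the dissipation $\|Q_j^\varepsilon\|_{L^2}^2$ (not $\varepsilon^{-2}\|Q_j^\varepsilon\|_{L^2}^2$), and after the normalization step (dividing by $\sqrt{\mathcal{L}_j}\simeq\|(a_j^\varepsilon,v_j^\varepsilon,\theta_j^\varepsilon,\varepsilon Q_j^\varepsilon)\|_{L^2}$, or invoking Lemma \ref{SimpliCarre}) only the \emph{weakest} rate survives: with $A=\|(a_j^\varepsilon,v_j^\varepsilon,\theta_j^\varepsilon)\|_{L^2}$ and $B=\|\varepsilon Q_j^\varepsilon\|_{L^2}$ one has
\begin{align*}
\frac{c\,2^{2j}A^2+\varepsilon^{-2}B^2}{\sqrt{A^2+B^2}}\;\not\gtrsim\;\varepsilon^{-2}B\qquad\text{whenever } A\gg B,
\end{align*}
so integrating in time gives at best $2^{2j}\|\varepsilon Q_j^\varepsilon\|_{L^1_t(L^2)}$, which in low frequencies is far weaker than $\frac1\varepsilon\|Q_j^\varepsilon\|_{L^1_t(L^2)}$. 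But this $\frac1\varepsilon$-component of $X^\ell$ is part of the statement and is precisely what later produces the $\mathcal{O}(\varepsilon)$ bound \eqref{eq:Qepsilon} driving the relaxation theorem, so it cannot be dropped. The fix is exactly the paper's route (and your first sentence): since the $Q^\varepsilon$-equation decouples up to source terms, estimate it separately with the damped-equation Lemma \ref{maximalL1L2}, keeping the full rate $\frac1\varepsilon\|Q^\varepsilon\|^\ell_{L^1_T(\dot B^{\frac d2-1}_{2,1})}$, and only then add it to the Lyapunov estimate for $(a^\varepsilon,v^\varepsilon,\theta^\varepsilon)$, absorbing $\|Q^\varepsilon\|^\ell_{L^1_T(\dot B^{\frac d2}_{2,1})}\leq K\|Q^\varepsilon\|^\ell_{L^1_T(\dot B^{\frac d2-1}_{2,1})}$ and $\varepsilon\|(q^\varepsilon,v^\varepsilon)\|^\ell_{L^1_T(\dot B^{\frac d2+1}_{2,1})}$ thanks to $K<1/\varepsilon$. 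With that modification the rest of your absorption bookkeeping goes through as in the paper.
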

\begin{proof}
The equation of $Q^\varepsilon$ can be studied separately from the others. By Lemma \ref{maximalL1L2} and Proposition \ref{prop:Bernstein}, we have
\begin{align}\label{est:QlinearBf1}  \varepsilon\|Q^\varepsilon\|^\ell_{L^\infty_T(\dot{B}^{\frac{d}{2}-1}_{2,1})}+ \frac{1}{\varepsilon}\|Q^\varepsilon\|^\ell_{L^1_T(\dot{B}^{\frac d2-1}_{2,1})}&\leq  \varepsilon\|(q^\varepsilon,v^\varepsilon)\|^\ell_{L^1_T(\dot{B}^{\frac{d}{2}+1}_{2,1})} +\varepsilon\|(I^\varepsilon,\nabla H^\varepsilon)\|^\ell_{L^1_T(\dot{B}^{\frac{d}{2}-1}_{2,1})}\\ \nonumber &\lesssim \varepsilon\|(q^\varepsilon,v^\varepsilon)\|^\ell_{L^1_T(\dot{B}^{\frac{d}{2}+1}_{2,1})} +\varepsilon\|I^\varepsilon\|^\ell_{L^1_T(\dot{B}^{\frac{d}{2}-1}_{2,1})}+K\varepsilon\|H^\varepsilon\|^\ell_{L^1_T(\dot{B}^{\frac{d}{2}-1}_{2,1})}.
\end{align}
To derive a priori estimates for $a^\varepsilon$,$v^\varepsilon$ and $\theta^\varepsilon$, we define the following functional of Lyapunov-type
\begin{align}\label{LyaBF2}
\mathcal{L}^\ell_j=\|( a_j^\varepsilon,v^\varepsilon_j,\theta_j^\varepsilon)\|_{L^2}^2+\frac12\int_\R v^\varepsilon_j\nabla  a_j^\varepsilon \quad \text{for } j\leq J_0.
\end{align}
Using Young inequality and Proposition \ref{prop:Bernstein}, we have
\begin{align} \label{Lyaeq2}
\mathcal{L}^\ell_j\sim \|( a_j^\varepsilon,v_j^\varepsilon,\theta_j^\varepsilon)\|_{L^2}^2.   
\end{align}
Differentiating in time $\mathcal{L}^\ell_j$, we have
\begin{align}\label{feiasgn}
    \dfrac{d}{dt}\mathcal{L}^\ell_j+c2^{2j}\|( a_j^\varepsilon,v_j^\varepsilon,\theta_j^\varepsilon)\|_{L^2}^2\lesssim \|\div Q_j^\varepsilon\|_{L^2}\|\theta_j^\varepsilon\|_{L^2}+\|(F_j^\varepsilon,G_j^\varepsilon,H_j^\varepsilon)\|_{L^2}\|( a_j^\varepsilon,v^\varepsilon_j,\theta_j^\varepsilon)\|_{L^2}.
\end{align}
Employing Lemma \ref{SimpliCarre}, using \eqref{Lyaeq2}, multiplying  the resulting equation by $2^{j(\frac{d}{2}-1)}$ and summing on $j\leq J_0$, we reach
\begin{align}\label{est:QlinearBf2}
     \|(a^\varepsilon,v^\varepsilon,\theta^\varepsilon)\|^\ell_{L^\infty_T(\dot{B}^{\frac{d}{2}-1}_{2,1})}+ \|(a^\varepsilon,v^\varepsilon,\theta^\varepsilon)\|^\ell_{L^1_T(\dot{B}^{\frac d2+1}_{2,1})}\leq& \:\|(a_0^\varepsilon,v_0^\varepsilon,\theta_0^\varepsilon)\|_{\dot{B}^{\frac d2-1}_{2,1}}^\ell+\|Q^\varepsilon\|^\ell_{L^1_T(\dot{B}^{\frac d2}_{2,1})}    \\&+\|F^\varepsilon,G^\varepsilon,H^\varepsilon\|^\ell_{L^1_T(\dot{B}^{\frac d2-1})}. \nonumber
\end{align}
Then, using the Bernstein-type Proposition \ref{prop:Bernstein}, we have
\begin{align}\label{eq:absorbQ}
   \|Q^\varepsilon\|^\ell_{L^1_T(\dot{B}^{\frac d2}_{2,1})}\leq K\|Q^\varepsilon\|^\ell_{L^1_T(\dot{B}^{\frac d2-1}_{2,1})}.
\end{align}
Gathering \eqref{est:QlinearBf1} and \eqref{est:QlinearBf2}, and using \eqref{eq:absorbQ}, we obtain
\begin{align*}\label{est:QlinearBf3}
     \|(a^\varepsilon,v^\varepsilon,\theta^\varepsilon,\varepsilon Q^\varepsilon)\|^\ell_{L^\infty_T(\dot{B}^{\frac{d}{2}-1}_{2,1})}+ &(\frac{1}{\varepsilon}-K)\|Q^\varepsilon\|^\ell_{L^1_T(\dot{B}^{\frac d2-1}_{2,1})}+ \|(a^\varepsilon,v^\varepsilon,\theta^\varepsilon)\|^\ell_{L^1_T(\dot{B}^{\frac d2+1}_{2,1})}\\\leq& \|(a_0^\varepsilon,v_0^\varepsilon,\theta_0^\varepsilon,\varepsilon Q_0^\varepsilon)\|_{\dot{B}^{\frac d2-1}_{2,1}}^\ell +\|F^\varepsilon,G^\varepsilon,H^\varepsilon,\var I^\varepsilon\|^\ell_{L^1_T(\dot{B}^{\frac d2-1})}.
\end{align*}
Since $q^\varepsilon=Q^\varepsilon-\kappa\nabla \theta^\varepsilon$ and $K < 1/\varepsilon$, updating the constants, we have
\begin{align*}
     &\|(a^\varepsilon,v^\varepsilon,\theta^\varepsilon,\varepsilon q^\varepsilon)\|^\ell_{L^\infty_T(\dot{B}^{\frac{d}{2}-1}_{2,1})}+\frac{1}{\varepsilon}\|Q^\varepsilon\|^\ell_{L^1_T(\dot{B}^{\frac d2-1}_{2,1})}+ \|q^\varepsilon\|^\ell_{L^1_T(\dot{B}^{\frac d2}_{2,1})}+ \|(a^\varepsilon,v^\varepsilon,\theta^\varepsilon)\|^\ell_{L^1_T(\dot{B}^{\frac d2+1}_{2,1})}\\&\qquad\qquad\qquad\qquad\qquad\qquad\qquad\lesssim \|(a_0^\varepsilon,v_0^\varepsilon,\theta_0^\varepsilon,\varepsilon Q_0^\varepsilon)\|_{\dot{B}^{\frac d2-1}_{2,1}}^\ell   +\|F^\varepsilon,G^\varepsilon,H^\varepsilon,\var I^\varepsilon\|^\ell_{L^1_T(\dot{B}^{\frac d2-1})}
\end{align*}
which concludes the proof of Proposition \ref{prop:linlow}.
\end{proof}
\subsection{Medium-frequency regime}\label{sec:med} Let $J_0 \leq j \leq J_\var$.
In this intermediate regime, roughly, we will rely on $J_0\leq j$ when dealing with the unknowns $ a^\varepsilon$ and $u^\varepsilon$ and on $j\leq J_\var$ for the unknowns $\theta$ and $q$.
Inspired by the high-frequency analysis performed for the Navier-Stokes systems in \cite{Haspot}, in addition to the damped mode $Q^\varepsilon$, we introduce the effective velocity $w^\varepsilon=v^\varepsilon+(-\Delta)^{-1}\nabla a^\varepsilon$ to further diagonalize the system, it reads
\begin{equation}
\left\{
\begin{array}
[c]{l}%
\d_ta^\varepsilon+a^\varepsilon+u^\varepsilon\cdot\nabla a^\varepsilon=\div w^\varepsilon+F_1^\varepsilon,\\
\d_t w^\varepsilon-\Delta w^\varepsilon =w^\varepsilon-(-\Delta)^{-1}\nabla a^\varepsilon +\nabla \theta^\varepsilon+G^\varepsilon+(-\Delta)^{-1}\nabla F^\varepsilon,\\
\d_t \theta^\varepsilon-\Delta \theta^\varepsilon=-\div w^\varepsilon - a^\varepsilon+\div Q^\varepsilon+H^\varepsilon,\\
\d_t\varepsilon Q^\varepsilon+\dfrac{Q^\varepsilon}{\var}=\var f_1^\varepsilon+\var\nabla a^\varepsilon+\varepsilon I^\varepsilon+\varepsilon\kappa \nabla H^\varepsilon,
\end{array}
\right.
\label{LinearHypNSCMed}
\end{equation}
where $F_1^\varepsilon=F^\varepsilon+u^\varepsilon\cdot \nabla  a^\varepsilon.$ We have the following proposition.
\begin{Prop}\label{prop:linmed}
Let $(a^\varepsilon,v^\varepsilon,\theta^\varepsilon,q^\varepsilon)$ be a smooth solution of \eqref{LinearEffective} such that \eqref{eq:smallZX} holds. We have
\begin{align}\label{est:lowccl2}
    X^{m,\varepsilon} (t)&\lesssim X_0^{m,\varepsilon}+\|F_1^\varepsilon\|^{m,\var}_{L^1_T(\dot{B}^{\frac{d}{p}}_{p,1})}+\|G^\varepsilon\|^{m,\var}_{L^1_T\cap L^2_T(\dot{B}^{\frac{d}{p}-1}_{p,1})}+\varepsilon\|I^\varepsilon\|^{m,\var}_{L^1_T( \dot{B}^{\frac{d}{p}-1}_{p,1})}
   +\| H^\varepsilon\|^{m,\var}_{L^1_T( \dot{B}^{\frac{d}{p}-1}_{p,1})}.
\end{align}
\end{Prop}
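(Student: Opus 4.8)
The plan is to estimate each block of the medium-frequency norm $X^{m,\varepsilon}(t)$ separately by performing $\dot\Delta_j$-localized energy estimates on the reformulated system \eqref{LinearHypNSCMed}, exploiting that in this regime $|\xi|\sim 2^j$ ranges over $K\lesssim 2^j\lesssim k/\varepsilon$. The structure of \eqref{LinearHypNSCMed} is designed so that the partially damped pair $(\theta^\varepsilon,q^\varepsilon)$ and the partially diffusive pair $(a^\varepsilon,w^\varepsilon)$ decouple at the linear level up to lower-order and off-diagonal terms. First I would treat the $(a^\varepsilon,w^\varepsilon)$-block exactly as in the high-frequency analysis of \cite{Haspot}: the equation for $a^\varepsilon$ is a pure damped transport equation, so applying $\dot\Delta_j$, multiplying by $|a_j^\varepsilon|^{p-2}a_j^\varepsilon$, integrating, and using the standard commutator and transport estimates gives $\frac1p\frac{d}{dt}\|a_j^\varepsilon\|_{L^p}^p+c\|a_j^\varepsilon\|_{L^p}^p\lesssim (\text{r.h.s})$, and the heat equation for $w^\varepsilon$ gives $\frac1p\frac{d}{dt}\|w_j^\varepsilon\|_{L^p}^p+c2^{2j}\|w_j^\varepsilon\|_{L^p}^p\lesssim(\text{r.h.s})$. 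The coupling terms $(-\Delta)^{-1}\nabla a^\varepsilon$ and $\nabla\theta^\varepsilon$ in the $w^\varepsilon$-equation, and $\div w^\varepsilon$ in the $a^\varepsilon$-equation, are absorbed using the Bernstein-type inequalities \eqref{eq:medBernstein}--\eqref{eq:lowhfBernstein}: on $J_0\le j$ one has $2^{-j}\lesssim K^{-1}$ which, taking $K$ large, beats the coupling, and one uses $\|w_j^\varepsilon\|\sim\|v_j^\varepsilon\|+2^{-j}\|a_j^\varepsilon\|$ to transfer back to the original unknowns.

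Next I would treat the damped block $(\theta^\varepsilon,q^\varepsilon)$. Here the natural objects are the effective unknown $Q^\varepsilon=q^\varepsilon+\nabla\theta^\varepsilon$ satisfying \eqref{eq:EffectiveRelaxQ} and a perturbed Lyapunov functional of hypocoercive type, namely (for $J_0\le j\le J_\varepsilon$)
\begin{align*}
\mathcal{L}^{m}_j=\|(\theta_j^\varepsilon,\varepsilon q_j^\varepsilon)\|_{L^2}^2+\eta\,\varepsilon^2 2^{-2j}\!\int_{\R^d} q_j^\varepsilon\cdot\nabla\theta_j^\varepsilon,
\end{align*}
with $\eta$ small; since $j\le J_\varepsilon$ means $\varepsilon 2^j\lesssim k$, the cross term is controlled by the diagonal one so $\mathcal{L}^{m}_j\sim\|(\theta_j^\varepsilon,\varepsilon q_j^\varepsilon)\|_{L^2}^2$, and differentiating in time yields $\frac{d}{dt}\mathcal{L}^m_j+c\big(2^{2j}\|\theta_j^\varepsilon\|_{L^2}^2+\|q_j^\varepsilon\|_{L^2}^2+\varepsilon^2 2^{-2j}\|\nabla\theta_j^\varepsilon\|_{L^2}^2\big)\lesssim(\text{r.h.s})$ — this is the spectrally expected decay: diffusion for $\theta^\varepsilon$ and damping for $q^\varepsilon$. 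However, since the $L^p$-framework is wanted here and the parabolic $L^p$ maximal-regularity smoothing is available for $\theta^\varepsilon$, an alternative (and I think cleaner) route is to estimate $\theta^\varepsilon$ from its heat equation in $L^p$ directly and $Q^\varepsilon$ from \eqref{eq:EffectiveRelaxQ} via the damped-mode estimate $\frac1p\frac{d}{dt}\|Q_j^\varepsilon\|_{L^p}^p+\frac{c}{\varepsilon}\|Q_j^\varepsilon\|_{L^p}^p\lesssim(\text{r.h.s})$, recovering $q^\varepsilon=Q^\varepsilon-\nabla\theta^\varepsilon$ afterwards. One then also needs the $L^2_T$-norms of $v^\varepsilon$ and $q^\varepsilon$ appearing in $X^{m,\varepsilon}$; these follow by interpolation between the $L^1_T$ and $L^\infty_T$ bounds (or directly from square-summing the $\ell^2$-in-time decay coming from the Lyapunov estimate), and the $f_1^\varepsilon=\nabla\div q^\varepsilon+\nabla\div v^\varepsilon$ term in \eqref{eq:EffectiveRelaxQ}, which carries two derivatives, is absorbed using $\varepsilon^2 2^{2j}\lesssim k^2$ valid precisely because $j\le J_\varepsilon$.

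After assembling all blocks, the remaining task is bookkeeping: collect the diagonal dissipation terms on the left and the off-diagonal couplings on the right, check that every off-diagonal term is killed by a gain of either $K^{-1}$ (from $j\ge J_0$) or $\varepsilon k$ (from $j\le J_\varepsilon$), integrate in time, sum over $J_0\le j\le J_\varepsilon$ against the various weights $2^{j(d/p-2)}$, $2^{j(d/p-1)}$, $2^{j d/p}$, $2^{j(d/p+1)}$, and match against the definition \eqref{Xmedium} of $X^{m,\varepsilon}(t)$; the source terms $F_1^\varepsilon,G^\varepsilon,H^\varepsilon,\varepsilon I^\varepsilon$ land in exactly the spaces listed in \eqref{est:lowccl2}, with $H^\varepsilon$ requiring both $L^1_T$ and $L^2_T$ control because it feeds the $L^2_T$-part of the $\theta^\varepsilon$-norm. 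The main obstacle I anticipate is the simultaneous juggling of the two thresholds: one must verify that the \emph{same} choice of constants $K$ large and $k$ small (constrained by \eqref{ThresholdAssum}) makes \emph{all} the off-diagonal absorptions work at once — in particular the terms like $\varepsilon\nabla a^\varepsilon$ in the $Q^\varepsilon$-equation and $\nabla\theta^\varepsilon$ in the $w^\varepsilon$-equation are borderline and need the precise Bernstein gains of Proposition \ref{prop:Bernstein}, while the transport/commutator terms in the $a^\varepsilon$-equation bring in the nonlinear smallness hypothesis \eqref{eq:smallZX} through $\|\nabla u^\varepsilon\|_{L^\infty}$-type factors that must be re-absorbed into $X^{m,\varepsilon}(t)$ rather than into the source.
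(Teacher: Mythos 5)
Your route for the main blocks coincides with the paper's: after the diagonalization by $w^\varepsilon$ and $Q^\varepsilon$, each equation of \eqref{LinearHypNSCMed} is estimated separately in an $L^p$ framework via the maximal-regularity Lemma \ref{maximalL1L2}, the off-diagonal linear terms are absorbed through the Bernstein gains in $K^{-1}$ and $\varepsilon k$ of Proposition \ref{prop:Bernstein} (with estimates performed at the two regularity levels needed for the intersection norms, as in \eqref{addTheta}--\eqref{addq}), and $q^\varepsilon$, $v^\varepsilon$ are recovered algebraically from $Q^\varepsilon$, $\theta^\varepsilon$, $w^\varepsilon$, $a^\varepsilon$. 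Your discarded Lyapunov-functional variant would in any case be confined to $L^2$, so abandoning it in favour of the diagonalized $L^p$ estimates is the right call and matches the paper.

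The genuine gap is in your treatment of the $L^2$-in-time components of $X^{m,\varepsilon}$, which you propose to obtain \emph{by interpolation between the $L^1_T$ and $L^\infty_T$ bounds}. This works for $q^\varepsilon$: $\varepsilon\|Q^\varepsilon\|_{L^\infty_T}$ and $\varepsilon^{-1}\|Q^\varepsilon\|_{L^1_T}$ interpolate to a uniformly bounded $L^2_T$ norm, and $\theta^\varepsilon$ interpolates between $L^\infty_T(\dot{B}^{\frac dp-2}_{p,1})$ and $L^1_T(\dot{B}^{\frac dp}_{p,1})$. It fails, however, for the norm $\|v^\varepsilon\|^{m,\varepsilon}_{L^2_T(\dot{B}^{\frac dp+1}_{p,1})}$ appearing in \eqref{Xmedium}: the available endpoints for $v^\varepsilon$ are $L^\infty_T(\dot{B}^{\frac dp}_{p,1})$ and $L^1_T(\dot{B}^{\frac dp+1}_{p,1})$, whose interpolation only yields $\widetilde L^2_T(\dot{B}^{\frac dp+\frac12}_{p,1})$, half a derivative short. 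The paper closes this by a separate $L^2$-in-time maximal regularity estimate, \eqref{eq:reguuh}, i.e. \eqref{maximal22} of Lemma \ref{maximalL1L2} applied to the untransformed velocity equation $\partial_t v^\varepsilon-\Delta v^\varepsilon+\nabla a^\varepsilon+\nabla\theta^\varepsilon=G^\varepsilon$, and this is precisely the step that produces the term $\|G^\varepsilon\|^{m,\varepsilon}_{L^2_T(\dot{B}^{\frac dp-1}_{p,1})}$ in \eqref{est:lowccl2}. Relatedly, your bookkeeping assigns the $L^1_T\cap L^2_T$ requirement to $H^\varepsilon$ because it "feeds the $L^2_T$-part of the $\theta^\varepsilon$-norm"; but $X^{m,\varepsilon}$ contains no $L^2_T$ norm of $\theta^\varepsilon$, in the stated estimate $H^\varepsilon$ enters only through $L^1_T(\dot{B}^{\frac dp-1}_{p,1})$, and it is $G^\varepsilon$ that must be controlled in $L^2_T$, for the reason just given. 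Without the dedicated velocity estimate your argument neither controls $\|v^\varepsilon\|^{m,\varepsilon}_{L^2_T(\dot{B}^{\frac dp+1}_{p,1})}$ nor reproduces the right-hand side of \eqref{est:lowccl2}.
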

\begin{proof}
Since all the linear part of the equations in \eqref{LinearHypNSCMed} are decoupled, up to low or high-order linear source terms, they can be estimated separately and one can derive a priori estimates in a $L^p$ framework.
For the first two equations, employing Lemmas \ref{maximalL1L2} and \ref{CP} gives
 \begin{align}\label{eq:medrho}   \nonumber\|a^\varepsilon\|^{m,\var}_{L^\infty_T(\dot{B}^{\frac{d}{p}}_{p,1})}+  \|a^\varepsilon\|^{m,\var}_{L^1_T(\dot{B}^{\frac dp}_{p,1})}&\lesssim  \|a_0^\varepsilon\|^{m,\var}_{\dot{B}^{\frac{d}{p}}_{p,1}}+  \|w^\varepsilon\|^{m,\var}_{L^1_T(\dot{B}^{\frac{d}{p}+1}_{p,1})}+\|\nabla u\|_{L^1_T(\dot{B}^{d/p}_{p,1})}\|a^\varepsilon\|_{L^\infty_T(\dot{B}^{d/p}_{p,1})}+\|F_1^\varepsilon\|^{m,\var}_{L^1_T(\dot{B}^{\frac{d}{p}}_{p,1})}
 \\&\lesssim  \|a_0^\varepsilon\|^{m,\var}_{\dot{B}^{\frac{d}{p}}_{p,1}}+  \|w^\varepsilon\|^{m,\var}_{L^1_T(\dot{B}^{\frac{d}{p}+1}_{p,1})}+X^\varepsilon(t)^2+\|F_1^\varepsilon\|^{m,\var}_{L^1_T(\dot{B}^{\frac{d}{p}}_{p,1})}
\end{align}
and
\begin{align}\label{eq:medw}   \nonumber\|w^\varepsilon\|^{m,\var}_{L^\infty_T(\dot{B}^{\frac{d}{p}-1}_{p,1})}+ \|w^\varepsilon\|^{m,\var}_{L^1_T(\dot{B}^{\frac dp+1}_{p,1})}&\lesssim \|w_0^\varepsilon\|^{m,\var}_{\dot{B}^{\frac{d}{p}-1}_{p,1}}+ \|w^\varepsilon\|^{m,\var}_{L^1_T(\dot{B}^{\frac{d}{p}-1}_{p,1})}+ \|a^\varepsilon\|^{m,\var}_{L^1_T(\dot{B}^{\frac{d}{p}-2}_{p,1})}\\ &+ \|\theta^\varepsilon\|^{m,\var}_{L^1_T(\dot{B}^{\frac{d}{p}}_{p,1})}     +\|G^\varepsilon\|^{m,\var}_{L^1_T(\dot{B}^{\frac{d}{p}-1}_{p,1})}+X^\varepsilon(t)^2+\|F_1^\varepsilon\|^{m,\var}_{L^1_T(\dot{B}^{\frac{d}{p}-2}_{p,1})} .
 \end{align}
Thanks to Proposition \ref{prop:Bernstein}, we have
\begin{align} \label{eq:medbernstein}
\|a^\varepsilon\|^{m,\var}_{L^1_T(\dot{B}^{\frac{d}{p}-2}_{p,1})}\lesssim \dfrac{1}{K^2}\|a^\varepsilon\|^{m,\var}_{L^1_T(\dot{B}^{\frac{d}{p}}_{p,1})}\andf\|w^\varepsilon\|^{m,\var}_{L^1_T(\dot{B}^{\frac{d}{p}-1}_{p,1})}\lesssim \dfrac{1}{K^2}\|w^\varepsilon\|^{m,\var}_{L^1_T(\dot{B}^{\frac{d}{p}+1}_{p,1})}. 
\end{align}
Multiplying \eqref{eq:medrho} by a small fixed constant, adding it to \eqref{eq:medw}, using \eqref{eq:medbernstein} and choosing $K$ large enough so that $1/K^2>1$, we get
\begin{align}\label{eq:medwrho}  \nonumber\|w^\varepsilon\|^{m,\var}_{L^\infty_T(\dot{B}^{\frac{d}{p}-1}_{p,1})}+\|w^\varepsilon\|^{m,\var}_{L^1_T(\dot{B}^{\frac dp+1}_{p,1})}+\|a^\varepsilon\|^{m,\var}_{L^\infty_T\cap L^1_T(\dot{B}^{\frac{d}{p}}_{p,1})}\lesssim& \:\|w_0^\varepsilon\|^{m,\var}_{\dot{B}^{\frac{d}{p}-1}_{p,1}}+\|a_0^\varepsilon\|^{m,\var}_{\dot{B}^{\frac{d}{p}}_{p,1}}+ \|\theta^\varepsilon\|^{m,\var}_{L^1_T(\dot{B}^{\frac{d}{p}}_{p,1})}
       \\ &+\|G^\varepsilon\|^{m,\var}_{L^1_T(\dot{B}^{\frac{d}{p}-1}_{p,1})}+X^\varepsilon(t)^2+\|F_1^\varepsilon\|^{m,\var}_{L^1_T(\dot{B}^{\frac{d}{p}}_{p,1})}. 
\end{align}
For $\theta^\varepsilon$ and $Q^\varepsilon$, we have
\begin{align*}
   \|\theta^\varepsilon\|^{m,\var}_{L^\infty_T(\dot{B}^{\frac{d}{p}-2}_{p,1})}+ \|\theta^\varepsilon\|^{m,\var}_{L^1_T(\dot{B}^{\frac dp}_{p,1})}&\lesssim \|\theta_0\|^{m,\var}_{\dot{B}^{\frac{d}{p}-2}_{p,1}}+ \|w^\varepsilon\|^{m,\var}_{L^1_T(\dot{B}^{\frac{d}{p}-1}_{p,1})}+\|a^\varepsilon\|^{m,\var}_{L^1_T(\dot{B}^{\frac{d}{p}-2}_{p,1})}\\&\quad+ \|Q^\varepsilon\|^{m,\var}_{L^1_T(\dot{B}^{\frac{d}{p}-1}_{p,1})}+\|H^\varepsilon\|^{m,\var}_{L^1_T( \dot{B}^{\frac{d}{p}-2}_{p,1})}
\end{align*}
and 
\begin{align*}
      \varepsilon\|Q^\varepsilon\|^{m,\var}_{L^\infty_T(\dot{B}^{\frac{d}{p}-2}_{p,1})}+ \frac{1}{\varepsilon}\|Q^\varepsilon\|^{m,\var}_{L^1_T(\dot{B}^{\frac dp-2}_{p,1})}\leq& X_0^\varepsilon+\varepsilon\|q^\varepsilon\|^{m,\var}_{L^1_T(\dot{B}^{\frac{d}{p}}_{p,1})}+ \var\|w^\varepsilon\|^{m,\var}_{L^1_T(\dot{B}^{\frac{d}{p}}_{p,1})}+\varepsilon\|a^\varepsilon\|^{m,\var}_{L^1_T(\dot{B}^{\frac{d}{p}-1}_{p,1})} \\&+\varepsilon\|I^\varepsilon\|^{m,\var}_{L^1_T( \dot{B}^{\frac{d}{p}-2}_{p,1})}
     +\var \|\nabla H^\varepsilon\|^{m,\var}_{L^1_T( \dot{B}^{\frac{d}{p}-2}_{p,1})}.
\end{align*}
Using Proposition \ref{prop:Bernstein}, we get
\begin{align*}
   \|\theta^\varepsilon\|^{m,\var}_{L^\infty_T(\dot{B}^{\frac{d}{p}-2}_{p,1})}+ \|\theta^\varepsilon\|^{m,\var}_{L^1_T(\dot{B}^{\frac dp}_{p,1})}&\leq \|\theta_0^\varepsilon\|^{m,\var}_{\dot{B}^{\frac{d}{p}-2}_{p,1}} +\dfrac{1}{K^2}(\|w^\varepsilon\|^{m,\var}_{L^1_T(\dot{B}^{\frac{d}{p}+1}_{p,1})}+\|a^\varepsilon\|^{m,\var}_{L^1_T(\dot{B}^{\frac{d}{p}}_{p,1})})\\&\quad+\frac{\kappa}{\var} \|Q^\varepsilon\|^{m,\var}_{L^1_T(\dot{B}^{\frac{d}{p}-2}_{p,1})}+\|H^\varepsilon\|^{m,\var}_{L^1_T( \dot{B}^{\frac{d}{p}-2}_{p,1})}
\end{align*}
and
\begin{align*}
      \varepsilon\|Q^\varepsilon\|^{m,\var}_{L^\infty_T(\dot{B}^{\frac{d}{p}-2}_{p,1})}+ \frac{1}{\varepsilon}\|Q^\varepsilon\|^{m,\var}_{L^1_T(\dot{B}^{\frac dp-2}_{p,1})}&\lesssim
 X_0^\varepsilon+\dfrac{k^2}{\varepsilon}\|Q^\varepsilon\|^{m,\var}_{L^1_T(\dot{B}^{\frac{d}{p}-2}_{p,1})}+ \dfrac{\var}{K}(\|w^\varepsilon\|^{m,\var}_{L^1_T(\dot{B}^{\frac{d}{p}+1}_{p,1})}+\|a^\varepsilon\|^{m,\var}_{L^1_T(\dot{B}^{\frac{d}{p}}_{p,1})}) \\&+k\|\theta^\varepsilon\|^{m,\var}_{L^1_T(\dot{B}^{\frac{d}{p}}_{p,1})}+\varepsilon\|I^\varepsilon\|^{m,\var}_{L^1_T( \dot{B}^{\frac{d}{p}-2}_{p,1})}
     +\|H^\varepsilon\|^{m,\var}_{L^1_T( \dot{B}^{\frac{d}{p}-2}_{p,1})},
\end{align*}
where we used that 
$$\varepsilon\|q^\varepsilon\|^{m,\var}_{L^1_T(\dot{B}^{\frac{d}{p}}_{p,1})}\leq \dfrac{\kappa^2}{\varepsilon}\|Q^\varepsilon\|^{m,\var}_{L^1_T(\dot{B}^{\frac{d}{p}-2}_{p,1})}+k\|\theta^\varepsilon\|^{m,\var}_{L^1_T(\dot{B}^{\frac{d}{p}}_{p,1})}.$$
Adding the above estimates for $\theta^\varepsilon$ and $Q^\varepsilon$ with \eqref{eq:medwrho} multiplied by $\frac12$, and adjusting the constant $k,\varepsilon$ and $K$ so that the linear right-hand side terms can be absorbed by the left-hand side terms, we get
\begin{align}
    \label{eq:mednoq}
    &\|w^\varepsilon\|^{m,\var}_{L^\infty_T(\dot{B}^{\frac{d}{p}-1}_{p,1})}+\|w^\varepsilon\|^{m,\var}_{L^1_T(\dot{B}^{\frac dp+1}_{p,1})}+\|a^\varepsilon\|^{m,\var}_{L^\infty_T\cap L^1_T(\dot{B}^{\frac{d}{p}}_{p,1})}+ \|\theta^\varepsilon\|^{m,\var}_{L^\infty_T(\dot{B}^{\frac{d}{p}-2}_{p,1})}\\&\nonumber+ \|\theta^\varepsilon\|^{m,\var}_{L^1_T(\dot{B}^{\frac dp}_{p,1})}+ \varepsilon\|Q^\varepsilon\|^{m,\var}_{L^\infty_T(\dot{B}^{\frac{d}{p}-2}_{p,1})}+ \frac{1}{\varepsilon}\|Q^\varepsilon\|^{m,\var}_{L^1_T(\dot{B}^{\frac dp-2}_{p,1})}\\& \lesssim X_0^{m,\varepsilon}+X^\varepsilon(t)^2+\|G^\varepsilon\|^{m,\var}_{L^1_T(\dot{B}^{\frac{d}{p}-1}_{p,1})}+\|F_1^\varepsilon\|^{m,\var}_{L^1_T(\dot{B}^{\frac{d}{p}}_{p,1})}+\|H^\varepsilon\|^{m,\var}_{L^1_T( \dot{B}^{\frac{d}{p}-2}_{p,1})}+ \varepsilon\|I^\varepsilon\|^{m,\var}_{L^1_T( \dot{B}^{\frac{d}{p}-2}_{p,1})}. \nonumber
\end{align}
Then, using that $q^\varepsilon=Q^\varepsilon-\kappa\nabla \theta^\varepsilon$, we recover
\begin{align}\label{eq:medq}
      \varepsilon\|q^\varepsilon\|^{m,\var}_{L^\infty_T(\dot{B}^{\frac{d}{p}-2}_{p,1})}+ \|q^\varepsilon\|^{m,\var}_{L^2_T(\dot{B}^{\frac dp-2}_{p,1})}+ \|q^\varepsilon\|^{m,\var}_{L^1_T(\dot{B}^{\frac dp-1}_{p,1})}\lesssim X_0^\varepsilon+\varepsilon\|I^\varepsilon\|^{m,\var}_{L^1_T( \dot{B}^{\frac{d}{p}-2}_{p,1})}+\|H^\varepsilon\|^{m,\var}_{L^1_T( \dot{B}^{\frac{d}{p}-2}_{p,1})}.
\end{align}
To control the nonlinear terms, additional regularity properties are necessary for $\theta^\varepsilon$, $Q^\varepsilon$ and $v^\varepsilon$.
\smallbreak

\noindent\textbf{Additional regularity for $\theta^\varepsilon$ and $Q^\varepsilon$.} Performing similar computations at a higher regularity index, we derive
\begin{align}\label{addTheta}\|\theta^\varepsilon\|^{m,\var}_{L^\infty_T(\dot{B}^{\frac{d}{p}-1}_{p,1})}+ \|\theta^\varepsilon\|^{m,\var}_{L^1_T(\dot{B}^{\frac dp+1}_{p,1})}&\lesssim \|\theta_0^\varepsilon\|^{m,\var}_{\dot{B}^{\frac{d}{p}-1}_{p,1}} +\dfrac{1}{K}(\|w^\varepsilon\|^{m,\var}_{L^1_T(\dot{B}^{\frac{d}{p}+1}_{p,1})}+\|a^\varepsilon\|^{m,\var}_{L^1_T(\dot{B}^{\frac{d}{p}}_{p,1})})\\&\quad+\frac{\kappa}{\var} \|Q^\varepsilon\|^{m,\var}_{L^1_T(\dot{B}^{\frac{d}{p}-1}_{p,1})}+\|H^\varepsilon\|^{m,\var}_{L^1_T( \dot{B}^{\frac{d}{p}-1}_{p,1})} \nonumber
\end{align}
and
\begin{align}   \label{addQ} \varepsilon\|Q^\varepsilon\|^{m,\var}_{L^\infty_T(\dot{B}^{\frac{d}{p}-1}_{p,1})}+ \frac{1}{\varepsilon}\|Q^\varepsilon\|^{m,\var}_{L^1_T(\dot{B}^{\frac dp-1}_{p,1})}\lesssim& \:X_0^\varepsilon+\dfrac{k^2}{\varepsilon}\|Q^\varepsilon\|^{m,\var}_{L^1_T(\dot{B}^{\frac{d}{p}-1}_{p,1})}+k\|\theta^\varepsilon\|^{m,\var}_{L^1_T(\dot{B}^{\frac{d}{p}+1}_{p,1})}\\&+ \varepsilon(\|w^\varepsilon\|^{m,\var}_{L^1_T(\dot{B}^{\frac{d}{p}+1}_{p,1})}+\|a^\varepsilon\|^{m,\var}_{L^1_T(\dot{B}^{\frac{d}{p}}_{p,1})}) \nonumber\\&+\varepsilon\|I^\varepsilon\|^{m,\var}_{L^1_T( \dot{B}^{\frac{d}{p}-1}_{p,1})}
     +\|H^\varepsilon\|^{m,\var}_{L^1_T( \dot{B}^{\frac{d}{p}-1}_{p,1})}.\nonumber
\end{align}
Using that $q^\varepsilon=Q^\varepsilon-\kappa\nabla \theta^\varepsilon$, we have
\begin{align}\label{addq}     \|\varepsilon q^\varepsilon\|^{m,\var}_{L^\infty_T(\dot{B}^{\frac{d}{p}-1}_{p,1})}+ \|q^\varepsilon\|^{m,\var}_{L^2_T(\dot{B}^{\frac dp-1}_{p,1})}+ \|q^\varepsilon\|^{m,\var}_{L^1_T(\dot{B}^{\frac dp}_{p,1})}\lesssim X_0^\varepsilon+\varepsilon\|I^\varepsilon\|^{m,\var}_{L^1_T( \dot{B}^{\frac{d}{p}-1}_{p,1})}+\|H^\varepsilon\|^{m,\var}_{L^1_T( \dot{B}^{\frac{d}{p}-1}_{p,1})}.
\end{align}

\noindent\textbf{Additional regularity for $v^\varepsilon$.} The equation of $v^\varepsilon$ reads
$$
\d_t v^\varepsilon-\Delta v^\varepsilon+\nabla a^\varepsilon +\nabla \theta^\varepsilon= G^\varepsilon.
$$
Using Lemma \ref{maximalL1L2}, we obtain
\begin{align}\label{eq:reguuh}   \|v^\varepsilon\|^{m,\var}_{L^\infty_T(\dot{B}^{\frac{d}{p}}_{p,1})}+ \|v^\varepsilon\|^{m,\var}_{L^2_T(\dot{B}^{\frac{d}{p}+1}_{p,1})}\leq \|v^\varepsilon_0\|^{m,\var}_{\dot{B}^{\frac{d}{p}}_{p,1}}+\|(\nabla a^\varepsilon,\nabla \theta^\varepsilon)\|^{m,\var}_{L^2_T(\dot{B}^{\frac{d}{p}-1}_{p,1})}+\|G^\varepsilon\|^{m,\var}_{L^2_T(\dot{B}^{\frac{d}{p}-1}_{p,1})}.
\end{align}
\noindent\textbf{Conclusion of the proof of Proposition \ref{prop:linmed}.}
Adding \eqref{addTheta}-\eqref{addQ}-\eqref{addq} to \eqref{eq:mednoq} and \eqref{eq:medq}, and using \eqref{eq:smallZX} and that
$$
\varepsilon\|I^\varepsilon\|^{m,\var}_{L^1_T( \dot{B}^{\frac{d}{p}-2}_{p,1})}+\|H^\varepsilon\|^{m,\var}_{L^1_T( \dot{B}^{\frac{d}{p}-2}_{p,1})} \leq \frac{1}{K}\left(\varepsilon\|I^\varepsilon\|^{m,\var}_{L^1_T( \dot{B}^{\frac{d}{p}-1}_{p,1})}+\|H^\varepsilon\|^{m,\var}_{L^1_T( \dot{B}^{\frac{d}{p}-1}_{p,1})}\right),
$$
conclude the proof of Proposition \ref{prop:linmed} when $\varepsilon$ and $\kappa$ are chosen small enough and $K$ large enough.
\end{proof}

\subsection{High-frequency regime} Let $j\geq J_\var$.
In this regime, we cannot use the unknown $Q$ to partially diagonalize the system but using $w^\varepsilon=v^\varepsilon+(-\Delta)^{-1}\nabla a^\varepsilon$ is still effective. The linear system we are interested in reads
\begin{equation}
\left\{
\begin{array}
[c]{l}%
\d_ta^\varepsilon+a^\varepsilon+v^\varepsilon\cdot \nabla a^\varepsilon=\div w^\varepsilon+F_1^\varepsilon,\\
\d_t w^\varepsilon-\Delta w^\varepsilon =w^\varepsilon-(-\Delta)^{-1}\nabla a^\varepsilon +\nabla \theta^\varepsilon+(-\Delta)^{-1}\nabla F^\varepsilon+G^\varepsilon,\\
\d_t \theta^\varepsilon+v^\varepsilon\cdot\nabla \theta^\varepsilon+(1+J(a^\varepsilon))\div q^\varepsilon+\div w^\varepsilon=H_1^\varepsilon,\\
\varepsilon^2\d_tq^\varepsilon+\varepsilon
^2 v^\varepsilon\cdot\nabla q^{\varepsilon}+q^\varepsilon+\nabla\theta^\varepsilon=\varepsilon^2I_1^\varepsilon,
\end{array}
\right.
\label{LinearHypNSC3}
\end{equation}
where $I_1^\varepsilon=I^\varepsilon+v^\varepsilon\cdot\nabla q^{\varepsilon}$ and $H_1^\varepsilon=H^\varepsilon-J(a^\varepsilon))\div q^\varepsilon+v^\varepsilon\cdot\nabla \theta^\varepsilon$. We prove the following statement.
\begin{Prop}\label{prop:linhf}
Let $(a^\varepsilon,v^\varepsilon,\theta^\varepsilon,q^\varepsilon)$ be a smooth solution of \eqref{LinearEffective} such that \eqref{eq:smallZX} holds. We have
\begin{align}\label{est:highccl}
    X^h(t)& \leq X_0^h+\varepsilon\|(G^\varepsilon,H^\varepsilon)\|^{h,\var}_{L^1_T(\dot{B}^{\frac{d}{2}}_{2,1})}+\varepsilon\|G^\varepsilon\|^{h,\var}_{L^2_T(\dot{B}^{\frac{d}{2}}_{2,1})}+\|(\varepsilon F_1^\varepsilon,\varepsilon^2H_1^\varepsilon, \varepsilon^3I^\varepsilon,\varepsilon^2 R_1^\varepsilon)\|^{h,\var}_{L^1_T(\dot{B}^{\frac{d}{2}+1}_{2,1})} \nonumber\\&\quad +\varepsilon\|\d_tJ(a^\varepsilon)\|_{L^\infty_T(\dot{B}^{\frac{d}{2}}_{2,1})} \|\var^2 q^\varepsilon\|^{h,\var}_{L^1_T(\dot{B}^{\frac{d}{2}+1}_{2,1})}. 
\end{align}
\end{Prop}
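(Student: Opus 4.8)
The plan is to exploit the quasi-diagonal structure of \eqref{LinearHypNSC3}, in which the four unknowns interact only through lower- or higher-order terms: $a^\varepsilon$ obeys a damped transport equation, $w^\varepsilon$ a damped heat equation, and $(\theta^\varepsilon,q^\varepsilon)$ a Cattaneo-type partially damped hyperbolic pair (carrying the transport terms $v^\varepsilon\cdot\nabla$ and the variable coefficient $1+J(a^\varepsilon)$). I would estimate the three blocks separately, at the regularity levels dictated by \eqref{Xhigh} and restricted to frequencies $j\geq J_\varepsilon-1$, and then close by absorption. The structural facts used throughout are: in this regime $2^{j}\gtrsim k/\varepsilon$, so Proposition \ref{prop:Bernstein} trades each factor $2^{-j}$ for a factor $\varepsilon/k$; $\varepsilon\ll1$ by \eqref{ThresholdAssum}; and the transport coefficient obeys $\|\div v^\varepsilon\|_{L^1_T(\dot B^{\frac d2}_{2,1})}\lesssim X^\varepsilon(t)\ll1$, so the transport estimates hold with universal constants and the localization commutators are harmless.

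For $a^\varepsilon$ I apply the (damped) transport estimate of Lemma \ref{CP} at regularity $\dot B^{\frac d2+1}_{2,1}$: the zeroth-order term $+a^\varepsilon$ supplies the $L^1_T$-damping, $\div w^\varepsilon$ is controlled by $\varepsilon\|w^\varepsilon\|^{h,\varepsilon}_{L^1_T(\dot B^{\frac d2+2}_{2,1})}$, and $F_1^\varepsilon$ enters with the prefactor $\varepsilon$ as claimed. For $w^\varepsilon$, maximal $L^1$-regularity for the heat flow (Lemma \ref{maximalL1L2}) at $\dot B^{\frac d2}_{2,1}$ controls $\varepsilon\|w^\varepsilon\|^{h,\varepsilon}_{L^\infty_T(\dot B^{\frac d2}_{2,1})}+\varepsilon\|w^\varepsilon\|^{h,\varepsilon}_{L^1_T\cap L^2_T(\dot B^{\frac d2+2}_{2,1})}$; the zeroth-order term $w^\varepsilon$ and the term $(-\Delta)^{-1}\nabla a^\varepsilon$ are two derivatives below, hence absorbable via Proposition \ref{prop:Bernstein}, while $\nabla\theta^\varepsilon$, $G^\varepsilon$ and the nonlinear source persist; the $v^\varepsilon$-norms of \eqref{Xhigh} then follow from $v^\varepsilon=w^\varepsilon-(-\Delta)^{-1}\nabla a^\varepsilon$ and interpolation.

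The heart of the argument is the hyperbolic pair $(\theta^\varepsilon,q^\varepsilon)$. On each block $j\geq J_\varepsilon-1$ I introduce a hypocoercive Lyapunov functional of the form
$$\mathcal{L}^h_j:=\|\theta^\varepsilon_j\|_{L^2}^2+\varepsilon^2\|q^\varepsilon_j\|_{L^2}^2+\eta\,2^{-2j}\!\int_{\R^d}(1+J(a^\varepsilon))^{-1}\,q^\varepsilon_j\cdot\nabla\theta^\varepsilon_j,$$
with $\eta$ a small absolute constant; since $2^{-2j}\lesssim(\varepsilon/k)^2$, Young's inequality gives $\mathcal{L}^h_j\sim\|(\theta^\varepsilon_j,\varepsilon q^\varepsilon_j)\|_{L^2}^2$. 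Differentiating in time and using \eqref{LinearHypNSC3}, the quadratic energy $\|\theta^\varepsilon_j\|^2+\varepsilon^2\|q^\varepsilon_j\|^2$ yields only the damping $\|q^\varepsilon_j\|_{L^2}^2$, whereas the cross term is arranged so that $\int\partial_t q^\varepsilon_j\cdot\nabla\theta^\varepsilon_j=-\varepsilon^{-2}\|\nabla\theta^\varepsilon_j\|^2+\cdots$ feeds the missing dissipation $\varepsilon^{-2}2^{-2j}\|\nabla\theta^\varepsilon_j\|^2\sim\varepsilon^{-2}\|\theta^\varepsilon_j\|^2$ at the spectrally expected rate $1/\varepsilon^2$; the competing terms---$\|\div q^\varepsilon_j\|^2\sim2^{2j}\|q^\varepsilon_j\|^2$ from $\int q^\varepsilon_j\cdot\nabla\partial_t\theta^\varepsilon_j$, the cross term itself, and the coupling $\int\div q^\varepsilon_j\,\div w^\varepsilon_j$---are absorbed using the factor $2^{-2j}$, the smallness of $\eta$, and $\varepsilon\ll1$, while the weight $(1+J(a^\varepsilon))^{-1}$ neutralizes the variable coefficient in front of $\div q^\varepsilon$ and contributes, upon differentiation, the term $\|\partial_t J(a^\varepsilon)\|_{L^\infty}\|q^\varepsilon_j\|_{L^2}^2$. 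Reducing to $\frac{d}{dt}\sqrt{\mathcal{L}^h_j}+\tfrac{c}{\varepsilon^2}\sqrt{\mathcal{L}^h_j}\lesssim\|\div w^\varepsilon_j\|_{L^2}+\|(H_1^\varepsilon)_j\|_{L^2}+\varepsilon^2\|(I^\varepsilon)_j\|_{L^2}+\|(R_1^\varepsilon)_j\|_{L^2}$, integrating in time, summing against $2^{j(\frac d2+1)}$ and multiplying by $\varepsilon^2$, I obtain the $L^\infty_T(\dot B^{\frac d2+1}_{2,1})$-bound for $(\varepsilon^2\theta^\varepsilon,\varepsilon^3 q^\varepsilon)$ and the $L^1_T(\dot B^{\frac d2+1}_{2,1})$-bound for $(\theta^\varepsilon,\varepsilon q^\varepsilon)$ appearing in \eqref{Xhigh}, the sources acquiring the announced powers of $\varepsilon$ (the extra $\varepsilon^2$ being the ratio of the $L^1_T$-rate $1/\varepsilon^2$ to the $L^\infty_T$-energy); the remaining $\|Q^\varepsilon\|^{h,\varepsilon}_{L^1_T(\dot B^{\frac dp}_{p,1})}$ follows from $Q^\varepsilon=q^\varepsilon+\nabla\theta^\varepsilon$ and the embedding $\dot B^{s}_{2,1}\hookrightarrow\dot B^{s-d(\frac12-\frac1p)}_{p,1}$ in high frequencies.

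Combining the three blocks, the decisive point is an asymmetry in the couplings: the hyperbolic rescaling attaches a factor $\varepsilon^2$ to $\div w^\varepsilon$ in the $(\theta^\varepsilon,q^\varepsilon)$-estimate, whereas $\nabla\theta^\varepsilon$ in the $w^\varepsilon$-equation and the $a^\varepsilon$--$w^\varepsilon$ couplings carry only a factor $\varepsilon$; substituting the $w^\varepsilon$-bound into the $a^\varepsilon$- and $(\theta^\varepsilon,q^\varepsilon)$-bounds and absorbing the resulting $O(\varepsilon^2)$ and $O((\varepsilon/k)^2)$ self-interactions closes the loop, leaving $X^{h,\varepsilon}(t)$ on the left and $X_0^{h,\varepsilon}$, the stated source norms of $G^\varepsilon,H^\varepsilon,F_1^\varepsilon,H_1^\varepsilon,I^\varepsilon,R_1^\varepsilon$, and the $\partial_t J(a^\varepsilon)$ term on the right, i.e. \eqref{est:highccl}. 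I expect the main obstacle to be the variable coefficient $1+J(a^\varepsilon)$ multiplying $\div q^\varepsilon$: a naive product-law estimate would demand $q^\varepsilon$ one derivative above the level at which it is controlled, which is what forces the weighted-functional device above and generates the commutator remainder $R_1^\varepsilon$; the second delicate point is the accounting of the $\varepsilon$-, $K$- and $k$-powers so that every coupling and remainder lands strictly below the left-hand side, in particular the parabolic/hyperbolic interaction between $w^\varepsilon$ and $(\theta^\varepsilon,q^\varepsilon)$, which must be handled by playing the two dissipation mechanisms against each other rather than estimating the blocks in isolation.
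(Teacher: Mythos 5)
Your overall skeleton is the paper's: reuse the medium-frequency damped-transport/heat estimates for $a^\varepsilon$ and $w^\varepsilon$, run a hypocoercive functional with the cross term $2^{-2j}\int q_j^\varepsilon\cdot\nabla\theta_j^\varepsilon$ for the pair $(\theta^\varepsilon,q^\varepsilon)$, prove $\mathcal{L}^h_j\sim\|(\theta_j^\varepsilon,\varepsilon q_j^\varepsilon)\|_{L^2}^2$, then combine with $\varepsilon$-weights. The gap is exactly at the point you yourself single out as the heart of the matter: where the weight $(1+J(a^\varepsilon))$ is placed. In the paper's functional \eqref{LyaHF} the weight sits on the quadratic $q$-energy, i.e.\ the $q^\varepsilon$-equation is tested against $(1+J(a^\varepsilon))q_j^\varepsilon$; then $\int(1+J(a^\varepsilon))\div q_j^\varepsilon\,\theta_j^\varepsilon$ from the $\theta$-equation and $\int(1+J(a^\varepsilon))\nabla\theta_j^\varepsilon\cdot q_j^\varepsilon$ from the $q$-equation combine into $\int(1+J(a^\varepsilon))\div(\theta_j^\varepsilon q_j^\varepsilon)=-\int\nabla J(a^\varepsilon)\cdot q_j^\varepsilon\,\theta_j^\varepsilon$, and the only remainders are $\partial_tJ(a^\varepsilon)|\varepsilon q_j^\varepsilon|^2$ and the commutator $R_j^\varepsilon=[J(a^\varepsilon),\ddj]\div q^\varepsilon$ --- precisely the terms appearing in \eqref{est:highccl}. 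Your functional keeps the quadratic energy unweighted and puts $(1+J(a^\varepsilon))^{-1}$ on the cross term, asserting that the quadratic part ``yields only the damping''. It does not: testing the $\theta$-equation with $\theta_j^\varepsilon$ and the $q$-equation with $q_j^\varepsilon$ leaves the uncancelled term $\int J(a^\varepsilon)\,\div q_j^\varepsilon\,\theta_j^\varepsilon$ (plus the harmless commutator). This term loses one derivative on $q^\varepsilon$: it is of size $\|J(a^\varepsilon)\|_{L^\infty}\,2^{j}\|q_j^\varepsilon\|_{L^2}\|\theta_j^\varepsilon\|_{L^2}$, whereas the available dissipation is only $\varepsilon^{-2}\|\theta_j^\varepsilon\|_{L^2}^2+\|q_j^\varepsilon\|_{L^2}^2$, so absorption by Young's inequality requires $\|J(a^\varepsilon)\|_{L^\infty}\,\varepsilon\,2^{j}\lesssim1$, which fails as $j\to\infty$ in the regime $2^j\geq k/\varepsilon$ no matter how small $\|J(a^\varepsilon)\|_{L^\infty}$ is. The inverse weight on the cross term cannot repair this, because the trouble originates in the quadratic-energy part, not in the $q_j^\varepsilon\cdot\nabla\partial_t\theta_j^\varepsilon$ contribution (there the variable coefficient pairs with $\div q_j^\varepsilon$ itself, so no derivative is lost even without a weight). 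As written, your analogue of \eqref{eq:415}--\eqref{eq:lyahf1} does not close; the weight must be moved onto the term $\int(1+J(a^\varepsilon))|\varepsilon q_j^\varepsilon|^2$ as in the paper, which is also what generates exactly the $\varepsilon^2R_1^\varepsilon$ and $\partial_tJ(a^\varepsilon)$ remainders in the statement.

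A secondary point: the norms $\varepsilon\|v^\varepsilon\|^{h,\varepsilon}_{L^\infty_T(\dot B^{\frac d2+1}_{2,1})}$ and $\varepsilon\|v^\varepsilon\|^{h,\varepsilon}_{L^2_T(\dot B^{\frac d2+2}_{2,1})}$ in \eqref{Xhigh} do not follow from $v^\varepsilon=w^\varepsilon-(-\Delta)^{-1}\nabla a^\varepsilon$ plus interpolation: in high frequencies Bernstein goes the wrong way, and your maximal-regularity estimate controls $w^\varepsilon$ only in $L^\infty_T(\dot B^{\frac d2}_{2,1})$, one derivative short. The paper obtains these by applying Lemma \ref{maximalL1L2} directly to the parabolic equation \eqref{eq:v123} for $v^\varepsilon$ (estimate \eqref{eq:regu2}), with $\nabla a^\varepsilon$, $\nabla\theta^\varepsilon$ and $G^\varepsilon$ measured in $L^2_T$; this is also why the $L^2_T$-norm of $G^\varepsilon$ appears on the right-hand side of \eqref{est:highccl}. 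Apart from these two points, your bookkeeping of the $\varepsilon$, $k$, $K$ powers and the final combination (multiplying the $(\theta^\varepsilon,q^\varepsilon)$-estimate by $\varepsilon$, adding the $a^\varepsilon$--$w^\varepsilon$ block, then rescaling by $\varepsilon$) matches the paper.
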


\begin{proof}

Since $j\geq J_\var$ implies $j\geq J_0$, the estimates for $a^\varepsilon$ and $w^\varepsilon$ follow the same lines as in the previous section, we have
\begin{align}\label{eq:highwrho2}    \|w^\varepsilon\|^{h,\var}_{L^\infty_T(\dot{B}^{\frac{d}{2}}_{2,1})}+\|w^\varepsilon\|^{h,\var}_{L^1_T(\dot{B}^{\frac d2+2}_{2,1})}+\|a^\varepsilon\|^{h,\var}_{L^\infty_T\cap L^1_T(\dot{B}^{\frac{d}{2}+1}_{2,1})}\leq& \|w_0^\varepsilon\|^{h,\var}_{\dot{B}^{\frac{d}{2}}_{2,1}}+\|a_0^\varepsilon\|^{h,\var}_{\dot{B}^{\frac{d}{2}+1}_{2,1}}+ \|\theta^\varepsilon\|^{h,\var}_{L^1_T(\dot{B}^{\frac{d}{2}+1}_{2,1})} 
       \\ &+\|G^\varepsilon\|^{h,\var}_{L^1_T(\dot{B}^{\frac{d}{2}}_{2,1})}+\|F_1^\varepsilon\|^{h,\var}_{L^1_T(\dot{B}^{\frac{d}{2}+1}_{2,1})}. \nonumber
\end{align}
Concerning $\theta$ and $q$, we define the functional of Lyapunov-type \begin{align}\label{LyaHF}
\mathcal{L}_j^h=\int_{\R^d}|\theta_j^\varepsilon|^2+\int_{\R^d}(1+J(a^\varepsilon)) |q_j^\varepsilon|^2+2^{-2j}\eta\int_{\R^d}  q_j^\varepsilon\cdot \nabla \theta_j^\varepsilon \quad \text{for }j\geq J_\varepsilon \end{align}
and for $\eta>0$ a positive constant to be adjusted.
Differentiating in time $\mathcal{L}_j^h$, we get
\begin{align*}\frac{1}{2}\dfrac{d}{dt}\left(\int_{\R^d} |\theta_j^\varepsilon|^2+\int_{\R^d}(1+J(a^\varepsilon)) |\varepsilon q_j^\varepsilon|^2\right)+\dfrac{1}{\varepsilon^2}\|\varepsilon q_j^\varepsilon\|_{L^2}^2
&\lesssim \int_{\R^d} \div w_j^\varepsilon \cdot \theta_j^\varepsilon + \int_{\R^d} \ddj(v^\varepsilon\cdot \nabla q^\varepsilon)q_j^\varepsilon(1+J(a^\varepsilon)) 
\\&\quad + \int_{\R^d} \ddj(v^\varepsilon\cdot \nabla \theta^\varepsilon)q_j^\varepsilon(1+J(a^\varepsilon)) 
\\&\quad+\int_{\R^d}H_{1,j}^\varepsilon\cdot\theta_j^\varepsilon+\int_{\R^d}\varepsilon^2 I_{1,j}^\varepsilon\cdot q_j^\varepsilon(1+J(a^\varepsilon)) 
\\\quad-\int_{\R^d}&\int_{\R^d}J(a^\varepsilon)|q_j^\varepsilon|^2 -\int_{\R^d}\d_t (J(a^\varepsilon))|\varepsilon q_j^\varepsilon|^2 +\int_{\R^d}R_j^\varepsilon \theta_j^\varepsilon,
\end{align*}
where $R_j^\varepsilon=[J(a^\varepsilon),\ddj]\div q^\varepsilon$.
Then, using Lemma \ref{CP} to deal with the advection terms, we obtain
\begin{align*}\frac{1}{2}\dfrac{d}{dt}\left(\int_{\R^d} |\theta_j^\varepsilon|^2+\int_{\R^d}(1+J(a^\varepsilon)) |q_j^\varepsilon|^2\right)+\dfrac{1}{\varepsilon^2}\|\varepsilon q_j^\varepsilon\|_{L^2}^2 &\lesssim\|\div w_j^\varepsilon\|_{L^2}\|\theta_j^\varepsilon\|_{L^2}\\&\quad+c_j2^{-j\frac d2}X^\varepsilon(t)^2\|q^\varepsilon\|_{L^2}(1+\|J(a^\varepsilon)\|_{L^\infty})
\\&\quad+\|(H^\varepsilon_1,\varepsilon I_{1,j}^\varepsilon,R_j^\varepsilon)\|_{L^2}\|(\theta_j^\varepsilon,\varepsilon q_j^\varepsilon)\|_{L^2}
\\&\quad+\|(J(a^\varepsilon),\d_tJ(a^\varepsilon))\|_{L^\infty}\|q_j^\varepsilon\|_{L^2}^2,
\end{align*} where $(c_j)_{j\geq J_\varepsilon}$ is a sequence such that $\sum_{j\geq J_\varepsilon}c_j=1$. Then, thanks to \eqref{eq:smallZX} and composition estimates, we have $\|J(a^\varepsilon)\|_{L^\infty}\ll1$ and
\begin{align}\label{eq:415}\frac{1}{2}\dfrac{d}{dt}\left(\int_{\R^d}|\theta_j^\varepsilon|^2+\int_{\R^d}(1+J(a^\varepsilon)) |q_j^\varepsilon|^2\right)+\dfrac{1}{\varepsilon^2}\|\varepsilon q_j^\varepsilon\|_{L^2}^2 &\lesssim \|\div w_j^\varepsilon\|_{L^2}\|\theta_j^\varepsilon\|_{L^2}+c_j2^{-j\frac d2}X^\varepsilon(t)^2\|q^\varepsilon\|_{L^2}
\\&\quad+\|(H^\varepsilon_1,\varepsilon I_{1,j}^\varepsilon,R_j^\varepsilon)\|_{L^2}\|(\theta_j^\varepsilon,\varepsilon q_j^\varepsilon)\|_{L^2}\nonumber
\\&\quad + \|\d_tJ(a^\varepsilon))\|_{L^\infty}\|q_j^\varepsilon\|_{L^2}^2. \nonumber
\end{align}
Differentiating in time the third term of the Lyapunov functional \eqref{LyaHF}, we obtain
\begin{align*}
\dfrac{d}{dt}\int_{\R^d} q_j^\varepsilon\cdot \nabla \theta_j^\varepsilon+\frac{1}{\varepsilon^2}\|\nabla \theta_j^\varepsilon\|_{L^2}^2&\lesssim \|\div q_j^\varepsilon\|_{L^2}^2+ \frac{1}{\varepsilon^2}\int_{\R^d} q_j^\varepsilon\cdot \nabla \theta_j^\varepsilon+\int_{\R^d} \div w_j^\varepsilon\cdot \div q_j^\varepsilon
\\&\quad +\int_{\R^d}\nabla H_j^\varepsilon\cdot q_j^\varepsilon+\int_{\R^d}I_j^\varepsilon\cdot \nabla\theta_j^\varepsilon.
\end{align*}
Using Cauchy-Schwarz, Young and Bernstein inequalities, we have
\begin{align*}
     2^{-2j}\frac{\eta}{\varepsilon^2}\int_{\R^d}q_j^\varepsilon\cdot\nabla\theta_j^\varepsilon&\lesssim 2^{-2j}\frac{\eta}{\varepsilon^2}\|q_j^\varepsilon\|_{L^2}\|\nabla \theta_j^\varepsilon\|_{L^2}\\&\lesssim 2^{-j}\dfrac{\eta}{\varepsilon^2}\|q_j^\varepsilon\|_{L^2}\|\theta_j\|_{L^2} \\&\lesssim \frac{k^{-1}\eta}{\varepsilon}\|q_j^\varepsilon\|_{L^2}\|\theta_j^\varepsilon\|_{L^2}
    \\&\lesssim \frac{k^{-2}\eta^2}{2}\|q_j^\varepsilon\|_{L^2}^2+\frac{1}{2\varepsilon^2}\|\theta_j^\varepsilon\|^2_{L^2}.
\end{align*}
Similarly,
$$2^{-2j}\int_{\R^d} \div w_j^\varepsilon\cdot \div q_j^\varepsilon\lesssim\dfrac{1}{2}\|w_j^\varepsilon\|_{L^2}^2+\dfrac{1}{2
}\|q_j^\varepsilon\|_{L^2}^2.$$
Thus, choosing $\eta$ small enough such that $\eta\leq k$, we infer \begin{align} \label{eq:lyahf1}
    \dfrac{d}{dt}\mathcal{L}^2_j+\frac{1}{\varepsilon^2}\|(\theta_j^\varepsilon,\varepsilon q_j^\varepsilon)\|_{L^2}^2 \lesssim&\:\|\div w_j^\varepsilon\|_{L^2}\|\theta_j\|_{L^2}+c_j2^{-j\frac d2}X^\varepsilon(t)^2\\&+\|(H^\varepsilon_1,\varepsilon I_{1,j}^\varepsilon,R_j^\varepsilon)\|_{L^2}\|(\theta_j^\varepsilon,\varepsilon q_j^\varepsilon)\|_{L^2}+\dfrac{1}{\varepsilon}2^{-j}\|H_j^\varepsilon\|_{L^2}\| \varepsilon q_j^\varepsilon\|_{L^2} \nonumber\\&+\|\d_tJ(a^\varepsilon))\|_{L^\infty}\|\varepsilon q_j^\varepsilon\|_{L^2}^2,\nonumber
\end{align}
where we used that 
$$2^{-2j}\int_{\R^d}\nabla H_j^\varepsilon\cdot q_j^\varepsilon+2^{-2j}\int_{\R^d}I_j^\varepsilon\cdot \nabla\theta_j^\varepsilon\lesssim \dfrac{1}{\varepsilon}2^{-j}\|H_j^\varepsilon\|_{L^2}\|\varepsilon q_j^\varepsilon\|_{L^2}+\|\varepsilon I_{1,j}^\varepsilon\|_{L^2}\|\theta_j^\varepsilon\|_{L^2}. $$
Then, we prove the following lemma
\begin{Lemme} \label{Lyaeq}
The function $\mathcal{L}^h_j$ is equivalent to the $L^2$-norm of the solution, we have $$\mathcal{L}^h_j\sim \|(\theta_j^\varepsilon,\varepsilon  q_j^\varepsilon)\|_{L^2}^2.$$ 
\end{Lemme}
\begin{proof}
Using Young's inequality, we obtain
\begin{align*} 
    2^{-2j}\int_{\R^d}  q_j^\varepsilon\cdot \nabla \theta_j^\varepsilon &\lesssim 2^{-2j}(\|q_j^\varepsilon\|_{L^2}^2+2^{2j}\|\theta_j^\varepsilon\|_{L^2}^2)
    \\&\lesssim 2^{-2j}\|q_j^\varepsilon\|_{L^2}^2+\|\theta_j^\varepsilon\|_{L^2}^2
    \\&\lesssim \varepsilon^2\| q_j^\varepsilon\|_{L^2}^2+\|\theta_j^\varepsilon\|_{L^2}^2
    \\&\lesssim \|(\theta_j^\varepsilon,\varepsilon  q_j^\varepsilon)\|_{L^2}^2
\end{align*}
and since \eqref{eq:smallZX} implies that $\|J(a^\varepsilon)\|_{L^\infty}\ll1$, we obtain the desired result.
\end{proof}
Applying Lemma \ref{Lyaeq} and Lemma \ref{SimpliCarre} to \eqref{eq:lyahf1}, we get
\begin{align} \label{test1234}
 \|(\theta^\varepsilon,\var q^\varepsilon)\|^{h,\var}_{L^\infty_T(\dot{B}^{\frac{d}{2}+1}_{2,1})}+\dfrac{1}{\varepsilon^2}\|(\theta^\varepsilon,\var q^\varepsilon)\|^{h,\var}_{L^1_T(\dot{B}^{\frac d2+1}_{2,1})}&\lesssim \|(\theta_0^\varepsilon,\varepsilon q_0^\varepsilon)\|^{h,\var}_{\dot{B}^{\frac{d}{2}+1}_{2,1}}+\|w^\varepsilon\|^{h,\var}_{L^1_T(\dot{B}^{\frac{d}{2}+2}_{2,1})}\\&+\|(H_1^\varepsilon,\varepsilon I^\varepsilon,R^\varepsilon)\|^{h,\var}_{L^1_T(\dot{B}^{\frac{d}{2}+1}_{2,1})}
+\dfrac{1}{\varepsilon}\|H^\varepsilon\|^{h,\var}_{L^1_T(\dot{B}^{\frac{d}{2}}_{2,1})} \nonumber
\\&+\int_0^T\|\d_tJ(a^\varepsilon)\|_{\dot{B}^{\frac{d}{2}}_{2,1}} \|\var q^\varepsilon\|^{h,\var}_{\dot{B}^{\frac{d}{2}+1}_{2,1}}+X^\varepsilon(t)^2. \nonumber
\end{align}
Multiplying \eqref{test1234} by $\varepsilon$, adding it to \eqref{eq:highwrho2}, yields
\begin{align}\label{eq:highlast2}\nonumber
     \|w^\varepsilon\|^{h,\var}_{L^\infty_T(\dot{B}^{\frac{d}{2}}_{2,1})}+ \|(\varepsilon\theta^\varepsilon,\var^2 q^\varepsilon)\|^{h,\var}_{L^\infty_T(\dot{B}^{\frac{d}{2}+1}_{2,1})}+ \|w^\varepsilon\|^{h,\var}_{L^1_T(\dot{B}^{\frac{d}{2}+2}_{2,1})} +&\dfrac{1}{\varepsilon}\|(\theta^\varepsilon,\var q^\varepsilon)\|^{h,\var}_{L^1_T(\dot{B}^{\frac d2+1}_{2,1})}+\|a^\varepsilon\|^{h,\var}_{L^\infty_T\cap L^1_T(\dot{B}^{\frac{d}{2}+1}_{2,1})}\\\nonumber&\lesssim\|w_0^\varepsilon\|^{h,\var}_{\dot{B}^{\frac{d}{2}}_{2,1}}+ \|(\varepsilon\theta_0^\varepsilon,\varepsilon^2 q_0^\varepsilon,a_0^\varepsilon)\|^{h,\var}_{\dot{B}^{\frac{d}{2}+1}_{2,1}}\nonumber
     \\&+X^\varepsilon(t)^2+\|G^\varepsilon\|^{h,\var}_{L^1_T(\dot{B}^{\frac{d}{2}}_{2,1})}  +\|F^\varepsilon\|^{h,\var}_{L^1_T(\dot{B}^{\frac{d}{2}}_{2,1})}
     \\&\nonumber+\|(\varepsilon H_1^\varepsilon, \varepsilon^2I^\varepsilon,\varepsilon R_1^\varepsilon)\|^{h,\var}_{L^1_T(\dot{B}^{\frac{d}{2}+1}_{2,1})} \nonumber+\|H^\varepsilon\|^{h,\var}_{L^1_T(\dot{B}^{\frac{d}{2}}_{2,1})}\\&+\int_0^T\|\d_tJ(a^\varepsilon)\|_{\dot{B}^{\frac{d}{2}}_{2,1}} \|\var^2 q^\varepsilon\|^{h,\var}_{\dot{B}^{\frac{d}{2}+1}_{2,1}}.\nonumber
\end{align}
Again, to deal with nonlinearities we need additional $L^2$-in-time information for $v^\varepsilon$.
\smallbreak
\noindent\textbf{Additional regularity for the velocity.}
 Recall that  $v^\varepsilon$ satisfies
\begin{align}
    \label{eq:v123}
\d_t v^\varepsilon-\Delta v^\varepsilon+\nabla a^\varepsilon +\nabla \theta^\varepsilon= G^\varepsilon.
\end{align}
Applying Lemma \ref{maximalL1L2} to \eqref{eq:v123} yields
\begin{align}\label{eq:regu2}
     \|v^\varepsilon\|^{h,\var}_{L^\infty_T(\dot{B}^{\frac{d}{2}+1}_{2,1})}+ \|v^\varepsilon\|^{h,\var}_{L^2_T(\dot{B}^{\frac{d}{2}+1}_{2,1})}\leq \|v_0^\varepsilon\|^{h,\var}_{\dot{B}^{\frac{d}{2}}_{2,1}}+\|(\nabla a^\varepsilon,\nabla \theta^\varepsilon)\|^{h,\var}_{L^2_T(\dot{B}^{\frac{d}{2}-1}_{2,1})}+\|G^\varepsilon\|^{h,\var}_{L^2_T(\dot{B}^{\frac{d}{2}-1}_{2,1})}.
\end{align}
Multiplying \eqref{eq:regu2} by $1/2$, adding it to \eqref{eq:highlast2} and multiplying the resulting inequality by $\varepsilon$ concludes the proof of Proposition \ref{prop:linhf}.
\end{proof}

\section{Proof of Theorem \ref{thm:exist}: nonlinear analysis }\label{sec:nonlinear}
In this section, we estimate the nonlinear terms appearing on the right-hand side of \eqref{LinearAnalysisEst} in Proposition \ref{prop:Aprioriallfreq}.
\subsection{Low frequencies: nonlinear analysis} \label{nonlinearLf}
We prove the following lemma.
\begin{Lemme}\label{lem:NLlow} Let $(a^\varepsilon,v^\varepsilon,\theta^\varepsilon,q^\varepsilon)$ be a smooth solution of \eqref{LinearEffective}, we have
$$\|(F^\varepsilon,G^\varepsilon,H^\varepsilon,\var I^\varepsilon)\|^\ell_{L^1_T(\dot{B}^{\frac d2-1}_{2,1})}\leq X(t)^2.$$
\end{Lemme}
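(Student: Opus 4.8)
The source terms $F^\varepsilon,G^\varepsilon,H^\varepsilon,I^\varepsilon$ gather all the nonlinear contributions produced by the linearization and nondimensionalization of \eqref{Cattaneo-NSC}; as written out in Appendix~\ref{sec:appreform}, they are, up to multiplication by smooth functions of $a^\varepsilon$ that vanish at $0$, finite sums of quadratic expressions of the schematic form $z_1\,\partial z_2$, $z_1\,\mathcal{A}z_2$ and $\partial z_1\,\partial z_2$, where each $z_i$ is one of the unknowns $a^\varepsilon,v^\varepsilon,\theta^\varepsilon$ or $\varepsilon q^\varepsilon$ — the $\varepsilon$-weight on $q^\varepsilon$ being precisely the one carried throughout $X^\ell,X^{m,\var},X^{h,\var}$, and in particular the one appearing in front of $I^\varepsilon$ in \eqref{est:lowccl}. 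The plan is to bound each such product in $L^1_T(\dot{B}^{\frac d2-1}_{2,1})^\ell$ by applying the hybrid $L^2$--$L^p$ product laws of Propositions~\ref{LPP}--\ref{prop:PL-low}, choosing for every term a splitting of its two factors into one norm of $X(t)$ that is integrable (or square-integrable) in time and another that is $L^\infty$ (or square-integrable) in time. In the low-frequency block an extra flexibility is available: since $j\le J_0=\log_2K$ ranges over a \emph{bounded} set of indices, Proposition~\ref{prop:Bernstein} lets us trade regularity for the fixed constant $K$, so it suffices to estimate each product in $\dot{B}^{\frac d2-1-N}_{2,1}$ for a fixed $N\in\N$ and then raise the index back to $\frac d2-1$ at the cost of a harmless factor $K^{N}$.

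Concretely, the transport and compression terms — $v^\varepsilon\!\cdot\!\nabla a^\varepsilon$, $v^\varepsilon\!\cdot\!\nabla v^\varepsilon$, $v^\varepsilon\!\cdot\!\nabla\theta^\varepsilon$, $\theta^\varepsilon\,\div v^\varepsilon$, as well as the Oldroyd-type contributions $\varepsilon(v^\varepsilon\!\cdot\!\nabla q^\varepsilon)$, $\varepsilon(q^\varepsilon\!\cdot\!\nabla v^\varepsilon)$ and $\varepsilon(q^\varepsilon\,\div v^\varepsilon)$ that make up $\varepsilon I^\varepsilon$ — are handled by placing the differentiated factor in an $L^1_T$-in-time norm of $X(t)$ (for instance $v^\varepsilon\in L^1_T(\dot{B}^{\frac d2+1}_{2,1})^\ell$, or a medium- or high-frequency analogue) and the undifferentiated one in an $L^\infty_T$ norm. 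The coefficient-function terms in $G^\varepsilon$ and $H^\varepsilon$, which are of the form (smooth function of $a^\varepsilon$ vanishing at $0$)$\times$(derivative of an unknown), are reduced to this case by the composition estimate together with $\|a^\varepsilon\|_{L^\infty}\lesssim X(t)\ll1$, which replaces the coefficient by $a^\varepsilon$ times a bounded factor. The viscous-heating contribution in $H^\varepsilon$, which is quadratic in $\nabla v^\varepsilon$ with a bounded coefficient depending on $a^\varepsilon$, is split $L^2_T\times L^2_T$ using the interpolation bound $\|v^\varepsilon\|^\ell_{L^2_T(\dot{B}^{\frac d2}_{2,1})}\lesssim(\|v^\varepsilon\|^\ell_{L^\infty_T(\dot{B}^{\frac d2-1}_{2,1})}\|v^\varepsilon\|^\ell_{L^1_T(\dot{B}^{\frac d2+1}_{2,1})})^{1/2}$ contained in $X^\ell$ (and its medium-/high-frequency counterparts). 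Summing the finitely many contributions and absorbing the composition factors then gives a bound $\lesssim X(t)^2(1+X(t))\lesssim X(t)^2$.

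The genuine point — and the place where the hybrid product laws carry the real weight — is that the low-frequency block $\ddj(z_1z_2)$ with $j\le J_0$ is \emph{not} controlled by the low-frequency parts of $z_1$ and $z_2$ alone: through the remainder in the Bony decomposition it also sees arbitrarily high frequencies of both factors, which in $X(t)$ are measured by $L^p$-based norms in the medium regime $J_0\le k\le J_\var$ and by $\varepsilon$-weighted $L^2$-based norms in the high regime $k\ge J_\var$. One must therefore convert those pieces to the $L^2$-based target $\dot{B}^{\frac d2-1}_{2,1}$ while keeping exact track of the powers of $\varepsilon$, $K$ and $k$ — this is exactly what Propositions~\ref{LPP}--\ref{prop:PL-low} are tailored to do, and it is where the dimensional restriction $d\ge3$ enters, ensuring that the relevant product conditions close (cf. \eqref{R-E955c}). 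This bookkeeping, namely matching time-integrability and $\varepsilon$-weights across the three frequency regimes, is the only real obstacle; the rest is a routine application of the product and composition estimates recalled above.
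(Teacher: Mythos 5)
Your strategy is, in outline, the paper's own: Bony decomposition of each quadratic term, hybrid $L^2$--$L^p$ paraproduct/remainder estimates, composition estimates for the coefficient functions of $a^\varepsilon$, interpolation to produce $L^2_T$ norms, the $K$-Bernstein shift of regularity in the low-frequency block (the paper uses it e.g. in \eqref{R-E23} and \eqref{R-E27}), and compensation of the $\varepsilon^{-1}$ cost of $\|q^\varepsilon\|_{L^\infty_T(\dot B^{\frac dp-1}_{p,1})}$ by the prefactor $\varepsilon$ on $I^\varepsilon$ (cf.\ \eqref{R-E7}, \eqref{R-E780}). One mild mismatch of references: the low-frequency nonlinear estimates in the paper do not rest on Proposition \ref{LPP} (a high-frequency product law) but on \eqref{R-E955b}--\eqref{R-E955c} together with a bespoke spectral-cutoff treatment of the paraproducts $T_{v^\varepsilon}\nabla q^{h}$, $T_{q^\varepsilon}\nabla v^{h}$, $T_{\theta^\varepsilon}\nabla K_3(a^\varepsilon)^{\widetilde h}$, etc., with the case distinction $2\le p\le\min(d,d^*)$ versus $d\le p\le 4$; this is the same technique that underlies Proposition \ref{prop:PL-low}, so your pointer is morally right, but it is exactly in these terms that the restrictions $d\ge3$, $p<d$, $p\le\min(4,\frac{2d}{d-2})$ are actually consumed, and your sketch only names them without checking them.

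Two concrete points where the sketch, taken literally, would not close. First, your recipe for the transport/compression terms (differentiated factor in $L^1_T$, undifferentiated factor in $L^\infty_T$) fails uniformly in $\varepsilon$ whenever the derivative falls on $a^\varepsilon$, i.e.\ for $v^\varepsilon\cdot\nabla a^\varepsilon$ in $F^\varepsilon$ and for $K_1(a^\varepsilon)\nabla a^\varepsilon$, $\theta^\varepsilon\nabla K_3(a^\varepsilon)$ in $G^\varepsilon$: in the medium regime $X^{m,\varepsilon}$ controls $a^\varepsilon$ only in $L^1_T(\dot B^{\frac dp}_{p,1})$, so $\|\nabla a^\varepsilon\|^{m,\varepsilon}_{L^1_T(\dot B^{\frac dp}_{p,1})}$ costs a factor $k/\varepsilon$ by Proposition \ref{prop:Bernstein}. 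The paper avoids this by splitting those terms $L^2_T\times L^2_T$ with the interpolation bound $\|a^\varepsilon\|_{L^2_T(\dot B^{\frac dp}_{p,1})}\lesssim X^\varepsilon$, cf.\ \eqref{R-E9}, \eqref{R-E15}, \eqref{R-E21}; your opening sentence allows square-integrable splittings, so this is reparable, but it is precisely the $\varepsilon$-bookkeeping you dismiss as routine, and it has to be made term by term (that verification is the bulk of the paper's proof, which your proposal defers). Second, a small slip of justification: the index set $\{j\le J_0\}$ is not bounded; the regularity-lowering trick is valid because $2^{js'}\le K^{s'}$ for all $j\le J_0$ (first inequality of Proposition \ref{prop:Bernstein}), not because only finitely many dyadic blocks occur.
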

\begin{proof}
 First, we focus on the term $I^\varepsilon=v^\varepsilon\cdot\nabla q^\varepsilon-q^\varepsilon\cdot\nabla v^\varepsilon+ q^\varepsilon\div v^\varepsilon$. To this end, we shall employ the following two inequalities (see \cite{HeDanchin}):
\begin{equation}\label{R-E955b}
\|T_{f}g\|_{\dot{B}^{s-1+\frac d2-\frac dp}_{2,1}}\lesssim \|f\|_{\dot{B}^{\frac dp-1}_{p,1}}\|g\|_{\dot{B}^{s}_{p,1}}\quad \mbox{if}\ d\geq2 \ \mbox{and}\ \frac{d}{d-1}\leq p \leq\min (4, d^{*}),
\end{equation}
\begin{equation}\label{R-E955c}
\|R(f,g)\|_{\dot{B}^{s-1+\frac d2-\frac dp}_{2,1}}\lesssim \|f\|_{\dot{B}^{\frac dp-1}_{p,1}}\|g\|_{\dot{B}^{s}_{p,1}} \quad \mbox{if} \ s>1-\min \Big(\frac dp,\frac {d}{p'}\Big)\ \mbox{and}\  1\leq p \leq4,
\end{equation}
where $1/p+1/p'=1$ and $d^{*}\triangleq \frac{2d}{d-2}$. Using Bony's para-product decomposition, we have
\begin{equation}\label{R-E955e} v^\varepsilon\cdot\nabla q^\varepsilon=T_{\nabla q^\varepsilon}v^\varepsilon+R(\nabla q^\varepsilon,v^\varepsilon)+T_{v^\varepsilon}\nabla q^{\ell,\varepsilon}+T_{v^\varepsilon}\nabla q^{h}.\end{equation}
 Thanks to \eqref{R-E955b} and \eqref{R-E955c} with $s=\frac dp$, we get
\begin{align*}
&\|T_{\nabla q^\varepsilon}v^\varepsilon\|^{\ell}_{L^{1}_{T}(\dot{B}^{\frac d2-1}_{2,1})} \lesssim \|\nabla q^\varepsilon\|_{L^{1}_{T}(\dot{B}^{\frac dp-1}_{p,1})}\|v^\varepsilon\|_{L^{\infty}_{T}(\dot{B}^{\frac dp}_{p,1})},\\&
\|R(\nabla q^\varepsilon,v^\varepsilon)\|^{\ell}_{L^{1}_{T}(\dot{B}^{\frac d2-1}_{2,1})} \lesssim  \|\nabla q^\varepsilon\|_{L^{1}_{T}(\dot{B}^{\frac dp-1}_{p,1})}\|v^\varepsilon\|_{L^{\infty}_{T}(\dot{B}^{\frac dp}_{p,1})}.
\end{align*}
It follows from the definition of $X^{\varepsilon}$ and \eqref{eq:medBernstein}-\eqref{eq:lowhfBernstein} that
\begin{equation}\label{R-E1}
\|\nabla q^\varepsilon\|_{L^{1}_{T}(\dot{B}^{\frac dp-1}_{p,1})}\lesssim \|q^\varepsilon\|^{\ell}_{L^{1}_{T}(\dot{B}^{\frac d2}_{2,1})}+\|q^\varepsilon\|^{m}_{L^{1}_{T}(\dot{B}^{\frac dp}_{p,1})}+\|q^\varepsilon\|^{h}_{L^{1}_{T}(\dot{B}^{\frac d2}_{2,1})}
\lesssim X^{\varepsilon}
\end{equation}
and
\begin{equation}\label{R-E2}
\|v^\varepsilon\|_{L^{\infty}_{T}(\dot{B}^{\frac dp}_{p,1})}\lesssim K \|v^\varepsilon\|^{\ell}_{L^{\infty}_{T}(\dot{B}^{\frac d2-1}_{2,1})}+\|v^\varepsilon\|^{m}_{L^{\infty}_{T}(\dot{B}^{\frac dp}_{p,1})}+\|v^\varepsilon\|^{h}_{L^{\infty}_{T}(\dot{B}^{\frac d2}_{2,1})}\lesssim X^{\varepsilon}.
\end{equation}
Since $T$ maps $L^\infty\times \dot{B}^{\frac d2-1}_{2,1}$ to $\dot{B}^{\frac d2-1}_{2,1}$, 
\begin{equation}\label{R-E3}
\|T_{v^\varepsilon}\nabla q^{\ell,\varepsilon}\|^{\ell}_{L^{1}_{T}(\dot{B}^{\frac d2-1}_{2,1})}\lesssim \|v^\varepsilon\|_{L^{\infty}_{T}(L^{\infty})}\|\nabla q^{\ell,\varepsilon}\|_{L^{1}_{T}(\dot{B}^{\frac d2-1}_{2,1})}\lesssim  \|v^\varepsilon\|_{L^{\infty}_{T}(\dot{B}^{\frac dp}_{p,1})}\| q^{\varepsilon}\|^{\ell}_{L^{1}_{T}(\dot{B}^{\frac d2}_{2,1})}\lesssim X^{\varepsilon}X^{\ell,\varepsilon}.
\end{equation}
In order to handle the term $T_{v^\varepsilon}\nabla q^{h}$, we observe that owing to the spectral cut-off, there exists a universal integer $N_0$ such that
$$\Big(T_{v^\varepsilon}\nabla q^{h}\Big)^{\ell}=\dot{S}_{J_{0}+1}\Big(\sum_{|j-J_0|\leq N_{0}}\dot{S}_{j-1}v^{\varepsilon}\dot{\Delta}_{j}\nabla q^{h}\Big).$$
Hence $\|T_{v^\varepsilon}\nabla q^{h}\|^{\ell}_{\dot{B}^{\frac d2-1}_{2,1}}\approx 2^{J_0(\frac d2-1)}\sum_{|j-J_0|\leq N_{0}}\|\dot{S}_{j-1}v^{\varepsilon}\dot{\Delta}_{j}\nabla q^{h}\|_{L^2}$. If $2\leq p \leq \min (d,d^*)$ then one may use that, for $|j-J_0|\leq N_{0}$,
\begin{equation*}
\begin{aligned}
&2^{J_0(\frac d2-1)}\|\dot{S}_{j-1}v^{\varepsilon}\dot{\Delta}_{j}\nabla q^{h}\|_{L^2}\lesssim \|\dot{S}_{j-1}v^{\varepsilon}\|_{L^d}\Big(2^{j(\frac{d}{d^*}-1)}\|\dot{\Delta}_{j}\nabla q^{h}\|_{L^{d^{*}}}\Big)\\
&\qquad\qquad\qquad\qquad\qquad\qquad\lesssim \|v^{\varepsilon}\|_{\dot{B}^{0}_{d,1}}\|\nabla q^{h}\|_{\dot{B}^{\frac {d}{d*}-1}_{d*,\infty}}
\lesssim \|v^{\varepsilon}\|_{\dot{B}^{\frac dp-1}_{p,1}}\|q^{h}\|_{\dot{B}^{\frac dp}_{p,1}},
\end{aligned}
\end{equation*}
where we have used the embeddings $\dot{B}^{\frac dp-1}_{p,1}\hookrightarrow \dot{B}^{0}_{d,1}\hookrightarrow L^{d}$ and $\dot{B}^{\frac dp-1}_{p,\infty}\hookrightarrow\dot{B}^{\frac {d}{d*}-1}_{d*,\infty}$. If $d\leq p\leq 4$, then it holds that
\begin{equation}\nonumber
\begin{aligned}
2^{J_0(\frac d2-1)}\|\dot{S}_{j-1}v^{\varepsilon}\dot{\Delta}_{j}\nabla q^{h}\|_{L^2}&\lesssim\Big(2^{j\frac d4}\|\dot{S}_{j-1}v^{\varepsilon}\|_{L^4}\Big)\Big(2^{j(\frac d4-1)}\|\dot{\Delta}_{j}\nabla q^{h}\|_{L^4}\Big)\\ &\lesssim 2^{J_0}\Big(2^{j(\frac dp-1)}\|\dot{S}_{j-1}v^{\varepsilon}\|_{L^p}\Big)\Big(2^{j(\frac dp-1)}\|\nabla q^{h}\|_{L^p}\Big) \lesssim\|v^{\varepsilon}\|_{\dot{B}^{\frac dp-1}_{p,1}}\|q^{h}\|_{\dot{B}^{\frac dp}_{p,1}}.
\end{aligned}
\end{equation}
Hence, we deduce that
\begin{equation}\label{R-E412}
\begin{aligned}
\|T_{v^\varepsilon}\nabla q^{h}\|^{\ell}_{L^{1}_{T}(\dot{B}^{\frac d2-1}_{2,1})}&\lesssim \|v^{\varepsilon}\|_{L^{\infty}_{T}(\dot{B}^{\frac dp-1}_{p,1})}\|q^{h}\|_{L^{1}_{T}(\dot{B}^{\frac dp}_{p,1})}\\
&\lesssim (X^{\ell,\varepsilon} +X^{m,\varepsilon} + \varepsilon X^{h,\varepsilon})(X^{m,\varepsilon} + X^{h,\varepsilon})\lesssim (X^{\varepsilon})^2.
\end{aligned}
\end{equation}

Bounding the other nonlinear terms follows from a similar process, we give the sketch of computations since we need to track the uniformity of relaxation parameter $\varepsilon$. Let us take a look at the term $q^\varepsilon\cdot \nabla v^\varepsilon$. We have
\begin{equation}\label{R-E5}
\|T_{\nabla v^\varepsilon}q^{\varepsilon}+R(\nabla v^\varepsilon,q^{\varepsilon})\|^{\ell}_{L^{1}_{T}(\dot{B}^{\frac d2-1}_{2,1})}\lesssim \|\nabla v^\varepsilon\|_{L^{\infty}_{T}(\dot{B}^{\frac dp-1}_{p,1})}\|q^{\varepsilon}\|_{L^{1}_{T}(\dot{B}^{\frac dp}_{p,1})}\lesssim (X^{\varepsilon})^2,
\end{equation}
\begin{equation}\label{R-E6}
\|T_{q^{\varepsilon}}\nabla v^{\ell,\varepsilon}\|^{\ell}_{L^{1}_{T}(\dot{B}^{\frac d2-1}_{2,1})}\lesssim \|q^{\varepsilon}\|_{L^{1}_{T}(\dot{B}^{\frac dp}_{p,1})}\|v^{\ell,\varepsilon}\|_{L^{\infty}_{T}(\dot{B}^{\frac d2-1}_{2,1})}\lesssim X^{\varepsilon}X^{\ell,\varepsilon}
\end{equation}
and
\begin{equation}\label{R-E7}
\begin{aligned}
&\|T_{q^{\varepsilon}}\nabla v^{h}\|^{\ell}_{L^{1}_{T}(\dot{B}^{\frac d2-1}_{2,1})}\lesssim \|q^{\varepsilon}\|_{L^{\infty}_{T}(\dot{B}^{\frac dp-1}_{p,1})}\|v^{h}\|_{L^{1}_{T}(\dot{B}^{\frac dp}_{p,1})}\nonumber \\ &\lesssim (\frac{1}{\varepsilon}X^{\ell,\varepsilon} +\frac{1}{\varepsilon}X^{m,\varepsilon} + \frac{1}{\varepsilon} X^{h,\varepsilon})(X^{m,\varepsilon}+\varepsilon X^{h,\varepsilon})=\frac{1}{\varepsilon}X^{\varepsilon}(X^{m,\varepsilon}+\varepsilon X^{h,\varepsilon}).
\end{aligned}
\end{equation}
Similarly, we have
\begin{equation}\label{R-E780}
\|q^\varepsilon\div v^\varepsilon\|^{\ell}_{L^{1}_{T}(\dot{B}^{\frac d2-1}_{2,1})}\lesssim \frac{1}{\varepsilon}(X^{\varepsilon})^2.
\end{equation}
Next, we estimate the nonlinear terms $F^\varepsilon,G^\varepsilon,H^\varepsilon$. We write
$$v^{\varepsilon}\cdot\nabla a^{\varepsilon}=T_{\nabla a^{\varepsilon}}v^{\varepsilon}+R(v^{\varepsilon}\cdot\nabla a^{\varepsilon})+T_{v^{\varepsilon}}\nabla a^{\ell,\varepsilon}+T_{v^{\varepsilon}}\nabla a^{h,\varepsilon}.$$
We have
\begin{equation}\label{R-E9}
\|T_{\nabla a^{\varepsilon}}v^{\varepsilon}+R(v^{\varepsilon}\cdot\nabla a^{\varepsilon})\|^{\ell}_{L^{1}_{T}(\dot{B}^{\frac d2-1}_{2,1})}\lesssim \|\nabla a^\varepsilon\|_{L^{2}_{T}(\dot{B}^{\frac dp-1}_{p,1})}\|v^{\varepsilon}\|_{L^{2}_{T}(\dot{B}^{\frac dp}_{p,1})}\lesssim (X^{\varepsilon})^2,
\end{equation}
where we employed the interpolation and the definition of $X^{\varepsilon}$ to get
$$\|a^\varepsilon\|_{L^{2}_{T}(\dot{B}^{\frac dp}_{p,1})}\lesssim \|a^{\ell,\varepsilon}\|_{L^{\infty}_{T}(\dot{B}^{\frac dp-1}_{p,1})\cap L^{1}_{T}(\dot{B}^{\frac dp+1}_{p,1})}+\|a^{m,\varepsilon}\|_{L^{\infty}_{T}(\dot{B}^{\frac dp}_{p,1})\cap L^{1}_{T}(\dot{B}^{\frac dp}_{p,1})}+\|a^{h,\varepsilon}\|_{L^{\infty}_{T}(\dot{B}^{\frac dp}_{p,1})\cap L^{1}_{T}(\dot{B}^{\frac dp}_{p,1})}\lesssim X^{\varepsilon}$$
and
\begin{eqnarray*}
\|v^{\varepsilon}\|_{L^{2}_{T}(\dot{B}^{\frac dp}_{p,1})}&\lesssim& \|v^{\ell,\varepsilon}\|_{L^{\infty}_{T}(\dot{B}^{\frac dp-1}_{p,1})\cap L^{1}_{T}(\dot{B}^{\frac dp+1}_{p,1})}+\|v^{m,\varepsilon}\|_{L^{\infty}_{T}(\dot{B}^{\frac dp-1}_{p,1})\cap L^{1}_{T}(\dot{B}^{\frac dp+1}_{p,1})}+\|v^{h,\varepsilon}\|_{L^{\infty}_{T}(\dot{B}^{\frac dp}_{p,1})\cap L^{1}_{T}(\dot{B}^{\frac dp}_{p,1})}\\&\lesssim & X^{\ell,\varepsilon}+X^{m,\varepsilon}+X^{h,\varepsilon}+\varepsilon X^{h,\varepsilon}\lesssim X^{\varepsilon}.
\end{eqnarray*}
It follows from Sobolev embedding that
\begin{equation}\label{R-E10}
\|T_{v^{\varepsilon}}\nabla a^{\ell,\varepsilon}\|^{\ell}_{L^{1}_{T}(\dot{B}^{\frac d2-1}_{2,1})}\lesssim \|v^{\varepsilon}\|_{L^2(L^\infty)}\|\nabla a^{\ell,\varepsilon}\|_{L^{2}_{T}(\dot{B}^{\frac d2-1}_{2,1})}\lesssim \|v^{\varepsilon}\|_{L^{2}_{T}(\dot{B}^{\frac dp}_{p,1})}\|a^{\ell,\varepsilon}\|_{L^{2}_{T}(\dot{B}^{\frac d2}_{2,1})}\lesssim X^{\varepsilon}X^{\ell,\varepsilon}.
\end{equation}
Similarly,
\begin{align}  
\label{R-E11}
\|T_{v^{\varepsilon}}\nabla a^{h}\|^{\ell}_{L^{1}_{T}(\dot{B}^{\frac d2-1}_{2,1})}&\lesssim \|v^{\varepsilon}\|_{L^{\infty}_{T}(\dot{B}^{\frac dp-1}_{p,1})}\|a^{h}\|_{L^{1}_{T}(\dot{B}^{\frac dp}_{p,1})}\nonumber \\ &\lesssim (X^{\ell,\varepsilon} +X^{m,\varepsilon} + \varepsilon X^{h,\varepsilon})(X^{m,\varepsilon}+X^{h,\varepsilon})\lesssim X^{\varepsilon}(X^{m,\varepsilon}+ X^{h,\varepsilon}).
\end{align}
For $a^{\varepsilon}\div v^{\varepsilon}$, we obtain
\begin{align}\label{R-E12}
&\|a^{\varepsilon}\div v^{\varepsilon}\|^{\ell}_{L^{1}_{T}(\dot{B}^{\frac d2-1}_{2,1})}\\&\lesssim \|v^{\varepsilon}\|_{L^{2}_{T}(\dot{B}^{\frac dp}_{p,1})}\| a^\varepsilon\|_{L^{2}_{T}(\dot{B}^{\frac dp}_{p,1})}+\|a^\varepsilon\|_{L^{2}_{T}(\dot{B}^{\frac dp}_{p,1})} \|v^{\ell,\varepsilon}\|_{L^{2}_{T}(\dot{B}^{\frac d2-1}_{2,1})}+\|a^{\varepsilon}\|_{L^{\infty}_{T}(\dot{B}^{\frac dp-1}_{p,1})} \|v^{h}\|_{L^{1}_{T}(\dot{B}^{\frac dp}_{p,1})}\nonumber\\& \lesssim (X^{\varepsilon})^2+X^{\varepsilon}X^{\ell,\varepsilon}+(X^{\ell,\varepsilon} +X^{m,\varepsilon} + \varepsilon X^{h,\varepsilon})(X^{m,\varepsilon}+\varepsilon X^{h,\varepsilon}).
\end{align}
We now focus on $G^\varepsilon$. Regarding $v^{\varepsilon}\cdot\nabla v^{\varepsilon}$, we have
\begin{align}\label{R-E13}
\|v^{\varepsilon}\cdot\nabla v^{\varepsilon}\|^{\ell}_{L^{1}_{T}(\dot{B}^{\frac d2-1}_{2,1})}\nonumber&\lesssim\|v^{\varepsilon}\|^2_{L^{2}_{T}(\dot{B}^{\frac dp}_{p,1})}+\|v^{\varepsilon}\|_{L^{2}_{T}(\dot{B}^{\frac dp}_{p,1})}\|v^{\ell,\varepsilon}\|_{L^{2}_{T}(\dot{B}^{\frac d2-1}_{2,1})}+\|v^{\varepsilon}\|_{L^{\infty}_{T}(\dot{B}^{\frac dp-1}_{p,1})} \|v^{h}\|_{L^{1}_{T}(\dot{B}^{\frac dp}_{p,1})}\nonumber\\&\lesssim (X^{\varepsilon})^2+X^{\varepsilon}X^{\ell,\varepsilon}+(X^{\ell,\varepsilon} +X^{m,\varepsilon} + \varepsilon X^{h,\varepsilon})(X^{m,\varepsilon}+\varepsilon X^{h,\varepsilon}).
\end{align}
Using the composition estimate \ref{prop:comphf}, we obtain 
\begin{eqnarray}\label{R-E14}
\|J(a^\varepsilon)\mathcal{A}v^\varepsilon\|^{\ell}_{L^{1}_{T}(\dot{B}^{\frac d2-1}_{2,1})}&\lesssim&\|v^\varepsilon\|_{L^{1}_{T}(\dot{B}^{\frac dp+1}_{p,1})}
\|a^\varepsilon\|_{L^{\infty}_{T}(\dot{B}^{\frac dp}_{p,1})}+\|a^\varepsilon\|_{L^{\infty}_{T}(\dot{B}^{\frac dp}_{p,1})}\|v^{\ell,\varepsilon}\|_{L^{1}_{T}(\dot{B}^{\frac d2+1}_{2,1})}\nonumber\\&&+\|a^{\varepsilon}\|_{L^{\infty}_{T}(\dot{B}^{\frac dp-1}_{p,1})} \|v^{h}\|_{L^{1}_{T}(\dot{B}^{\frac dp+1}_{p,1})}\nonumber\\&\lesssim& (X^{\varepsilon})^2+X^{\varepsilon}X^{\ell,\varepsilon}+(X^{\ell,\varepsilon}+X^{m,\varepsilon}+\varepsilon X^{h,\varepsilon})(X^{m,\varepsilon}+X^{h,\varepsilon}),
\end{eqnarray}
\begin{eqnarray}\label{R-E15}
\|K_1(a^\varepsilon)\nabla a^\varepsilon\|^{\ell}_{L^{1}_{T}(\dot{B}^{\frac d2-1}_{2,1})}&\lesssim& \|a^{\varepsilon}\|^2_{L^{2}_{T}(\dot{B}^{\frac dp}_{p,1})}
+\|a^{\varepsilon}\|_{L^{2}_{T}(\dot{B}^{\frac dp}_{p,1})}\|a^{\ell,\varepsilon}\|_{L^{2}_{T}(\dot{B}^{\frac dp}_{p,1})}+\|a^{\varepsilon}\|_{L^{\infty}_{T}(\dot{B}^{\frac dp-1}_{p,1})}\|a^{h}\|_{L^{1}_{T}(\dot{B}^{\frac dp}_{p,1})}\nonumber\\&\lesssim&(X^{\varepsilon})^2+X^{\varepsilon}X^{\ell,\varepsilon}+(X^{\ell,\varepsilon}+X^{m,\varepsilon}+\varepsilon X^{h,\varepsilon})(X^{m,\varepsilon}+X^{h,\varepsilon})
\end{eqnarray} and
\begin{eqnarray}\label{R-E16}
\|K_2(a^\varepsilon)\nabla \theta^\varepsilon\|^{\ell}_{L^{1}_{T}(\dot{B}^{\frac d2-1}_{2,1})}&\lesssim& \|\theta^{\varepsilon}\|_{L^{2}_{T}(\dot{B}^{\frac dp}_{p,1})}\|a^{\varepsilon}\|_{L^{2}_{T}(\dot{B}^{\frac dp}_{p,1})}
\nonumber\\&&+\|a^{\varepsilon}\|_{L^{2}_{T}(\dot{B}^{\frac dp}_{p,1})}\|\theta^{\ell,\varepsilon}\|_{L^{2}_{T}(\dot{B}^{\frac d2}_{2,1})}+\|a^{\varepsilon}\|_{L^{\infty}_{T}(\dot{B}^{\frac dp-1}_{p,1})}\|\theta^{h}\|_{L^{1}_{T}(\dot{B}^{\frac dp}_{p,1})}.
\end{eqnarray}
Using an interpolation inequality, we have
\begin{eqnarray*}\|\theta^{\varepsilon}\|_{L^{2}_{T}(\dot{B}^{\frac dp}_{p,1})}&\lesssim& \|\theta^{\ell,\varepsilon}\|_{L^{\infty}_{T}(\dot{B}^{\frac dp-1}_{p,1})\cap L^{1}_{T}(\dot{B}^{\frac dp+1}_{p,1})}+\|\theta^{m,\varepsilon}\|_{L^{\infty}_{T}(\dot{B}^{\frac dp-1}_{p,1})\cap L^{1}_{T}(\dot{B}^{\frac dp+1}_{p,1})}+\|\theta^{h,\varepsilon}\|_{L^{\infty}_{T}(\dot{B}^{\frac dp}_{p,1})\cap L^{1}_{T}(\dot{B}^{\frac dp}_{p,1})}\nonumber \\ &\lesssim & X^{\ell,\varepsilon}+X^{m,\varepsilon}+\varepsilon\|\theta^{h,\varepsilon}\|_{L^{\infty}_{T}(\dot{B}^{\frac d2}_{2,1})}+\frac{1}{\varepsilon}\|\theta^{h,\varepsilon}\|_{L^{1}_{T}(\dot{B}^{\frac d2}_{2,1})}\lesssim X^{\varepsilon},
\end{eqnarray*}
which leads to
\begin{align}\label{R-E17}
&\|K_2(a^\varepsilon)\nabla \theta^\varepsilon\|^{\ell}_{L^{1}_{T}(\dot{B}^{\frac d2-1}_{2,1})}\lesssim ( X^{\varepsilon})^2+X^{\varepsilon}X^{\ell,\varepsilon}+(X^{\ell,\varepsilon}+X^{m,\varepsilon}+\varepsilon X^{h,\varepsilon})(X^{m,\varepsilon}+\varepsilon X^{h,\varepsilon}),
\\&\label{R-E18}
\|T_{\nabla K_3(a^{\varepsilon})}\theta^{\varepsilon}+R(\nabla K_3(a^{\varepsilon}),\theta^{\varepsilon})\|^{\ell}_{L^{1}_{T}(\dot{B}^{\frac d2-1}_{2,1})}\lesssim \|\nabla K_3(a^\varepsilon)\|_{L^{2}_{T}(\dot{B}^{\frac dp-1}_{p,1})}\|\theta^{\varepsilon}\|_{L^{2}_{T}(\dot{B}^{\frac dp}_{p,1})}\lesssim (X^{\varepsilon})^2,
\\&\label{R-E19}
\|T_{\theta^{\varepsilon}}\nabla K_3(a^{\varepsilon})^{\ell}\|^{\ell}_{L^{1}_{T}(\dot{B}^{\frac d2-1}_{2,1})}\lesssim \|\theta^{\varepsilon}\|_{L^{2}_{T}(\dot{B}^{\frac dp}_{p,1})}\|\nabla K_3(a^{\varepsilon})^{\ell}\|_{L^{2}_{T}(\dot{B}^{\frac d2-1}_{2,1})}.
\end{align}
Using that $\nabla K_3(a^{\varepsilon})=K_{3}'(0)\nabla a^{\varepsilon}+\widetilde{K}_{3}(a^{\varepsilon})\nabla a^{\varepsilon}$ for some smooth function $\widetilde{K}_{3}$ vanishing at zero, we have
\begin{eqnarray}\label{R-E21}
\|\widetilde{K}_{3}(a^{\varepsilon})\nabla a^{\varepsilon}\|^{\ell}_{L^{2}_{T}(\dot{B}^{\frac d2-1}_{2,1})}&\lesssim&\|a^{\varepsilon}\|_{L^{2}_{T}(\dot{B}^{\frac dp}_{p,1})}\|a^{\varepsilon}\|_{L^{\infty}_{T}(\dot{B}^{\frac dp}_{p,1})}
\nonumber\\&&+\|a^{\varepsilon}\|_{L^{\infty}_{T}(\dot{B}^{\frac dp}_{p,1})}\|a^{\ell,\varepsilon}\|_{L^{2}_{T}(\dot{B}^{\frac dp}_{p,1})}+\|a^{\varepsilon}\|_{L^{\infty}_{T}(\dot{B}^{\frac dp-1}_{p,1})}\|a^{h}\|_{L^{2}_{T}(\dot{B}^{\frac dp}_{p,1})}\nonumber\\&\lesssim&(X^{\varepsilon})^2+X^{\varepsilon}X^{\ell,\varepsilon}+(X^{\ell,\varepsilon}+X^{m,\varepsilon}+\varepsilon X^{h,\varepsilon})(X^{m,\varepsilon}+X^{h,\varepsilon}).
\end{eqnarray}
The final term $T_{\theta^{\varepsilon}}\nabla K_3(a^{\varepsilon})^{\widetilde{h}}$ can be similarly estimated  as follows
\begin{eqnarray}\label{R-E22}
\|T_{\theta^{\varepsilon}}\nabla K_3(a^{\varepsilon})^{\widetilde{h}}\|^{\ell}_{L^{1}_{T}(\dot{B}^{\frac d2-1}_{2,1})}&\lesssim &
\|\theta^{\varepsilon}\|_{L^{\infty}_{T}(\dot{B}^{\frac dp-1}_{p,1})} \|\nabla K_3(a^{\varepsilon})^{\widetilde{h}}\|_{L^{1}_{T}(\dot{B}^{\frac dp-1}_{p,1})}
\nonumber\\&\lesssim & X^{\varepsilon}(X^{m,\varepsilon}+X^{h,\varepsilon}+(X^{\varepsilon})^2).
\end{eqnarray}
For the nonlinear term $v^{\varepsilon}\cdot\nabla \theta^{\varepsilon}$, we employ Bony's para-product decomposition:
$v^{\varepsilon}\cdot\nabla \theta^{\varepsilon}\triangleq T_{\nabla \theta^{\varepsilon}}v^{\varepsilon}+R(v^{\varepsilon}\cdot\nabla \theta^{\varepsilon})+T_{v^{\varepsilon}}\nabla \theta^{\ell,\varepsilon}+T_{v^{\varepsilon}}\nabla \theta^{h,\varepsilon}$. We have
$$\|T_{\nabla \theta^{\varepsilon}}v^{\varepsilon}+R(v^{\varepsilon}\cdot\nabla \theta^{\varepsilon})\|^{\ell}_{L^{1}_{T}(\dot{B}^{\frac d2-1}_{2,1})}
\lesssim \|\theta^{\varepsilon}\|_{L^{2}_{T}(\dot{B}^{\frac dp}_{p,1})}\|v^{\varepsilon}\|_{L^{2}_{T}(\dot{B}^{\frac dp}_{p,1})}\lesssim (X^{\varepsilon})^2,
$$
$$\|T_{v^{\varepsilon}}\nabla \theta^{\ell,\varepsilon}\|^{\ell}_{L^{1}_{T}(\dot{B}^{\frac d2-1}_{2,1})}\lesssim \|v^{\varepsilon}\|_{L^{2}_{T}(L^\infty)}\|\theta^{\ell,\varepsilon}\|_{L^{2}_{T}(\dot{B}^{\frac d2-1}_{2,1})}\lesssim X^{\varepsilon}X^{\ell,\varepsilon},$$
and
$$
\|T_{v^{\varepsilon}}\nabla \theta^{h,\varepsilon}\|^{\ell}_{L^{1}_{T}(\dot{B}^{\frac d2-1}_{2,1})}\lesssim \|v^{\varepsilon}\|_{L^{\infty}_{T}(\dot{B}^{\frac dp-1}_{p,1})}\|\theta^{h,\varepsilon}\|_{L^{1}_{T}(\dot{B}^{\frac dp}_{p,1})}\lesssim (X^{\ell,\varepsilon} +X^{m,\varepsilon} + \varepsilon X^{h,\varepsilon})(X^{m,\varepsilon}+\varepsilon X^{h,\varepsilon}).
$$
For the term $I(a^{\varepsilon})\Delta \theta^{\varepsilon}\triangleq I(a^{\varepsilon})\Delta \theta^{\ell, \varepsilon}+I(a^{\varepsilon})\Delta \theta^{\widetilde{h}, \varepsilon}$, we obtain
$$\|I(a^{\varepsilon})\Delta \theta^{\ell, \varepsilon}\|^{\ell}_{L^{1}_{T}(\dot{B}^{\frac d2-1}_{2,1})}\lesssim \|I(a^{\varepsilon})\|_{L^{\infty}_{T}(L^\infty)}\|\Delta \theta^{\ell, \varepsilon}\|_{L^{1}_{T}(\dot{B}^{\frac d2-1}_{2,1})}\lesssim\|a^{\varepsilon}\|_{L^{\infty}_{T}(\dot{B}^{\frac dp}_{p,1})}\|\theta^{\ell, \varepsilon}\|_{L^{1}_{T}(\dot{B}^{\frac d2+1}_{2,1})}\lesssim X^{\varepsilon}X^{\ell,\varepsilon}.$$
For the second term, we write
$I(a^{\varepsilon})\Delta \theta^{\widetilde{h}, \varepsilon}=T_{I(a^{\varepsilon})}\Delta \theta^{\widetilde{h}}+R(I(a^{\varepsilon}), \Delta \theta^{\widetilde{h}})$ and
we use that $R$ and $T$ map $$\dot{B}^{\frac dp-2}_{p,1}\times\dot{B}^{\frac dp}_{p,1}\rightarrow \dot{B}^{\frac d2-2}_{2,1}, $$ for $p<d$ and $d\geq3$. This leads to
\begin{eqnarray}\label{R-E23}
\|T_{I(a^{\varepsilon})}\Delta \theta^{\widetilde{h}}+R(I(a^{\varepsilon}), \Delta \theta^{\widetilde{h}})\|^{\ell}_{L^{1}_{T}(\dot{B}^{\frac d2-1}_{2,1})}&\lesssim & K\|T_{I(a^{\varepsilon})}\Delta \theta^{\widetilde{h}}+R(I(a^{\varepsilon}), \Delta \theta^{\widetilde{h}})\|\nonumber\\ &\lesssim &\|I(a^{\varepsilon})\|_{L^{\infty}_{T}(\dot{B}^{\frac dp}_{p,1})}
\|\Delta \theta^{\widetilde{h}}\|_{L^{1}_{T}(\dot{B}^{\frac dp-2}_{p,1})}\\&\lesssim& \|a^{\varepsilon}\|_{L^{\infty}_{T}(\dot{B}^{\frac dp}_{p,1})}\|\theta^{\widetilde{h}}\|_{L^{1}_{T}(\dot{B}^{\frac dp}_{p,1})}\nonumber\\ &\lesssim& X^{\varepsilon}(X^{m,\varepsilon}+\varepsilon X^{h,\varepsilon}).
\end{eqnarray}
Next, we focus on $\dfrac{N(\nabla v^\varepsilon,\nabla v^\varepsilon)}{1+a^\varepsilon}\triangleq (1+J(a))N(\nabla v^\varepsilon,\nabla v^\varepsilon)$. Using the continuity of the para-product and remainder operators, we obtain
\begin{align}\label{R-E24}
&\|T_{J(a)}\nabla u\otimes \nabla u\|_{L^{1}_{T}(\dot{B}^{\frac d2-2}_{2,1})}\lesssim \|J(a)\|_{L^{\infty}_{T}(L^\infty)}\|\nabla v^{\varepsilon}\otimes \nabla v^{\varepsilon}\|_{L^{1}_{T}(\dot{B}^{\frac d2-2}_{2,1})},
\\&\label{R-E25}
\|R(J(a),\nabla v^{\varepsilon}\otimes \nabla v^{\varepsilon})\|_{L^{1}_{T}(\dot{B}^{\frac d2-2}_{2,1})}\lesssim \|J(a)\|_{L^{\infty}_{T}(L^\infty)}\|\nabla v^{\varepsilon}\otimes \nabla v^{\varepsilon}\|_{L^{1}_{T}(\dot{B}^{\frac d2-2}_{2,1})}
\end{align}
and
\begin{align}\label{R-E26}
\|T_{\nabla v^{\varepsilon}\otimes \nabla v^{\varepsilon}}J(a)\|_{L^{1}_{T}(\dot{B}^{\frac d2-2}_{2,1})}&\lesssim \|\nabla v^{\varepsilon}\otimes \nabla v^{\varepsilon}\|_{L^{1}_{T}(\dot{B}^{\frac {d}{p^{*}}-2}_{p^{*},1})}\|J(a)\|_{L^{\infty}_{T}(\dot{B}^{\frac dp}_{p,1})}\nonumber\\&\lesssim\|\nabla v^{\varepsilon}\otimes \nabla v^{\varepsilon}\|_{L^{1}_{T}(\dot{B}^{\frac d2-2}_{2,1})}\|J(a)\|_{L^{\infty}_{T}(\dot{B}^{\frac dp}_{p,1})}.
\end{align}
It follows from the mapping
$$\dot{B}^{\frac dp-1}_{p,1}\times\dot{B}^{\frac dp-1}_{p,1}\rightarrow \dot{B}^{\frac d2-2}_{2,1},$$
for $2\leq p\leq 2d/d-2$, $p<d$ and $d\geq3$, where $1/p+1/p^{*}=1/2$, that
\begin{align}\label{R-E27}
\|\dfrac{N(\nabla v^\varepsilon,\nabla v^\varepsilon)}{1+a^\varepsilon}\|^{\ell}_{L^{1}_{T}(\dot{B}^{\frac d2-1}_{2,1})}&\lesssim K\|\dfrac{N(\nabla v^\varepsilon,\nabla v^\varepsilon)}{1+a^\varepsilon}\|^{\ell}_{L^{1}_{T}(\dot{B}^{\frac d2-2}_{2,1})}\nonumber\\&\lesssim (1+\|a\|_{L^{\infty}_{T}(\dot{B}^{\frac dp}_{p,1})})\|\nabla v^{\varepsilon}\|^2_{L^{2}_{T}(\dot{B}^{\frac dp-1}_{p,1})}\lesssim (1+X^{\varepsilon})(X^{\varepsilon})^2.
\end{align}
Now we bound $H_1(a^\varepsilon)\theta^\varepsilon \div v^\varepsilon$. We write $H_1(a^\varepsilon)\theta^\varepsilon \div v^\varepsilon=T_{H_1(a^\varepsilon)\div v^\varepsilon}\theta^\varepsilon+R(H_1(a^\varepsilon)\div v^\varepsilon,\theta^\varepsilon)+T_{\theta^\varepsilon}(H_1(a^\varepsilon)\div v^\varepsilon)^{\ell}+T_{\theta^\varepsilon}(H_1(a^\varepsilon)\div v^\varepsilon)^{\widetilde{h}}$. Similarly, one gets
\begin{eqnarray}\label{R-E28}
\|T_{H_1(a^\varepsilon)\div v^\varepsilon}\theta^\varepsilon+R(H_1(a^\varepsilon)\div v^\varepsilon,\theta^\varepsilon)\|^{\ell}_{L^{1}_{T}(\dot{B}^{\frac d2-1}_{2,1})}&\lesssim &\|H_1(a^\varepsilon)\div v^\varepsilon\|_{L^{2}_{T}(\dot{B}^{\frac dp-1}_{p,1})}\|\theta^\varepsilon\|_{L^{2}_{T}(\dot{B}^{\frac dp}_{p,1})}\nonumber\\&\lesssim& \|a^\varepsilon\|_{L^{\infty}_{T}(\dot{B}^{\frac dp}_{p,1})}\|v^{\varepsilon}\|_{L^{2}_{T}(\dot{B}^{\frac dp}_{p,1})}
\|\theta^{\varepsilon}\|_{L^{2}_{T}(\dot{B}^{\frac dp}_{p,1})}\lesssim (X^{\varepsilon})^3.
\end{eqnarray}
By applying a similar procedure that led to \eqref{R-E22}, we obtain
\begin{eqnarray}\label{R-E29}
\|H_1(a^{\varepsilon})\div v^\varepsilon\|^{\ell}_{L^{2}_{T}(\dot{B}^{\frac d2-1}_{2,1})}&\lesssim&\|\div v^\varepsilon\|_{L^{2}_{T}(\dot{B}^{\frac dp-1}_{p,1})}\|a^{\varepsilon}\|_{L^{\infty}_{T}(\dot{B}^{\frac dp}_{p,1})}
\nonumber\\&&+\|a^{\varepsilon}\|_{L^{\infty}_{T}(\dot{B}^{\frac dp}_{p,1})}\|\div v^{\ell,\varepsilon}\|_{L^{2}_{T}(\dot{B}^{\frac d2-1}_{2,1})}+\|a^{\varepsilon}\|_{L^{\infty}_{T}(\dot{B}^{\frac dp-1}_{p,1})}\|\div v^{h}\|_{L^{2}_{T}(\dot{B}^{\frac dp-1}_{p,1})}\nonumber\\&\lesssim&(X^{\varepsilon})^2+X^{\varepsilon}X^{\ell,\varepsilon}+(X^{\ell,\varepsilon}+X^{m,\varepsilon}+\varepsilon X^{h,\varepsilon})(X^{m,\varepsilon}+X^{h,\varepsilon}),
\end{eqnarray}
which yields
\begin{eqnarray}\label{R-E30}
\|T_{\theta^\varepsilon}(H_1(a^\varepsilon)\div v^\varepsilon)^{\ell}\|^{\ell}_{L^{1}_{T}(\dot{B}^{\frac d2-1}_{2,1})}\lesssim \|\theta^\varepsilon\|_{L^{2}_{T}(\dot{B}^{\frac dp}_{p,1})}\|H_1(a^{\varepsilon})\div v^\varepsilon\|^{\ell}_{L^{2}_{T}(\dot{B}^{\frac d2-1}_{2,1})}\lesssim (X^{\varepsilon})^3.
\end{eqnarray}
It follows that
\begin{eqnarray}\label{R-E300}
\|T_{\theta^\varepsilon}(H_1(a^\varepsilon)\div v^\varepsilon)^{\widetilde{h}}\|^{\ell}_{L^{1}_{T}(\dot{B}^{\frac d2-1}_{2,1})}&\lesssim& \|\theta^\varepsilon\|_{L^{\infty}_{T}(\dot{B}^{\frac dp-1}_{p,1})}\|(H_1(a^\varepsilon)\div v^\varepsilon)^{\widetilde{h}}\|_{L^{1}_{T}(\dot{B}^{\frac dp-1}_{p,1})}
\nonumber\\&\lesssim &\|\theta^\varepsilon\|_{L^{\infty}_{T}(\dot{B}^{\frac dp-1}_{p,1})}\|a^\varepsilon\|_{L^{\infty}_{T}(\dot{B}^{\frac dp}_{p,1})}\|v^\varepsilon\|_{L^{1}_{T}(\dot{B}^{\frac dp+1}_{p,1})}\lesssim (X^{\varepsilon})^3,
\end{eqnarray}
which concludes the proof of Lemma \ref{lem:NLlow}.
     \end{proof}

\subsection{Medium frequencies: nonlinear analysis}
In this section, we show the following lemma.
\begin{Lemme}\label{lem:NLmed} Let $(a^\varepsilon,v^\varepsilon,\theta^\varepsilon,q^\varepsilon)$ be a smooth solution of \eqref{LinearEffective}, we have
$$\|F_1^\varepsilon\|^{m,\var}_{L^1_T(\dot{B}^{\frac{d}{p}}_{p,1})}+\|G^\varepsilon\|^{m,\var}_{L^1_T\cap L^2_T(\dot{B}^{\frac{d}{p}-1}_{p,1})}+\varepsilon\|I^\varepsilon\|^{m,\var}_{L^1_T( \dot{B}^{\frac{d}{p}-1}_{p,1})}
     + \| H^\varepsilon\|^{m,\var}_{L^1_T( \dot{B}^{\frac{d}{p}-1}_{p,1})}\leq X^\varepsilon(t)^2.$$
     \end{Lemme}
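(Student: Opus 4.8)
The plan is to replay the scheme of Lemma~\ref{lem:NLlow}, the key simplification being that in Lemma~\ref{lem:NLmed} every target space is of $L^p$-type, so the \emph{hybrid} product laws \eqref{R-E955b}--\eqref{R-E955c} are no longer needed: I would only invoke the standard critical product and composition estimates in $\dot B^{s}_{p,1}$ ($2\le p<d$, $d\ge3$). First I would perform the immediate reduction that each medium-frequency semi-norm $\|\cdot\|^{m,\var}_{\dot B^{s}_{p,1}}$ is a sub-sum of the dyadic blocks defining the full norm $\|\cdot\|_{\dot B^{s}_{p,1}}$, so that it suffices to bound $\|F_1^\varepsilon\|_{L^1_T(\dot B^{\frac dp}_{p,1})}$, $\|G^\varepsilon\|_{L^1_T\cap L^2_T(\dot B^{\frac dp-1}_{p,1})}$, $\|H^\varepsilon\|_{L^1_T(\dot B^{\frac dp-1}_{p,1})}$ and $\varepsilon\|I^\varepsilon\|_{L^1_T(\dot B^{\frac dp-1}_{p,1})}$ by $X^\varepsilon(t)^2$. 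When a source term sits at a lower regularity index than the space into which it must be absorbed, I would use the restriction $J_0\le j\le J_\var$ and Proposition~\ref{prop:Bernstein} to trade regularity for fixed powers of $K$ or $k$ together with harmless powers of the small parameter $\varepsilon$.

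Next I would list the nonlinear terms, which coincide with those handled in Lemma~\ref{lem:NLlow}: in $F_1^\varepsilon$ the terms $v^\varepsilon\cdot\nabla a^\varepsilon$ and $a^\varepsilon\div v^\varepsilon$; in $G^\varepsilon$ the terms $v^\varepsilon\cdot\nabla v^\varepsilon$, $J(a^\varepsilon)\mathcal Av^\varepsilon$, $K_1(a^\varepsilon)\nabla a^\varepsilon$, $K_2(a^\varepsilon)\nabla\theta^\varepsilon$ and the $\nabla K_3(a^\varepsilon)$-products; in $H^\varepsilon$ the terms $v^\varepsilon\cdot\nabla\theta^\varepsilon$, $I(a^\varepsilon)\Delta\theta^\varepsilon$, $(1+J(a^\varepsilon))N(\nabla v^\varepsilon,\nabla v^\varepsilon)$ and $H_1(a^\varepsilon)\theta^\varepsilon\div v^\varepsilon$; and in $I^\varepsilon=v^\varepsilon\cdot\nabla q^\varepsilon-q^\varepsilon\cdot\nabla v^\varepsilon+q^\varepsilon\div v^\varepsilon$ the three quadratic terms. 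For each, I would apply a Bony decomposition followed by the continuity of paraproduct and remainder $\dot B^{\frac dp}_{p,1}\times\dot B^{s}_{p,1}\to\dot B^{s}_{p,1}$ for admissible $s$, together with the composition estimates of the Appendix for $J,K_i,I,H_1$; the quasilinear terms $I(a^\varepsilon)\Delta\theta^\varepsilon$ and $J(a^\varepsilon)\mathcal Av^\varepsilon$ cause no extra difficulty since the two additional derivatives falling on $\theta^\varepsilon$, resp. $v^\varepsilon$, are exactly matched by the $L^1_T(\dot B^{\frac dp+1}_{p,1})$- and $L^2_T(\dot B^{\frac dp+1}_{p,1})$-controls of $\theta^\varepsilon$ and $v^\varepsilon$ contained in $X^\varepsilon(t)$. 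Then I would replace every Besov norm of an unknown by $X^\varepsilon(t)$ via \eqref{X}--\eqref{Xhigh} and interpolation, exactly as in the book-keeping \eqref{R-E1}--\eqref{R-E2} --- e.g. $\|v^\varepsilon\|_{L^2_T(\dot B^{\frac dp}_{p,1})}\lesssim X^\varepsilon(t)$, $\|a^\varepsilon\|_{L^\infty_T(\dot B^{\frac dp}_{p,1})}\lesssim X^\varepsilon(t)$, $\|\theta^\varepsilon\|_{L^1_T(\dot B^{\frac dp+1}_{p,1})}\lesssim X^\varepsilon(t)$ --- the low- and high-frequency pieces being brought into $L^p$ by the embeddings $\dot B^{\frac d2+\sigma}_{2,1}\hookrightarrow\dot B^{\frac dp+\sigma}_{p,1}$ and Proposition~\ref{prop:Bernstein}, at the cost of the fixed constants $K,k$ and of powers of $\varepsilon\le1$ already built into $X^{h,\var}$. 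For the $F_1^\varepsilon$, $G^\varepsilon$ and $H^\varepsilon$ contributions this yields $\lesssim X^\varepsilon(t)^2$ with no surprise, since every unknown other than $q^\varepsilon$ enters $X^\varepsilon$ with an $\varepsilon$-independent weight.

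The hard part will be the heat-flux nonlinearity $\varepsilon I^\varepsilon$, because $q^\varepsilon$ enters $X^\varepsilon$ with \emph{different} $\varepsilon$-weights depending on the functional space: $\varepsilon q^\varepsilon$ in $L^\infty_T$, $q^\varepsilon$ itself in $L^1_T$ and $L^2_T$, and $\varepsilon^3 q^\varepsilon$ in the high-frequency $L^\infty_T$ norm. I would need to check that, after Bony decomposition, the prefactor $\varepsilon$ always compensates either an $\varepsilon$-weighted $q^\varepsilon$-norm from $X^\varepsilon$ or the loss $\varepsilon^{-s'}$ incurred when raising the regularity of $q^\varepsilon$ inside the medium band via Proposition~\ref{prop:Bernstein}, so that a \emph{uniform} bound $\lesssim X^\varepsilon(t)^2$ results; indeed, as already in \eqref{R-E780}, the individual terms $q^\varepsilon\cdot\nabla v^\varepsilon$ and $q^\varepsilon\div v^\varepsilon$ are only $\lesssim\frac{1}{\varepsilon} X^\varepsilon(t)^2$, the missing power of $\varepsilon$ being exactly supplied by the prefactor. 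Concretely I would split $q^\varepsilon$ into its low-, medium- and high-frequency pieces: on the low part use $\|q^\varepsilon\|^\ell_{L^1_T(\dot B^{\frac d2}_{2,1})}$ and $\frac{1}{\varepsilon}\|Q^\varepsilon\|^\ell_{L^1_T(\dot B^{\frac d2-1}_{2,1})}$; on the medium part use $\|q^\varepsilon\|^{m,\var}_{L^1_T\cap L^2_T}$ and $\frac{1}{\varepsilon}\|Q^\varepsilon\|^{m,\var}$ after writing $q^\varepsilon=Q^\varepsilon-\kappa\nabla\theta^\varepsilon$, noting that $\varepsilon\cdot k\varepsilon^{-1}=k$ is a harmless fixed constant; on the high part use the $\varepsilon$-weighted norms inside $X^{h,\var}$. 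In every case the surviving power of $\varepsilon$ is nonnegative, so $\varepsilon\le1$ closes the estimate. Summing the contributions and using \eqref{eq:smallZX} to absorb the cubic and higher-order remainders completes the proof.
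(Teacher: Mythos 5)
Your proposal is correct and takes essentially the same route as the paper: reduce the medium-frequency semi-norms to full Besov norms, apply standard product and composition laws in $\dot B^{s}_{p,1}$, and book-keep every factor through $X^\varepsilon(t)$. Two small remarks: in \eqref{R-E31}--\eqref{R-E33} the paper pairs $q^\varepsilon$ with its $L^1_T$/$L^2_T$ medium-frequency controls (never with $L^\infty_T$), so no $1/\varepsilon$ loss ever appears and the prefactor $\varepsilon$ on $I^\varepsilon$ is not even needed, making your extra compensation argument harmless but superfluous; and in your list for $H^\varepsilon$ the term is $J(a^\varepsilon)\div q^\varepsilon$ rather than $I(a^\varepsilon)\Delta\theta^\varepsilon$, which is handled by exactly the same product law together with the $L^1_T(\dot B^{\frac dp}_{p,1})$ control of $q^\varepsilon$.
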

     \begin{proof}
First, we handle with the nonlinear terms in $I^{\varepsilon}$. It follows from standard product law that
\begin{eqnarray}\label{R-E31}
\|v^{\varepsilon}\cdot \nabla q^{\varepsilon}\|^{m}_{L^1_T( \dot{B}^{\frac{d}{p}-1}_{p,1})}\lesssim \|v^{\varepsilon}\|_{L^{\infty}_T( \dot{B}^{\frac{d}{p}-1}_{p,1})}\|\nabla q^{\varepsilon}\|_{L^1_T( \dot{B}^{\frac{d}{p}-1}_{p,1})}\lesssim X^\varepsilon(t)^2
\end{eqnarray}
and
\begin{eqnarray}\label{R-E32}
\|q^{\varepsilon}\cdot\nabla v^{\varepsilon}\|^{m}_{L^1_T( \dot{B}^{\frac{d}{p}-1}_{p,1})}\lesssim \|q^{\varepsilon}\|_{L^1_T( \dot{B}^{\frac{d}{p}}_{p,1})}\|\nabla v^{\varepsilon}\|_{L^{\infty}_T( \dot{B}^{\frac{d}{p}-1}_{p,1})}\lesssim X^\varepsilon(t)^2.
\end{eqnarray}
Similarly,
\begin{eqnarray}\label{R-E33}
\|q^{\varepsilon}\div v^{\varepsilon}\|^{m}_{L^1_T( \dot{B}^{\frac{d}{p}-1}_{p,1})}\lesssim X^\varepsilon(t)^2.
 \end{eqnarray}
Regarding $F_{1}^{\varepsilon}=a^{\varepsilon}\div v^{\varepsilon}$, it is easy to see that
\begin{eqnarray}\label{R-E34}
\|a^{\varepsilon}\div v^{\varepsilon}\|^{m}_{L^1_T( \dot{B}^{\frac{d}{p}}_{p,1})} \lesssim \|a^{\varepsilon}\|_{L^{\infty}_T( \dot{B}^{\frac{d}{p}}_{p,1})}\|\div v^{\varepsilon}\|_{L^1_T( \dot{B}^{\frac{d}{p}}_{p,1})}\lesssim X^\varepsilon(t)^2.
 \end{eqnarray}
Secondly, we bound the terms of $H^{\varepsilon}$ in turn. We have
    \begin{eqnarray}\label{R-E35}
\|v^{\varepsilon}\cdot\nabla \theta^{\varepsilon}\|^{m}_{L^1_T( \dot{B}^{\frac{d}{p}-1}_{p,1})} \lesssim \|v^{\varepsilon}\|_{L^2_T( \dot{B}^{\frac{d}{p}}_{p,1})}\|\nabla \theta^{\varepsilon}\|_{L^2_T( \dot{B}^{\frac{d}{p}-1}_{p,1})}\lesssim X^\varepsilon(t)^2.
 \end{eqnarray}
It follows from standard product laws and the composition Proposition \ref{prop:comphf} that
    \begin{eqnarray}\label{R-E36}
\|J(a^{\varepsilon})\div q^{\varepsilon}\|^{m}_{L^1_T( \dot{B}^{\frac{d}{p}-1}_{p,1})}\lesssim \|a^{\varepsilon}\|_{L^{\infty}_T( \dot{B}^{\frac{d}{p}}_{p,1})}\|q^{\varepsilon}\|_{L^1_T(\dot{B}^{\frac{d}{p}}_{p,1})}\lesssim X^\varepsilon(t)^2.
 \end{eqnarray}
 Similarly, we have
    \begin{align}\label{R-E37}
\|\dfrac{N(\nabla v^\varepsilon,\nabla v^\varepsilon)}{1+a^\varepsilon}\|^{m}_{L^1_T( \dot{B}^{\frac{d}{p}-1}_{p,1})} &\lesssim  (1+\|a^{\varepsilon}\|_{L^{\infty}_T( \dot{B}^{\frac{d}{p}}_{p,1})})\|\nabla v^\varepsilon\|_{L^{\infty}_T( \dot{B}^{\frac{d}{p}-1}_{p,1})}
\|\nabla v^\varepsilon\|_{L^{1}_T( \dot{B}^{\frac{d}{p}}_{p,1})}
\\&\lesssim (1+ X^\varepsilon(t))X^\varepsilon(t)^2.\nonumber
 \end{align}
and 
\begin{eqnarray}\label{R-E38}
\|\wt H_1(a^\varepsilon)\theta^\varepsilon \div v^\varepsilon\|^{m}_{L^1_T( \dot{B}^{\frac{d}{p}-1}_{p,1})}\lesssim \|a^{\varepsilon}\|_{L^{\infty}_T( \dot{B}^{\frac{d}{p}}_{p,1})}\|\theta^\varepsilon\|_{L^2_T( \dot{B}^{\frac{d}{p}}_{p,1})}\|\div v^\varepsilon\|_{L^2_T( \dot{B}^{\frac{d}{p}-1}_{p,1})}
\lesssim X^\varepsilon(t)^3.
 \end{eqnarray}
Finally, w estimate the nonlinear terms in $G^{\varepsilon}$. Precisely, we have
\begin{align}\label{R-E39}
&\|v^\varepsilon\cdot\nabla v^\varepsilon\|^{m}_{L^1_T( \dot{B}^{\frac{d}{p}-1}_{p,1})}\lesssim \|v^\varepsilon\|^2_{L^2_T( \dot{B}^{\frac{d}{p}}_{p,1})}\lesssim X^\varepsilon(t)^2,
\\&\label{R-E40}
\|J(a^{\varepsilon})\mathcal{A}v^\varepsilon\|^{m}_{L^1_T( \dot{B}^{\frac{d}{p}-1}_{p,1})}\lesssim \|a^{\varepsilon}\|_{L^{\infty}_T( \dot{B}^{\frac{d}{p}}_{p,1})}\|v^\varepsilon\|_{L^1_T( \dot{B}^{\frac{d}{p}+1}_{p,1})}\lesssim X^\varepsilon(t)^2,
\\&\label{R-E41}
\|K_1(a^{\varepsilon})\nabla a^{\varepsilon}\|^{m}_{L^1_T( \dot{B}^{\frac{d}{p}-1}_{p,1})}\lesssim\|a^{\varepsilon}\|^2_{L^2_T( \dot{B}^{\frac{d}{p}}_{p,1})}\lesssim X^\varepsilon(t)^2,
\\&\label{R-E42}
\|K_2(a^{\varepsilon})\nabla \theta^{\varepsilon}\|^{m}_{L^1_T( \dot{B}^{\frac{d}{p}-1}_{p,1})}\lesssim\|a^{\varepsilon}\|_{L^2_T( \dot{B}^{\frac{d}{p}}_{p,1})}\|\theta^{\varepsilon}\|_{L^2_T( \dot{B}^{\frac{d}{p}}_{p,1})}\lesssim X^\varepsilon(t)^2,
\\&\label{R-E43}
\|\theta^{\varepsilon}\nabla K_3(a^{\varepsilon})\|^{m}_{L^1_T( \dot{B}^{\frac{d}{p}-1}_{p,1})}\lesssim\|\theta^{\varepsilon}\|_{L^2_T( \dot{B}^{\frac{d}{p}}_{p,1})}\|a^{\varepsilon}\|_{L^2_T( \dot{B}^{\frac{d}{p}}_{p,1})}\lesssim X^\varepsilon(t)^2.
\end{align}
In a similar way, one can get the corresponding estimates in the norm $L^2_T( \dot{B}^{\frac{d}{p}-1}_{p,1})$. We obtain
\begin{align}\label{R-E44}
&\|v^\varepsilon\cdot\nabla v^\varepsilon\|^{m}_{L^2_T( \dot{B}^{\frac{d}{p}-1}_{p,1})}\lesssim \|v^\varepsilon\|_{L^{\infty}_T( \dot{B}^{\frac{d}{p}}_{p,1})}\|v^\varepsilon\|_{L^{2}_T( \dot{B}^{\frac{d}{p}}_{p,1})}\lesssim X^\varepsilon(t)^2,
\\&\label{R-E45}\|J(a^{\varepsilon})\mathcal{A}v^\varepsilon\|^{m}_{L^2_T( \dot{B}^{\frac{d}{p}-1}_{p,1})}\lesssim \|a^{\varepsilon}\|_{L^{\infty}_T( \dot{B}^{\frac{d}{p}}_{p,1})}\|v^\varepsilon\|_{L^2_T( \dot{B}^{\frac{d}{p}+1}_{p,1})}
\lesssim X^\varepsilon(t)^2,
\\&\label{R-E46}
\|K_1(a^{\varepsilon})\nabla a^{\varepsilon}\|^{m}_{L^2_T( \dot{B}^{\frac{d}{p}-1}_{p,1})}\lesssim\|a^{\varepsilon}\|_{L^{\infty}_T( \dot{B}^{\frac{d}{p}}_{p,1})}\|a^{\varepsilon}\|_{L^2_T( \dot{B}^{\frac{d}{p}}_{p,1})}\lesssim X^\varepsilon(t)^2,
\\&\label{R-E47}
\|K_2(a^{\varepsilon})\nabla \theta^{\varepsilon}\|^{m}_{L^2_T( \dot{B}^{\frac{d}{p}-1}_{p,1})}\lesssim\|a^{\varepsilon}\|_{L^{\infty}_T( \dot{B}^{\frac{d}{p}}_{p,1})}\|\theta^{\varepsilon}\|_{L^2_T( \dot{B}^{\frac{d}{p}}_{p,1})}\lesssim X^\varepsilon(t)^2,
\\&\label{R-E48}
\|\theta^{\varepsilon}\nabla K_3(a^{\varepsilon})\|^{m}_{L^2_T( \dot{B}^{\frac{d}{p}-1}_{p,1})}\lesssim\|\theta^{\varepsilon}\|_{L^2_T( \dot{B}^{\frac{d}{p}}_{p,1})}\|a^{\varepsilon}\|_{L^{\infty}_T( \dot{B}^{\frac{d}{p}}_{p,1})}\lesssim X^\varepsilon(t)^2.
\end{align}
The proof of Lemma \ref{lem:NLmed} is concluded.
\end{proof}

\subsection{High frequencies: nonlinear analysis}
We show the following lemma.
\begin{Lemme}\label{lem5.3}
Let $(a^\varepsilon,v^\varepsilon,\theta^\varepsilon,q^\varepsilon)$ be a smooth solution of \eqref{LinearEffective}, we have
\begin{align}
&\varepsilon\|(G^\varepsilon,H^\varepsilon)\|^{h,\var}_{L^1_T(\dot{B}^{\frac{d}{2}}_{2,1})}+\varepsilon\|G^\varepsilon\|^{h,\var}_{L^2_T(\dot{B}^{\frac{d}{2}}_{2,1})}+\|(\varepsilon F_1^\varepsilon,\varepsilon^2H_1^\varepsilon, \varepsilon^3I^\varepsilon,\varepsilon^2 R_1^\varepsilon)\|^{h,\var}_{L^1_T(\dot{B}^{\frac{d}{2}+1}_{2,1})} \\&\quad +\varepsilon\int_0^T\|\d_tJ(a^\varepsilon)\|_{\dot{B}^{\frac{d}{2}}_{2,1}} \|\var^2 q^\varepsilon\|^{h,\var}_{\dot{B}^{\frac{d}{2}+1}_{2,1}} \lesssim X^\varepsilon(t)^2.\nonumber
\end{align}
\end{Lemme}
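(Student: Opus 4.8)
\emph{Proof plan.} The estimates follow the pattern of Lemmas~\ref{lem:NLlow} and~\ref{lem:NLmed}: the nonlinear contributions to be bounded are $a^\varepsilon\div v^\varepsilon$ and $v^\varepsilon\cdot\nabla a^\varepsilon$ in $F_1^\varepsilon$; $v^\varepsilon\cdot\nabla v^\varepsilon$, $J(a^\varepsilon)\mathcal{A}v^\varepsilon$, $K_1(a^\varepsilon)\nabla a^\varepsilon$, $K_2(a^\varepsilon)\nabla\theta^\varepsilon$ and $\theta^\varepsilon\nabla K_3(a^\varepsilon)$ in $G^\varepsilon$; $v^\varepsilon\cdot\nabla\theta^\varepsilon$, $J(a^\varepsilon)\div q^\varepsilon$, $N(\nabla v^\varepsilon,\nabla v^\varepsilon)/(1+a^\varepsilon)$ and $\widetilde H_1(a^\varepsilon)\theta^\varepsilon\div v^\varepsilon$ in $H^\varepsilon$ and $H_1^\varepsilon$; $v^\varepsilon\cdot\nabla q^\varepsilon$, $q^\varepsilon\cdot\nabla v^\varepsilon$ and $q^\varepsilon\div v^\varepsilon$ in $I^\varepsilon$; plus the commutator $R^\varepsilon_j=[J(a^\varepsilon),\ddj]\div q^\varepsilon$. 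For each of them I would use Bony's paraproduct decomposition, the hybrid $L^2$--$L^p$--$L^2$ product and composition laws (Propositions~\ref{LPP}--\ref{prop:PL-low} and~\ref{prop:comphf}) and the Bernstein-type inequalities of Proposition~\ref{prop:Bernstein} to move between the low, medium and high frequency parts of each factor, exactly as in \eqref{R-E2} and \eqref{R-E14}. The genuinely new ingredient is the bookkeeping of powers of $\varepsilon$: since $X^{h,\varepsilon}$ weights $a^\varepsilon$, $v^\varepsilon$, $w^\varepsilon$ by $\varepsilon$, the quantity $\theta^\varepsilon$ by $\varepsilon^2$ and $q^\varepsilon$ by $\varepsilon^3$ in the $L^\infty_T$ norms (and by $\varepsilon$ in the $L^1_T$ norm), one must verify that in every paraproduct or remainder piece the explicit prefactors $\varepsilon$, $\varepsilon^2$, $\varepsilon^3$ of the statement are exactly compensated by the $\varepsilon$-weights carried by whichever factors are read at high frequency, so that the outcome is $\lesssim(X^\varepsilon(t))^2$, uniformly in $\varepsilon$.

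Concretely, I would group the terms by their prefactor. For the ``quasi-parabolic'' terms — those of $G^\varepsilon$, $H^\varepsilon$ entering with $\varepsilon$ at regularity index $d/2$ and those of $F_1^\varepsilon$ entering with $\varepsilon$ at index $d/2+1$ — in each Bony piece one factor is read at high frequency (after a high-frequency Bernstein step to reach the regularity index actually stored in $X^{h,\varepsilon}$, e.g. $\|v^\varepsilon\|^{h,\varepsilon}_{\dot B^{d/2+1}_{2,1}}\le\|v^\varepsilon\|^{h,\varepsilon}_{\dot B^{d/2+2}_{2,1}}$), where it contributes a single factor $\varepsilon$, while the complementary factor is placed in an $L^\infty_T$ or $L^2_T$ norm controlled by $X^\varepsilon(t)$ without a degenerate weight; composition estimates (Proposition~\ref{prop:comphf}) dispose of $J$, $K_i$, $\widetilde H_1$, whose $L^\infty$ norms are $\ll1$ by \eqref{eq:smallZX}. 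The terms of $H_1^\varepsilon$ and $I^\varepsilon$, carrying $\varepsilon^2$ and $\varepsilon^3$ and involving the hyperbolic variable $q^\varepsilon$, are treated identically, distributing the extra powers of $\varepsilon$ over the factors containing $q^\varepsilon$ and using interpolation in time between the $L^\infty_T$ and $L^1_T$ endpoints of $X^{h,\varepsilon}$ to produce $L^2_T$ norms when needed. The commutator $R^\varepsilon_j=[J(a^\varepsilon),\ddj]\div q^\varepsilon$ is controlled by the commutator inequality of Lemma~\ref{CP}, which reduces $\|R^\varepsilon\|^{h,\varepsilon}_{\dot B^{d/2+1}_{2,1}}$ to the product of $\nabla J(a^\varepsilon)$ in an $L^\infty_T$ $L^2$-type Besov norm with $q^\varepsilon$ in an $L^1_T$ high-frequency Besov norm; pairing the weight $\varepsilon$ carried by $\nabla a^\varepsilon$ with the weight $\varepsilon$ carried by $q^\varepsilon$ recovers the required $\varepsilon^2$.

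The delicate point, which has no analogue in the Navier--Stokes--Fourier analysis, is the last term $\varepsilon\int_0^T\|\d_tJ(a^\varepsilon)\|_{\dot B^{d/2}_{2,1}}\,\|\varepsilon^2q^\varepsilon\|^{h,\varepsilon}_{\dot B^{d/2+1}_{2,1}}\,dt$, where one cannot rely on regularity bounds for $\d_tJ(a^\varepsilon)$ alone but must use the PDE. Substituting $\d_ta^\varepsilon=F^\varepsilon-\div v^\varepsilon$ from the mass equation gives $\d_tJ(a^\varepsilon)=J'(a^\varepsilon)(F^\varepsilon-\div v^\varepsilon)$; estimating $J'(a^\varepsilon)$ by composition, $F^\varepsilon$ as a quadratic term already controlled in $L^1_T$, and $\div v^\varepsilon$ through the parabolic smoothing on $v^\varepsilon$ — at worst with a factor $\varepsilon^{-1}$ coming from its high-frequency weight — one would bound $\|\d_tJ(a^\varepsilon)\|_{L^\infty_T(\dot B^{d/2}_{2,1})}$ by $\varepsilon^{-1}X^\varepsilon(t)(1+X^\varepsilon(t))$; combining this with $\|\varepsilon^2q^\varepsilon\|^{h,\varepsilon}_{L^1_T(\dot B^{d/2+1}_{2,1})}=\varepsilon\|\varepsilon q^\varepsilon\|^{h,\varepsilon}_{L^1_T(\dot B^{d/2+1}_{2,1})}\lesssim\varepsilon X^\varepsilon(t)$, the outer $\varepsilon$ and this inner $\varepsilon$ compensate the $\varepsilon^{-1}$ loss and leave $\lesssim(X^\varepsilon(t))^2$. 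The main obstacle throughout is precisely this $\varepsilon$-accounting — ensuring that no case leaves an uncompensated negative power of $\varepsilon$ — which is exactly what forces the specific choice of degenerate weights in the definition \eqref{Xhigh} of $X^{h,\varepsilon}$; once this is checked, summing over all contributions and absorbing the cubic remainders by \eqref{eq:smallZX} finishes the proof.
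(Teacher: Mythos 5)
Your overall route is the same as the paper's: for every nonlinear term ($F_1^\varepsilon$, $G^\varepsilon$, $H^\varepsilon$, $H_1^\varepsilon$, $I^\varepsilon$) the paper applies the hybrid product laws of Proposition \ref{LPP} (together with the composition estimate \eqref{prop:comphf} and the Bernstein inequalities of Proposition \ref{prop:Bernstein}) and tracks the degenerate $\varepsilon$-weights of \eqref{Xhigh} exactly as you describe; the commutator $R_1^\varepsilon$ is handled, as you propose, through the commutator estimate of Lemma \ref{CP}, pairing the $\varepsilon$ carried by $\nabla J(a^\varepsilon)$ with the one carried by $q^\varepsilon$; and for the $\d_tJ(a^\varepsilon)$ term the paper also substitutes the mass equation, so that the contributions are $J'(a^\varepsilon)\div v^\varepsilon$ and $J'(a^\varepsilon)\,v^\varepsilon\cdot\nabla a^\varepsilon$ (up to $a^\varepsilon\div v^\varepsilon$). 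Up to this point your plan and the paper's proof coincide.

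The one step that does not hold as you state it is the intermediate bound $\|\d_tJ(a^\varepsilon)\|_{L^\infty_T(\dot B^{d/2}_{2,1})}\lesssim \varepsilon^{-1}X^\varepsilon(t)\bigl(1+X^\varepsilon(t)\bigr)$ over \emph{all} frequencies. Since $J'(0)=1$, the leading contribution of $J'(a^\varepsilon)\div v^\varepsilon$ contains the linear term $\div v^\varepsilon$ itself, and in the low and medium frequency regimes the functional $X^\varepsilon$ only controls $v^\varepsilon$ in $L^p$-based Besov norms ($p\ge 2$); on $\R^d$ there is no way to pass from $\dot B^{s}_{p,1}$ to the $L^2$-based space $\dot B^{d/2}_{2,1}$ at fixed frequency when $p>2$, and the product structure cannot rescue a linear piece. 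So the claimed $L^\infty_T(\dot B^{d/2}_{2,1})$ bound is not available for $p>2$, and your ``outer $\varepsilon$ compensates the $\varepsilon^{-1}$'' accounting collapses for that portion. The paper circumvents this by estimating only the high-frequency restricted norm $\|\d_tJ(a^\varepsilon)\|^{h,\varepsilon}_{\dot B^{d/2}_{2,1}}$ (see \eqref{toupdateP2}, which is how the term produced by the linear analysis is actually used), and by splitting the two contributions with different pairings in time: the $\div v^\varepsilon$ part is paired with $q^\varepsilon$ in $L^2_T\times L^2_T$ (using the $L^2_T(\dot B^{d/2+2}_{2,1})$ control of $v^\varepsilon$ and time-interpolation for $\varepsilon^2 q^\varepsilon$), while the quadratic part $v^\varepsilon\cdot\nabla a^\varepsilon$ is paired in $L^\infty_T\times L^1_T$ and bounded through Proposition \ref{LPP}, whose cross terms precisely convert the $L^p$-based medium-frequency information into the required $L^2$-based output. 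If you replace your global $L^\infty_T$ bound on $\d_tJ(a^\varepsilon)$ by this high-frequency-restricted treatment (or, equivalently, estimate the $L^\infty$ norm of $\d_tJ(a^\varepsilon)$ through $\dot B^{d/p}_{p,1}\hookrightarrow L^\infty$ rather than $\dot B^{d/2}_{2,1}$), the rest of your $\varepsilon$-bookkeeping goes through and reproduces the paper's estimate.
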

\begin{proof}
Using commutator estimates from \cite{CBD3}, i.e. Lemma \ref{CP}, we obtain
\begin{align}\label{commutator12}
    \varepsilon^2\|R_1^\varepsilon\|^{h,\var}_{L^1_T(\dot{B}^{\frac{d}{2}+1}_{2,1})}& \lesssim \varepsilon^2\norme{\nabla J(a^\varepsilon)}_{L^\infty_T( B^{\frac dp}_{p,1})}\norme{q^\varepsilon}^{h,\varepsilon}_{L^1_T({B}^{\frac{d}{2}+1}_{2,1})}
+ \varepsilon^2 \norme{q^\varepsilon}^{\ell,\varepsilon}_{L^1_T( B^{\frac{d}{p}}_{p,1})}\norme{\nabla J(a^\varepsilon)}^{\ell,\varepsilon}_{L^\infty_T(\dot{B}^{\frac{d}{2}+1}_{p,1})}\\&+\varepsilon^2\norme{q^\varepsilon}_{L^1_T(B^{\frac dp}_{p,1})}\norme{\nabla J(a^\varepsilon)}^h_{L^\infty_T( B^{\frac{d}{2}+1}_{2,1})}
+  \varepsilon^2\norme{q^\varepsilon}^{\ell,\varepsilon}_{ L^1_T(B^{\frac dp}_{p,1})}\norme{\nabla J(a^\varepsilon)}^{\ell,\varepsilon}_{L^\infty_T( B^{\frac{d}{p}}_{p,1})} \nonumber
    \\&\lesssim X^\varepsilon(t)^2,\nonumber
\end{align}
since, for $\alpha\geq0$, we have $\varepsilon^\alpha\|a^\varepsilon\|^{m,\varepsilon}_{\dot{B}^{\frac{d}{2}+\alpha}_{2,1}} \lesssim \|a^\varepsilon\|^{m,\varepsilon}_{\dot{B}^{\frac{d}{2}}_{2,1}}$ and similarly for $q^\varepsilon$. Moreover, we have
\begin{align}\label{toupdateP2}
\int_0^T\varepsilon^2\|\d_tJ(a^\varepsilon)\|^{h,\var}_{\dot{B}^{\frac{d}{2}}_{2,1}}\|\varepsilon q^\varepsilon\|^{h,\var}_{\dot{B}^{\frac{d}{2}+1}_{2,1}}& \lesssim \varepsilon^2\|\div v^\varepsilon\|^{h,\varepsilon}_{L^2_T(\dot{B}^{\frac{d}{2}}_{2,1})}  \varepsilon\|q\|^{h,\var}_{L^2_T(\dot{B}^{\frac{d}{2}+1}_{2,1})} 
    \\& + \varepsilon^2\|v^\varepsilon\cdot \nabla a^\varepsilon\|^{h,\varepsilon}_{L^\infty_T(\dot{B}^{\frac{d}{2}}_{2,1})}  \varepsilon\|q\|^{h,\var}_{L^1_T(\dot{B}^{\frac{d}{2}+1}_{2,1})}.\nonumber
\end{align}
Using Proposition \ref{LPP}, we have
\begin{align}
\|v^\varepsilon\cdot \nabla a^\varepsilon\|^{h,\varepsilon}_{L^\infty_T(\dot{B}^{\frac{d}{2}}_{2,1})}&\lesssim\varepsilon\|v^\varepsilon\|_{L^\infty_T(\dot{B}^{\frac dp}_{p,1})}\|\nabla a^\varepsilon\|^{h}_{L^\infty_T(\dot{B}^{\frac d2}_{2,1})}+\varepsilon\|\nabla a^\varepsilon\|_{L^\infty_T(\dot{B}^{\frac dp}_{p,1})} \|v^\varepsilon\|^{h}_{L^\infty_T(\dot{B}^{\frac d2}_{2,1})}\nonumber\\&+ \varepsilon\|v^\varepsilon\|^{\ell,\varepsilon}_{L^\infty_T(\dot{B}^{\frac dp}_{p,1})}\|\nabla a^\varepsilon\|^{\ell,\varepsilon}_{L^\infty_T(\dot{B}^{\frac dp}_{p,1})},
\end{align}
which yields
\begin{align}\label{toupdateP3}
\int_0^T\varepsilon^2\|\d_tJ(a^\varepsilon)\|^{h,\var}_{\dot{B}^{\frac{d}{2}}_{2,1}}\|\varepsilon q^\varepsilon\|^{h,\var}_{\dot{B}^{\frac{d}{2}+1}_{2,1}}\lesssim X^\varepsilon(t)^2+X^\varepsilon(t)^3.
\end{align}
For the remaining terms, we rely on Proposition \ref{LPP}. Concerning $I^\varepsilon_{1}$, we have
   \begin{align}
        \label{eq:Ihigh1}
     \varepsilon^3\| q^\varepsilon\cdot\nabla v^\varepsilon\|^h_{L^1_T(\dot{B}^{\frac d2+1}_{2,1})}\nonumber&\lesssim   \varepsilon^3\|q^{\varepsilon}\|_{L^2_T(\dot{B}^{\frac dp}_{p,1})}\| \nabla v^\varepsilon\|^{h}_{L^2_T(\dot{B}^{\frac d2+1}_{2,1})}+ \varepsilon^3\|\nabla v^\varepsilon\|_{L^2_T(\dot{B}^{\frac dp}_{p,1})} \|q^{\varepsilon}\|^{h}_{L^2_T(\dot{B}^{\frac d2+1}_{2,1})}
      \\&+ \varepsilon^3\|q^{\varepsilon}\|^{\ell,\varepsilon}_{L^2_T(\dot{B}^{\frac dp+1}_{p,1})} \|\nabla v^\varepsilon\|^{\ell,\varepsilon}_{L^2_T(\dot{B}^{\frac dp}_{p,1})}+ \varepsilon^3\|q^{\varepsilon}\|^{\ell,\varepsilon}_{L^2_T(\dot{B}^{\frac dp}_{p,1})} \|\nabla v^\varepsilon\|^{\ell,\varepsilon}_{L^2_T(\dot{B}^{\frac dp+1}_{p,1})}
      \nonumber\\&\lesssim X^\varepsilon(t)^2,
    \end{align}
where we recall that the notation $\|\cdot\|^{\ell,\varepsilon}$ refers to the sum of low-frequency and medium-frequency norms.
Handling $q^\varepsilon\div v^\varepsilon$ is the same as handling $q^\varepsilon\cdot\nabla v^\varepsilon$ and we obtain $\|\varepsilon^3 I_1^\varepsilon\|^{h,\var}_{L^1_T(\dot{B}^{\frac{d}{2}+1}_{2,1})}\lesssim X^\varepsilon(t)^2.$ 
Regarding $F_1^\varepsilon=a^\varepsilon\div v^\varepsilon$, we have
\begin{align}\label{R-E49}
\varepsilon\|a^\varepsilon\div v^\varepsilon\|^{h,\varepsilon}_{L^1_T(\dot{B}^{\frac d2+1}_{2,1})}\nonumber&\lesssim\varepsilon\|a^\varepsilon\|_{L^\infty_T(\dot{B}^{\frac dp}_{p,1})}\|\div v^\varepsilon\|^{h,\varepsilon}_{L^1_T(\dot{B}^{\frac d2+1}_{2,1})}+\varepsilon\|\div v^\varepsilon\|_{L^1_T(\dot{B}^{\frac dp}_{p,1})}\|a^\varepsilon\|^{h,\varepsilon}_{L^{\infty}_T(\dot{B}^{\frac d2+1}_{2,1})}\nonumber\\&+
\varepsilon\|a^\varepsilon\|^{\ell,\varepsilon}_{L^{\infty}_T(\dot{B}^{\frac dp}_{p,1})}\|\div v^\varepsilon\|^{\ell,\varepsilon}_{L^1_T(\dot{B}^{\frac dp+1}_{p,1})}+\varepsilon\|a^\varepsilon\|^{\ell,\varepsilon}_{L^{\infty}_T(\dot{B}^{\frac dp+1}_{p,1})}\|\div v^\varepsilon\|^{\ell,\varepsilon}_{L^1_T(\dot{B}^{\frac dp}_{p,1})}\nonumber\\&\lesssim X^\varepsilon(t)X^{h,\varepsilon}(t)+(X^{m,\varepsilon}(t)+X^{\ell,\varepsilon}(t))^2\lesssim X^\varepsilon(t)^2.
\end{align}
Concerning $H^\varepsilon$, we have
\begin{align}\label{R-E51}
\varepsilon\|J(a^\varepsilon)\div q^\varepsilon\|^{h}_{L^1_T( \dot{B}^{\frac{d}{2}}_{2,1})}&\lesssim\varepsilon\|J(a^\varepsilon)\|_{L^2_T(\dot{B}^{\frac dp}_{p,1})}\|\div q^\varepsilon\|^{h}_{L^2_T(\dot{B}^{\frac d2}_{2,1})}+\varepsilon\|\div q^\varepsilon\|_{L^1_T(\dot{B}^{\frac dp}_{p,1})} \|J(a^\varepsilon)\|^{h}_{L^\infty_T(\dot{B}^{\frac d2}_{2,1})}\nonumber\\&+ \varepsilon\|J(a^\varepsilon)|^{\ell,\varepsilon}_{L^\infty_T(\dot{B}^{\frac dp}_{p,1})}\|\div q^\varepsilon\|^{\ell,\varepsilon}_{L^1_T(\dot{B}^{\frac dp}_{p,1})}\nonumber
\\&\lesssim X^\varepsilon(t)^2.
\end{align}
Using that $1/a^\varepsilon=1+I(a^\varepsilon)$, we have
\begin{align}\label{R-E510}
&\varepsilon\|\dfrac{N(\nabla v^\varepsilon,\nabla v^\varepsilon)}{1+a^\varepsilon}\|^{h}_{L^1_T( \dot{B}^{\frac{d}{2}}_{2,1})}\\&\lesssim \varepsilon\|(1+I(a^\varepsilon))\nabla v^\varepsilon\|_{L^2_T(\dot{B}^{\frac dp}_{p,1})}\|\nabla v^\varepsilon\|^{h}_{L^2_T(\dot{B}^{\frac d2}_{2,1})}+\varepsilon\|\nabla v^\varepsilon\|_{L^2_T(\dot{B}^{\frac dp}_{p,1})} \|(1+I(a^\varepsilon))\nabla v^\varepsilon\|^{h}_{L^2_T(\dot{B}^{\frac d2}_{2,1})}\nonumber\\&+ \varepsilon\|(1+I(a^\varepsilon))\nabla v^\varepsilon\|^{\ell,\varepsilon}_{L^2_T(\dot{B}^{\frac dp}_{p,1})}\|\nabla v^\varepsilon\|^{\ell,\varepsilon}_{L^2_T(\dot{B}^{\frac dp}_{p,1})}\nonumber\\&\lesssim \varepsilon(1+\|a^\varepsilon\|_{L^{\infty}_T(\dot{B}^{\frac dp}_{p,1})})\|v^\varepsilon\|_{L^2_T(\dot{B}^{\frac {d}{p}+1}_{p,1})}\|v^\varepsilon\|^{h}_{L^2_T(\dot{B}^{\frac {d}{2}+1}_{2,1})}+\varepsilon\|v^\varepsilon\|_{L^2_T(\dot{B}^{\frac {d}{p}+1}_{p,1})}\|(1+I(a^\varepsilon))\nabla v^\varepsilon\|^{h}_{L^2_T(\dot{B}^{\frac d2}_{2,1})}\nonumber\\&+
\varepsilon(1+\|a^\varepsilon\|_{L^{\infty}_T(\dot{B}^{\frac dp}_{p,1})})\|v^\varepsilon\|_{L^2_T(\dot{B}^{\frac {d}{p}+1}_{p,1})}\|v^\varepsilon\|^{\ell,\varepsilon}_{L^2_T(\dot{B}^{\frac {d}{p}+1}_{p,1})}. \nonumber
\end{align}
Then, we have
$$\|(1+I(a^\varepsilon))\nabla v^\varepsilon\|^{h}_{L^2_T(\dot{B}^{\frac d2}_{2,1})}\leq \|v^\varepsilon\|^{h}_{L^2_T(\dot{B}^{\frac d2+1}_{2,1})}+\|I(a^\varepsilon)\nabla v^\varepsilon\|^{h}_{L^2_T(\dot{B}^{\frac d2}_{2,1})},$$
and from a composition law
\begin{eqnarray}\label{R-E52}
\|I(a)\|^{h}_{L^{\infty}_T(\dot{B}^{\frac d2}_{2,1})}&\lesssim& (1+\|a\|^{\ell,\varepsilon}_{L^{\infty}_T(\dot{B}^{\frac {d}{p}}_{p,1})}+\varepsilon\|a\|^{h,\varepsilon}_{L^{\infty}_T(\dot{B}^{\frac {d}{2}}_{2,1})})(\|a\|^{\ell,\varepsilon}_{L^{\infty}_T(\dot{B}^{\frac {d}{p}}_{p,1})}+\|a\|^{h,\varepsilon}_{L^{\infty}_T(\dot{B}^{\frac {d}{2}}_{2,1})})\nonumber\\&\lesssim& (1+X^\varepsilon(t))X^\varepsilon(t).
\end{eqnarray}
Employing Proposition \ref{LPP} implies that 
\begin{eqnarray}\label{R-E53}
\|I(a^\varepsilon)\nabla v^\varepsilon\|^{h}_{L^2_T(\dot{B}^{\frac d2}_{2,1})}&\lesssim& \|I(a^\varepsilon)\|_{L^2_T(\dot{B}^{\frac dp}_{p,1})}\|\nabla v^\varepsilon\|^{h}_{L^{\infty}_T(\dot{B}^{\frac d2}_{2,1})}+\|\nabla v^\varepsilon\|_{L^2_T(\dot{B}^{\frac dp}_{p,1})}\|I(a)\|^{h}_{L^{\infty}_T(\dot{B}^{\frac dp}_{p,1})}\nonumber\\&& +\|I(a)\|^{\ell,\varepsilon}_{L^{\infty}_T(\dot{B}^{\frac dp}_{p,1})}\|\nabla v^\varepsilon\|^{\ell,\varepsilon}_{L^{2}_T(\dot{B}^{\frac dp}_{p,1})}\nonumber\\&\lesssim& X^\varepsilon(t)X^{h,\varepsilon}(t)+(1+X^\varepsilon(t))X^\varepsilon(t)^2+X^\varepsilon(t)(X^{m,\varepsilon}(t)+X^{\ell,\varepsilon}(t))\nonumber\\&\lesssim& (1+X^\varepsilon(t))X^\varepsilon(t)^2.
\end{eqnarray}
Gathering \eqref{R-E51}, \eqref{R-E510} \eqref{R-E52} and \eqref{R-E53}, we arrive at 
\begin{eqnarray}\label{R-E533}
\varepsilon\|\dfrac{N(\nabla v^\varepsilon,\nabla v^\varepsilon)}{1+a^\varepsilon}\|^{h}_{L^1_T( \dot{B}^{\frac{d}{2}}_{2,1})}\lesssim(1+X^\varepsilon(t)+X^\varepsilon(t)^2)X^\varepsilon(t)^2.
\end{eqnarray}
Similarly, we have
\begin{align}\label{R-E54}
\varepsilon\|\wt H_1(a^\varepsilon)\theta^\varepsilon \div v^\varepsilon\|^{h}_{L^1_T(\dot{B}^{\frac d2}_{2,1})}\nonumber&\lesssim \varepsilon\|\wt H_1(a^\varepsilon)\theta^\varepsilon\|_{L^2_T(\dot{B}^{\frac dp}_{p,1})}\|\div v^\varepsilon\|^{h}_{L^2_T(\dot{B}^{\frac d2}_{2,1})}\nonumber\\&+\varepsilon\|\div v^\varepsilon\|_{L^2_T(\dot{B}^{\frac dp}_{p,1})}\|\wt H_1(a^\varepsilon)\theta^\varepsilon\|^{h}_{L^2_T(\dot{B}^{\frac d2}_{2,1})}+\varepsilon\|\wt H_1(a^\varepsilon)\theta^\varepsilon\|^{\ell,\varepsilon}_{L^2_T(\dot{B}^{\frac dp}_{p,1})}\|\div v^\varepsilon\|^{\ell,\varepsilon}_{L^2_T(\dot{B}^{\frac dp}_{p,1})}
\nonumber\\&\lesssim\varepsilon\|a\|_{L^{\infty}_T(\dot{B}^{\frac dp}_{p,1})}\|\theta^\varepsilon\|_{L^{2}_T(\dot{B}^{\frac dp}_{p,1})}\|v^\varepsilon\|^{h}_{L^2_T(\dot{B}^{\frac d2+1}_{2,1})}
+\varepsilon\|v^\varepsilon\|_{L^2_T(\dot{B}^{\frac dp+1}_{p,1})}\|\wt H_1(a^\varepsilon)\theta^\varepsilon\|^{h}_{L^2_T(\dot{B}^{\frac d2}_{2,1})}\nonumber\\&+\varepsilon\|a\|_{L^{\infty}_T(\dot{B}^{\frac dp}_{p,1})}\|\theta^\varepsilon\|_{L^{2}_T(\dot{B}^{\frac dp}_{p,1})}\|v^\varepsilon\|^{\ell,\varepsilon}_{L^2_T(\dot{B}^{\frac dp+1}_{p,1})},
\end{align}
where \begin{align}\label{R-E55}&\|\wt H_1(a^\varepsilon)\theta^\varepsilon\|^{h}_{L^2_T(\dot{B}^{\frac d2}_{2,1})}\nonumber\\&\lesssim  \|a^\varepsilon\|_{L^{\infty}_T(\dot{B}^{\frac dp}_{p,1})}\|\theta^\varepsilon\|^{h}_{L^{2}_T(\dot{B}^{\frac d2}_{2,1})}+\|\theta^\varepsilon\|_{L^{2}_T(\dot{B}^{\frac dp}_{p,1})}\|H_1(a^\varepsilon)\|^{h}_{L^{\infty}_T(\dot{B}^{\frac d2}_{2,1})}+\|a^\varepsilon\|^{\ell,\varepsilon}_{L^{\infty}_T(\dot{B}^{\frac dp}_{p,1})}\|\theta^\varepsilon\|^{\ell,\varepsilon}_{L^{2}_T(\dot{B}^{\frac dp}_{p,1})}\nonumber\\ \hspace{5mm} &\lesssim  (1+X^\varepsilon(t))X^\varepsilon(t)^2. \end{align}
Hence, we deduce that 
\begin{eqnarray}\label{R-E56}
\varepsilon\|\wt H_1(a^\varepsilon)\theta^\varepsilon \div v^\varepsilon\|^{h}_{L^1_T(\dot{B}^{\frac d2}_{2,1})}\lesssim (1+X^\varepsilon(t))X^\varepsilon(t)^3.
\end{eqnarray}
Bounding $H_1$ follows from similar considerations and we have $\|\varepsilon^2H_1^\varepsilon\|^{h,\var}_{L^1_T(\dot{B}^{\frac{d}{2}+1}_{2,1})}\leq X^\varepsilon(t)^2.$

Finally, we bound the nonlinear terms in $G^{\varepsilon}$. We have
\begin{align}\label{R-E57}
\varepsilon\|v^{\varepsilon}\cdot\nabla v^{\varepsilon}\|^{h}_{L^1_T(\dot{B}^{\frac d2}_{2,1})}\nonumber&\lesssim\varepsilon\|v^{\varepsilon}\|_{L^{2}_T(\dot{B}^{\frac dp}_{p,1})}\|\nabla v^{\varepsilon}\|^{h}_{L^2_T(\dot{B}^{\frac d2}_{2,1})}+\varepsilon\|\nabla v^{\varepsilon}\|_{L^1_T(\dot{B}^{\frac dp}_{p,1})}\|v^{\varepsilon}\|^{h}_{L^{\infty}_T(\dot{B}^{\frac d2}_{2,1})}\nonumber\\&+ \varepsilon\|v^{\varepsilon}\|^{\ell,\varepsilon}_{L^{2}_T(\dot{B}^{\frac dp}_{p,1})}\|\nabla v^{\varepsilon}\|^{\ell,\varepsilon}_{L^{2}_T(\dot{B}^{\frac dp}_{p,1})}\nonumber\\&\lesssim X^\varepsilon(t)^2.
\end{align}
Using \eqref{eq:prod433} together with composition estimates, we obtain
\begin{align}\label{R-E58} \nonumber
\varepsilon\|J(a^{\varepsilon})\mathcal{A}v^\varepsilon\|^{h}_{L^1_T(\dot{B}^{\frac d2}_{2,1})}&\lesssim \varepsilon\|a^{\varepsilon}\|_{L^{\infty}_T( \dot{B}^{\frac{d}{p}}_{p,1})}\|(v^{h,\varepsilon},v^{\ell})\|_{L^1_T( \dot{B}^{\frac{d}{2}+2}_{2,1})} +\varepsilon\|v\|^{m,\varepsilon}_{L^{1}_T( \dot{B}^{\frac{d}{p}+2}_{p,1})}\|a^{\varepsilon}\|_{L^\infty_T( \dot{B}^{\frac{d}{p}}_{p,1})}\\&+\varepsilon \|a\|^{\ell,\varepsilon}_{L^2_T(\dot{B}^{\frac{d}{p}}_{p,1})}\|v\|^{m,\varepsilon}_{L^2_T(\dot{B}^{\frac{d}{p}+2}_{p,1})}\\&\lesssim X^\varepsilon(t)^2.\nonumber 
\end{align}
For the terms $K_1$, using \eqref{eq:prod433}, we obtain
\begin{align}\label{R-E59} \|K_1(a^{\varepsilon})\nabla a^{\varepsilon}\|^{h}_{L^1_T( \dot{B}^{\frac{d}{2}}_{2,1})}\nonumber&\lesssim \varepsilon\|a^{\varepsilon}\|_{L^{\infty}_T( \dot{B}^{\frac{d}{p}}_{p,1})}\|(a^{h,\varepsilon},a^{\ell})\|_{L^1_T( \dot{B}^{\frac{d}{2}+1}_{2,1})} +\varepsilon\|a\|^{m,\varepsilon}_{L^{2}_T( \dot{B}^{\frac{d}{p}}_{p,1})}\|a^{\varepsilon}\|_{L^2_T( \dot{B}^{\frac{d}{p}}_{p,1})}\\&\quad+ \varepsilon\|a\|^{\ell,\varepsilon}_{L^2_T(\dot{B}^{\frac{d}{p}}_{p,1})}\|a\|^{m,\varepsilon}_{L^2_T(\dot{B}^{\frac{d}{p}}_{p,1})} \lesssim X^\varepsilon(t)^2.
\end{align}
Using \eqref{eq:prod422}, we have
\begin{align}\label{R-E60}\varepsilon\|K_2(a^{\varepsilon})\nabla \theta^{\varepsilon}\|^{h}_{L^1_T( \dot{B}^{\frac{d}{2}}_{2,1})}\nonumber&\lesssim \varepsilon\|K_2(a^{\varepsilon})\|_{L^2_T( \dot{B}^{\frac{d}{p}}_{p,1})}
\|\nabla \theta^{\varepsilon}\|^{h}_{L^2_T(\dot{B}^{\frac{d}{2}}_{2,1})}+\varepsilon\|\nabla \theta^{\varepsilon}\|_{L^1_T( \dot{B}^{\frac{d}{p}}_{p,1})}\|K_2(a^{\varepsilon})\|^{h}_{L^\infty_T( \dot{B}^{\frac{d}{2}}_{2,1})}\nonumber\\&+\varepsilon\|K_2(a^{\varepsilon})\|^{\ell,\varepsilon}_{L^\infty_T( \dot{B}^{\frac{d}{p}}_{p,1})}\|\nabla \theta^{\varepsilon}\|^{\ell,\varepsilon}_{L^1_T( \dot{B}^{\frac{d}{p}}_{p,1})}
\nonumber\\&\lesssim(1+X^\varepsilon(t))X^\varepsilon(t)^2
\end{align}
and using \eqref{eq:prod433}
\begin{align}\label{R-E61}
\varepsilon\|\theta^{\varepsilon}\nabla K_3(a^{\varepsilon})\|^{h}_{L^1_T( \dot{B}^{\frac{d}{2}}_{2,1})}\nonumber&\lesssim \varepsilon\|\theta^{\varepsilon}\|_{L^{2}_T( \dot{B}^{\frac{d}{p}}_{p,1})}\|(a^{h,\varepsilon},a^{\ell})\|_{L^2_T( \dot{B}^{\frac{d}{2}+1}_{2,1})} +\varepsilon\|a^\varepsilon\|^{m,\varepsilon}_{L^{2}_T( \dot{B}^{\frac{d}{p}}_{p,1})}\|\theta^{\varepsilon}\|_{L^2_T( \dot{B}^{\frac{d}{p}}_{p,1})}\\&\quad+ \varepsilon\|\theta\|^{\ell,\varepsilon}_{L^2_T(\dot{B}^{\frac{d}{p}}_{p,1})}\|a\|^{m,\varepsilon}_{L^2_T(\dot{B}^{\frac{d}{p}+1}_{p,1})} \lesssim X^\varepsilon(t)^2.
\end{align}
We now control $\|G^\varepsilon\|^{h,\var}_{L^2_T(\dot{B}^{\frac{d}{2}}_{2,1})}$.
We only treat the term $J(a^{\varepsilon})\mathcal{A}v^\varepsilon$, the other terms can be treated in the same way as in the medium-frequency regime. Using \eqref{eq:prod433}, we have
\begin{align}
\nonumber\varepsilon\|J(a^{\varepsilon})\mathcal{A}v^\varepsilon\|^{h,\varepsilon}_{L^2_T( \dot{B}^{\frac{d}{2}}_{2,1})}&\lesssim \varepsilon\|a^{\varepsilon}\|_{L^{\infty}_T( \dot{B}^{\frac{d}{p}}_{p,1})}\|(v^{h,\varepsilon},v^{\ell})\|_{L^2_T( \dot{B}^{\frac{d}{2}+2}_{2,1})} \\&+\varepsilon\|v\|^{m,\varepsilon}_{L^{2}_T( \dot{B}^{\frac{d}{p}+2}_{p,1})}\|J(a^{\varepsilon})\|_{L^\infty_T( \dot{B}^{\frac{d}{p}}_{p,1})}+ \varepsilon\|a\|^{\ell,\varepsilon}_{L^\infty_T(\dot{B}^{\frac{d}{p}}_{p,1})}\|v\|^{m,\varepsilon}_{L^2_T(\dot{B}^{\frac{d}{p}+2}_{p,1})} \\\nonumber &\lesssim X^\varepsilon(t)^2.
\end{align}
\end{proof}

\subsection{Concluding the proof of Theorem \ref{thm:exist}}

Gathering the estimates from the previous section, we obtain
\begin{align}\label{est:ccl}
X^\varepsilon(t)\leq X^\varepsilon_0+X^\varepsilon(t)^2.
\end{align}
From here, a classical bootstrap argument applied to the local-in-time solution constructed in \cite{FA2023-2}, similar to the one used in \cite{HeDanchin}, allows us to conclude the existence of global-in-time solutions for the system \ref{LinearHypNSCzero}.
We omit here the estimates to justify the uniqueness of the solutions as it follows closely the stability estimates that are established in Section \ref{sec:relaxation}. This concludes the proof Theorem \ref{thm:exist}.

\qed
\eqref{LinearHypNSCzero}
\section{Optimal time-decay estimates: Proof of Theorem \ref{Thm:decay}} \label{sec:decay}
In this section, we follow the Lyapunov-type energy argument developed by Xin and Xu in \cite{XX} to derive large-time decay estimates. The main difference is that we need to perform the analysis uniformly with respect to the relaxation parameter $\var$ in each frequency domain.
\subsection{Linear estimates}
For low frequencies, $j\leq J_0$, it follows from \eqref{feiasgn} that
\begin{eqnarray}\label{R-E62}
\frac{d}{dt}\mathcal{L}_{j}+c2^{2j}\|( a_j^\varepsilon,v_j^\varepsilon,\theta_j^\varepsilon)\|_{L^2}\lesssim K\|Q_j^\varepsilon\|_{L^2}+\|(F_j^\varepsilon,G_j^\varepsilon,H_j^\varepsilon)\|_{L^2}.
\end{eqnarray}
On the other hand, we have
\begin{eqnarray}\label{R-E63}
\frac{\varepsilon}{2}\frac{d}{dt}\|Q_{j}^{\varepsilon}\|^2_{L^2}+\frac{1}{\varepsilon}\|Q_{j}^{\varepsilon}\|^2_{L^2}\lesssim \varepsilon\|f_{1j}^{\varepsilon}+I^{\varepsilon}_{j}+\kappa \nabla H^{\varepsilon}_{j}\|_{L^2}\|Q_{j}^{\varepsilon}\|_{L^2},
\end{eqnarray}
which leads to
\begin{eqnarray}\label{R-E64}
\frac{d}{dt}\|\varepsilon Q_{j}^{\varepsilon}\|_{L^2}+\frac{1}{\varepsilon}\|Q_{j}^{\varepsilon}\|_{L^2}\lesssim \varepsilon(\|f_{1j}^{\varepsilon}+I^{\varepsilon}_{j}+\kappa \nabla H^{\varepsilon}_{j}\|_{L^2}).
\end{eqnarray}
Adding \eqref{R-E62} to \eqref{R-E64}, we have
\begin{eqnarray}\label{R-E65712}
\frac{d}{dt}(\mathcal{L}_{j}+\|\varepsilon Q_{j}^{\varepsilon}\|_{L^2})+2^{2j}\|( a_j^\varepsilon,v_j^\varepsilon,\theta_j^\varepsilon)\|_{L^2}+\frac{1}{2\varepsilon}\|Q_{j}^{\varepsilon}\|_{L^2}\lesssim  \varepsilon 2^{2j}\|q^{\varepsilon}_{j}\|_{L^2}+\|(F_j^\varepsilon,G_j^\varepsilon,H_j^\varepsilon,\varepsilon I^{\varepsilon})\|_{L^2}.
\end{eqnarray}
Note that $Q^{\varepsilon}=q^{\varepsilon}+\nabla\theta^{\varepsilon}$, thus choosing $\varepsilon$ sufficiently small, we obtain
\begin{align}\label{R-E66}
\frac{d}{dt}\|(a^\varepsilon,v^\varepsilon,\theta^\varepsilon,\varepsilon q^{\varepsilon})^{\ell}\|_{\dot{B}^{\frac {d}{2}-1}_{2,1}}+\|( a^\varepsilon,v^\varepsilon,\theta^\varepsilon)^{\ell}\|_{\dot{B}^{\frac {d}{2}+1}_{2,1}}+\frac{1}{\varepsilon}\|Q^{\varepsilon,\ell}\|_{\dot{B}^{\frac {d}{2}-1}_{2,1}}&+\|q^{\varepsilon,\ell}\|_{\dot{B}^{\frac {d}{2}}_{2,1}}\nonumber\\ &\lesssim \|(F^\varepsilon,G^\varepsilon,H^\varepsilon,\varepsilon I^{\varepsilon})\|^{\ell}_{\dot{B}^{\frac {d}{2}-1}_{2,1}}.
\end{align}

For medium frequencies, $J_0\leq j\leq J_\varepsilon$, applying the operator $\partial_{i}\dot{\Delta}_{j}$  to the first equation in \eqref{LinearHypNSCMed} and denoting $R_{j}^{i}\triangleq[u^{\varepsilon}\cdot\nabla,\partial_{i}\dot{\Delta}_{j}]a^{\varepsilon}$
gives
\begin{equation}\label{R-E67}
 \partial_{t}\partial_{i}a_j+u^{\varepsilon}\cdot\nabla\partial_{i}a_j+\partial_{i}a_j=   -\partial_{i}\dot{\Delta}_{j}(a\div u)-\partial_{i}\div w_j+R_{j}^{i},\quad i=1,\cdots,d.\end{equation}
Multiplying  by $|\partial_{i}a_j^{\varepsilon}|^{p-2}\partial_{i}a_j^{\varepsilon},$   integrating on $\R^d,$ and performing an integration by
parts in the second term of  \eqref{R-E67}, we get
$$\displaylines{
\frac1p\frac{d}{dt}\|\partial_{i}a^{\varepsilon}_j\|_{L^p}^p+\|\partial_{i}a_j^{\varepsilon}\|_{L^p}^p=\frac1p\int\div u^{\varepsilon}\:|\partial_{i}a_j|^p\,dx
+\int\bigl(R_j^i -\partial_{i}\dot{\Delta}_{j}(a^{\varepsilon}\div u^{\varepsilon})-\partial_{i}\div w_j^{\varepsilon})|\partial_{i}a^{\varepsilon}_j|^{p-2}\partial_{i}a^{\varepsilon}_j\,dx.}
$$
Summing up on $i=1,\cdots,d,$ and applying H\"older  and Bernstein inequalities implies that
\begin{multline}\label{R-E68}
\frac1p\frac{d}{dt}\|\nabla a_j^{\varepsilon}\|_{L^p}+\|\nabla a_j^{\varepsilon}\|_{L^p}\leq\frac{1}{p}\|\mathrm{div}u^{\varepsilon}\|_{L^\infty}\|\nabla a_j^{\varepsilon}\|_{L^p}+\|\nabla\ddj(a^{\varepsilon}\mathrm{div}u^{\varepsilon})\|_{L^p}+2^{2j}\|w_j^{\varepsilon}\|_{L^p}+\|R_j^{\varepsilon}\|_{L^p}.
\end{multline}
Similarly, we have
\begin{eqnarray}\label{R-E69}
\frac{d}{dt}\|\varepsilon\Lambda^{-1}Q_j^{\varepsilon}\|_{L^p}+\frac{1}{\varepsilon}\|\Lambda^{-1}Q_j^{\varepsilon}\|_{L^p}\leq \var \|\Lambda^{-1}(f_{1j}^\varepsilon+\nabla a^\varepsilon_{j}+ I^\varepsilon_{j}+\kappa \nabla H^\varepsilon_{j})\|_{L^p}.
\end{eqnarray}
Applying $\ddj$ to the second and third equations yields for all $j\in \mathbb{Z}$, with the help of Lemma \ref{maximalL1L2}, we obtain
\begin{eqnarray}\label{R-E70}
\frac{d}{dt}\|w^{\varepsilon}_{j}\|_{L^p}+2^{2j}\|w^{\varepsilon}_{j}\|_{L^p}\leq \|w^{\varepsilon}_{j}\|_{L^p}+2^{-2j}\|\nabla a^{\varepsilon}_{j}\|_{L^p}+\|\nabla \theta^{\varepsilon}_{j}\|_{L^p}+\|G^{\varepsilon}_{j}\|_{L^p}+2^{-2j}\|\nabla F^{\varepsilon}_{j}\|_{L^p}
\end{eqnarray}
and
\begin{eqnarray}\label{R-E71}
\frac{d}{dt}\|\Lambda^{-1}\theta_j^{\varepsilon}\|_{L^p}+2^{2j}\|\Lambda^{-1}\theta_j^{\varepsilon}\|_{L^p}\leq\|w^{\varepsilon}_{j}\|_{L^p}
+\|\Lambda^{-1}a^{\varepsilon}_{j}\|_{L^p}+\|\Lambda^{-1}\div Q^{\varepsilon}_{j}\|_{L^p}+\|\Lambda^{-1}H^{\varepsilon}_{j}\|_{L^p}.
\end{eqnarray}
Since $J_0 \leq j \leq J_{\varepsilon}$, we have $2^{-2j}\leq 1/K^2$ and $2^{J_{\varepsilon}}\leq k/\varepsilon$. Choosing $K$ suitably large and $k$ suitably small in the inequalities \eqref{R-E68}- \eqref{R-E71},  we get
\begin{align}\label{R-E72}
\frac{d}{dt}(\|\nabla a_j^{\varepsilon}\|_{L^p}+\|w^{\varepsilon}_{j}\|_{L^p}&+\|\varepsilon\Lambda^{-1}Q_j^{\varepsilon}\|_{L^p}+\|\Lambda^{-1}\theta_j^{\varepsilon}\|_{L^p})\\&+\|\nabla a_j^{\varepsilon}\|_{L^p}+2^{2j}\|w^{\varepsilon}_{j}\|_{L^p}+\frac{1}{\varepsilon}\|\Lambda^{-1}Q_j^{\varepsilon}\|_{L^p}+2^{2j}\|\Lambda^{-1}\theta_j^{\varepsilon}\|_{L^p}\nonumber\\
&\lesssim \|G^{\varepsilon}_{j}\|_{L^p}+2^{-2j}\|\nabla F^{\varepsilon}_{j}\|_{L^p}+\|\mathrm{div}u^{\varepsilon}\|_{L^\infty}\|\nabla a_j^{\varepsilon}\|_{L^p}+\|\nabla\ddj(a^{\varepsilon}\mathrm{div}u^{\varepsilon})\|_{L^p}\nonumber\\&\quad+\|R_j^{\varepsilon}\|_{L^p}+\var \|\Lambda^{-1}(f_{1j}^\varepsilon+ I^\varepsilon_{j}+\kappa \nabla H^\varepsilon_{j})\|_{L^p}. \nonumber
\end{align}
Recalling that $Q^{\var}=q^{\var}+\kappa \nabla \theta^{\var}$ and $ w^{\var}=v^{\var}+(-\Delta)^{-1}\nabla a^{\varepsilon},$
we have
\begin{eqnarray}\label{R-E73}
\var\|\nabla\div \Lambda^{-1}q^{\var}_{j}\|_{L^p}\lesssim \frac{k^2}{\var}\|\Lambda^{-1}Q_j^{\varepsilon}\|_{L^p}+k2^{j}\|\theta_j^{\varepsilon}\|_{L^p}
\end{eqnarray}
and
\begin{eqnarray}\label{R-E74}
\var\|\nabla\div \Lambda^{-1}v^{\var}_{j}\|_{L^p}\lesssim \frac{\var}{K}(2^{2j}\|w^{\varepsilon}_{j}\|_{L^p})+\frac{\var}{K}\|\nabla a^{\varepsilon}_{j}\|_{L^p}.
\end{eqnarray}
Multiplying by $2^{j(\frac{d}{p}-1)}$ on both sides of \eqref{R-E72} and summing over $j\in [J_0,J_{\var}]$, it follows from \eqref{R-E73}-\eqref{R-E74} that
\begin{multline}\label{R-E75}
\frac{d}{dt}(\|a^{\var}\|^{m}_{\dot{B}^{d/p}_{p,1}}+\|w^{\var}\|^{m}_{\dot{B}^{d/p-1}_{p,1}}+\|\var Q^{\var}\|^{m}_{\dot{B}^{d/p-2}_{p,1}}+\|\theta^{\var}\|^{m}_{\dot{B}^{d/p-2}_{p,1}})\\+\|a^{\var}\|^{m}_{\dot{B}^{d/p}_{p,1}}
+\|w^{\var}\|^{m}_{\dot{B}^{d/p+1}_{p,1}}+\frac{1}{\varepsilon}\|Q^{\var}\|^{m}_{\dot{B}^{d/p-2}_{p,1}}+\|\theta^{\var}\|^{m}_{\dot{B}^{d/p}_{p,1}}
\\ \lesssim \|G^{\var}\|^{m}_{\dot{B}^{d/p-1}_{p,1}}+\|F^{\var}\|^{m}_{\dot{B}^{d/p-2}_{p,1}}+\|\nabla v^{\var}\|_{\dot{B}^{d/p}_{p,1}}\|a^{\var}\|_{\dot{B}^{d/p}_{p,1}}+\varepsilon\|I^{\varepsilon}\|^{m}_{\dot{B}^{d/p-2}_{p,1}}
+\|H^{\varepsilon}\|^{m}_{\dot{B}^{d/p-2}_{p,1}}.
\end{multline}

In the high-frequency regime,  we have $j\geq J_0$, so it follows from the previous lines that
\begin{eqnarray*}\label{R-E76}
\frac{d}{dt}(\|\nabla a^{\var}_{j}\|_{L^p}+\|w^{\var}_{j}\|_{L^p})+\|\nabla a^{\var}_{j}\|_{L^p}+2^{2j}\|w^{\var}_{j}\|_{L^p}\leq \|\nabla \theta_{j}\|_{L^p}+\|G^{\var}_{j}\|_{L^p}+2^{-2j}\|\nabla F^{\var}_{1j}\|_{L^p}\nonumber\\+\|\div v^{\var}\|_{L^\infty}\|\nabla a^{\var}_{j}\|_{L^p}+\|R_{j}\|_{L^p}.
\end{eqnarray*}
On the other hand, 
\begin{align}\label{R-E77}
\frac 12\frac{d}{dt}\|(\sqrt{\kappa}\theta^{\var}_{j},\varepsilon q^{\var}_{j})\|^2_{L^2}+&\frac{1}{\varepsilon^2}\|\varepsilon q^{\var}_{j}\|^2_{L^2} \nonumber\\&\lesssim \int \div w^{\var}_{j}\theta^{\var}_{j}+\int H^{\var}_{j}\theta^{\var}_{j}+\varepsilon^2\int I^{\var}_{1j}q^{\var}_{j}+\varepsilon^2\int\Delta_{j}(v^{\var}\cdot\nabla q^{\var})q^{\var}_{j} \\ \nonumber &\lesssim
\|\div w^{\var}_{j}\|_{L^2}\|\theta^{\var}_{j}\|_{L^2}+\|(H^{\var}_{j},\varepsilon I^{\var}_{1j})\|_{L^2}\|(\theta^{\var}_{j},\varepsilon q^{\var}_{j})\|_{L^2}+\|\varepsilon\Delta_{j}(v^{\var}\cdot\nabla q^{\var})\|_{L^2}\|\varepsilon q^{\var}_{j}\|_{L^2}.
\end{align}
\begin{multline}\label{R-E78}
\frac{d}{dt}\int 2^{-2j} q^{\var}_{j}\cdot\nabla\theta^{\var}_{j}+\frac{1}{\var^2}\|\theta^{\var}_{j}\|^2_{L^2}\\ \leq \|q^{\var}_{j}\|^2_{L^2}+\frac{2^{-2j}}{\var^2}\|\theta^{\var}_{j}\|_{L^2}\|\nabla\theta^{\var}_{j}\|_{L^2}+\|w^{\var}_{j}\|_{L^2}\|q^{\var}_{j}\|_{L^2}
+\|H^{\var}_{j}\|_{L^2}\|\var q^{\var}_{j}\|_{L^2}+\|\var I^{\var}_{1j}\|_{L^2}\|\theta^{\var}_{j}\|_{L^2}.
\end{multline}
Combining \eqref{R-E77} \eqref{R-E78}, we get
\begin{multline}\label{R-E79}
\frac{d}{dt}\mathcal{L}^{h}_{j}+\frac{1}{\var^2}\|(\theta^{\var}_{j},\varepsilon q^{\var}_{j})\|^2_{L^2}\lesssim \|\div w^{\var}_{j}\|_{L^2}\|\theta^{\var}_{j}\|_{L^2}+\|(H^{\var}_{j},\varepsilon I^{\var}_{1j})\|_{L^2}\|(\theta^{\var}_{j},\varepsilon q^{\var}_{j})\|_{L^2}+\|\varepsilon\Delta_{j}(v^{\var}\cdot\nabla q^{\var})\|_{L^2}.
\end{multline}
Using that $\mathcal{L}^{h}_{j}\approx \|(\theta^{\var}_{j},\varepsilon q_{j}^{\var})\|^2_{L^2}$ leads to
\begin{eqnarray}\label{R-E80}
\frac{d}{dt}\|(\var\theta^{\var}_{j},\varepsilon^2 q_{j}^{\var})\|_{L^2}+\frac{1}{\var}\|(\theta^{\var}_{j},\varepsilon q_{j}^{\var})\|_{L^2}\lesssim \var
\|\div w_{j}^{\var}\|_{L^2}+\|(\var H^{\var}_{j},\varepsilon^2 I^{\var}_{1j})\|_{L^2}.
\end{eqnarray}
Therefore, we obtain
\begin{align}\label{R-E81}
\frac{d}{dt}\Big(\|\varepsilon a^{\var}\|^{h}_{\dot{B}^{d/2+1}_{2,1}}+\|\varepsilon w^{\var}\|^{h}_{\dot{B}^{d/2}_{2,1}}&+\|\varepsilon^2 \theta^{\var}\|^{h}_{\dot{B}^{d/2+1}_{2,1}}+\|\varepsilon^3q^{\var}\|^{h}_{\dot{B}^{d/2+1}_{2,1}}\Big)\\&+\|\varepsilon a^{\var}\|^{h}_{\dot{B}^{d/2+1}_{2,1}}+\|\varepsilon w^{\var}\|^{h}_{\dot{B}^{d/2+2}_{2,1}}+\|(\theta^{\var},\var q^{\var})\|^{h}_{\dot{B}^{d/2+1}_{2,1}} \\ &\lesssim \varepsilon\|G^{\var}\|^{h}_{\dot{B}^{d/2}_{2,1}}+\varepsilon\|F^{\var}_1\|^{h}_{\dot{B}^{d/2-1}_{2,1}}+\varepsilon\|\nabla v^{\var}\|_{\dot{B}^{d/2}_{2,1}}\|a^{\var}\|^{h}_{\dot{B}^{d/2+1}_{2,1}}\\&+\varepsilon\|\nabla v^{\var}\|_{\dot{B}^{d/2}_{2,1}}\|a^{\var}\|_{\dot{B}^{d/2}_{2,1}}+\|(\var^2 H^{\var},\varepsilon^3 I^{\var}_{1})\|^{h}_{\dot{B}^{d/2+1}_{2,1}}.
\end{align}
\subsection{Nonlinear analysis}
In this second step, we bound the nonlinear terms. First, we prove the estimates for $\|(F^\varepsilon,G^\varepsilon,H^\varepsilon,\varepsilon
I^{\varepsilon})\|^{\ell}_{\dot{B}^{\frac {d}{2}-1}_{2,1}}$. We have
\begin{align}\label{R-E82}
\|v^{\var}\cdot&\nabla q^{\var}\|^{\ell}_{\dot{B}^{d/2-1}_{2,1}}\\&\lesssim (\|v^{\var}\|^{\ell}_{\dot{B}^{d/2-1}_{2,1}}+\|v^{\var}\|^{m}_{\dot{B}^{d/p}_{2,1}\cap\dot{B}^{d/p-1}_{p,1}}+\|v^{\var}\|^{h}_{\dot{B}^{d/2}_{2,1}})
(\|q^{\var}\|^{\ell}_{\dot{B}^{d/2}_{2,1}}+\frac{1}{\var}\|q^{\var}\|^{m}_{\dot{B}^{d/p}_{p,1}}+\|q^{\var}\|^{h}_{\dot{B}^{d/2}_{2,1}})\\
&\lesssim X^{\var}(t)(\|q^{\var}\|^{\ell}_{\dot{B}^{d/2}_{2,1}}+\frac{1}{\var}\|q^{\var}\|^{m}_{\dot{B}^{d/p-1}_{p,1}}+\|q^{\var}\|^{h}_{\dot{B}^{d/2}_{2,1}})
\end{align}
and
\begin{eqnarray}\label{R-E83}
\|q^{\var}\cdot\nabla v^{\var}\|^{\ell}_{\dot{B}^{d/2-1}_{2,1}}\lesssim \|\nabla v^{\var}\|_{\dot{B}^{d/p-1}_{p,1}}\|q^{\var}\|_{\dot{B}^{d/p}_{p,1}}+
\|q^{\var}\|_{\dot{B}^{d/p}_{p,1}}\|v^{\ell,\var}\|_{\dot{B}^{d/2-1}_{2,1}}+\|q^{\var}\|_{\dot{B}^{d/p}_{p,1}}\|v^{\tilde{h},\var}\|_{\dot{B}^{d/p}_{p,1}}.
\end{eqnarray}
The term $q^{\var}\div v^{\var}$ can be treated similarly as in \eqref{R-E83}. Next we bound $F^{\var}$.
\begin{multline}\label{R-E84}
\|F^{\var}\|^{\ell}_{\dot{B}^{d/2-1}_{2,1}}\lesssim \|a^{\var}v^{\var}\|^{\ell}_{\dot{B}^{d/2}_{2,1}}\lesssim \|a^{\var}\|_{\dot{B}^{d/p-1}_{p,1}}\|v^{\var}\|_{\dot{B}^{d/p+1}_{p,1}}+\|v^{\var}\|_{\dot{B}^{d/p-1}_{p,1}}(\|a^{\ell,\varepsilon}\|_{\dot{B}^{d/p+1}_{p,1}}+
\|a^{\tilde{h},\var}\|_{\dot{B}^{d/p}_{p,1}}).
\end{multline}
Regarding $G^\varepsilon$, we have
\begin{eqnarray}\label{R-E85}
\|v^{\var}\cdot\nabla v^{\var}\|^{\ell}_{\dot{B}^{d/2-1}_{2,1}}\lesssim \|v^{\var}\|_{\dot{B}^{d/p-1}_{p,1}}\|\nabla v^{\var}\|_{\dot{B}^{d/p}_{p,1}}+
\|v^{\var}\|_{\dot{B}^{d/p-1}_{p,1}}(\|\nabla v^{\ell,\var}\|_{\dot{B}^{d/p}_{p,1}}+\|\nabla v^{\tilde{h},\var}\|_{\dot{B}^{d/p-1}_{p,1}}),
\end{eqnarray}
\begin{eqnarray}\label{R-E86}
\|J(a^{\var})\mathcal{A}v^{\var}\|^{\ell}_{\dot{B}^{d/2-1}_{2,1}}\lesssim\|a^{\var}\|_{\dot{B}^{d/p}_{p,1}}\|\nabla v^{\var}\|_{\dot{B}^{d/p}_{p,1}}+
\|a^{\var}\|_{\dot{B}^{d/p}_{p,1}}(\|\mathcal{A}v^{\ell,\var}\|_{\dot{B}^{d/2-1}_{2,1}}+\|\mathcal{A}v^{\tilde{h},\var}\|_{\dot{B}^{d/p-1}_{p,1}}),
\end{eqnarray}
\begin{eqnarray}\label{R-E87}
\|H_{1}(a^{\var})\nabla a^{\var}\|^{\ell}_{\dot{B}^{d/2-1}_{2,1}}\nonumber\lesssim\|a^{\ell,\var}\|_{\dot{B}^{d/2-1}_{p,1}}\|a^{\ell,\var}\|_{\dot{B}^{d/2+1}_{p,1}}+\|a^{\tilde{h},\var}\|^2_{\dot{B}^{d/p}_{p,1}}+\|a^{\var}\|_{\dot{B}^{d/p-1}_{p,1}}
(\|a^{\var}\|^{\ell}_{\dot{B}^{d/2+1}_{2,1}}+\|a^{\var}\|^{\tilde{h}}_{\dot{B}^{d/p}_{p,1}}),
\end{eqnarray}
and
\begin{eqnarray}\label{R-E88}
\|H_2(a^{\var})\nabla \theta^{\var}\|^{\ell}_{\dot{B}^{d/2-1}_{2,1}}\nonumber\lesssim \|\nabla \theta^{\var}\|_{\dot{B}^{d/p-1}_{p,1}}\|a^{\var}\|_{\dot{B}^{d/p}_{p,1}}+\|a^{\var}\|_{\dot{B}^{d/p-1}_{p,1}}(\|\theta^{\ell,\var}\|_{\dot{B}^{d/2+1}_{2,1}}
+\|\theta^{m,\var}\|_{\dot{B}^{d/p}_{p,1}}+\frac 1\var \|\theta^{h,\var}\|_{\dot{B}^{d/2}_{2,1}}).
\end{eqnarray}
Then, using an interpolation inequality, we get
\begin{eqnarray*}\|\nabla \theta^{\var}\|_{\dot{B}^{d/p-1}_{p,1}}\|a^{\var}\|_{\dot{B}^{d/p}_{p,1}}&\lesssim & \|(a^{\var},\theta^{\var})\|^{\ell}_{\dot{B}^{d/2-1}_{2,1}}\|(a^{\var},\theta^{\var})\|^{\ell}_{\dot{B}^{d/2+1}_{2,1}}
+(\|a^{\var}\|^{m}_{\dot{B}^{d/p}_{p,1}}+\|\theta^{\var}\|^{m}_{\dot{B}^{d/p-1}_{p,1}})\\&&+\|(a^{\var},\nabla\theta^{\var})\|^{m}_{\dot{B}^{d/p}_{p,1}}+
\|(\varepsilon a^{\var},\varepsilon^2\theta^{\var})\|^{h}_{\dot{B}^{d/2+1}_{2,1}}\|(\varepsilon a^{\var},\theta^{\var})\|^{h}_{\dot{B}^{d/2+1}_{2,1}}.
\end{eqnarray*}
Moreover, we have
\begin{eqnarray}\label{R-E89}
\|\theta\nabla H_{3}(a^{\var})\|^{\ell}_{\dot{B}^{d/2-1}_{2,1}}\lesssim \|\theta^{\var}\|_{\dot{B}^{d/p}_{p,1}}\|a^{\var}\|_{\dot{B}^{d/p}_{p,1}}+\|\theta^{\var}\|_{\dot{B}^{d/p-1}_{p,1}}(\|a^{\var}\|^{\ell}_{\dot{B}^{d/2+1}_{2,1}}+\|a^{\var}\|^{\tilde{h}}_{\dot{B}^{d/p}_{p,1}}).
\end{eqnarray}
Regarding $H^{\var}$, we have 
\begin{align}\label{R-E90}
\|v^{\var}\cdot\nabla\theta^{\var}\|^{\ell}_{\dot{B}^{d/2-1}_{2,1}}&\lesssim \|\nabla \theta^{\var}\|_{\dot{B}^{d/p-1}_{p,1}}\|v^{\var}\|_{\dot{B}^{d/p}_{p,1}}\\&+\|v^{\var}\|_{\dot{B}^{d/p-1}_{p,1}}(\|\theta^{\ell,\var}\|_{\dot{B}^{d/2+1}_{2,1}}
+\|\theta^{m,\var}\|_{\dot{B}^{d/p}_{p,1}}+\frac 1\var \|\theta^{h,\var}\|_{\dot{B}^{d/2}_{2,1}}),
\end{align}
\begin{align}\label{R-E91}
\|J(a^{\var})\div q^{\var}\|^{\ell}_{\dot{B}^{d/2-1}_{2,1}}&\lesssim \|q^{\var}\|_{\dot{B}^{d/p}_{p,1}}\|a^{\var}\|_{\dot{B}^{d/p}_{p,1}}\\&+\|a^{\var}\|_{\dot{B}^{d/p}_{p,1}}\|q^{\var}\|_{\dot{B}^{d/2}_{2,1}}
+\|a^{\var}\|_{\dot{B}^{d/p-1}_{p,1}}(\|q^{\var}\|^{m}_{\dot{B}^{d/p}_{p,1}}+\|\var q^{\var}\|^{h}_{\dot{B}^{d/2+1}_{p,1}}),
\end{align}
and
\begin{eqnarray}\label{R-E910}
\|\frac{N(\nabla v^{\var},\nabla v^{\var})}{1+a^{\var}}\|^{\ell}_{\dot{B}^{d/2-1}_{2,1}}=\|(1+J(a^{\var}))N(\nabla v^{\var},\nabla v^{\var})\|^{\ell}_{\dot{B}^{d/2-1}_{2,1}}&\lesssim &\|(1+J(a^{\var}))N(\nabla v^{\var},\nabla v^{\var})\|^{\ell}_{\dot{B}^{d/2-2}_{2,1}}\nonumber\\&\lesssim & (1+\|a^{\var}\|_{\dot{B}^{d/p}_{p,1}})\|\nabla v^{\var}\|_{\dot{B}^{d/p^{*}-1}_{p^{*},1}}
\|\nabla v^{\var}\|_{\dot{B}^{d/p-1}_{p,1}}\nonumber\\&\lesssim & (1+\|a^{\var}\|_{\dot{B}^{d/p}_{p,1}})\|v^{\var}\|^2_{\dot{B}^{d/p}_{p,1}},
\end{eqnarray}
where $1/p^{*}+1/p=1/2$. Then,
\begin{multline}\label{R-E93}
\|H_1(a^{\var})\theta^{\var}\div v^{\var}\|^{\ell}_{\dot{B}^{d/2-1}_{2,1}}\lesssim \|a^{\var}\|_{\dot{B}^{d/p}_{p,1}} \|v^{\var}\|_{\dot{B}^{d/p}_{p,1}}  \|\theta^{\var}\|_{\dot{B}^{d/p}_{p,1}\cap\dot{B}^{d/p-1}_{p,1}}+\|\theta^{\var}\|_{\dot{B}^{d/p}_{p,1}}\|H_1(a^{\var})\div v^{\var}\|^{\ell}_{\dot{B}^{d/2-1}_{2,1}},
\end{multline}
and
\begin{eqnarray*}
\|H_1(a^{\var})\div v^{\var}\|^{\ell}_{\dot{B}^{d/2-1}_{2,1}}\|\div v^{\var}\|_{\dot{B}^{d/p}_{p,1}}\|a^{\var}\|_{\dot{B}^{d/p}_{p,1}}+\|a^{\var}\|_{\dot{B}^{d/p-1}_{p,1}}(\|v^{\var}\|^{m}_{\dot{B}^{d/p}_{p,1}}
+\varepsilon\|v^{\var}\|^{h}_{\dot{B}^{d/2+1}_{2,1}}).
\end{eqnarray*}

We now provide the estimates for $\|F^{\var}_{1}\|^{m}_{\dot{B}^{d/p-2}_{p,1}}+\|G^{\var}\|^{m}_{\dot{B}^{d/p-1}_{p,1}}+\varepsilon\|I^{\varepsilon}\|^{m}_{\dot{B}^{d/p-2}_{p,1}}+\|H^{\varepsilon}\|^{m}_{\dot{B}^{d/p-2}_{p,1}}$. We have
\begin{eqnarray}\label{R-E94}
\|F^{\var}_{1}\|^{m}_{\dot{B}^{d/p-2}_{p,1}}\lesssim \|F^{\var}_{1}\|^{m}_{\dot{B}^{d/p}_{p,1}}\lesssim \|a^{\var}\|_{\dot{B}^{d/p}_{p,1}}\|\div v^{\var}\|_{\dot{B}^{d/p}_{p,1}}.
\end{eqnarray}
For $\|G^{\var}\|^{m}_{\dot{B}^{d/p-1}_{p,1}}$, we have
\begin{eqnarray}\label{R-E95}
\|v^{\var}\cdot\nabla v^{\var}\|^{m}_{\dot{B}^{d/p-1}_{p,1}}\lesssim \|v^{\var}\|^2_{\dot{B}^{d/p}_{p,1}},
\end{eqnarray}
\begin{eqnarray}\label{R-E96}
\|J(a^{\var})\mathcal{A}v^{\var}\|^{m}_{\dot{B}^{d/p-1}_{p,1}}\lesssim \|a^{\var}\|_{\dot{B}^{d/p}_{p,1}}\|v^{\var}\|_{\dot{B}^{d/p+1}_{p,1}},
\end{eqnarray}
\begin{eqnarray}\label{R-E97}
\|H_{1}(a^{\var})\nabla a^{\var}\|^{m}_{\dot{B}^{d/p-1}_{p,1}}\lesssim \|a^{\var}\|^2_{\dot{B}^{d/p}_{p,1}},
\end{eqnarray}
\begin{eqnarray}\label{R-E98}
\|H_{2}(a^{\var})\nabla \theta^{\var}\|^{m}_{\dot{B}^{d/p-1}_{p,1}}\lesssim \|a^{\var}\|_{\dot{B}^{d/p}_{p,1}}\|\theta^{\var}\|_{\dot{B}^{d/p}_{p,1}},
\end{eqnarray}
\begin{eqnarray}\label{R-E99}
\|\theta^{\var}\nabla H_{3}(a^{\var})\|^{m}_{\dot{B}^{d/p-1}_{p,1}}\lesssim \|\theta^{\var}\|_{\dot{B}^{d/p}_{p,1}}\|a^{\var}\|_{\dot{B}^{d/p}_{p,1}},
\end{eqnarray}
Using Proposition \ref{prop:Bernstein} gives
\begin{eqnarray}\label{R-E100}
\varepsilon\|I^{\varepsilon}\|^{m}_{\dot{B}^{d/p-2}_{p,1}}\lesssim \var \|I^{\varepsilon}\|^{m}_{\dot{B}^{d/p-1}_{p,1}}.
\end{eqnarray}
We have
\begin{eqnarray}\label{R-E101}
\|v^{\var}\cdot \nabla q^{\var}\|^{m}_{\dot{B}^{d/p-1}_{p,1}}\lesssim \|v^{\var}\|_{\dot{B}^{d/p-1}_{p,1}}\|q^{\var}\|_{\dot{B}^{d/p}_{p,1}},
\end{eqnarray}
\begin{eqnarray}\label{R-E102}
\varepsilon\|q^{\var}\cdot \nabla v^{\var}\|^{m}_{\dot{B}^{d/p-1}_{p,1}}\lesssim (\|\var q^{\ell,\var}\|_{\dot{B}^{d/2-1}_{2,1}}+\|\var q^{m,\var}\|_{\dot{B}^{d/p-1}_{p,1}}+\|\var^3q^{h,\var}\|_{\dot{B}^{d/2+1}_{2,1}})\|v^{\var}\|_{\dot{B}^{d/p+1}_{p,1}},
\end{eqnarray}
and
\begin{eqnarray}\label{R-E103}
\varepsilon\|q^{\var}\div v^{\var}\|^{m}_{\dot{B}^{d/p-1}_{p,1}}\lesssim (\|\var q^{\ell,\var}\|_{\dot{B}^{d/2-1}_{2,1}}+\|\var q^{m,\var}\|_{\dot{B}^{d/p-1}_{p,1}}+\|\var^3q^{h,\var}\|_{\dot{B}^{d/2+1}_{2,1}})\|v^{\var}\|_{\dot{B}^{d/p+1}_{p,1}}.
\end{eqnarray}
Employing Proposition \ref{prop:Bernstein}, we get
\begin{eqnarray}\label{R-E104}
\|H^{\varepsilon}\|^{m}_{\dot{B}^{d/p-2}_{p,1}}\lesssim \|H^{\varepsilon}\|^{m}_{\dot{B}^{d/p-1}_{p,1}}.
\end{eqnarray}
Furthermore, we have
\begin{eqnarray}\label{R-E105}
\|v^{\var}\cdot\nabla \theta^{\var}\|^{m}_{\dot{B}^{d/p-1}_{p,1}}\lesssim \|v^{\var}\|_{\dot{B}^{d/p-1}_{p,1}}\|\theta^{\var}\|_{\dot{B}^{d/p+1}_{p,1}},
\end{eqnarray}
\begin{eqnarray}\label{R-E106}
\|J(a^{\var})\div q^{\var}\|^{m}_{\dot{B}^{d/p-1}_{p,1}}\lesssim \|a^{\var}\|_{\dot{B}^{d/p}_{p,1}}\|q^{\var}\|_{\dot{B}^{d/p}_{p,1}},
\end{eqnarray}
\begin{eqnarray}\label{R-E107}
\|\frac{N(\nabla v^{\var},\nabla v^{\var})}{1+a^{\var}}\|^{m}_{\dot{B}^{d/p-1}_{p,1}}\lesssim (1+\|a^{\var}\|_{\dot{B}^{d/p}_{p,1}}) \|\nabla v^{\var}\|_{\dot{B}^{d/p-1}_{p,1}}\|\nabla v^{\var}\|_{\dot{B}^{d/p}_{p,1}}
\end{eqnarray}
and
\begin{align}\label{R-E108}
\|H_1(a^{\var})\theta^{\var}\div v^{\var}\|^{m}_{\dot{B}^{d/p-1}_{p,1}}&\lesssim \|a^{\var}\|_{\dot{B}^{d/p}_{p,1}}\|\theta^{\var}\|_{\dot{B}^{d/p-1}_{p,1}}\|v^{\var}\|_{\dot{B}^{d/p+1}_{p,1}}\\&\lesssim \|a^{\var}\|_{\dot{B}^{d/p}_{p,1}}(\|\theta^{\var}\|^{\ell}_{\dot{B}^{d/2-1}_{2,1}}+\|\theta^{\var}\|^{m}_{\dot{B}^{d/p-1}_{p,1}}
+\|\var^2\theta^{\var}\|^{h}_{\dot{B}^{d/2+1}_{2,1}})\|v^{\var}\|_{\dot{B}^{d/p+1}_{p,1}}.
\end{align}
Finally, we provide the estimates for $\varepsilon\|G^{\var}\|^{h}_{\dot{B}^{d/2}_{2,1}}+
\varepsilon\|F^{\var}_1\|^{h}_{\dot{B}^{d/2-1}_{2,1}}+\|(\var^2 H^{\var},\varepsilon^3 I^{\var}_{1})\|^{h}_{\dot{B}^{d/2+1}_{2,1}}
$. First, we pay attention to $\|\varepsilon^3 I^{\var}_{1}\|^{h}_{\dot{B}^{d/2+1}_{2,1}}$. It follows from Proposition \ref{LPP} that 
\begin{eqnarray}\label{R-E109}
\|q^{\var}\cdot \nabla v^{\var}\|^{h}_{\dot{B}^{d/2+1}_{2,1}}\lesssim \|q^{\var}\|_{\dot{B}^{d/p}_{p,1}}\|\nabla v^{\var}\|^{h}_{\dot{B}^{d/2+1}_{2,1}}+\|\nabla v^{\var}\|_{\dot{B}^{d/p}_{p,1}}\|q^{\var}\|^{h}_{\dot{B}^{d/2+1}_{2,1}}+\|q^{\var}\|^{\tilde{\ell}}_{\dot{B}^{d/p}_{p,1}}\|\nabla v^{\var}\|^{\tilde{\ell}}_{\dot{B}^{d/p+1}_{p,1}},
\end{eqnarray}
which leads to
\begin{eqnarray}\label{R-E110}
\varepsilon^3\|q^{\var}\cdot \nabla v^{\var}\|^{h}_{\dot{B}^{d/2+1}_{2,1}}\lesssim X^{\varepsilon}(t)(\|\varepsilon v^{\var}\|^{h}_{\dot{B}^{d/2+2}_{2,1}}
+\|\varepsilon q^{\var}\|^{h}_{\dot{B}^{d/2+1}_{2,1}}+\|v^{\var}\|^{\ell}_{\dot{B}^{d/2+1}_{2,1}}+\|v^{\var}\|^{m}_{\dot{B}^{d/p+1}_{p,1}}).
\end{eqnarray}
Bounding the term $\varepsilon^3q^{\var}\div v^{\var}$ is similar to bounding $\varepsilon^3q^{\var}\cdot \nabla v^{\var}$. We obtain
\begin{eqnarray}\label{R-E111}
\|F^{\var}_1\|^{h}_{\dot{B}^{d/2}_{2,1}}&\lesssim& \|a^{\var}\|_{\dot{B}^{d/p}_{p,1}}\|\div v^{\var}\|^{h}_{\dot{B}^{d/2}_{2,1}}+\|\div v^{\var}\|_{\dot{B}^{d/p}_{p,1}}\|a^{\var}\|^{h}_{\dot{B}^{d/2}_{2,1}}+\|a^{\var}\|^{\tilde{\ell}}_{\dot{B}^{d/p}_{p,1}}
\|\div v^{\var}\|^{\tilde{\ell}}_{\dot{B}^{d/p}_{p,1}}\nonumber\\ &\lesssim& X^{\varepsilon}(t)(\|v^{\var}\|^{\ell}_{\dot{B}^{d/2+1}_{2,1}}+\|v^{\var}\|^{m}_{\dot{B}^{d/p+1}_{p,1}}+\varepsilon\|v^{\var}\|^{h}_{\dot{B}^{d/2+2}_{2,1}}).
\end{eqnarray}
Next, we handle $\|\var^2 H^{\var}\|^{h}_{\dot{B}^{d/2+1}_{2,1}}$. We claim that
$\|f(a^{\var})\|^{h}_{\dot{B}^{d/2}_{2,1}}\lesssim (1+X^{\varepsilon}(t))X^{\varepsilon}(t)$ for some smooth function satisfying $f(0)=0$. Indeed, it follows from \eqref{prop:comphf} that
\begin{eqnarray}\label{R-E112}
\|f(a^{\var})\|^{h}_{\dot{B}^{d/2}_{2,1}}\lesssim (1+\|a^{\var}\|^{\tilde{\ell}}_{\dot{B}^{d/p}_{p,1}}+\var\|a^{\var}\|^{h}_{\dot{B}^{d/2}_{2,1}})(\|a^{\var}\|^{\tilde{\ell}}_{\dot{B}^{d/p}_{p,1}}
+\|a^{\var}\|^{h}_{\dot{B}^{d/2}_{2,1}})\lesssim (1+X^{\varepsilon}(t))X^{\varepsilon}(t).
\end{eqnarray}
Therefore, we have
\begin{multline}\label{R-E113}
\|J(a^{\var})\div q^{\var}\|^{h}_{\dot{B}^{d/2}_{2,1}}\lesssim \|a^{\var}\|_{\dot{B}^{d/p}_{p,1}}\|\div q^{\var}\|^{h}_{\dot{B}^{d/2}_{2,1}}+\|\div q^{\var}\|_{\dot{B}^{d/p}_{p,1}}\|J(a^{\var})\|^{h}_{\dot{B}^{d/2}_{2,1}}+\|J(a^{\var})\|^{\tilde{\ell}}_{\dot{B}^{d/p}_{p,1}}\|\div q^{\var}\|^{\tilde{\ell}}_{\dot{B}^{d/p}_{p,1}}
\end{multline}
and
\begin{eqnarray}\label{R-E114}
\|\frac{N(\nabla v^{\var},\nabla v^{\var})}{1+a^{\var}}\|^{h}_{\dot{B}^{d/2}_{2,1}}\lesssim (1+\|a^{\var}\|_{\dot{B}^{d/p}_{p,1}})\|v^{\var}\|_{\dot{B}^{d/p+1}_{p,1}}\|\nabla v^{\var}\|_{\dot{B}^{d/2}_{2,1}}+\|\nabla v^{\var}\|_{\dot{B}^{d/p}_{p,1}}\|J(a^{\var})\nabla v^{\var}\|^{h}_{\dot{B}^{d/2}_{2,1}},
\end{eqnarray}
where 
$$\|J(a^{\var})\nabla v^{\var}\|^{h}_{\dot{B}^{d/2}_{2,1}}\lesssim \|a^{\var}\|_{\dot{B}^{d/p}_{p,1}}\|\nabla v^{\var}\|^{h}_{\dot{B}^{d/2}_{2,1}}+
\|\nabla v^{\var}\|_{\dot{B}^{d/p}_{p,1}}\|a^{\var}\|_{\dot{B}^{d/p}_{p,1}}.$$
Similarly, we have
\begin{multline}\label{R-E115}
\|H_1(a^{\var})\theta^{\var}\div v^{\var}\|^{h}_{\dot{B}^{d/2}_{2,1}}\lesssim \|a^{\var}\|_{\dot{B}^{d/p}_{p,1}}\|\theta^{\var}\|_{\dot{B}^{d/p}_{p,1}}
(\|v^{\var}\|^{\ell}_{\dot{B}^{d/2+1}_{2,1}}+\|v^{\var}\|^{m}_{\dot{B}^{d/p+1}_{p,1}}+\varepsilon\|v^{\var}\|^{h}_{\dot{B}^{d/2+2}_{2,1}})\\+\|\nabla v^{\var}\|_{\dot{B}^{d/p}_{p,1}}\|H_1(a^{\var})\theta^{\var}\|^{h}_{\dot{B}^{d/2}_{2,1}},
\end{multline}
where
$$\|H_1(a^{\var})\theta^{\var}\|^{h}_{\dot{B}^{d/2}_{2,1}}\lesssim \|a^{\var}\|_{\dot{B}^{d/p}_{p,1}} \|\theta^{\var}\|^{h}_{\dot{B}^{d/2}_{2,1}}+\|\theta^{\var}\|_{\dot{B}^{d/p}_{p,1}}\|H_1(a^{\var})\|^{h}_{\dot{B}^{d/2}_{2,1}}
+\|a^{\var}\|^{\tilde{\ell}}_{\dot{B}^{d/p}_{p,1}} \|\theta^{\var}\|^{\tilde{\ell}}_{\dot{B}^{d/p}_{p,1}}.$$
Finally, we bound the nonlinear term $G^{\var}$. Precisely,
\begin{eqnarray}\label{R-E116}
\|v^{\var}\cdot\nabla v^{\var}\|^{h}_{\dot{B}^{d/2}_{2,1}}&\lesssim& \|v^{\var}\|_{\dot{B}^{d/p}_{p,1}} \|\nabla v^{\var}\|^{h}_{\dot{B}^{d/2}_{2,1}}+ \|\nabla v^{\var}\|_{\dot{B}^{d/p}_{p,1}} \|v^{\var}\|^{h}_{\dot{B}^{d/2}_{2,1}}+ \|v^{\var}\|^{\tilde{\ell}}_{\dot{B}^{d/p}_{p,1}}\|\nabla v^{\var}\|^{\tilde{\ell}}_{\dot{B}^{d/p}_{p,1}}\nonumber\\ &\lesssim&
\|v^{\var}\|_{\dot{B}^{d/p}_{p,1}} \|\nabla v^{\var}\|^{h}_{\dot{B}^{d/2}_{2,1}}+(\|v^{\var}\|^{\ell}_{\dot{B}^{d/2-1}_{2,1}}+\frac{1}{\varepsilon}\|v^{\var}\|^{m}_{\dot{B}^{d/p-1}_{p,1}}
+\varepsilon\|v^{\var}\|^{h}_{\dot{B}^{d/2+1}_{2,1}})\|v^{\var}\|_{\dot{B}^{d/p+1}_{p,1}},
\end{eqnarray}
\begin{eqnarray}\label{R-E117}
\|J(a^{\var})\mathcal{A}v^{\var}\|^{h}_{\dot{B}^{d/2}_{2,1}}&\lesssim& \|a^{\var}\|_{\dot{B}^{d/p}_{p,1}}\|\mathcal{A} v^{\var}\|^{h}_{\dot{B}^{d/2}_{2,1}}+\|\mathcal{A} v^{\var}\|_{\dot{B}^{d/p}_{p,1}}\|J(a^{\var})\|^{h}_{\dot{B}^{d/2}_{2,1}}+
\|J(a^{\var})\|^{\tilde{\ell}}_{\dot{B}^{d/p}_{p,1}}\|\mathcal{A} v^{\var}\|^{\tilde{\ell}}_{\dot{B}^{d/p}_{p,1}},
\end{eqnarray}
\begin{multline}\label{R-E118}
\|H_{1}(a^{\var})\nabla a^{\var}\|^{h}_{\dot{B}^{d/2}_{2,1}}\lesssim \|a^{\var}\|_{\dot{B}^{d/p}_{p,1}} \|\nabla a^{\var}\|^{h}_{\dot{B}^{d/2}_{2,1}}+\|\nabla a^{\var}\|^{h}_{\dot{B}^{d/p}_{p,1}}\|H_{1}(a^{\var})\|^{h}_{\dot{B}^{d/2}_{2,1}}+\|\nabla a^{\var}\|^{\tilde{\ell}}_{\dot{B}^{d/p}_{p,1}}\|H_{1}(a^{\var})\|^{\tilde{\ell}}_{\dot{B}^{d/p}_{p,1}},
\end{multline}
\begin{multline}\label{R-E119}
\|H_{2}(a^{\var})\nabla \theta^{\var}\|^{h}_{\dot{B}^{d/2}_{2,1}}\lesssim \|a^{\var}\|_{\dot{B}^{d/p}_{p,1}} \|\nabla \theta^{\var}\|^{h}_{\dot{B}^{d/2}_{2,1}}+\|\nabla \theta^{\var}\|^{h}_{\dot{B}^{d/p}_{p,1}}\|H_{2}(a^{\var})\|^{h}_{\dot{B}^{d/2}_{2,1}}+\|\nabla \theta^{\var}\|^{\tilde{\ell}}_{\dot{B}^{d/p}_{p,1}}\|H_{1}(a^{\var})\|^{\tilde{\ell}}_{\dot{B}^{d/p}_{p,1}},
\end{multline}
and
\begin{align}\label{R-E120}
\|\theta^{\var}\nabla H_{3}(a^{\var})\|^{h}_{\dot{B}^{d/2}_{2,1}}&\lesssim \|\theta^{\var}\|_{\dot{B}^{d/p}_{p,1}} \|\nabla H_{3}(a^{\var})\|^{h}_{\dot{B}^{d/2}_{2,1}}+\|\nabla H_{3}(a^{\var})\|_{\dot{B}^{d/p-1}_{p,1}}\|\theta^{\var}\|^{h}_{\dot{B}^{d/2+1}_{2,1}}
\\&+\|\theta^{\var}\|^{\tilde{\ell}}_{\dot{B}^{d/p}_{p,1}}\|\nabla  H_{3}(a^{\var})\|^{\tilde{\ell}}_{\dot{B}^{d/p}_{p,1}}. \nonumber
\end{align}

\subsection{The regularity evolution of negative Besov norm}
In this section, we establish the regularity evolution of negative Besov norm in low frequencies, which is the key part in deriving the decay estimates. We have the following lemma.
\begin{Lemme}\label{LemmaDecay}Let 
 $(a^\var,v^\var,\theta^\var,q^\var)$ be the solution of \eqref{LinearHypNSCzero} given by Theorem \ref{thm:exist}. If $(a^{\var}_0,v^{\var}_0,\theta^{\var}_0,\var Q^{\var}_0)^{\ell}\in{\dot{B}^{-\sigma_1}_{2,\infty}}$, we have
\begin{align}\label{R-E1250}
\|(a^{\var},v^{\var},\theta^{\var},\var Q^{\var})\|^{\ell}_{\dot{B}^{-\sigma_1}_{2,\infty}}\lesssim \|(a^{\var}_0,v^{\var}_0,\theta^{\var}_0,\var Q^{\var}_0)\|^{\ell}_{\dot{B}^{-\sigma_1}_{2,\infty}}+X^\var(t)^2.
\end{align}
\end{Lemme}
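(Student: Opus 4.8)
The plan is to propagate the $\dot B^{-\sigma_1}_{2,\infty}$ regularity along the low-frequency part of the flow, exploiting that this norm sits below the critical index and is therefore only \emph{weakly} damaged by the nonlinear source terms. First I would localize in frequency: for $j\le J_0$, apply $\ddj$ to the system \eqref{LinearEffective} and reuse the Lyapunov functional $\mathcal{L}^\ell_j$ from \eqref{LyaBF2} together with the $\varepsilon Q^\varepsilon$-equation exactly as in \eqref{R-E62}--\eqref{R-E65712}, which already produced
\begin{align}\label{prop:low-neg}
\frac{d}{dt}\big(\mathcal{L}_j+\|\varepsilon Q_j^\varepsilon\|_{L^2}\big)+2^{2j}\|(a_j^\varepsilon,v_j^\varepsilon,\theta_j^\varepsilon)\|_{L^2}+\frac{1}{2\varepsilon}\|Q_j^\varepsilon\|_{L^2}\lesssim \varepsilon 2^{2j}\|q_j^\varepsilon\|_{L^2}+\|(F_j^\varepsilon,G_j^\varepsilon,H_j^\varepsilon,\varepsilon I_j^\varepsilon)\|_{L^2}.
\end{align}
Using $\mathcal{L}_j+\|\varepsilon Q_j^\varepsilon\|_{L^2}\sim \|(a_j^\varepsilon,v_j^\varepsilon,\theta_j^\varepsilon,\varepsilon q_j^\varepsilon)\|_{L^2}$ (by \eqref{Lyaeq2} and $Q^\varepsilon=q^\varepsilon+\nabla\theta^\varepsilon$, for $\varepsilon$ small), absorbing the $\varepsilon 2^{2j}\|q_j^\varepsilon\|_{L^2}$ term on the left for $j\le J_0$ (Bernstein: $2^{2j}\le K^2$, $\varepsilon K^2\ll 1$), dropping the nonnegative dissipation terms and then integrating in time, one gets for each $j\le J_0$
\begin{align}\label{R-E1251}
\|(a^\varepsilon,v^\varepsilon,\theta^\varepsilon,\varepsilon q^\varepsilon)_j(t)\|_{L^2}\lesssim \|(a_0^\varepsilon,v_0^\varepsilon,\theta_0^\varepsilon,\varepsilon Q_0^\varepsilon)_j\|_{L^2}+\int_0^t\|(F^\varepsilon,G^\varepsilon,H^\varepsilon,\varepsilon I^\varepsilon)_j\|_{L^2}\,d\tau.
\end{align}
Multiplying by $2^{-j\sigma_1}$ and taking the supremum over $j\le J_0$ converts this into the target inequality, provided the source terms are controlled in $L^1_T(\dot B^{-\sigma_1}_{2,\infty})^\ell$.

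The heart of the matter is thus the nonlinear estimate
\begin{align}\label{R-E1252}
\|(F^\varepsilon,G^\varepsilon,H^\varepsilon,\varepsilon I^\varepsilon)\|^\ell_{L^1_T(\dot B^{-\sigma_1}_{2,\infty})}\lesssim X^\varepsilon(t)^2.
\end{align}
I would handle the $\dot B^{-\sigma_1}_{2,\infty}$ norm by the standard trick: since $\sigma_1\le \sigma_0=\frac{2d}{p}-\frac d2$ and $1-\frac d2<\sigma_1$, one has a product/paraproduct law of the type $\|fg\|^\ell_{\dot B^{-\sigma_1}_{2,\infty}}\lesssim \|f\|_{\dot B^{?}}\|g\|_{\dot B^{?}}$ where one factor is taken in a space whose index is shifted up by $\sigma_1$ relative to the critical one, so that the low regularity is "paid" by the extra integrability/decay already encoded in $X^\varepsilon$. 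Concretely, for a quadratic term like $v^\varepsilon\cdot\nabla\theta^\varepsilon$, write Bony's decomposition and bound, e.g., $\|T_{v^\varepsilon}\nabla\theta^\varepsilon\|^\ell_{\dot B^{-\sigma_1}_{2,\infty}}$ and the remainder by $\|v^\varepsilon\|_{\dot B^{d/p-\sigma_1}_{p,1}}\|\theta^\varepsilon\|_{L^\infty_T(\dot B^{d/p}_{p,1})}$-type products, using $L^2$-$L^\infty$ in time and interpolation within $X^\varepsilon$; this is the same bookkeeping as in Lemma \ref{lem:NLlow}, only with the target index lowered by $\sigma_1$, which is harmless because $\dot B^{\frac d2-1-\sigma_1}_{2,1}\hookrightarrow$ is replaced by the weaker $\dot B^{-\sigma_1}_{2,\infty}$ endpoint and one does not lose the $L^1_T$ summability. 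The composition terms ($J(a^\varepsilon)\mathcal A v^\varepsilon$, $K_i(a^\varepsilon)\nabla(\cdot)$, $N(\nabla v^\varepsilon,\nabla v^\varepsilon)/(1+a^\varepsilon)$, etc.) are treated verbatim as in Section \ref{nonlinearLf} up to this index shift, invoking Propositions \ref{LPP}--\ref{prop:comphf}; the uniformity in $\varepsilon$ is tracked exactly as there, noting that the worst factors $\frac1\varepsilon\|q^\varepsilon\|$ appearing in \eqref{R-E7}, \eqref{R-E780} are always paired with a compensating $\varepsilon$ from $\varepsilon I^\varepsilon$ or from $X^{m,\varepsilon}+\varepsilon X^{h,\varepsilon}$.

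The constraint $\sigma_1>1-\frac d2$ is exactly what guarantees the remainder operator $R(f,g)$ maps into $\dot B^{-\sigma_1}_{2,\infty}$ (cf.\ condition \eqref{R-E955c}: $s>1-\min(d/p,d/p')$ becomes, after the shift, $-\sigma_1>1-\frac d2-\min(\cdots)$, i.e.\ $\sigma_1<\frac d2-1+\min(\cdots)$, and the lower bound $\sigma_1>1-\frac d2$ is what one needs on the other side), while $\sigma_1\le\sigma_0$ ensures the high-regularity factor stays inside the norms collected in $X^\varepsilon$ without spilling outside the low/medium/high pieces. \textbf{The main obstacle} I anticipate is precisely this borderline product estimate at the bottom of the Besov scale: one must verify that for the full list of nonlinearities, the paraproduct/remainder pieces of the low-frequency part indeed close in $\dot B^{-\sigma_1}_{2,\infty}$ with only the norms available in $X^\varepsilon$ and uniformly in $\varepsilon$ — in particular that no term forces $\sigma_1$ beyond $[\,1-\tfrac d2,\ \sigma_0\,]$, and that the diffusion-weighted norms in $X^\varepsilon$ supply enough time-integrability to absorb the $L^1_T$ in \eqref{R-E1252}. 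Once \eqref{R-E1252} is in hand, combining it with \eqref{R-E1251} and taking $\sup_{j\le J_0}2^{-j\sigma_1}$ finishes the proof of Lemma \ref{LemmaDecay}.
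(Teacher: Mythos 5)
There is a genuine gap, and it sits exactly where you flagged your ``main obstacle'': your claimed source-term bound $\|(F^\varepsilon,G^\varepsilon,H^\varepsilon,\varepsilon I^\varepsilon)\|^\ell_{L^1_T(\dot B^{-\sigma_1}_{2,\infty})}\lesssim X^\varepsilon(t)^2$ cannot hold. The functional $X^\varepsilon$ defined in \eqref{X} contains no negative-regularity information on the low-frequency block: Bernstein inequalities only allow you to \emph{raise} the regularity index on frequencies $j\le J_0$, never to lower it, so $\sup_{j\le J_0}2^{-j\sigma_1}\|h_j\|_{L^2}$ is not controlled by $\|h\|^\ell_{\dot B^{\frac d2-1}_{2,1}}$ (the factor $2^{-j(\sigma_1+\frac d2-1)}$ blows up as $j\to-\infty$, since $\sigma_1>1-\frac d2$). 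Consequently, for the low--low interactions such as $a^{\ell,\varepsilon}\div v^{\ell,\varepsilon}$ or $v^{\ell,\varepsilon}\cdot\nabla\theta^{\ell,\varepsilon}$, every admissible product law landing in $\dot B^{-\sigma_1}_{2,\infty}$ must put one factor in a negative-index space — in practice in $\dot B^{-\sigma_1}_{2,\infty}$ itself — and that norm of the solution is precisely the quantity being propagated, not something available inside $X^\varepsilon$. This is visible in the paper's own estimates, e.g.\ \eqref{R-E126}--\eqref{R-E142}, where each such term is bounded by (an $L^1_t$-integrable coefficient controlled by $X^\varepsilon$) times $\|(a^\varepsilon,v^\varepsilon,\theta^\varepsilon,q^\varepsilon)\|^\ell_{\dot B^{-\sigma_1}_{2,\infty}}$. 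With only high-frequency factors involved the purely quadratic bound is fine (and the paper uses it there), but for the low--low terms your estimate is structurally out of reach, uniformly in $\varepsilon$ or not.

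Because of this, your closing scheme also needs to change: you linearize the frequency-localized energy inequality, integrate in time, and then would like to plug in a source bound depending only on $X^\varepsilon$; since the correct source bound is \emph{linear} in $\|(a^\varepsilon,v^\varepsilon,\theta^\varepsilon,\varepsilon Q^\varepsilon)\|^\ell_{\dot B^{-\sigma_1}_{2,\infty}}$, the inequality does not close as written. The paper keeps the quadratic structure \eqref{R-E125}, bounds the nonlinearity as $A(\tau)\,\mathcal E(\tau)+B(\tau)$ with $\int_0^tA\lesssim X^\varepsilon(t)\ll1$ and $\int_0^tB\lesssim X^\varepsilon(t)^2$, and absorbs the linear term using the smallness from Theorem \ref{thm:exist}, which yields \eqref{R-E1250}; a Gr\"onwall argument on your linear inequality would do equally well, but you must first replace your purely-$X^\varepsilon$ source estimate by one of this mixed form. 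The linear machinery you chose (the Lyapunov functional with the cross term, instead of the paper's $\omega^\varepsilon=\Lambda^{-1}\div v^\varepsilon$, $\Omega^\varepsilon=\Lambda^{-1}\mathrm{curl}\,v^\varepsilon$ splitting with plain energy estimates) is unproblematic — no dissipation is needed to propagate $\dot B^{-\sigma_1}_{2,\infty}$ — so the fix is entirely in the nonlinear step and in the subsequent absorption argument.
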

\begin{proof}[Proof of Lemma \ref{LemmaDecay}]
Set $\omega^{\var}=\Lambda^{-1}\div v^{\var}$ and $\Omega^{\var}=\Lambda^{-1} \mathrm{curl}\,{v^{\var}}$.
The first three equations in \eqref{LinearHypNSC} can be written as
\begin{equation}
\left\{
\begin{array}
[c]{l}%
\d_ta^\varepsilon+\Lambda \omega^\varepsilon=F^{\var},\\
\d_t \omega^\varepsilon-\Delta\omega^\varepsilon-\Lambda a^\varepsilon-\Lambda\theta^\varepsilon=\Lambda^{-1}\div G^{\var},\\
\d_t \Omega^{\var}-\Delta \Omega^{\var}=\mathrm{curl} G^{\var},\\
\d_t \theta^\varepsilon-\Delta\theta^{\var}+\Lambda\omega^\varepsilon=-\div Q^{\var}+H^\varepsilon.\\
\end{array}
\right.
\label{R-E121}
\end{equation}
Energy estimates give
\begin{multline}\label{R-E122}
\frac{1}{2}\frac{d}{dt}(\|a^{\var}_j\|^2_{L^2}+\|\omega^{\var}_j\|^2_{L^2}+\|\Omega^{\var}_j\|^2_{L^2}+\|\theta^{\var}_j\|^2_{L^2})+\|\Lambda v^{\var}_j\|^2_{L^2}+\|\Lambda \theta^{\var}_j\|^2_{L^2}\\ \leq \|F^{\var}_{j}\|_{L^2}\|a^{\var}_j\|_{L^2}+\|\Lambda^{-1}\div G^{\var}_{j}\|_{L^2}\|\omega^{\var}_j\|_{L^2}+\|\Lambda^{-1}\mathrm{curl} G^{\var}_{j}\|_{L^2}\|\Omega^{\var}_j\|_{L^2}\\+\|-\div Q^{\var}_j+H^\varepsilon_j\|_{L^2}\|\theta^{\var}_j\|_{L^2}
\end{multline}
and
\begin{eqnarray}\label{R-E123}
\frac{1}{2}\frac{d}{dt}\|\var^2Q^{\var}_j\|^2_{L^2}+\|Q^{\var}_j\|^2_{L^2}\leq \|\var (f^{\var}_j+I^{\var}_j+\kappa \nabla H^{\var}_j)\|_{L^2}\|\var Q^{\var}_j\|_{L^2}.
\end{eqnarray}
It follows from \eqref{R-E122} and \eqref{R-E123} that
\begin{align}\label{R-E124}
\frac{d}{dt}(\|a^{\var}_j\|^2_{L^2}+\|\omega^{\var}_j\|^2_{L^2}+\|\Omega^{\var}_j\|^2_{L^2}&+\|\theta^{\var}_j\|^2_{L^2}+\|\var Q^{\var}_j\|^2_{L^2})\\ &\lesssim
(\|F^{\var}_{j}\|_{L^2}+\|\Lambda^{-1}\div G^{\var}_{j}\|_{L^2}+\|\Lambda^{-1}\mathrm{curl} G^{\var}_{j}\|_{L^2}+\|H^\varepsilon_j\|_{L^2}\nonumber \\&+\|\var (I^{\var}_j+\kappa \nabla H^{\var}_j)\|_{L^2})\|(a^{\var}_j,\omega^{\var}_j,\Omega^{\var}_j,\theta^{\var}_j,\var Q^{\var}_j)\|_{L^2}. \nonumber
\end{align}
A standard procedure leads to
\begin{multline}\label{R-E125}
\Big(\|(a^{\var},v^{\var},\theta^{\var},\var Q^{\var})\|^{\ell}_{\dot{B}^{-\sigma_1}_{2,\infty}}\Big)^2\lesssim \Big(\|(a^{\var}_0,v^{\var}_0,\theta^{\var}_0,\var Q^{\var}_0)\|^{\ell}_{\dot{B}^{-\sigma_1}_{2,\infty}}\Big)^2\\+\int^{t}_{0}\|(F^{\var},G^{\var},H^{\var},\varepsilon I^{\var})\|^{\ell}_{\dot{B}^{-\sigma_1}_{2,\infty}}\|(a^{\var},v^{\var},\theta^{\var},\var Q^{\var})\|^{\ell}_{\dot{B}^{-\sigma_1}_{2,\infty}}d\tau.
\end{multline}
In what follows, we focus on bounding the nonlinear term $\|(F^{\var},G^{\var},H^{\var},\varepsilon I^{\var})\|^{\ell}_{\dot{B}^{-\sigma_1}_{2,\infty}}$. It is convenient to decompose them into low-frequency and high-frequency parts. Precisely, 
$$F^{\ell,\var}=-a^{\var}\div v^{\ell,\var}-v^{\var}\cdot \nabla a^{\ell,\var}, \quad  F^{\tilde{h},\var}=-a^{\var}\div v^{\tilde{h},\var}-v^{\var}\cdot \nabla a^{\tilde{h},\var},$$
$$G^{\ell,\var}=-v^\varepsilon\cdot\nabla v^{\ell,\var} -J(a^\varepsilon)\mathcal{A}v^{\ell,\var}/\nu-H_1(a^\varepsilon)\nabla a^{\ell,\var}-H_2(a^\varepsilon)\nabla \theta^{\ell,\var}-\theta^{\ell,\var}\nabla H_3(a^\varepsilon),$$
$$G^{\tilde{h},\var}=-v^\varepsilon\cdot\nabla v^{\tilde{h},\var} -J(a^\varepsilon)\mathcal{A}v^{\tilde{h},\var}/\nu-H_1(a^\varepsilon)\nabla a^{\tilde{h},\var}-H_2(a^\varepsilon)\nabla \theta^{\tilde{h},\var}-\theta^{\tilde{h},\var}\nabla H_3(a^\varepsilon),$$
$$H^{\ell,\var}=-v^\varepsilon\cdot\nabla\theta^{\ell,\var}+J(a^\varepsilon)\div q^{\ell,\var}+\dfrac{N(\nabla v^\varepsilon,\nabla v^{\ell,\var})}{1+a^\varepsilon}-\wt H_1(a^\varepsilon)\theta^\varepsilon \div v^{\ell,\var},$$
$$H^{\tilde{h},\var}=-v^\varepsilon\cdot\nabla\theta^{\tilde{h},\var}+J(a^\varepsilon)\div q^{\tilde{h},\var}+\dfrac{N(\nabla v^\varepsilon,\nabla v^{\tilde{h},\var})}{1+a^\varepsilon}-\wt H_1(a^\varepsilon)\theta^\varepsilon \div v^{\tilde{h},\var},$$
$$I^{\ell,\var}=v^\varepsilon\cdot\nabla q^{\ell,\var}-q^\varepsilon\cdot\nabla v^{\ell,\var}+q^\varepsilon\div v^{\ell,\var},$$
$$I^{\tilde{h},\var}=v^\varepsilon\cdot\nabla q^{\tilde{h},\var}-q^\varepsilon\cdot\nabla v^{\tilde{h},\var}+q^\varepsilon\div v^{\tilde{h},\var}.$$
Using standard product laws, we have
\begin{equation}\label{R-E126}
\|a^{\ell,\var}\div v^{\ell,\var}\|^{\ell}_{\dot{B}^{-\sigma_1}_{2,\infty}}\lesssim \|\div v^{\ell,\var}\|_{\dot{B}^{d/p}_{p,1}}\|a^{\ell,\var}\|_{\dot{B}^{-\sigma_1}_{2,\infty}}\lesssim \|v^{\var}\|^{\ell}_{\dot{B}^{d/2+1}_{2,1}}\|a^{\var}\|^{\ell}_{\dot{B}^{-\sigma_1}_{2,\infty}},
\end{equation}
\begin{equation}\label{R-E127}
\|a^{\tilde{h},\var}\div v^{\ell,\var}\|^{\ell}_{\dot{B}^{-\sigma_1}_{2,\infty}}\lesssim \|a^{\tilde{h},\var}\|_{\dot{B}^{d/p}_{p,1}}\|\div v^{\ell,\var}\|_{\dot{B}^{-\sigma_1}_{2,\infty}}\lesssim (\|a^{\var}\|^{m}_{\dot{B}^{d/p}_{p,1}}+\|a^{\var}\|^{h}_{\dot{B}^{d/2}_{2,1}})\|v^{\var}\|^{\ell}_{\dot{B}^{-\sigma_1}_{2,\infty}},
\end{equation}
\begin{equation}\label{R-E128}
\|v^{\ell,\var}\cdot \nabla a^{\ell,\var}\|^{\ell}_{\dot{B}^{-\sigma_1}_{2,\infty}}\lesssim \|a^{\var}\|^{\ell}_{\dot{B}^{d/2+1}_{2,1}}\|v^{\var}\|^{\ell}_{\dot{B}^{-\sigma_1}_{2,\infty}}
\end{equation}
and
\begin{equation}\label{R-E129}
\|v^{\tilde{h},\var}\cdot \nabla a^{\ell,\var}\|^{\ell}_{\dot{B}^{-\sigma_1}_{2,\infty}}\lesssim (\|v^{\var}\|^{m}_{\dot{B}^{d/p+1}_{p,1}}+\varepsilon^2\|v^{\var}\|^{h}_{\dot{B}^{d/2+2}_{2,1}})\|a^{\var}\|^{\ell}_{\dot{B}^{-\sigma_1}_{2,\infty}}.
\end{equation}
Similarly, 
\begin{equation}\label{R-E130}
\|v^{\var}\cdot \nabla v^{\ell,\var}\|^{\ell}_{\dot{B}^{-\sigma_1}_{2,\infty}}\lesssim (\|v^{\var}\|^{\ell}_{\dot{B}^{d/2+1}_{2,1}}+\|v^{\var}\|^{m}_{\dot{B}^{d/p+1}_{p,1}}+\varepsilon^2\|v^{\var}\|^{h}_{\dot{B}^{d/2+2}_{2,1}})\|v^{\var}\|^{\ell}_{\dot{B}^{-\sigma_1}_{2,\infty}}.
\end{equation}
Since $J(a^{\var})=J'(0)a^{\var}+\tilde{J}(a^{\var})$ for some smooth function satisfying $\tilde{J}=0$. It follows that
\begin{equation}\label{R-E131}
\|a^{\ell,\var}\mathcal{A}v^{\var}\|^{\ell}_{\dot{B}^{-\sigma_1}_{2,\infty}}\lesssim \|v^{\var}\|^{\ell}_{\dot{B}^{d/2+1}_{2,1}}\|a^{\var}\|^{\ell}_{\dot{B}^{-\sigma_1}_{2,\infty}},
\end{equation}
\begin{equation}\label{R-E132}
\|a^{\tilde{h},\var}\mathcal{A}v^{\ell,\var}\|^{\ell}_{\dot{B}^{-\sigma_1}_{2,\infty}}\lesssim
\|a^{\tilde{h},\var}\|_{\dot{B}^{d/2+1}_{2,1}}\|\mathcal{A}v^{\ell,\var}\|_{\dot{B}^{-\sigma_1}_{2,\infty}}\lesssim (\|a^{\var}\|^{m}_{\dot{B}^{d/p}_{p,1}}+\|a^{\var}\|^{h}_{\dot{B}^{d/2}_{2,1}})\|v^{\var}\|^{\ell}_{\dot{B}^{-\sigma_1}_{2,\infty}},
\end{equation}
\begin{equation}\label{R-E133}
\|\tilde{J}(a^{\var})a^{\var}\mathcal{A}v^{\ell,\var}\|^{\ell}_{\dot{B}^{-\sigma_1}_{2,\infty}}\lesssim\|\tilde{J}(a^{\var})a^{\var}\|_{\dot{B}^{d/p}_{p,1}}
\|\mathcal{A}v^{\ell,\var}\|_{\dot{B}^{-\sigma_1}_{2,\infty}}\lesssim\|a^{\var}\|^2_{\dot{B}^{d/p}_{p,1}}\|v^{\var}\|^{\ell}_{\dot{B}^{-\sigma_1}_{2,\infty}},
\end{equation}
\begin{equation}\label{R-E134}
\|H_1(a^\varepsilon)\nabla a^{\ell,\var}\|^{\ell}_{\dot{B}^{-\sigma_1}_{2,\infty}}\lesssim (\|a^{\var}\|^{\ell}_{\dot{B}^{d/2+1}_{2,1}}+\|a^{\var}\|^{m}_{\dot{B}^{d/p}_{p,1}}+\|a^{\var}\|^{h}_{\dot{B}^{d/2}_{2,1}})\|a^{\var}\|^{\ell}_{\dot{B}^{-\sigma_1}_{2,\infty}}
\end{equation}
and
\begin{equation}\label{R-E135}
\|H_2(a^\varepsilon)\nabla \theta^{\ell,\var}\|^{\ell}_{\dot{B}^{-\sigma_1}_{2,\infty}}\lesssim (\|a^{\var}\|^{\ell}_{\dot{B}^{d/2+1}_{2,1}}+\|a^{\var}\|^{m}_{\dot{B}^{d/p}_{p,1}}+\|a^{\var}\|^{h}_{\dot{B}^{d/2}_{2,1}})\|\theta^{\var}\|^{\ell}_{\dot{B}^{-\sigma_1}_{2,\infty}}.
\end{equation}
Notice that $\nabla H_3(a^\varepsilon)=H'_3(0)\nabla a^\varepsilon+\nabla(\tilde{H}_3(a^\varepsilon)a^\varepsilon)$, we have
\begin{equation}\label{R-E136}
\|\theta^{\ell,\var}\nabla a^{\ell,\var}\|^{\ell}_{\dot{B}^{-\sigma_1}_{2,\infty}}\lesssim \|\nabla a^{\ell,\var}\|_{\dot{B}^{d/p}_{p,1}}\|\theta^{\ell,\var}\|_{\dot{B}^{-\sigma_1}_{2,\infty}}\lesssim\|a^{\var}\|^{\ell}_{\dot{B}^{d/2+1}_{2,1}}
\|\theta^{\var}\|^{\ell}_{\dot{B}^{-\sigma_1}_{2,\infty}},
\end{equation}
and
\begin{equation}\label{R-E137}
\|\theta^{\ell,\var}\nabla a^{\tilde{h},\var}\|^{\ell}_{\dot{B}^{-\sigma_1}_{2,\infty}}\lesssim \|\theta^{\ell,\var}\nabla a^{\tilde{h},\var}\|^{\ell}_{\dot{B}^{\frac dp-\frac d2-\sigma_1}_{2,\infty}}\lesssim 
(\|a^{\var}\|^{m}_{\dot{B}^{d/p}_{p,1}}+\|a^{\var}\|^{h}_{\dot{B}^{d/2}_{2,1}}))
\|\theta^{\ell,\var}\|_{\dot{B}^{-\sigma_1}_{2,\infty}},
\end{equation}
where we used the fact that $p\leq d^{*}$, which implies that $d/p-d/2-\sigma_{1}+1\geq-\sigma_{1}$.
Concerning, $H^\varepsilon$, we have
\begin{align}\label{R-E138}
    \|v^{\ell}\cdot\nabla \theta^{\ell}\|_{\dot{B}^{-\sigma_1}_{2,\infty}}^\ell\lesssim  \|\theta^{\ell}\|_{\dot{B}^{\frac d2+1}_{2,1}} \|v^{\ell}\|_{\dot{B}^{-\sigma_1}_{2,\infty}}
\end{align}
and
\begin{align}\label{R-E139}
    \|v^{h}\cdot\nabla \theta^{\ell}\|_{\dot{B}^{-\sigma_1}_{2,\infty}}^\ell\lesssim \left(\|v^{h}\|_{\dot{B}^{\frac d2}_{2,1}} +\|v^{m}\|_{\dot{B}^{\frac dp+1}_{p,1}} \right)\|\theta^{\ell}\|_{\dot{B}^{-\sigma_1}_{2,\infty}}.
\end{align}
Using that $J(a^\var)=J'(0)a^\var+\wt J(a^\varepsilon)a^\varepsilon$, we have \begin{align}\label{R-E140}
    \|J(a^\varepsilon)\div q^{\ell}\|_{\dot{B}^{-\sigma_1}_{2,\infty}}^\ell&\lesssim \|q^\var\|^\ell_{\dot{B}^{\frac d2}_{2,1}} \|a^{\ell}\|_{\dot{B}^{-\sigma_1}_{2,\infty}}+\left(\|a^\var\|^m_{\dot{B}^{\frac dp}_{p,1}}+\|a^\var\|^h_{\dot{B}^{\frac d2}_{2,1}}\right)\|q^{\ell}\|_{\dot{B}^{-\sigma_1}_{2,\infty}}+ \|a^\var\|^2_{\dot{B}^{\frac dp}_{p,1}} \|q\|^{\ell}_{\dot{B}^{-\sigma_1}_{2,\infty}},
\end{align}
\begin{align}\label{R-E141}
    \|\dfrac{N(\nabla v^\varepsilon,\nabla v^{\ell,\varepsilon})}{1+a^\varepsilon}\|_{\dot{B}^{-\sigma_1}_{2,\infty}}^\ell&\lesssim (1+\|a^\var\|^\ell_{\dot{B}^{\frac dp}_{p,1}}) \|N(\nabla v^{\varepsilon},\nabla v^{\ell,\varepsilon})\|_{\dot{B}^{-\sigma_1}_{2,\infty}}
    \\ \nonumber  & \lesssim (1+\|a^\var\|^\ell_{\dot{B}^{\frac dp}_{p,1}})\left(\|v^\var\|^\ell_{\dot{B}^{\frac d2+1}_{2,1}}+\|v^\var\|^{m,\varepsilon}_{\dot{B}^{\frac dp+1}_{p,1}}+\|v^\var\|^{h,\varepsilon}_{\dot{B}^{\frac d2+1}_{2,1}}\right)\|v^{\varepsilon}\|^\ell_{\dot{B}^{-\sigma_1}_{2,\infty}}
\end{align}
and
\begin{align}\label{R-E142}
    \|H_1(a^\varepsilon)\theta^\varepsilon\div v^{\varepsilon,\ell}\|_{\dot{B}^{-\sigma_1}_{2,\infty}}^\ell&\lesssim \|a^{\varepsilon}\|_{\dot{B}^{\frac dp}_{p,1}} \|\theta^\varepsilon\div v^{\varepsilon,\ell}\|_{\dot{B}^{\frac dp+1}_{p,1}}
    \\ \nonumber&\lesssim \|a^{\varepsilon}\|_{\dot{B}^{\frac dp}_{p,1}} \|v^\varepsilon\|^\ell_{\dot{B}^{\frac d2+1}_{2,1}}\|\theta^\varepsilon\|^\ell_{\dot{B}^{-\sigma_1}_{2,\infty}}\\&+ \|a^{\varepsilon}\|_{\dot{B}^{\frac dp}_{p,1}} \left(\|\theta^\varepsilon\|^{m,\varepsilon}_{\dot{B}^{\frac dp}_{p,1}}+\dfrac{1}{\varepsilon}\|\theta^\varepsilon\|^{h,\varepsilon}_{\dot{B}^{\frac d2}_{2,1}}\right)\|v^\varepsilon\|^\ell_{\dot{B}^{-\sigma_1}_{2,\infty}}.
\end{align}
Using that $\nabla(K_3(a^\varepsilon)a^\varepsilon)\theta^\varepsilon=\nabla(K_3(a^\varepsilon)a^\varepsilon)\theta^{\ell}+\nabla(K_3(a^\varepsilon)a^\varepsilon)\theta^{h}$ and $p\leq d^*=\frac{2d}{d-2} \Leftrightarrow \frac dp-\frac d2-\sigma_1+1\geq -\sigma_1$, we have
\begin{align}\label{R-E15003}
\|\nabla(K_3(a^\varepsilon)a^\varepsilon)\theta^{\ell}\|^{\ell}_{\dot{B}^{-\sigma_1}_{2,\infty}}&\lesssim \|K(a^\varepsilon)a^\varepsilon\|_{\dot{B}^{\frac dp}_{p,1}}\|\theta^\varepsilon\|^{\ell}_{\dot{B}^{\frac dp-\frac d2-\sigma_1+1}_{2,\infty}}
\\&\lesssim \|a^\varepsilon\|^2_{\dot{B}^{\frac dp}_{p,1}}\|\theta^\varepsilon\|^{\ell}_{\dot{B}^{\sigma_1}_{2,\infty}}.\nonumber
\end{align}
The estimates for $I^{\ell,\varepsilon}$ in low-frequency follow similar lines. 
We now focus on the high-frequency counterpart. For $F^{h}$, we have
\begin{equation}\label{R-E143}
\|a^{\varepsilon}\div v^{h}\|^{\ell}_{\dot{B}^{-\sigma_1}_{2,\infty}}\lesssim \left(\|a^\varepsilon\|^\ell_{\dot{B}^{\frac d2-1}_{2,1}}+\|a^\varepsilon\|^{m,\varepsilon}_{\dot{B}^{\frac dp}_{p,1}}+\varepsilon\|a^\varepsilon\|^{h,\varepsilon}_{\dot{B}^{\frac d2}_{2,1}}\right)\|\div v^{\varepsilon}\|^h_{\dot{B}^{\frac dp-1}_{p,1}},
\end{equation}

\begin{equation}\label{R-E144}
\|v^{\varepsilon}\nabla a^{h}\|^{\ell}_{\dot{B}^{-\sigma_1}_{2,\infty}}\lesssim \left(\|v^\varepsilon\|^\ell_{\dot{B}^{\frac d2-1}_{2,1}}+\|v^\varepsilon\|^{m,\varepsilon}_{\dot{B}^{\frac dp-1}_{p,1}}+\varepsilon\|v^\varepsilon\|^{h,\varepsilon}_{\dot{B}^{\frac d2}_{2,1}}\right)\|\nabla a^{\varepsilon}\|^h_{\dot{B}^{\frac dp-1}_{p,1}},
\end{equation}
\begin{equation}\label{R-E145}
\|v^{\varepsilon}\nabla v^{h}\|^{\ell}_{\dot{B}^{-\sigma_1}_{2,\infty}}\lesssim \left(\|v^\varepsilon\|^\ell_{\dot{B}^{\frac d2-1}_{2,1}}+\|v^\varepsilon\|^{m,\varepsilon}_{\dot{B}^{\frac dp-1}_{p,1}}+\varepsilon\|v^\varepsilon\|^{h,\varepsilon}_{\dot{B}^{\frac d2}_{2,1}}\right)\|\nabla v^{\varepsilon}\|^h_{\dot{B}^{\frac dp-1}_{p,1}},
\end{equation}

\begin{equation}\label{R-E146}
\|J(a^{\varepsilon})\mathcal{A}v^{h}\|^{\ell}_{\dot{B}^{-\sigma_1}_{2,\infty}}\lesssim \left(\|a^\varepsilon\|^\ell_{\dot{B}^{\frac d2-1}_{2,1}}+\|a^\varepsilon\|^{m,\varepsilon}_{\dot{B}^{\frac dp}_{p,1}}+\varepsilon\|a^\varepsilon\|^{h,\varepsilon}_{\dot{B}^{\frac d2}_{2,1}}\right)\|\mathcal{A}v^{\varepsilon}\|^h_{\dot{B}^{\frac dp-1}_{p,1}},
\end{equation}

\begin{equation}\label{R-E147}
\|H_1(a^{\varepsilon})\nabla a^{h}\|^{\ell}_{\dot{B}^{-\sigma_1}_{2,\infty}}\lesssim \left(\|a^\varepsilon\|^\ell_{\dot{B}^{\frac d2-1}_{2,1}}+\|a^\varepsilon\|^{m,\varepsilon}_{\dot{B}^{\frac dp}_{p,1}}+\varepsilon\|a^\varepsilon\|^{h,\varepsilon}_{\dot{B}^{\frac d2}_{2,1}}\right)\|\nabla a^{\varepsilon}\|^h_{\dot{B}^{\frac dp-1}_{p,1}},
\end{equation}

\begin{equation}\label{R-E148}
\|H_2(a^{\varepsilon})\nabla \theta^{h}\|^{\ell}_{\dot{B}^{-\sigma_1}_{2,\infty}}\lesssim \left(\|a^\varepsilon\|^\ell_{\dot{B}^{\frac d2-1}_{2,1}}+\|a^\varepsilon\|^{m,\varepsilon}_{\dot{B}^{\frac dp}_{p,1}}+\varepsilon\|a^\varepsilon\|^{h,\varepsilon}_{\dot{B}^{\frac d2}_{2,1}}\right)\|\nabla \theta^{\varepsilon}\|^h_{\dot{B}^{\frac dp-1}_{p,1}}
\end{equation}
and
\begin{align}\label{R-E149}
\|\theta^{\varepsilon}\nabla a^{h}\|^{\ell}_{\dot{B}^{-\sigma_1}_{2,\infty}}&\lesssim \left(\|\theta^\varepsilon\|^\ell_{\dot{B}^{\frac d2-1}_{2,1}}+\|\theta^\varepsilon\|^{m,\varepsilon}_{\dot{B}^{\frac dp-1}_{p,1}}+\|\theta^\varepsilon\|^{h,\varepsilon}_{\dot{B}^{\frac d2-1}_{2,1}}\right)\|\nabla a^{\varepsilon}\|^h_{\dot{B}^{\frac dp-1}_{p,1}}.
\end{align}
Concerning the other terms, we have
\begin{equation}\label{R-E150}
\|v^{\varepsilon}\nabla \theta^{h}\|^{\ell}_{\dot{B}^{-\sigma_1}_{2,\infty}}\lesssim \left(\|v^\varepsilon\|^\ell_{\dot{B}^{\frac d2-1}_{2,1}}+\|v^\varepsilon\|^{m,\varepsilon}_{\dot{B}^{\frac dp-1}_{p,1}}+\varepsilon\|v^\varepsilon\|^{h,\varepsilon}_{\dot{B}^{\frac d2}_{2,1}}\right)\|\nabla \theta^{\varepsilon}\|^h_{\dot{B}^{\frac dp-1}_{p,1}}
\end{equation}
and
\begin{equation}\label{R-E151}
\|J(a^{\varepsilon})\div q^{h}\|^{\ell}_{\dot{B}^{-\sigma_1}_{2,\infty}}\lesssim \left(\|a^\varepsilon\|^\ell_{\dot{B}^{\frac d2-1}_{2,1}}+\|a^\varepsilon\|^{m,\varepsilon}_{\dot{B}^{\frac dp}_{p,1}}+\varepsilon\|a^\varepsilon\|^{h,\varepsilon}_{\dot{B}^{\frac d2}_{2,1}}\right)\|\div q^{\varepsilon}\|^h_{\dot{B}^{\frac dp-1}_{p,1}}.
\end{equation}
Concerning $\nabla(K_3(a^\varepsilon)a^\varepsilon)\theta^{h}$, we split its analysis into two cases. For $\frac dp-\frac d2< \sigma_1\leq \sigma_0=\dfrac{2d}{p}-\dfrac{d}{2}$, $p<d$, using the embedding $L^p\hookrightarrow \dot{B}^{\frac{d}{2}-\frac{2d}{p}}_{2,\infty}$, we have
\begin{align}\label{R-E15004}
\|\nabla K_3(a^\varepsilon)\theta^{h}\|^{\ell}_{\dot{B}^{-\sigma_0}_{2,\infty}}&\lesssim \|K(a^\varepsilon)\|_{L^p}\|\theta^\varepsilon\|^{h}_{\dot{B}^{0}_{p,1}}
\end{align}
which leads to
\begin{align}\label{R-E15005}
\|\nabla K_3(a^\varepsilon)\|_{L^p}&\lesssim \|\nabla a^\varepsilon\|_{L^p}\lesssim \|\nabla a^\varepsilon\|^{\ell}_{\dot{B}^{\frac d2-\frac dp}_{2,1}}+\|\nabla a^\var\|^{h}_{\dot{B}^{0}_{p,1}}
\\\nonumber &\lesssim\|a^\varepsilon\|^{\ell}_{\dot{B}^{\frac d2-\frac dp}_{2,1}}+\|a^\var\|^{m,\var}_{\dot{B}^{\frac dp}_{p,1}}+\| a^\var\|^{h,\var}_{\dot{B}^{\frac d2}_{2,1}}.
\end{align}
Then, using an interpolation inequality, for $\theta_2=\frac{\sigma_1+\frac d2-\frac dp}{\sigma_1+\frac d2-1}$, we have
\begin{align}
\|a^\varepsilon\|^{\ell}_{\dot{B}^{\frac d2-\frac dp}_{2,1}} &\lesssim \left(\|a^\varepsilon\|^{\ell,}_{\dot{B}^{-\sigma_1}_{2,\infty}}\right)^{1-\theta_2}\left(\|a^\varepsilon\|^{\ell}_{\dot{B}^{\frac d2-1}_{2,1}}\right)^{\theta_2}.
\end{align}
Thus
\begin{align}\label{R-E150006}
\|\nabla K_3(a^\varepsilon)\theta^{h}\|^{\ell}_{\dot{B}^{-\sigma_0}_{2,\infty}}&\lesssim \left(\|a^\varepsilon\|^{\ell}_{\dot{B}^{\frac d2-1}_{2,1}}+\|a^\varepsilon\|^{\ell}_{\dot{B}^{-\sigma_1}_{2,\infty}}+\|a^\var\|^{m,\var}_{\dot{B}^{\frac dp}_{p,1}}+\| a^\var\|^{h,\var}_{\dot{B}^{\frac d2}_{2,1}}\right)\left(\|\theta^\varepsilon\|^{m,\varepsilon}_{\dot{B}^{\frac dp}_{p,1}}+\|\theta^\varepsilon\|^{h,\varepsilon}_{\dot{B}^{\frac d2}_{2,1}}\right).
\end{align}
In the case $1-\frac{d}{2}<\sigma_1\leq \frac{d}{p}-\frac d2\leq 0$, $p<d$, we have $\dot{B}^{\frac{d}{p}-1}_{p,1}\hookrightarrow L^d$ and thus 
\begin{align}\label{R-E150007}
\|\nabla K_3(a^\varepsilon)\theta^{h}\|^{\ell}_{\dot{B}^{-\sigma_1}_{2,\infty}}&\lesssim \|a^\varepsilon\|_{\dot{B}^{\frac dp}_{p,1}}\left(\|\theta^\varepsilon\|^{m,\varepsilon}_{\dot{B}^{\frac d2}_{p,1}}+\|\theta^\varepsilon\|^{h,\varepsilon}_{\dot{B}^{\frac dp}_{p,1}}\right).
\end{align}
Since $\dfrac{N(\nabla v^\var,\nabla v^{\var,h}))}{1+a^\varepsilon}=(1+I(a^\varepsilon)N(\nabla v^\var,\nabla v^{\var,h}))$, in the case $\frac{d}{p}-\frac{d}{2}<\sigma_1\leq \sigma_0$, we have 
\begin{align}
    \|\dfrac{N(\nabla v^\var,\nabla v^{\var,h}))}{1+a^\varepsilon}\|^{\ell}_{\dot{B}^{-\sigma_1}_{2,\infty}}&\lesssim \left(1+\|a^\varepsilon\|_{\dot{B}^{\frac dp}_{p,1}}\right)\|\nabla v^{\varepsilon}\|_{L^p}\|\nabla v^{\varepsilon,h}\|_{L^p},
    \\&\lesssim \left(1+\|a^\varepsilon\|_{\dot{B}^{\frac dp}_{p,1}}\right)\left(\|v^\varepsilon\|^{\ell}_{\dot{B}^{\frac d2-1}_{2,1}}+\|v^\varepsilon\|^{\ell}_{\dot{B}^{-\sigma_1}_{2,\infty}}+\|v^\var\|^{m,\var}_{\dot{B}^{\frac dp}_{p,1}}+\| v^\var\|^{h,\var}_{\dot{B}^{\frac d2}_{2,1}}\right)\nonumber\\&\times\left(\|v^\varepsilon\|^{m,\varepsilon}_{\dot{B}^{\frac dp}_{p,1}}+\|v^\varepsilon\|^{h,\varepsilon}_{\dot{B}^{\frac d2}_{2,1}}\right). \nonumber
\end{align}
In the case $1-\frac d2< \sigma_1\leq \frac dp-\frac d2\leq0$, we have 
\begin{align}
    \|\dfrac{N(\nabla v^\var,\nabla v^{\var,h}))}{1+a^\varepsilon}\|^{\ell}_{\dot{B}^{-\sigma_1}_{2,\infty}}&\lesssim \left(1+\|a^\varepsilon\|_{\dot{B}^{\frac dp}_{p,1}}\right)\|\nabla v^{\varepsilon}\|_{L^d}\|\nabla v^{\varepsilon,h}\|_{L^{d^*}},
    \\&\lesssim \left(1+\|a^\varepsilon\|_{\dot{B}^{\frac dp}_{p,1}}\right)\|v^\varepsilon\|_{\dot{B}^{\frac dp}_{p,1}}\|v^{\varepsilon}\|^h_{\dot{B}^{-\frac dp+1}_{p,1}},
\end{align}

\begin{equation}\label{R-E152}
\|v^{\varepsilon}\nabla q^{h}\|^{\ell}_{\dot{B}^{-\sigma_1}_{2,\infty}}\lesssim \left(\|v^\varepsilon\|^\ell_{\dot{B}^{\frac d2-1}_{2,1}}+\|v^\varepsilon\|^{m,\varepsilon}_{\dot{B}^{\frac dp-1}_{p,1}}+\varepsilon\|v^\varepsilon\|^{h,\varepsilon}_{\dot{B}^{\frac d2}_{2,1}}\right)\|\nabla q^{\varepsilon}\|^h_{\dot{B}^{\frac dp-1}_{p,1}},
\end{equation}

\begin{equation}\label{R-E1534}
\|q^{\varepsilon}\nabla v^{h}\|^{\ell}_{\dot{B}^{-\sigma_1}_{2,\infty}}\lesssim \left(\|q^\varepsilon\|^\ell_{\dot{B}^{\frac d2-1}_{2,1}}+\|q^\varepsilon\|^{m,\varepsilon}_{\dot{B}^{\frac dp-1}_{p,1}}+\varepsilon\|q^\varepsilon\|^{h,\varepsilon}_{\dot{B}^{\frac d2-1}_{2,1}}\right)\|\nabla v^{\varepsilon}\|^h_{\dot{B}^{\frac dp-1}_{p,1}}
\end{equation}
and
\begin{equation}\label{R-E153}
\|q^{\varepsilon}\div v^{h}\|^{\ell}_{\dot{B}^{-\sigma_1}_{2,\infty}}\lesssim \left(\|q^\varepsilon\|^\ell_{\dot{B}^{\frac d2-1}_{2,1}}+\|q^\varepsilon\|^{m,\varepsilon}_{\dot{B}^{\frac dp-1}_{p,1}}+\varepsilon\|q^\varepsilon\|^{h,\varepsilon}_{\dot{B}^{\frac d2-1}_{2,1}}\right)\|\div v^{\varepsilon}\|^h_{\dot{B}^{\frac dp-1}_{p,1}}.
\end{equation}
Concerning $H_1(a^\varepsilon)\theta^\varepsilon\div v^\varepsilon$, in the case $\frac dp-\frac d2< \sigma_1 \leq \sigma_0$, we have\begin{align}\label{R-E1423} \|H_1(a^\varepsilon)\theta^\varepsilon\div v^\varepsilon\|_{\dot{B}^{-\sigma_1}_{2,\infty}}^\ell&\lesssim \|a^{\varepsilon}\|_{\dot{B}^{\frac dp}_{p,1}}\left(\|\theta^\varepsilon\|^{\ell}_{\dot{B}^{\frac d2-1}_{2,1}}+\|\theta^\varepsilon\|^{\ell}_{\dot{B}^{-\sigma_1}_{2,\infty}}+\|\theta^\var\|^{m,\var}_{\dot{B}^{\frac dp-1}_{p,1}}+\| \theta^\var\|^{h,\var}_{\dot{B}^{\frac d2-1}_{2,1}}\right)\\&\times\left(\|v^\varepsilon\|^{m,\varepsilon}_{\dot{B}^{\frac dp}_{p,1}}+\|v^\varepsilon\|^{h,\varepsilon}_{\dot{B}^{\frac d2}_{2,1}}\right). \nonumber
\end{align}
and, in the case $1 - \frac d2 < \sigma_1 \leq \frac dp-\frac d2 \leq 0$, we have 
\begin{align}\label{R-E1424}
    \|H_1(a^\varepsilon)\theta^\varepsilon\div v^{\varepsilon,h}\|_{\dot{B}^{-\sigma_1}_{2,\infty}}^\ell&\lesssim \|H_1(a^\varepsilon)\theta^\varepsilon\div v^{\varepsilon,h}\|_{\dot{B}^{0}_{2,\infty}}^\ell
    \\&\lesssim \|a^{\varepsilon}\|_{\dot{B}^{\frac dp}_{p,1}}\|\theta^\varepsilon\|_{\dot{B}^{\frac dp-1}_{p,1}}\|v^\varepsilon\|^{h,\varepsilon}_{\dot{B}^{\frac dp+1}_{p,1}}. \nonumber
\end{align}
Gathering the estimates from this section and using that all the right-hand side can be bounded by $X^\varepsilon(t)^2$ concludes the proof of Lemma \ref{LemmaDecay}.
\end{proof}
\subsection{Conclusion of the proof of Theorem \ref{Thm:decay}}

Gathering the estimates from the previous sections and using that $X^\var(t)\lesssim X_0^\var \ll 1$, we obtain
\begin{align*}
&\dfrac{d}{dt}\|(a^\varepsilon,v^\varepsilon,\theta^\varepsilon,\varepsilon q^{\varepsilon})\|^{\ell}_{\dot{B}^{\frac {d}{2}-1}_{2,1}}+\dfrac{d}{dt}(\|a^{\var}\|^{m}_{\dot{B}^{d/p}_{p,1}}+\|w^{\var}\|^{m}_{\dot{B}^{d/p-1}_{p,1}}+\|\var Q^{\var}\|^{m}_{\dot{B}^{d/p-2}_{p,1}}+\|\theta^{\var}\|^{m}_{\dot{B}^{d/p-2}_{p,1}})
\\&+\frac{d}{dt}\Big(\|\varepsilon a^{\var}\|^{h}_{\dot{B}^{d/2+1}_{2,1}}+\|\varepsilon w^{\var}\|^{h}_{\dot{B}^{d/2}_{2,1}}+\|\varepsilon^2 \theta^{\var}\|^{h}_{\dot{B}^{d/2+1}_{2,1}}+\|\varepsilon^3q^{\var}\|^{h}_{\dot{B}^{d/2+1}_{2,1}}\Big)
\\&+\|( a^\varepsilon,v^\varepsilon,\theta^\varepsilon)^{\ell}\|_{\dot{B}^{\frac {d}{2}+1}_{2,1}}+\frac{1}{\varepsilon}\|Q^{\varepsilon,\ell}\|_{\dot{B}^{\frac {d}{2}-1}_{2,1}}+\|q^{\varepsilon,\ell}\|_{\dot{B}^{\frac {d}{2}}_{2,1}}
+\|a^{\var}\|^{m}_{\dot{B}^{d/p}_{p,1}}
+\|w^{\var}\|^{m}_{\dot{B}^{d/p+1}_{p,1}}\\&+\frac{1}{\varepsilon}\|Q^{\var}\|^{m}_{\dot{B}^{d/p-2}_{p,1}}+\|\theta^{\var}\|^{m}_{\dot{B}^{d/p}_{p,1}}+\|\varepsilon a^{\var}\|^{h}_{\dot{B}^{d/2+1}_{2,1}}+\|\varepsilon w^{\var}\|^{h}_{\dot{B}^{d/2+2}_{2,1}}+\|(\theta^{\var},\var q^{\var})\|^{h}_{\dot{B}^{d/2+1}_{2,1}} \leq 0.
\end{align*}
Then, we employ a classical interpolation argument to derive the time-decay estimates. Since $-\sigma_1<\frac d2-1\leq \frac dp< \frac d2+1$, we have
\begin{align}
    \|(a^\varepsilon,v^\varepsilon,\theta^\varepsilon)\|^{\ell}_{\dot{B}^{\frac {d}{2}-1}_{2,1}}\leq \left(\|(a^\varepsilon,v^\varepsilon,\theta^\varepsilon)\|^{\ell}\|_{\dot{B}^{-\sigma_1}_{2,\infty}}\right)^{\theta_1}\left(\|(a^\varepsilon,v^\varepsilon,\theta^\varepsilon)^{\ell}\|_{\dot{B}^{\frac {d}{2}+1}_{2,\infty}}\right)^{1-\theta_1}
\end{align}
where $\theta=\frac{2}{d/2+1+\sigma_1}$. Using the $\dot{B}^{-\sigma_1}_{2,\infty}$-boundedness obtained in Lemma \ref{LemmaDecay}, we deduce that 
\begin{align}
   \|(a^\varepsilon,v^\varepsilon,\theta^\varepsilon)\|^{\ell}_{\dot{B}^{\frac {d}{2}+1}_{2,1}} \geq c_0\left(\|(a^\varepsilon,v^\varepsilon,\theta^\varepsilon)\|^{\ell}_{\dot{B}^{\frac {d}{2}-1}_{2,1}}\right)^{\frac{1}{1-\theta_1}}.
\end{align}
In addition, it is easy to see that
\begin{align}   \|q^\varepsilon\|^{\ell}_{\dot{B}^{\frac {d}{2}}_{2,1}} \geq \left(\|\var q^\varepsilon\|^{\ell}_{\dot{B}^{\frac {d}{2}-1}_{2,1}}\right)^{\frac{1}{1-\theta_1}}.
\end{align}
Concerning the medium frequencies, using Bernstein inequality, we have
\begin{align}  \|a^\varepsilon\|^{m,\varepsilon}_{\dot{B}^{\frac {d}{p}}_{p,1}} \geq C\left(\| a^\varepsilon\|^{m,\var}_{\dot{B}^{\frac {d}{p}}_{p,1}}\right)^{\frac{1}{1-\theta_1}}, \; 
\|w^\varepsilon\|^{m,\varepsilon}_{\dot{B}^{\frac {d}{p}+1}_{p,1}} \geq C\left(\| w^\varepsilon\|^{m,\var}_{\dot{B}^{\frac {d}{p}-1}_{p,1}}\right)^{\frac{1}{1-\theta_1}}, \;  \|\theta^\varepsilon\|^{m,\varepsilon}_{\dot{B}^{\frac {d}{p}}_{p,1}} \geq C\left(\| \theta^\varepsilon\|^{m,\var}_{\dot{B}^{\frac {d}{p}-2}_{p,1}}\right)^{\frac{1}{1-\theta_1}}.
\end{align}
Moreover, it is clear that
\begin{align}   \frac{1}{\var}\|Q^\varepsilon\|^{m,\varepsilon}_{\dot{B}^{\frac {d}{p}-2}_{p,1}} \geq \left(\|\varepsilon Q^\varepsilon\|^{m,\var}_{\dot{B}^{\frac {d}{p}-2}_{p,1}}\right)^{\frac{1}{1-\theta_1}}.
\end{align}
Concerning the high frequencies, one has
\begin{align}  \|\varepsilon a^\varepsilon\|^{h,\varepsilon}_{\dot{B}^{\frac {d}{2}+1}_{2,1}} \geq \left(\| \varepsilon a^\varepsilon\|^{h,\var}_{\dot{B}^{\frac {d}{2}+1}_{2,1}}\right)^{\frac{1}{1-\theta_1}}, \; 
\varepsilon^2\|\varepsilon w^\varepsilon\|^{h,\varepsilon}_{\dot{B}^{\frac {d}{2}+2}_{2,1}} \geq \left(\| \varepsilon w^\varepsilon\|^{h,\var}_{\dot{B}^{\frac {d}{2}}_{2,1}}\right)^{\frac{1}{1-\theta_1}}
\end{align}
and
\begin{align}  \|(\theta^\varepsilon,\varepsilon q^\varepsilon)\|^{h,\varepsilon}_{\dot{B}^{\frac {d}{2}+1}_{2,1}} \geq \left(\| (\varepsilon^2\theta^\varepsilon,\varepsilon^3q^\varepsilon)\|^{h,\var}_{\dot{B}^{\frac {d}{2}+1}_{2,1}}\right)^{\frac{1}{1-\theta_1}}.
\end{align}
Therefore, defining 
\begin{align}
\mathcal{L}_1&=\|(a^\varepsilon,v^\varepsilon,\theta^\varepsilon,\varepsilon q^{\varepsilon})\|^{\ell}_{\dot{B}^{\frac {d}{2}-1}_{2,1}}+\|a^{\var}\|^{m}_{\dot{B}^{d/p}_{p,1}}+\|w^{\var}\|^{m}_{\dot{B}^{d/p-1}_{p,1}}+\|\var Q^{\var}\|^{m}_{\dot{B}^{d/p-2}_{p,1}}+\|\theta^{\var}\|^{m}_{\dot{B}^{d/p-2}_{p,1}}
\\&\quad +\|\varepsilon a^{\var}\|^{h}_{\dot{B}^{d/2+1}_{2,1}}+\|\varepsilon w^{\var}\|^{h}_{\dot{B}^{d/2}_{2,1}}+\|\varepsilon^2 \theta^{\var}\|^{h}_{\dot{B}^{d/2+1}_{2,1}}+\|\varepsilon^3q^{\var}\|^{h}_{\dot{B}^{d/2+1}_{2,1}},  
\end{align}
and gathering the previous estimates, we obtain
\begin{align}\label{est:lastdecay}
    \dfrac{d}{dt}\mathcal{L}_1+c_0\mathcal{L}_1^{1+\frac{2}{d/2-1+\sigma_1}} \leq 0.
\end{align}
Solving \eqref{est:lastdecay} and following the embedding arguments from \cite{XX} gives the desired decay estimate \eqref{est:thmdecay}, which concludes the proof of Theorem \ref{Thm:decay}.
\qed



\section{Proof of the relaxation Theorem \ref{thm:relax}}\label{sec:relaxation}
\subsection{Formulation of the error system}
Let $(a_0^\varepsilon,u^\var,\theta^\var,q^\var)$ and $(a,u,\theta)$ be the solutions of \eqref{LinearHypNSCzero} and \eqref{LinearHypNSFzero} from Theorem \ref{thm:exist} and \ref{Thm:ExistHeDanchin} associated to the initial data $(a^\varepsilon_0,v^\var_0,\theta^\var_0,q^\var_0)$ and $(a_0,v_0,\theta_0)$, respectively.
We will prove that as $\varepsilon\to0$, we have 
$(a^\varepsilon_0,v^\var_0,\theta^\var_0,q^\var_0)\to (a_0,v_0,\theta_0)$ strongly in some suitable homogeneous Besov norms. To that matter, we define the error unknowns $(\widetilde{a},\widetilde{v},\widetilde{\theta})$ as $$(\widetilde{a},\widetilde{v},\widetilde{\theta}):=(a^\varepsilon-a,v^\varepsilon-v,\theta^\varepsilon-\theta).$$ The couple $(\wt a, \wt v, \wt \theta)$ satisfies
\begin{equation} \label{relaxw}
\left\{
\begin{array}
[c]{l}%
\d_t \widetilde{a}+\div \widetilde{v}=-\wt F,\\
\d_t \widetilde v-\mathcal{A}\wt v+\nabla \widetilde a +\nabla \widetilde \theta=-\wt G,\\
\d_t \widetilde\theta- \Delta \widetilde\theta+\div \widetilde v=-\div Q-\wt H,\\
\end{array}
\right.
\end{equation}
where 
\begin{align*}
    &\wt F = v^\varepsilon\cdot \nabla \wt a+\wt v\cdot\nabla  a+ \wt a\: \div v^\varepsilon +  a\div \wt v,
    \\  &\wt G = \wt v\cdot \nabla v-v^\varepsilon\cdot \nabla \wt v+\left(J(a^\varepsilon)-J(a)\right)\cA v^\varepsilon/\nu+J(a)\cA\wt v/\nu+(H_1(a^\varepsilon)-H_1(a))\nabla a^\varepsilon+H_1(a))\nabla \wt a
    \\& \quad\quad  +(H_2(a^\varepsilon)-H_2(a))\nabla \theta^\varepsilon+H_2(a))\nabla \wt \theta+\wt \theta\nabla H_3(a^\varepsilon)+ \theta\nabla (H_3(a^\varepsilon)-H_3(a)),
    \\  &\wt H = \wt v\cdot\nabla\theta^\varepsilon +  v\cdot\nabla \wt\theta + (J(a^\varepsilon)-J(a))\div q^\varepsilon+J(a)(\div q^\varepsilon-\Delta \theta) -\wt R,
\end{align*}
where
$$\wt R= \dfrac{N(\nabla v^\varepsilon,\nabla v^\varepsilon)}{1+a^\varepsilon}-\dfrac{N(\nabla v,\nabla v)}{1+a}+\wt H_1(a^\varepsilon)\theta^\varepsilon \div v^\varepsilon-\wt H_1( a)\theta \div v.$$
The linear part of \eqref{relaxw} has a similar structure to the Navier-Stokes-Fourier system \eqref{FullNSC}, thus, as in \cite{DanchinFullNSC}, we analyze differently the low and high frequencies. In order to control the linear part of the source terms, we derive a priori estimates at different regularities in both frequency regime. To find the optimal regularity indexes to do so, we first check in which space we are able to extract a $\mathcal{O}(\var)$ bound for the linear source term $\div Q$. From Theorem \ref{thm:exist}, we have
\begin{align}\label{eq:Qepsilon}
\|Q^\varepsilon\|_{L^1_T(\dot{B}^{\frac{d}{2}-1}_{2,1})}^\ell+\|Q^\varepsilon\|_{L^1_T(\dot{B}^{\frac{d}{p}-2}_{p,1}\cap \dot{B}^{\frac{d}{p}-1}_{p,1})}^{m,\var}+\|Q^\varepsilon\|_{L^1_T(\dot{B}^{\frac{d}{2}-1}_{2,1})}^{h,\var}=\mathcal{O}(\varepsilon).    
\end{align}
This suggests us to work at the regularity index $d/2-2$ and $d/p-2$ in low frequencies and high frequencies, respectively, for the component $\theta^\varepsilon.$

\subsection{Error estimates: linear analysis}
We have the following proposition.
\begin{Prop} \label{prop:errorlin}
Let $( a^\var,u^\var,\theta^\var,q^\var)$ and $( a,u,\theta)$ be the solutions of the system \eqref{LinearHypNSCzero} and \eqref{LinearHypNSFzero} from Theorem \ref{thm:exist} and \ref{Thm:ExistHeDanchin} respectively, such that their initial data satisfy 
\begin{align}
\|(a_0^\varepsilon- a_0,v_0^\varepsilon-v_0,\theta_0^\varepsilon-\theta_0)\|^\ell_{\dot{B}^{\frac{d}{2}-2}_{2,1}}+\|a_0^\varepsilon- a_0\|^{h}_{\dot{B}^{\frac{d}{p}-1}_{p,1}}+\|(v_0^\varepsilon-v_0,\theta_0^\varepsilon-\theta_0)\|^{h}_{\dot{B}^{\frac{d}{p}-2}_{p,1}}\leq C\varepsilon.    
\end{align}
Then
\begin{align*}
\widetilde{X}(t):=&\|(\wt a,\wt v,\wt \theta)\|^\ell_{L^\infty_T(\dot{B}^{\frac{d}{2}-2}_{2,1})}+\|(\wt a,\wt v,\wt \theta)\|^\ell_{L^1_T(\dot{B}^{\frac{d}{2}}_{2,1})}
 \\ & +\|\wt a\|^{h}_{L^\infty_T\cap L^1_T(\dot{B}^{\frac{d}{p}-1}_{p,1})}+\|(\wt v,\wt \theta)\|^{h}_{L^\infty_T(\dot{B}^{\frac{d}{p}-2}_{p,1})}+\|(\wt v,\wt \theta)\|^{h}_{L^1_T(\dot{B}^{\frac{d}{p}}_{p,1})}
    \lesssim \varepsilon+\wt X(t)X^\varepsilon(t) + \mathcal{I}_1+ \mathcal{I}_2,
\end{align*}
where
 \begin{align*}\mathcal{I}_1=\|(\wt F,\wt G,\wt H)\|^\ell_{L^1_T(\dot{B}^{\frac{d}{2}-2}_{2,1})} \andf \mathcal{I}_2=\|\wt F-v^\varepsilon\cdot \nabla \wt a\|^{h}_{L^1_T(\dot{B}^{\frac{d}{p}-1}_{2,1})}+\|(\wt G,\wt H)\|^{h}_{L^1_T(\dot{B}^{\frac{d}{p}-2}_{p,1})}.
 \end{align*}
\end{Prop}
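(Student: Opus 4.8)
The strategy is to treat the error system \eqref{relaxw} as a perturbation of the linearized Navier-Stokes-Fourier system \eqref{LinearHypNSFzero}, splitting the frequency space at the threshold $J_0$ exactly as in the analysis of \eqref{FullNSC} carried out in \cite{DanchinFullNSC,HeDanchin}. The crucial new feature is the extra linear source $\div Q$ in the temperature equation, which is not present in \eqref{LinearHypNSFzero}; the whole point of the chosen regularity indices $\frac d2-2$ (low) and $\frac dp-2$ (high) for $\wt\theta$ is that, by \eqref{eq:Qepsilon}, $\div Q$ lands in $L^1_T(\dot B^{\frac d2-2}_{2,1})^\ell\cap L^1_T(\dot B^{\frac dp-2}_{p,1})^h$ with an $\mathcal O(\varepsilon)$ bound (one derivative costs one power of $K$ in the low regime and is harmless in the high regime, while the $\tfrac1\varepsilon$-gain on $Q^\varepsilon$ from $X^\varepsilon$ compensates the loss). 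Thus I would first record that $\|\div Q\|^\ell_{L^1_T(\dot B^{\frac d2-2}_{2,1})}+\|\div Q\|^h_{L^1_T(\dot B^{\frac dp-2}_{p,1})}\lesssim\varepsilon$.

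\textbf{Low frequencies.} For $j\le J_0$ I would run the same Lyapunov functional $\mathcal L^\ell_j$ as in \eqref{LyaBF2} (it is $\sim\|(\wt a_j,\wt v_j,\wt\theta_j)\|_{L^2}^2$ by Proposition \ref{prop:Bernstein}), differentiate in time, use Young's inequality to absorb the cross terms, and obtain $\frac{d}{dt}\mathcal L^\ell_j+c2^{2j}\|(\wt a_j,\wt v_j,\wt\theta_j)\|_{L^2}^2\lesssim(\|\div Q_j\|_{L^2}+\|(\wt F_j,\wt G_j,\wt H_j)\|_{L^2})\|(\wt a_j,\wt v_j,\wt\theta_j)\|_{L^2}$. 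Multiplying by $2^{j(\frac d2-2)}$, summing over $j\le J_0$, and using Lemma \ref{SimpliCarre} yields $\|(\wt a,\wt v,\wt\theta)\|^\ell_{L^\infty_T(\dot B^{\frac d2-2}_{2,1})}+\|(\wt a,\wt v,\wt\theta)\|^\ell_{L^1_T(\dot B^{\frac d2}_{2,1})}\lesssim\|(\wt a_0,\wt v_0,\wt\theta_0)\|^\ell_{\dot B^{\frac d2-2}_{2,1}}+\|\div Q\|^\ell_{L^1_T(\dot B^{\frac d2-2}_{2,1})}+\mathcal I_1$, and the first two terms are $\lesssim\varepsilon$ by hypothesis and by the step above.

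\textbf{High frequencies.} For $j\ge J_0$ I would introduce the effective velocity $\wt w=\wt v+(-\Delta)^{-1}\nabla\wt a$, which diagonalizes the $(\wt a,\wt v)$ block as in \eqref{LinearHypNSCMed}: $\wt a$ satisfies a damped transport equation (the advection term $v^\varepsilon\cdot\nabla\wt a$ is handled by Lemma \ref{CP}, which is why it is subtracted off in the definition of $\mathcal I_2$), and $\wt w$ and $\wt\theta$ satisfy parabolic equations with lower-order right-hand sides that are absorbed using $2^{-2j}\le1/K^2$ and choosing $K$ large. Applying Lemma \ref{maximalL1L2} to each equation, combining the resulting inequalities at regularities $\frac dp-1$ for $\wt a$, $\frac dp-2$ for $(\wt w,\wt v,\wt\theta)$, and using the Bernstein inequalities of Proposition \ref{prop:Bernstein} to close the loop, gives the high-frequency part of $\wt X(t)$ controlled by $\|\wt a_0\|^h_{\dot B^{\frac dp-1}_{p,1}}+\|(\wt v_0,\wt\theta_0)\|^h_{\dot B^{\frac dp-2}_{p,1}}+\|\div Q\|^h_{L^1_T(\dot B^{\frac dp-2}_{p,1})}+\mathcal I_2\lesssim\varepsilon+\mathcal I_2$. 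Collecting the two regimes and noting that the advection commutators and the terms like $v^\varepsilon\cdot\nabla\wt v$ produce factors of $\wt X(t)X^\varepsilon(t)$ (this is where the smallness of $X^\varepsilon$ from Theorem \ref{thm:exist} enters) yields the stated bound.

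\textbf{Main obstacle.} The delicate point is bookkeeping the $\varepsilon$-powers: one must verify that every place where the high regime of $Q^\varepsilon$ or $q^\varepsilon$ appears, the corresponding norm in $X^\varepsilon$ carries enough powers of $\varepsilon$ (recall $X^{h,\varepsilon}$ is weighted by $\varepsilon$) so that no $\tfrac1\varepsilon$ survives in the final estimate — in particular that $\div Q$ really is $\mathcal O(\varepsilon)$ in the chosen spaces and not merely $\mathcal O(1)$. A secondary subtlety is that $\wt\theta$ is estimated one derivative below the natural ``energy'' level for the limit system, so the parabolic smoothing in the $\wt\theta$-equation must be used carefully to still recover the $L^1_T(\dot B^{\frac d2}_{2,1})^\ell\cap L^1_T(\dot B^{\frac dp}_{p,1})^h$ gain; this forces the term $\div\wt v$ on the right-hand side of the $\wt\theta$-equation to be absorbed by the $\wt w,\wt v$ estimates rather than by $\wt\theta$ itself, which is why the functionals for $(\wt a,\wt w)$ and for $\wt\theta$ must be combined with a small coupling constant exactly as in the medium-frequency analysis of Proposition \ref{prop:linmed}.
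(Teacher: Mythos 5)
Your proposal follows essentially the same route as the paper's proof: a low-frequency estimate at regularity $\dot B^{\frac d2-2}_{2,1}$ via the perturbed Lyapunov functional \eqref{LyaBF2}, a high-frequency estimate via the effective velocity $\wt w=\wt v+(-\Delta)^{-1}\nabla\wt a$ at regularities $\frac dp-1$ for $\wt a$ and $\frac dp-2$ for $(\wt w,\wt v,\wt\theta)$ with linear couplings absorbed by Bernstein inequalities for $J_0$ large, and the extra source $\div Q$ bounded by $\mathcal O(\varepsilon)$ through the uniform bound \eqref{eq:Qepsilon} from Theorem \ref{thm:exist}. The argument and the handling of the advection term and of the $\varepsilon$-bookkeeping match the paper's Steps 1 and 2, so the proposal is correct.
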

\begin{proof}
   \noindent\textbf{Step1: low-frequency estimates: $j\leq J_0$.}
Applying the localization operator $\ddj$ to \eqref{relaxw}, we obtain
\begin{equation} \label{relaxw2}
\left\{
\begin{array}
[c]{l}%
\d_t \widetilde{a}_j+\div \widetilde{v}+v^\varepsilon\cdot \nabla \wt a=-\wt F_j,\\
\d_t \widetilde v_j-\mathcal{A}\wt v_j+\nabla \widetilde a_j +\nabla \widetilde \theta_j=-\wt G_j,\\
\d_t \widetilde\theta_j- \Delta \widetilde\theta_j+\div \widetilde v_j=-\div Q_j-\wt H_j.
\end{array}
\right.
\end{equation}
Defining the perturbed energy functional
\begin{align}\label{LyaBF}
\mathcal{\widetilde{L}}^\ell_j=\|( \widetilde{a}_j,\widetilde{v}_j,\widetilde{\theta}_j)\|_{L^2}^2+\frac12\int_\R \widetilde{v}_j\nabla  \widetilde{a}_j \quad \text{for } j\leq J_0,
\end{align}
and following the exact same steps as in Section \ref{nonlinearLf}, we obtain
\begin{align}\label{est:QlinearBf2X}
    \|(\wt a,\wt v,\wt\theta)\|^\ell_{L^\infty_T(\dot{B}^{\frac{d}{2}-2}_{2,1})}+ \|(\wt a,\wt v,\wt\theta)\|^\ell_{L^1_T( \dot{B}^{\frac d2}_{2,1})}\leq& \:\|(\wt a_0,\wt v_0,\wt\theta_0)\|_{\dot{B}^{\frac d2-2}}^\ell+\|Q\|^\ell_{L^1_T(\dot{B}^{\frac d2-1})}\\ \nonumber & +\|(\wt F,\wt G,\wt H)\|^\ell_{L^1_T(\dot{B}^{\frac d2-2})}.
\end{align}
Using \eqref{eq:Qepsilon}, we have
\begin{align}\label{eq:absorbQX}
   \|Q^\varepsilon\|^\ell_{L^1_T(\dot{B}^{\frac d2-1}_{2,1})}\leq \varepsilon
\end{align}
and thus \begin{align}\label{est:QlinearBf7X}
    \|(\wt a,\wt v,\wt\theta)\|^\ell_{L^\infty_T(\dot{B}^{\frac{d}{2}-2}_{2,1})}+ \|(\wt a,\wt v,\wt\theta)\|^\ell_{L^1_T( \dot{B}^{\frac d2}_{2,1})}\leq& \:\|(\wt a_0,\wt v_0,\wt\theta_0)\|_{\dot{B}^{\frac d2-1}_{2,1}}^\ell+\varepsilon +\|(\wt F,\wt G,\wt H)\|^\ell_{L^1_T(\dot{B}^{\frac d2-2})}. 
\end{align}
   \textbf{Step 2: high-frequency estimates: $j\geq J_0$.} In high frequencies, we follow the computations done in the medium-frequency regime in Section \ref{sec:med}. We introduce the effective velocity $\wt w=\wt v+(-\Delta)^{-1}\nabla \wt a$\: to partially diagonalize the system, it reads as
\begin{equation}
\left\{
\begin{array}
[c]{l}%
\d_t\wt a+\wt a+v^\varepsilon\cdot\nabla \wt a=\div \wt w+\wt F,\\
\d_t \wt w-\Delta \wt w =\wt w-(-\Delta)^{-1}\nabla\wt a+\nabla \wt\theta+\wt G+(-\Delta)^{-1}\nabla \wt F,\\
\d_t\wt \theta-\Delta \wt\theta=-\div \wt w - \wt a+\div Q+\wt H,
\end{array}
\right.
\label{LinearHypNSCMed2}
\end{equation}
Then, standard estimates for damped and diffusive equations lead to
 \begin{align} \nonumber\|\wt a\|^{h,\varepsilon}_{L^\infty_T(\dot{B}^{\frac{d}{p}-1}_{p,1})}+  \|\wt a\|^{h,\varepsilon}_{L^1_T( \dot{B}^{\frac dp-1}_{p,1})}&\lesssim  \|\wt a_0\|^{h,\varepsilon}_{\dot{B}^{\frac{d}{p}-1}_{p,1}}+  \|\wt w\|^{h,\varepsilon}_{L^1_T(\dot{B}^{\frac{d}{p}}_{p,1})}+\|\nabla  v^\varepsilon\|_{L^1_T(\dot{B}^{d/p-1}_{p,1})}\|\wt a^\varepsilon\|_{L^\infty_T(\dot{B}^{d/p-1}_{p,1})}+\|\wt F_1\|^{h,\varepsilon}_{L^1_T( \dot{B}^{\frac{d}{p}-1}_{p,1})}
 \\&\lesssim\label{eq:medrhoX}    \|\wt a_0\|^{h,\varepsilon}_{\dot{B}^{\frac{d}{p}-1}_{p,1}}+  \|\wt w\|^{h,\varepsilon}_{L^1_T(\dot{B}^{\frac{d}{p}}_{p,1})}+\|\wt F_1\|^{h,\varepsilon}_{L^1_T(\dot{B}^{\frac{d}{p}-1}_{p,1})}+\|v^\varepsilon\|_{L^1_T(\dot{B}^{\frac{d}{p}+1}_{p,1})}\|\wt a\|_{L^\infty_T(\dot{B}^{\frac{d}{p}-1}_{p,1})}
\end{align}
and
\begin{align}\label{eq:medwX}  \|\wt w\|^{h,\varepsilon}_{L^\infty_T(\dot{B}^{\frac{d}{p}-2}_{p,1})}+ \|\wt w\|^{h,\varepsilon}_{L^1_T(\dot{B}^{\frac dp}_{p,1})}&\lesssim \|\wt w_0\|^{h,\varepsilon}_{\dot{B}^{\frac{d}{p}-2}_{p,1}}+ \|\wt w\|^{h,\varepsilon}_{L^1_T(\dot{B}^{\frac{d}{p}-2}_{p,1})}+ \|\wt a\|^{h,\varepsilon}_{L^1_T(\dot{B}^{\frac{d}{p}-3}_{p,1})}\\ &+ \|\wt \theta\|^{h,\varepsilon}_{L^1_T(\dot{B}^{\frac{d}{p}-1}_{p,1})} +\|\wt G\|^{h,\varepsilon}_{L^1_T( \dot{B}^{\frac{d}{p}-2}_{p,1})}+\|\wt F_1\|^{h,\varepsilon}_{L^1_T(\dot{B}^{\frac{d}{p}-3}_{p,1})} .\nonumber
 \end{align}
 Moreover, for $\widetilde\theta$, we have
\begin{align}\label{eq:medthetaX}
   \|\wt \theta\|^{h,\varepsilon}_{L^\infty_T(\dot{B}^{\frac{d}{p}-2}_{p,1})}+ \|\wt \theta\|^{h,\varepsilon}_{L^1_T(\dot{B}^{\frac dp}_{p,1})}&\lesssim \|\wt \theta_0\|^{h,\varepsilon}_{\dot{B}^{\frac{d}{p}-2}_{p,1}}+ \|\wt w\|^{h,\varepsilon}_{L^1_T(\dot{B}^{\frac{d}{p}-1}_{p,1})}+\|\wt a\|^{h,\varepsilon}_{L^1_T(\dot{B}^{\frac{d}{p}-2}_{p,1})}\\&+\|Q\|^{h,\varepsilon}_{L^1_T(\dot{B}^{\frac{d}{p}-1}_{p,1})}+\|\wt H\|^{h,\varepsilon}_{L^1_T( \dot{B}^{\frac{d}{p}-2}_{p,1})}. \nonumber
\end{align}
Gathering \eqref{eq:medrhoX}, \eqref{eq:medwX} and \eqref{eq:medthetaX}, and using Berstein inequalities from Proposition \ref{prop:Bernstein}, the linear source terms can be absorbed for $J_0$ large enough and we obtain the desired estimates.
\end{proof}
\subsection{Error estimates: nonlinear analysis}
We are now left with the estimation of the nonlinear terms.
Together with Proposition \ref{prop:errorlin}, the following proposition concludes the proof of Theorem \ref{thm:relax}.
\begin{Prop}\label{prop:relax}
    We have
    $$ \mathcal{I}_1+\mathcal{I}_2 \leq \varepsilon X^\varepsilon(t)+\wt X(t)(X(t)+X^\varepsilon(t)),$$
    where $X(t)$ is defined in \eqref{X:HeDanchin}, $X^\varepsilon(t)$ in \eqref{X} and we recall that
\begin{align}\label{tildeX0}
\wt X(t)&=
\|(\wt a,\wt v,\wt \theta)\|^\ell_{L^\infty_T(\dot{B}^{\frac{d}{2}-2}_{2,1})}+\|(\wt a,\wt v,\wt \theta)\|^\ell_{L^1_T(\dot{B}^{\frac{d}{2}}_{2,1})}
  \\&\nonumber\quad\quad+ \|\wt a\|^{h,\varepsilon}_{L^\infty_T\cap L^1_T(\dot{B}^{\frac{d}{p}-1}_{p,1})}+\|(\wt v,\wt \theta)\|^{h,\varepsilon}_{L^\infty_T(\dot{B}^{\frac{d}{p}-2}_{p,1})}+\|(\wt v,\wt \theta)\|^{h,\varepsilon}_{L^1_T(\dot{B}^{\frac{d}{p}}_{p,1})}.
  \end{align}
\end{Prop}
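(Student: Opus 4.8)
The plan is to expand the error nonlinearities $\widetilde F,\widetilde G,\widetilde H,\widetilde R$ into their elementary pieces and to bound each piece by the product of a norm entering $\widetilde X(t)$ with a norm entering $X(t)$ or $X^\varepsilon(t)$, the only exception being the genuine relaxation defect, which produces the $\varepsilon X^\varepsilon(t)$ contribution. The key structural observation is that, once compositions are handled, \emph{every term is bilinear with exactly one error factor}: differences of composition functions satisfy $f(a^\varepsilon)-f(a)=\widetilde a\,\widetilde f(a^\varepsilon,a)$ for a smooth $\widetilde f$ vanishing nowhere needed (mean-value/composition estimates, Proposition \ref{prop:comphf}), and differences of quadratic or cubic nonlinearities split by bilinearity, e.g.
\[
\frac{N(\nabla v^\varepsilon,\nabla v^\varepsilon)}{1+a^\varepsilon}-\frac{N(\nabla v,\nabla v)}{1+a}
=\frac{N(\nabla\widetilde v,\nabla v^\varepsilon)+N(\nabla v,\nabla\widetilde v)}{1+a^\varepsilon}-\frac{\widetilde a\,N(\nabla v,\nabla v)}{(1+a^\varepsilon)(1+a)},
\]
and similarly for the cubic terms $\widetilde H_1(\cdot)\,\theta\,\div v$. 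After this reduction the desired bound follows from the hybrid $L^2$–$L^p$ product and composition laws (Propositions \ref{LPP}, \ref{prop:PL-low}, \ref{prop:comphf} and the paraproduct/remainder estimates \eqref{R-E955b}–\eqref{R-E955c}), the extra powers appearing for cubic terms being absorbed by the smallness of $X(t)$ and $X^\varepsilon(t)$.

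Concretely, for $\mathcal{I}_1$ one repeats, mutatis mutandis, the low-frequency computations of Lemma \ref{lem:NLlow}, now at regularity index $\tfrac d2-2$ and with one factor replaced by the corresponding error unknown; the bilinear estimates then pair a $\widetilde X$-norm such as $\|(\widetilde a,\widetilde v,\widetilde\theta)\|^\ell_{L^1_T(\dot B^{d/2}_{2,1})}$ or $\|(\widetilde a,\widetilde v,\widetilde\theta)\|^\ell_{L^\infty_T(\dot B^{d/2-2}_{2,1})}$ with the solution norms controlled by $X(t)$ (Theorem \ref{Thm:ExistHeDanchin}) and $X^\varepsilon(t)$ (Theorem \ref{thm:exist}). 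For $\mathcal{I}_2$ one copies the high-frequency scheme of Lemmas \ref{lem:NLmed}–\ref{lem5.3}, working at index $\tfrac dp-1$ for the damped quantity $\widetilde a$ and at index $\tfrac dp-2$ for $(\widetilde v,\widetilde\theta)$. Throughout, one must track the powers of $\varepsilon$ carried by the high-frequency norms of $v^\varepsilon,\theta^\varepsilon,q^\varepsilon$ (cf. \eqref{Xhigh}) and use the Bernstein inequalities of Proposition \ref{prop:Bernstein} with the thresholds $J_0,J_\varepsilon$ to move regularity across frequency bands and recover a clean $\varepsilon$-uniform bound, exactly as in \eqref{R-E7}, \eqref{R-E412}, \eqref{R-E110}.

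The only terms not of pure ``error $\times$ solution'' type are those in $\widetilde H$ coupling $J(a)$ with the heat flux $\div q^\varepsilon$ and the limiting diffusion $\Delta\theta$. Substituting $q^\varepsilon=Q^\varepsilon-\kappa\nabla\theta^\varepsilon$ and $\theta^\varepsilon=\widetilde\theta+\theta$, this coupling becomes $J(a)\,\div Q^\varepsilon$ plus a term bilinear in $\widetilde\theta$ and $a$; the latter is handled as above, while for the former the regularity has been chosen precisely so that $\div Q^\varepsilon$ enters at the index ($\tfrac d2-2$ in low frequencies, $\tfrac dp-2$ in high frequencies) where \eqref{eq:Qepsilon} applies in its sharp form $\|Q^\varepsilon\|\lesssim\varepsilon X^\varepsilon(t)$ in the relevant norms, giving $\|J(a)\,\div Q^\varepsilon\|\lesssim\|a\|_{L^\infty_T(\dot B^{d/p}_{p,1})}\,\varepsilon X^\varepsilon(t)\lesssim\varepsilon X^\varepsilon(t)$. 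This is the origin of the $\varepsilon X^\varepsilon(t)$ term in the statement; summing all contributions and invoking $X(t),X^\varepsilon(t)\ll1$ yields $\mathcal{I}_1+\mathcal{I}_2\lesssim\varepsilon X^\varepsilon(t)+\widetilde X(t)\big(X(t)+X^\varepsilon(t)\big)$, which together with Proposition \ref{prop:errorlin} closes the bootstrap $\widetilde X(t)\lesssim\varepsilon$ and completes the proof of Theorem \ref{thm:relax}.

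I expect the main obstacle to be the high-frequency analysis of $\mathcal{I}_2$ at the low — and, when $d=3$, negative — regularity $\dot B^{d/p-2}_{p,1}$ of $(\widetilde v,\widetilde\theta)$: at such an index arbitrary products are not controlled, so the refined product laws of Propositions \ref{LPP}–\ref{prop:PL-low} are indispensable, and the hypotheses $p\in[2,d)$, $p\le\tfrac{2d}{d-2}$, $d\ge3$ enter exactly as in the proof of Theorem \ref{thm:exist}. Relatedly, the convective term $v^\varepsilon\cdot\nabla\widetilde a$ has been removed from $\widetilde F$ in the definition of $\mathcal{I}_2$: since $\widetilde a$ is only controlled in $\dot B^{d/p-1}_{p,1}$ in high frequencies, estimating $v^\varepsilon\cdot\nabla\widetilde a$ directly would cost one derivative too many, so it is instead kept on the left-hand side of \eqref{LinearHypNSCMed2} and absorbed through the transport/commutator estimate of Lemma \ref{CP}, as already used in Proposition \ref{prop:errorlin}.
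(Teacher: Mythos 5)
Your proposal follows essentially the same route as the paper's proof: a term-by-term bilinear estimation of $\wt F,\wt G,\wt H$ (and $\wt R$) via the hybrid product and composition laws of Propositions \ref{LPP}, \ref{prop:PL-low} and \ref{prop:comphf}, at regularity $\tfrac d2-2$ in low frequencies and $\tfrac dp-1$, $\tfrac dp-2$ in high frequencies, with the sole $\varepsilon X^\varepsilon(t)$ contribution arising from $J(a^\varepsilon)\div Q^\varepsilon$ after rewriting $\div q^\varepsilon+\Delta\theta=\div Q^\varepsilon-\Delta\wt\theta$ and invoking the $\mathcal{O}(\varepsilon)$ bound \eqref{eq:Qepsilon}. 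Your remarks on the exclusion of $v^\varepsilon\cdot\nabla\wt a$ from $\mathcal{I}_2$ and on the role of the restrictions $p\in[2,d)$, $p\le\tfrac{2d}{d-2}$, $d\ge3$ match the paper's treatment, so the argument is correct as proposed.
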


\begin{proof}
\textbf{Step 1: Analysis of $\mathcal{I}_1$.}
In this section we mainly rely on Proposition \ref{prop:PL-low} to control the nonlinearities.
First, we estimate the terms coming from $\wt F$. Using \eqref{PL:lowhigh1}, we have
\begin{align*}
\|v^\varepsilon\cdot\nabla \wt a\|^\ell_{L^1_T(\dot{B}^{\frac{d}{2}-2}_{2,1})} &\lesssim \|\nabla \wt a\|_{L^2_T(\dot{B}^{\frac{d}{p}-2}_{p,1})}\|v^\varepsilon\|_{L^2_T(\dot{B}^{\frac{d}{p}}_{p,1})}+\|\nabla \wt a\|^\ell_{L^1_T(\dot{B}^{\frac{d}{2}-1}_{2,1})}\|v^\varepsilon\|_{L^\infty_T(\dot{B}^{\frac{d}{p}-1}_{p,1})}\\&\quad+\|\nabla \wt a\|^{h,\varepsilon}_{L^1_T(\dot{B}^{\frac{d}{p}-2}_{p,1})}\|v^\varepsilon\|_{L^\infty_T(\dot{B}^{\frac{d}{p}-1}_{p,1})}
\\& \lesssim X^\varepsilon(t) \wt X(t).
\end{align*}
Thanks to \eqref{PL:lowhigh2}, we obtain
\begin{align*}
\|\wt v\cdot\nabla a\|^\ell_{L^1_T(\dot{B}^{\frac{d}{2}-2}_{2,1})} &\lesssim \|\wt v\|_{L^2_T(\dot{B}^{\frac{d}{p}-1}_{p,1})}\|\nabla a\|_{L^2_T(\dot{B}^{\frac{d}{p}-1}_{p,1})} 
\\& \lesssim \wt X(t)X(t).
\end{align*}
Employing \eqref{PL:lowhigh2}, we get
\begin{align*}
&\|\wt a\, \div v^\varepsilon\|^\ell_{L^1_T(\dot{B}^{\frac{d}{2}-2}_{2,1})}  \lesssim  \|\wt a\|_{L^2_T(\dot{B}^{\frac{d}{p}-1}_{p,1})} \|\div v^\varepsilon\|_{L^2_T(\dot{B}^{\frac{d}{p}-1}_{p,1})}
\lesssim \wt X(t)X^\varepsilon(t),
\\&
\| a\,\div \wt v\|^\ell_{L^1_T(\dot{B}^{\frac{d}{2}-2}_{p,1})}  \lesssim \|a\|_{L^\infty_T(\dot{B}^{\frac{d}{p}-1}_{p,1})}\|\div \wt v\|_{L^1_T(\dot{B}^{\frac{d}{p}-1}_{p,1})}
\lesssim \wt X(t)X(t).
\end{align*}
Gathering the above estimates, we obtain
\begin{align*}
\|\wt F\|_{L^1_T(\dot{B}^{\frac{d}{2}-2}_{2,1})}&\lesssim  \wt X(t)(X(t)+X^\varepsilon(t)).
\end{align*}
Next, we estimate the terms coming from $\wt G$. Using \eqref{PL:lowhigh2}, we have
\begin{align*}
&\|\wt v\cdot \nabla v\|_{L^1_T(\dot{B}^{\frac{d}{2}-2}_{2,1})}^\ell \lesssim \|\wt v\|_{L^2_T(\dot{B}^{\frac{d}{p}-1}_{p,1})}\|\nabla v\|_{L^2_T(\dot{B}^{\frac{d}{p}-1}_{p,1})}
\lesssim \wt X(t)X(t),
\\&\|v^\varepsilon\cdot \nabla \wt v\|_{L^1_T(\dot{B}^{\frac{d}{2}-2}_{2,1})}^\ell \lesssim \|v^\varepsilon\|_{L^\infty_T(\dot{B}^{\frac{d}{p}-1}_{p,1})}\|\nabla \wt v\|_{L^1_T(\dot{B}^{\frac{d}{p}-1}_{p,1})} \lesssim \wt X(t)X^\varepsilon(t),
\end{align*}
Applying composition estimates and \eqref{PL:lowhigh2}, we obtain
\begin{align*}
\|\left(J(a^\varepsilon)-J(a)\right)\cA v^\varepsilon/\nu\|_{L^1_T(\dot{B}^{\frac{d}{2}-2}_{2,1})}^\ell& \lesssim \|\wt a\|_{L^2_T(\dot{B}^{\frac{d}{p}-1}_{p,1})}\|\mathcal{A}v^\varepsilon\|_{L^2_T(\dot{B}^{\frac{d}{p}-1}_{p,1})}
\\ &\lesssim  \|\wt a\|_{L^2_T(\dot{B}^{\frac{d}{p}-1}_{p,1})}\|v^\varepsilon\|_{L^2_T(\dot{B}^{\frac{d}{p}+1}_{p,1})}
\\&\lesssim \wt X(t)X^\varepsilon(t)
\end{align*}
and
\begin{align*}
\|(H_1(a^\varepsilon)-H_1(a))\nabla a^\varepsilon\|_{L^1_T(\dot{B}^{\frac{d}{2}-2}_{2,1})}^\ell& \lesssim \|\wt a\|_{L^2_T(\dot{B}^{\frac{d}{p}-1}_{p,1})}\|\nabla a^\varepsilon\|_{L^2_T(\dot{B}^{\frac{d}{p}-1}_{p,1})}
\\ &\lesssim  \|\wt a\|_{L^2_T(\dot{B}^{\frac{d}{p}-1}_{p,1})}\|a^\varepsilon\|_{L^2_T(\dot{B}^{\frac{d}{p}}_{p,1})}
\\&\lesssim \wt X(t)X^\varepsilon(t).
\end{align*}
Using \eqref{PL:lowhigh2}, \eqref{PL:lowhigh3} and  \eqref{PL:lowhigh1}, we obtain
\begin{align*}
&\|H_1(a)\nabla \wt a^\ell\|^\ell_{L^1_T(\dot{B}^{\frac{d}{2}-2}_{2,1})}\leq \|a\|_{L^\infty_T(\dot{B}^{\frac{d}{p}-1}_{p,1})}\|\wt a\|^\ell_{L^1_T(\dot{B}^{\frac{d}{p}}_{p,1})}\lesssim \wt X(t) X(t),
\\
&\|H_1(a)\nabla \wt a^h\|^\ell_{L^1_T(\dot{B}^{\frac{d}{2}-2}_{2,1})}\leq \|\nabla \wt a\|_{L^2_T(\dot{B}^{\frac{d}{p}-2}_{p,1})}\|a\|_{L^2_T(\dot{B}^{\frac{d}{p}}_{p,1})}+\|\nabla \wt a\|^\ell_{L^1_T(\dot{B}^{\frac{d}{2}-1}_{2,1})}\|a\|_{L^\infty_T(\dot{B}^{\frac{d}{p}-1}_{p,1})}\\&\hspace{4.5cm}+\|\nabla \wt a\|^{h,\varepsilon}_{L^1_T(\dot{B}^{\frac{d}{p}-2}_{p,1})}\|a\|_{L^\infty_T(\dot{B}^{\frac{d}{p}-1}_{p,1})}\lesssim \wt X(t) X^\varepsilon(t),
\\&\|(H_2(a^\varepsilon)-H_2(a))\nabla \theta^\varepsilon\|_{L^1_T(\dot{B}^{\frac{d}{2}-2}_{2,1})}^\ell \lesssim \|\wt a\|_{L^2_T(\dot{B}^{\frac{d}{p}-1}_{p,1})}\|\nabla \theta^\varepsilon\|_{L^2_T(\dot{B}^{\frac{d}{p}-1}_{p,1})}\lesssim \wt X(t)X^\varepsilon(t),
\\&
\|H_2(a)\nabla \wt \theta\|_{L^1_T(\dot{B}^{\frac{d}{2}-2}_{2,1})}^\ell \lesssim \|a\|_{L^\infty_T(\dot{B}^{\frac{d}{p}-1}_{p,1})}\|\nabla \wt \theta\|_{L^1_T(\dot{B}^{\frac{d}{p}-1}_{p,1})}
\lesssim \wt X(t)X(t),
\\&
\|\wt \theta\nabla H_3(a^\varepsilon)\|_{L^1_T(\dot{B}^{\frac{d}{2}-2}_{2,1})}^\ell \lesssim \|\wt \theta\|_{L^2_T(\dot{B}^{\frac{d}{p}-1}_{p,1})}\|\nabla a^\varepsilon\|_{L^2_T(\dot{B}^{\frac{d}{p}-1}_{p,1})}\lesssim \wt X(t)X^\varepsilon(t).
\end{align*}
Using \eqref{PL:lowhigh3}, we have
\begin{align*}
\|\theta\nabla (H_3(a^\varepsilon)-H_3(a))\|_{L^1_T(\dot{B}^{\frac{d}{2}-2}_{2,1})}^\ell &\lesssim \|\nabla \wt a\|_{L^\infty_T(\dot{B}^{\frac{d}{p}-2}_{p,1})}\|\theta\|_{L^1_T(\dot{B}^{\frac{d}{p}}_{p,1})}+\|\nabla \wt a\|^\ell_{L^1_T(\dot{B}^{\frac{d}{2}-1}_{2,1})}\|\theta\|_{L^\infty_T(\dot{B}^{\frac{d}{p}-1}_{p,1})}\\&+\|\nabla \wt a\|^{h,\varepsilon}_{L^1_T(\dot{B}^{\frac{d}{p}-2}_{p,1})}\|\theta\|_{L^\infty_T(\dot{B}^{\frac{d}{p}-1}_{p,1})}
\\&\lesssim \wt X(t)X(t).
\end{align*}

Below, we estimate the terms coming from $\wt H$. Again, using \eqref{PL:lowhigh1} and \eqref{PL:lowhigh2}, we obtain
\begin{align*}
&\|\wt v\cdot\nabla\theta^\varepsilon\|_{L^1_T(\dot{B}^{\frac{d}{2}-2}_{2,1})}^\ell \lesssim \|\wt v\|_{L^2_T(\dot{B}^{\frac{d}{p}-1}_{p,1})}\|\nabla\theta^\varepsilon\|_{L^2_T(\dot{B}^{\frac{d}{p}-1}_{p,1})}\lesssim \wt X(t)X^\varepsilon(t),
\\
&\|v\cdot\nabla\wt \theta\|_{L^1_T(\dot{B}^{\frac{d}{2}-2}_{2,1})}^\ell \lesssim \|v\|_{L^\infty_T(\dot{B}^{\frac{d}{p}-1}_{p,1})}\|\nabla\wt\theta\|_{L^1_T(\dot{B}^{\frac{d}{p}-1}_{p,1})}\lesssim \wt X(t)X(t).
\end{align*}
The product law \eqref{PL:lowhigh2} together with composition estimates, we have
\begin{align*}
\|(J(a^\varepsilon)-J(a))\div q^\varepsilon\|_{L^1_T(\dot{B}^{\frac{d}{2}-2}_{2,1})}^\ell& \lesssim \|J(a^\varepsilon)-J(a)\|_{L^2_T(\dot{B}^{\frac{d}{p}-1}_{p,1})}\|\div q^\varepsilon\|_{L^2_T(\dot{B}^{\frac{d}{p}-1}_{p,1})}
\\ &\lesssim  \|\wt a\|_{L^2_T(\dot{B}^{\frac{d}{p}-1}_{p,1})}\|q^\varepsilon\|_{L^2_T(\dot{B}^{\frac{d}{p}}_{p,1})}
\\&\lesssim \wt X(t)X^\varepsilon(t).
\end{align*}
Since
\begin{align*}
\|J(a^\varepsilon)(\div q^\varepsilon+\Delta \theta)\|_{L^1_T(\dot{B}^{\frac{d}{2}-2}_{2,1})}^\ell& \lesssim  \|J(a^\varepsilon)\div Q^\varepsilon\|_{L^1_T(\dot{B}^{\frac{d}{p}-2}_{p,1})}^\ell+\|J(a^\varepsilon)\Delta \wt \theta\|_{L^1_T(\dot{B}^{\frac{d}{p}-2}_{p,1})}^\ell,
\end{align*}
we have
\begin{align*} \|J(a^\varepsilon)\div Q^\varepsilon\|_{L^1_T(\dot{B}^{\frac{d}{p}-2}_{p,1})}^\ell&\lesssim \|\div Q^\varepsilon\|_{L^1_T(\dot{B}^{\frac{d}{p}-2}_{p,1})}\|a^\varepsilon\|_{L^\infty_T(\dot{B}^{\frac{d}{p}}_{p,1})}+\|\div Q^\varepsilon\|^\ell_{L^1_T(\dot{B}^{\frac{d}{2}-1}_{2,1})}\|a^\varepsilon\|_{L^\infty_T(\dot{B}^{\frac{d}{p}-1}_{p,1})}\\&+\|\div Q^\varepsilon\|^{h,\varepsilon}_{L^1_T(\dot{B}^{\frac{d}{p}-2}_{p,1})}\|a^\varepsilon\|_{L^\infty_T(\dot{B}^{\frac{d}{p}-1}_{p,1})}
\\&\lesssim \varepsilon X^{\varepsilon}(t)
\end{align*}
and 
\begin{align*} \|J(a^\varepsilon)\Delta \wt\theta\|_{L^1_T(\dot{B}^{\frac{d}{p}-2}_{p,1})}^\ell&\lesssim \|\Delta \wt \theta\|_{L^1_T(\dot{B}^{\frac{d}{p}-2}_{p,1})}\|a^\varepsilon\|_{L^\infty_T(\dot{B}^{\frac{d}{p}}_{p,1})}+\|\Delta\wt \theta\|^\ell_{L^1_T(\dot{B}^{\frac{d}{2}-1}_{2,1})}\|a^\varepsilon\|_{L^\infty_T(\dot{B}^{\frac{d}{p}-1}_{p,1})}\\&+\|\Delta\wt \theta\|^{h,\varepsilon}_{L^1_T(\dot{B}^{\frac{d}{p}-2}_{p,1})}\|a^\varepsilon\|_{L^\infty_T(\dot{B}^{\frac{d}{p}-1}_{p,1})}
\\&\lesssim \wt X(t) X^{\varepsilon}(t).
\end{align*}
Finally, it is easy to show that
$\|\wt R\|_{L^1_T(\dot{B}^{\frac{d}{2}-2}_{2,1})}^\ell\lesssim \wt X(t)X^\varepsilon(t)$
where 
$\wt R= \dfrac{N(\nabla v^\varepsilon,\nabla v^\varepsilon)}{1+a^\varepsilon}-\dfrac{N(\nabla v,\nabla v)}{1+a}+\wt H_1(a^\varepsilon)\theta^\varepsilon \div v^\varepsilon-\wt H_1( a)\theta \div v.$

\bigbreak
\noindent\noindent\textbf{Step 2: Analysis of $\mathcal{I}_2$}.
First, we estimate the terms coming from $\wt F$. We have
\begin{align*}
&\|\wt v\cdot\nabla a\|_{L^1_T(\dot{B}^{\frac{d}{p}-1}_{p,1})} \lesssim \|\wt v\|_{L^1_T(\dot{B}^{\frac{d}{p}}_{p,1}\cap L^\infty)} \|\nabla a\|_{L^\infty_T(\dot{B}^{\frac{d}{p}-1}_{p,1})}\lesssim \wt X(t)X(t),
\\&
\|\wt a\, \div v^\varepsilon\|_{L^1_T(\dot{B}^{\frac{d}{p}-1}_{p,1})}  \lesssim  \|\wt a\|_{L^\infty_T(\dot{B}^{\frac{d}{p}-1}_{p,1})} \|\div v^\varepsilon\|_{L^1_T(\dot{B}^{\frac{d}{p}}_{p,1})}\lesssim X^\varepsilon(t)\wt X(t),
\\&\| a\,\div \wt v\|_{L^1_T(\dot{B}^{\frac{d}{p}-1}_{p,1})} \lesssim \|a\|_{L^\infty_T(\dot{B}^{\frac{d}{p}}_{p,1})}\|\div \wt v\|_{L^1_T(\dot{B}^{\frac{d}{p}-1}_{p,1})} \lesssim \wt X(t)X(t).
\end{align*}
Gathering the above estimates, we obtain
\begin{align*}
\|\wt F\|_{L^1_T(\dot{B}^{\frac{d}{p}-1}_{p,1})}&\lesssim  \wt X(t)(X(t)+X^\varepsilon(t)). 
\end{align*}

\medbreak

\noindent Then, we deal with $\wt G$.
We have
\begin{align*}
&\|\wt v\cdot \nabla v\|_{L^1_T(\dot{B}^{\frac{d}{p}-2}_{p,1})}^h \lesssim \|\wt v\|_{L^2_T(\dot{B}^{\frac{d}{p}-1}_{p,1})}\|\nabla v\|_{L^2_T(\dot{B}^{\frac{d}{p}-1}_{p,1})}\lesssim \wt X(t)X(t),
\\&\|v^\varepsilon\cdot \nabla \wt v\|_{L^1_T(\dot{B}^{\frac{d}{p}-2}_{p,1})}^h\lesssim \|v^\varepsilon\|_{L^\infty_T(\dot{B}^{\frac{d}{p}-1}_{p,1})}\|\nabla \wt v\|_{L^1_T(\dot{B}^{\frac{d}{p}-1}_{p,1})}
\lesssim \wt X(t)X^\varepsilon(t).
\end{align*}
Using product laws and the composition proposition \ref{prop:comphf}, we have
\begin{align*}
\|\left(J(a^\varepsilon)-J(a)\right)\cA v^\varepsilon/\nu\|_{L^1_T(\dot{B}^{\frac{d}{p}-2}_{p,1})}^h& \lesssim \|\wt a\|_{L^\infty_T(\dot{B}^{\frac{d}{p}-1}_{p,1})}\|\mathcal{A}v^\varepsilon\|_{L^1_T(\dot{B}^{\frac{d}{p}-1}_{p,1})}
\\ &\lesssim  \|\wt a\|_{L^\infty_T(\dot{B}^{\frac{d}{p}-1}_{p,1})}\|v^\varepsilon\|_{L^1_T(\dot{B}^{\frac{d}{p}+1}_{p,1})}
\\&\lesssim \wt X(t)X^\varepsilon(t).
\end{align*}
Similarly, 
\begin{align*}
\|J(a)\cA\wt v/\nu\|_{L^1_T(\dot{B}^{\frac{d}{p}-2}_{p,1})}^h& \lesssim \|a\|_{L^\infty_T(\dot{B}^{\frac{d}{p}}_{p,1})}\|\mathcal{A}\wt v\|_{L^1_T(\dot{B}^{\frac{d}{p}-2}_{p,1})}\lesssim \wt X(t)X(t),
\end{align*}
and
\begin{align*}
&\|(H_1(a^\varepsilon)-H_1(a))\nabla a^\varepsilon\|_{L^1_T(\dot{B}^{\frac{d}{p}-2}_{p,1})}^h \lesssim \|\wt a\|_{L^2_T(\dot{B}^{\frac{d}{p}-1}_{p,1})}\|\nabla a^\varepsilon\|_{L^2_T(\dot{B}^{\frac{d}{p}-1}_{p,1})}\lesssim \wt X(t)X^\varepsilon(t),
\\&\|H_1(a)\nabla \wt a\|_{L^1_T(\dot{B}^{\frac{d}{p}-2}_{p,1})}^h \lesssim \|a\|_{L^2_T(\dot{B}^{\frac{d}{p}}_{p,1})}\|\nabla \wt a\|_{L^2_T(\dot{B}^{\frac{d}{p}-2}_{p,1})}\lesssim \wt X(t)X(t),
\\&
\|(H_2(a^\varepsilon)-H_2(a))\nabla \theta^\varepsilon\|_{L^1_T(\dot{B}^{\frac{d}{p}-2}_{p,1})}^h \lesssim \|\wt a\|_{L^2_T(\dot{B}^{\frac{d}{p}-1}_{p,1})}\|\nabla \theta^\varepsilon\|_{L^2_T(\dot{B}^{\frac{d}{p}-1}_{p,1})}\lesssim \wt X(t)X^\varepsilon(t),
\\&
\|H_2(a)\nabla \wt \theta\|_{L^1_T(\dot{B}^{\frac{d}{p}-2}_{p,1})}^h \lesssim \|a\|_{L^2_T(\dot{B}^{\frac{d}{p}}_{p,1})}\|\nabla \wt \theta\|_{L^2_T(\dot{B}^{\frac{d}{p}-2}_{p,1})}\lesssim \wt X(t)X(t),
\\&
\|\wt \theta\nabla H_3(a^\varepsilon)\|_{L^1_T(\dot{B}^{\frac{d}{p}-2}_{p,1})}^h \lesssim \|\wt \theta\|_{L^2_T(\dot{B}^{\frac{d}{p}-1}_{p,1})}\|\nabla a^\varepsilon\|_{L^2_T(\dot{B}^{\frac{d}{p}-1}_{p,1})} \lesssim \wt X(t)X^\varepsilon(t).
\end{align*}
Decomposing $\theta=\theta^\ell+\theta^h$, we obtain
\begin{align*}
\| \theta\nabla (H_3(a^\varepsilon)-H_3(a))\|_{L^1_T(\dot{B}^{\frac{d}{p}-2}_{p,1})}^h
& \lesssim\| \theta^h\nabla (H_3(a^\varepsilon)-H_3(a))\|_{L^1_T(\dot{B}^{\frac{d}{p}-2}_{p,1})}^h+\| \theta^\ell\nabla (H_3(a^\varepsilon)-H_3(a))\|_{L^1_T(\dot{B}^{\frac{d}{p}-2}_{p,1})}^h
\\& \lesssim \|\theta\|^{h,\varepsilon}_{L^1_T(\dot{B}^{\frac{d}{p}}_{p,1})}\|\nabla \wt a\|_{L^\infty_T(\dot{B}^{\frac{d}{p}-2}_{p,1})}+\|\theta\|^\ell_{L^2_T(\dot{B}^{\frac{d}{p}}_{p,1})}\|\nabla \wt a\|_{L^2_T(\dot{B}^{\frac{d}{p}-2}_{p,1})}
\\ &\lesssim  \|\theta\|^{h,\varepsilon}_{L^1_T(\dot{B}^{\frac{d}{p}}_{p,1})}\|\wt a\|_{L^\infty_T(\dot{B}^{\frac{d}{p}-1}_{p,1})}+\|\theta\|^\ell_{L^2_T(\dot{B}^{\frac{d}{p}}_{p,1})}\|\wt a\|_{L^2_T(\dot{B}^{\frac{d}{p}-1}_{p,1})}
\\&\lesssim \wt X(t)X(t).
\end{align*}
Finally, we estimate the terms coming from $\wt H$. We have
\begin{align*}
&\|\wt v\cdot\nabla\theta^\varepsilon\|_{L^1_T(\dot{B}^{\frac{d}{p}-2}_{p,1})}^h \lesssim \|\wt v\|_{L^2_T(\dot{B}^{\frac{d}{p}-1}_{p,1})}\|\nabla\theta^\varepsilon\|_{L^2_T(\dot{B}^{\frac{d}{p}-1}_{p,1})}\lesssim \wt X(t)X^\varepsilon(t),
\\&
\|v\cdot\nabla\wt \theta\|_{L^1_T(\dot{B}^{\frac{d}{p}-2}_{p,1})}^h \lesssim \|v\|_{L^2_T(\dot{B}^{\frac{d}{p}}_{p,1})}\|\nabla\wt\theta\|_{L^2_T(\dot{B}^{\frac{d}{p}-2}_{p,1})} \lesssim \wt X(t)X(t).
\end{align*}
The composition inequality \ref{prop:comphf} coupled with product laws gives
\begin{align*}
\|(J(a^\varepsilon)-J(a))\div q^\varepsilon\|_{L^1_T(\dot{B}^{\frac{d}{p}-2}_{p,1})}^h& \lesssim \|J(a^\varepsilon)-J(a)\|_{L^2_T(\dot{B}^{\frac{d}{p}-1}_{p,1})}\|\div q^\varepsilon\|_{L^1_T(\dot{B}^{\frac{d}{p}-1}_{p,1})}
\\ &\lesssim  \|\wt a\|_{L^\infty_T(\dot{B}^{\frac{d}{p}-1}_{p,1})}\|q^\varepsilon\|_{L^1_T(\dot{B}^{\frac{d}{p}}_{p,1})}
\\&\lesssim \wt X(t)X^\varepsilon(t)
\end{align*}
and
\begin{align*}
\|J(a^\varepsilon)(\div q^\varepsilon+\Delta \theta)\|_{L^1_T(\dot{B}^{\frac{d}{p}-2}_{p,1})}^h& \lesssim  \|J(a^\varepsilon)(\div q^\varepsilon+\Delta \theta^\varepsilon)\|_{L^1_T(\dot{B}^{\frac{d}{p}-2}_{p,1})}^h+\|J(a^\varepsilon)\Delta \wt \theta\|_{L^1_T(\dot{B}^{\frac{d}{p}-2}_{p,1})}^h
\\&\lesssim \|a^\varepsilon\|_{L^\infty_T(\dot{B}^{\frac{d}{p}}_{p,1})}\| Q^\varepsilon\|_{L^1_T(\dot{B}^{\frac{d}{p}-1}_{p,1})}+\|a^\varepsilon\|_{L^\infty_T(\dot{B}^{\frac{d}{p}}_{p,1})}\|\wt \theta\|_{L^1_T(\dot{B}^{\frac{d}{p}}_{p,1})}
\\&\lesssim \varepsilon X^\varepsilon(t)+\wt X(t)X^\varepsilon(t).
\end{align*}
Then, it is easy to show that
\begin{align*}
\|\wt R\|_{L^1_T(\dot{B}^{\frac{d}{p}-2}_{p,1})}^h\lesssim \wt X(t)X^\varepsilon(t).
\end{align*}
\textbf{Step 3: Conclusion of the proof.} Gathering the estimates of Step 1 and Step 2, the proof of Proposition \ref{prop:relax} is complete.
\end{proof}

\section{Extensions and open problems} \label{sec:ext}
In this work, we justified rigorously the relaxation relation between the Navier-Stokes-Cattaneo-Christov system \eqref{Cattaneo-NSC} and the Navier-Stokes-Fourier system \eqref{FullNSC}. Our analysis opens up several possible extensions and problems. We discuss some of them below.

\begin{enumerate}
    \item[1.] \emph{The fully hyperbolic Navier-Stokes system}. We expect our method to be able to treat a fully hyperbolic version of the Navier-Stokes system. Replacing the constitutive law for a Newtonian fluid 
     \begin{align}
       \tau=2\mu D(u) I_d +\lambda\div u\,\Id
     \end{align}
    where $\tau$ is the stress tensor, by the Maxwell's relation
    \begin{align}
        \label{MaxwellLaw}
        \varepsilon^2_{2}(\partial_t \tau+u\cdot\nabla \tau+g(\tau, \nabla u))+\tau=2\mu D(u) I_d +\lambda\div u\,\Id
    \end{align} for a relaxation parameter $\varepsilon_2>0$, where $g(\tau, \nabla u)\triangleq\tau W(u)-W(u)\tau$ and $W(u)$ is the skew-symmetric part of $\nabla u$, namely, $W(u)=\frac12(\nabla u\!-\!{}^T\!\nabla u)$. To deal with such a system, and show that \eqref{Cattaneo-NSC} with the law \eqref{MaxwellLaw} converges as $\varepsilon_1,\varepsilon_2\to0$ toward the Navier-Stokes Fourier system \eqref{FullNSC} one would need to consider an additional frequency-threshold $J_{\varepsilon_2}=1/\varepsilon_2$ and distinguish four frequency-regimes instead of three. For more information on fully hyperbolic Navier-Stokes systems, see \cite{HuRacke2017,PengZhao2022} and references therein.
\item[2.] \emph{Two-dimensional hyperbolic Navier-Stokes systems.} Due to technical limitations in the product, composition and commutator laws, we are restricted to the case $d\geq3$ in the analysis presented here. It would be interesting to develop a method for the $d=2$ case. To this end, one could adapt the Lagrangian analysis used in \cite{ChikamiDanchin} to the present framework.
\item[3.]  \emph{A complete hyperbolic structure}.
As demonstrated by Angeles in \cite{Angeles22}, the inviscid form of system \eqref{Cattaneo-NSC} lacks hyperbolicity, posing a challenge in establishing the well-posedness, particularly for the Euler-Cattaneo system.
To address this issue, we identify two potential approaches. One option is to utilize the modified Cattaneo-Christov law introduced in \cite{FA2023}, which renders the system hyperbolic. Alternatively, we could follow the methodology introduced by Dhaouadi and Gavrilyuk \cite{DhaouadiGavrilyuk}, who recently proposed a purely hyperbolic way of modelling heat transfer with a finite speed of propagation. In both approaches, the relaxation structure is similar to the one studied in the present paper and achieving the strong relaxation limit associated with their hyperbolic heat transfer can be done using the methodology we developed.
    \end{enumerate}
\bigbreak
\bigbreak
\noindent \textbf{Acknowledgments}\, The authors are grateful to E. Zuazua for his
valuable comments on the manuscript. TCB is supported by the Alexander von Humboldt-Professorship program and the Deutsche
Forschungsgemeinschaft (DFG, German Research Foundation) under project C07 of the
Sonderforschungsbereich/Transregio 154 ``Mathematical Modelling, Simulation and
Optimization using the Example of Gas Networks" (project ID: 239904186).
SK is partially supported by JSPS KAKENHI Grant Numbers JP23H01085,
JP19H05597 and JP20H00118.
JX is supported by the National Natural Science Foundation of China (12271250, 12031006). Part of this work was done during the visit of J. Xu at RIMS, Kyoto University.
 \medbreak
\noindent \textbf{Data availability statement} \,
 Data sharing not applicable to this article as no data sets were generated or analyzed during the current study.
 
\bigbreak
\noindent \textbf{Declarations}
 \medbreak
\noindent \textbf{Conflicts of interest} \, The authors have no competing interests to declare that are relevant to the content of this
article.

\appendix
\section{Reformulation of the system}\label{sec:appreform}
We adapt the reformulation done in \cite{DanchinFullNSC,DanchinXuFullNSC}. Recall that the Navier-Stokes-Cattaneo-Christov system \eqref{Cattaneo-NSC} reads \begin{equation}\label{Cattaneo-NSCT0} 
\left\{
\begin{array}
[c]{l}%
\d_t\rho^\varepsilon+\div(\rho^\varepsilon u^\varepsilon)=0,\\
\rho^\varepsilon(\d_t u^\varepsilon+u\cdot \nabla u)+\nabla (T^\varepsilon \pi(\rho^\varepsilon))= \div\tau^\varepsilon,\\
\rho^\varepsilon C_v(\d_tT^\varepsilon+u\cdot\nabla T^\varepsilon)+T^\varepsilon\pi(\rho^\varepsilon)\div u^\varepsilon+\div \mathfrak{q}^\varepsilon=\div(\tau^\varepsilon \cdot u^\varepsilon),\\
\varepsilon^2(\d_tq^\varepsilon+u^\varepsilon\cdot\nabla \mathfrak{q}^\varepsilon-\mathfrak{q}^\varepsilon\cdot\nabla u^\varepsilon+\mathfrak{q}^\varepsilon\div u^\varepsilon)+\mathfrak{q}^\varepsilon+\kappa\nabla T^\varepsilon=0.\\
\end{array}
\right.
\end{equation} 
Let $\bar{\rho}>0$ and $\bar{T}>0$, linearizing the system \eqref{Cattaneo-NSCT0} around the constant equilibrium $$(\bar{\rho},\bar{u},\bar{T},\bar{q})=(\bar{\rho},0,\bar{T},0)$$ and setting $\mathfrak{a}^\varepsilon=\dfrac{\rho^\varepsilon-\bar{\rho}}{\bar{\rho}}$ and $\vartheta^\varepsilon=T^\varepsilon-\bar{T}$, we obtain
\begin{equation*}
\left\{
\begin{array}
[c]{l}%
\d_t\mathfrak{a}^\varepsilon+\div u^\varepsilon=-\div(\mathfrak{a}^\varepsilon u^\varepsilon),\\
\d_tu^\varepsilon +u^\varepsilon\cdot\nabla u^\varepsilon-\dfrac{\widetilde{\mathcal{A}}u^\varepsilon}{\bar{\rho}(1+\mathfrak{a}^\varepsilon)}+\dfrac{\pi'(\bar{\rho}(1+\mathfrak{a}^\varepsilon))\bar{T}}{1+\mathfrak{a}^\varepsilon}\nabla \mathfrak{a}^\varepsilon+\dfrac{\pi(\bar{\rho}(1+\mathfrak{a}^\varepsilon))}{\bar{\rho}(1+\mathfrak{a}^\varepsilon)}\nabla \vartheta^\varepsilon+\dfrac{\pi'(\bar{\rho}(1+\mathfrak{a}^\varepsilon))}{1+\mathfrak{a}^\varepsilon}\vartheta^\varepsilon\nabla \mathfrak{a}^\varepsilon=0,\\
\d_t \vartheta^\varepsilon+u^\varepsilon\cdot\nabla \vartheta^\varepsilon+(\bar{T}+\vartheta^\varepsilon)\dfrac{\pi(\bar{\rho}(1+\mathfrak{a}^\varepsilon))}{\bar{\rho}C_v(1+\mathfrak{a}^\varepsilon)}\div u^\varepsilon+\dfrac{\div \mathfrak{q}^\varepsilon}{\bar{\rho}C_v(1+\mathfrak{a}^\varepsilon)}=\dfrac{2\mu|Du^\varepsilon|^2+\lambda(\div u^\varepsilon)^2}{\bar{\rho}C_v(1+\mathfrak{a}^\varepsilon)},\\ \smallbreak
\varepsilon^2(\d_t\mathfrak{q}^\varepsilon+u^\varepsilon\cdot\nabla \mathfrak{q}^\varepsilon-\mathfrak{q}^\varepsilon\cdot\nabla u^\varepsilon+\mathfrak{q}^\varepsilon\div u^\varepsilon)+\mathfrak{q}^\varepsilon+\kappa\nabla\vartheta^\varepsilon=0.\\
\end{array}
\right.
\label{TempNSC}
\end{equation*}
where $\widetilde{\mathcal{A}}u^\varepsilon=\mu\Delta u^\varepsilon+(\mu+\lambda) \nabla\div u^\varepsilon$. Denoting $\nu:=\lambda + 2\mu$, $\bar{\nu}:=\nu/\bar{\rho}$, $\chi_0:=\d_\rho P(\bar{\rho},\bar{T})^{-1/2}$ and performing the change of unknowns
$$
a^\varepsilon(t,x)=\mathfrak{a}^\varepsilon(\bar{\nu}\chi_0^2t,\bar{\nu}\chi_0x),\quad v^\varepsilon(t,x)=\chi_0u^\varepsilon(\bar{\nu}\chi_0^2t,\bar{\nu}\chi_0x),
$$
$$\theta^\varepsilon(t,x)=\chi_0\sqrt{\dfrac{C_v}{\bar{T}}}\vartheta^\varepsilon(\bar{\nu}\chi_0^2t,\bar{\nu}\chi_0x)\andf q^\varepsilon(t,x)=\sqrt{\dfrac{C_v}{\bar{T}}}\mathfrak{q}^\varepsilon(\bar{\nu}\chi_0^2t,\bar{\nu}\chi_0x), $$
we arrive at
\begin{equation}
\left\{
\begin{array}
[c]{l}%
\d_ta^\varepsilon+\div v^\varepsilon=F^\varepsilon,\\
\d_t v^\varepsilon-\mathcal{A}v^\varepsilon+\nabla a^\varepsilon +\gamma\nabla \theta^\varepsilon=G^\varepsilon,\\
\d_t \theta^\varepsilon+\beta\div q^\varepsilon+\gamma\div v^\varepsilon=H^\varepsilon,\\
\varepsilon^2\d_tq^\varepsilon+\alpha q^\varepsilon+\kappa\nabla\theta^\varepsilon=\var^2I^\varepsilon,
\end{array}
\right.
\label{LinearHypNSCzero0}
\end{equation}
with $\alpha=\bar{\nu}\chi_0^2$, $\gamma=\dfrac{\chi_0}{\bar{\rho}}\sqrt{\dfrac{\bar{T}}{C_v}}\pi(\bar{\rho})$, $\beta=\dfrac{\chi_0^2}{\bar{\rho}C_v}$  and the nonlinear source terms are given by
\begin{align*}
   & F^\varepsilon=-\div(a^\varepsilon v^\varepsilon), \:\: G^\varepsilon=-v^\varepsilon\cdot\nabla v^\varepsilon -J(a^\varepsilon)\mathcal{A}v^\varepsilon/\nu-H_1(a^\varepsilon)\nabla a^\varepsilon-H_2(a^\varepsilon)\nabla \theta^\varepsilon-\theta\nabla H_3(a^\varepsilon),
    \\ &H^\varepsilon=-v^\varepsilon\cdot\nabla\theta^\varepsilon+J(a^\varepsilon)\div q^\varepsilon+\dfrac{N(\nabla v^\varepsilon,\nabla v^\varepsilon)}{1+a^\varepsilon}-\wt H_1(a^\varepsilon)\theta^\varepsilon \div v^\varepsilon,
    \\ &I^\varepsilon=-v^\varepsilon\cdot\nabla q^\varepsilon +q^\varepsilon\cdot\nabla v^\varepsilon- q^\varepsilon\div v^\varepsilon,
\end{align*}
where $J(a^\varepsilon)=\dfrac{a^\varepsilon}{1+a^\varepsilon}$ and the $H_i$ are smooth functions, written explicitly in \cite{DanchinFullNSC,DanchinXuFullNSC}, such that $H_1(0)=H_2(0)=H_3(0)=\wt H_1(0)=0$.

\section{Classical lemmas and Harmonic analysis}
\subsection{Classical lemmas}
We often used the following well-known result  (see e.g. \cite{CBD1} for its proof). 
\begin{Lemme}\label{SimpliCarre}
Let  $p\geq 1$ and $X : [0,T]\to \mathbb{R}^+$ be a continuous function such that $X^p$ is a.e. differentiable. We assume that there exist  a constant $b\geq 0$ and  a measurable function $A : [0,T]\to \mathbb{R}^+$ 
such that 
 $$\frac{1}{p}\frac{d}{dt}X^p+bX^p\leq AX^{p-1}\quad\hbox{a.e.  on }\ [0,T].$$ 
 Then, for all $t\in[0,T],$ we have
$$X(t)+b\int_0^tX\leq X_0+\int_0^tA.$$
\end{Lemme}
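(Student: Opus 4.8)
The plan is to reduce the differential inequality for $X^p$ to one for $X$ itself by a standard regularization, so as to avoid differentiating $X$ at the (possibly bad) points where $X$ vanishes. Throughout I treat $X^p$ as locally absolutely continuous, which is the setting in which this lemma is applied, so that the a.e. derivative recovers $X^p$ via the fundamental theorem of calculus.

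First, if $p=1$ the claim follows immediately by integrating the hypothesis, so assume $p>1$. For $\varepsilon>0$ set $X_\varepsilon:=(X^p+\varepsilon^p)^{1/p}$. Then $X_\varepsilon\ge\max(X,\varepsilon)>0$, the function $X_\varepsilon^p=X^p+\varepsilon^p$ is locally absolutely continuous with $\frac{d}{dt}X_\varepsilon^p=\frac{d}{dt}X^p$ a.e., and since $s\mapsto s^{1/p}$ is $C^1$ on $(0,\infty)$ the chain rule gives, almost everywhere on $[0,T]$,
\[
\frac{d}{dt}X_\varepsilon=\frac1p\,X_\varepsilon^{1-p}\,\frac{d}{dt}X^p\le X_\varepsilon^{1-p}\bigl(AX^{p-1}-bX^p\bigr).
\]
Because $0\le X\le X_\varepsilon$ and $p-1\ge0$, one has $X_\varepsilon^{1-p}X^{p-1}=(X/X_\varepsilon)^{p-1}\le1$ and $X_\varepsilon^{1-p}X^p=X\,(X/X_\varepsilon)^{p-1}\le X$; hence $\frac{d}{dt}X_\varepsilon\le A-bX_\varepsilon^{1-p}X^p$ a.e. on $[0,T]$.

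Integrating between $0$ and $t$ yields $X_\varepsilon(t)+b\int_0^t X_\varepsilon^{1-p}X^p\le X_\varepsilon(0)+\int_0^t A$ for every $t\in[0,T]$, where $X_\varepsilon(0)=(X_0^p+\varepsilon^p)^{1/p}$. It then remains to let $\varepsilon\to0$: pointwise, $X_\varepsilon(t)\to X(t)$ and $X_\varepsilon(0)\to X_0$ by continuity of $s\mapsto s^{1/p}$, while the integrand satisfies $0\le X_\varepsilon^{1-p}X^p\le X$ and $X_\varepsilon^{1-p}X^p\to X$ pointwise (on $\{X>0\}$ this follows from $X/X_\varepsilon\to1$, and on $\{X=0\}$ both sides equal $0$). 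Since $X$ is continuous, hence integrable on $[0,t]$, dominated convergence gives $\int_0^t X_\varepsilon^{1-p}X^p\to\int_0^t X$, and we conclude $X(t)+b\int_0^t X\le X_0+\int_0^t A$.

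The only delicate point is the behaviour of $X$ near its zero set, which is precisely what the regularization circumvents: every estimate above involves only $X^p$ (which is a.e. differentiable) together with the smooth, bounded-away-from-zero quantities $X_\varepsilon$, so no differentiability of $X$ itself is required, and the final passage to the limit is a routine dominated-convergence argument.
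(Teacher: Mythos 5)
Your proof is correct: the paper itself gives no proof of this lemma (it simply refers to \cite{CBD1}), and your regularization $X_\varepsilon=(X^p+\varepsilon^p)^{1/p}$, differentiation via the chain rule away from zero, integration, and passage to the limit by dominated (or monotone) convergence is exactly the standard argument used there. Your explicit upgrading of ``a.e.\ differentiable'' to locally absolutely continuous is also the right reading: with a.e.\ differentiability alone the statement would fail (Cantor-type counterexamples), and absolute continuity is what holds in every application of the lemma in the paper.
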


\subsection{Harmonic analysis tools}

We consider the Cauchy problem for the damped heat equation
\begin{equation}
\left\{
\begin{aligned}
&\partial_t u- c_{1} \Delta u+c_2 u=f,\quad x\in\mathbb{R}^{d},\quad t>0,\\
&u(0, x)=u_0(x),\quad\quad\quad~~~  x\in\mathbb{R}^{d},
\end{aligned}
\right.\label{Heat}
\end{equation}
where $c_1\geq 0$ and $ c_2 \geq 0$. The following lemma gives estimates for equations of the form \eqref{Heat}.
\begin{Lemme}\label{maximalL1L2}
Let $s\in\mathbb{R}$, $p\geq 2$, $T>0$ be given time, and $c_{i}\geq0$ $(i=1,2)$ be positive constants. Assume $u_{0}\in\dot{B}^{s}_{p,1}$ and $f\in L^1(0,T;\dot{B}^{s}_{p,1})$. If $u$ is the solution to the Cauchy problem \eqref{Heat} for $t\in(0,T)$, then $u$ satisfies
\begin{equation}\label{maximal11}
\begin{aligned}
&\|u\|_{\widetilde{L}^{\infty}_{t}(\dot{B}^{s}_{p,1})}+c_{1}\|u\|_{L^1_{t}(\dot{B}^{s+2}_{p,1})}+c_{2}\|u\|_{L^1_{t}(\dot{B}^{s}_{p,1})}\lesssim \|u_{0}\|_{\dot{B}^{s}_{p,1}}+\|f\|_{L^1_{t}(\dot{B}^{s}_{p,1})},\quad t\in(0,T),
\end{aligned}
\end{equation}
and for $c_2>0$,
\begin{equation}\label{maximal22}
\begin{aligned}
&\|u\|_{\widetilde{L}^{\infty}_{t}(\dot{B}^{s})}+\sqrt{c_{1}}\|u\|_{\widetilde{L}^2_{t}(\dot{B}^{s+1}_{p,1})} \leq C(\|u_{0}\|_{\dot{B}^{s}_{p,1}}+\frac{1}{\sqrt{c_{1}}}\|f_{2}\|_{\widetilde{L}^2_{t}(\dot{B}^{s-1}_{p,1})})\quad t\in(0,T),
\end{aligned}
\end{equation}
where $C>0$ is a constant independent of $c_1$.
\end{Lemme}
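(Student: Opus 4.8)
The plan is to reduce the estimate to a family of $L^p$ energy inequalities on the Littlewood--Paley blocks and then reassemble by summation. First I would apply $\ddj$ to \eqref{Heat}; since $\ddj$ commutes with the constant-coefficient operators, this gives $\partial_t u_j-c_1\Delta u_j+c_2 u_j=f_j$ with $u_j(0)=u_{0,j}$. Multiplying by $|u_j|^{p-2}u_j$ and integrating over $\R^d$, the damping term produces $c_2\|u_j\|_{L^p}^p$, while the key point is the lower bound for the diffusion term: for a function spectrally localized in an annulus $\{|\xi|\sim 2^j\}$ one has $\int_{\R^d}(-\Delta u_j)\,|u_j|^{p-2}u_j\,dx\geq \kappa_{d,p}\,2^{2j}\|u_j\|_{L^p}^p$ for some $\kappa_{d,p}>0$. (For $p=2$ this is just Plancherel; the general case $p\ge2$ is the Bernstein-type positivity estimate recorded in \cite[Chapter 2]{HJR}, and this is precisely where the hypothesis $p\geq2$ enters.) Combined with H\"older's inequality on the right-hand side, this yields, a.e. in $t$,
$$\frac1p\frac{d}{dt}\|u_j\|_{L^p}^p+\bigl(\kappa_{d,p}\,c_1 2^{2j}+c_2\bigr)\|u_j\|_{L^p}^p\leq \|f_j\|_{L^p}\,\|u_j\|_{L^p}^{p-1}.$$

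Next I would invoke Lemma \ref{SimpliCarre} with $X=\|u_j\|_{L^p}$, $b=\kappa_{d,p}c_1 2^{2j}+c_2$ and $A=\|f_j\|_{L^p}$, obtaining
$$\|u_j(t)\|_{L^p}+\kappa_{d,p}\,c_1 2^{2j}\!\int_0^t\!\|u_j\|_{L^p}+c_2\!\int_0^t\!\|u_j\|_{L^p}\leq \|u_{0,j}\|_{L^p}+\int_0^t\|f_j\|_{L^p}.$$
Multiplying by $2^{js}$ and summing over $j\in\Z$ then gives \eqref{maximal11}; the $\widetilde{L}^\infty_t(\dot B^s_{p,1})$ norm (rather than $L^\infty_t\dot B^s_{p,1}$) appears because we take the $\ell^1_j$ sum of quantities already controlled in $L^\infty_t$, and the $\dot B^{s+2}_{p,1}$ and $\dot B^s_{p,1}$ spaces have the same summation structure, so the $c_1$- and $c_2$-terms reproduce $c_1\|u\|_{L^1_t(\dot B^{s+2}_{p,1})}$ and $c_2\|u\|_{L^1_t(\dot B^s_{p,1})}$ exactly.

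For \eqref{maximal22} I would instead use Duhamel's formula $u_j(t)=e^{(c_1\Delta-c_2)t}u_{0,j}+\int_0^t e^{(c_1\Delta-c_2)(t-\tau)}f_j(\tau)\,d\tau$ together with the spectrally localized heat decay $\|e^{c_1\tau\Delta}\ddj g\|_{L^p}\lesssim e^{-\kappa_{d,p}c_1 2^{2j}\tau}\|\ddj g\|_{L^p}$ (valid for $g$ frequency-localized, and equivalent to the coercivity estimate above). Taking norms in time and applying Young's convolution inequality ($L^1\ast L^2\hookrightarrow L^2$ for the forcing, Cauchy--Schwarz in time for the $L^\infty$ bound of the Duhamel term), with $\|e^{-\kappa_{d,p}c_1 2^{2j}\cdot}\|_{L^2(0,t)}\lesssim (c_1 2^{2j})^{-1/2}$ and $\|e^{-\kappa_{d,p}c_1 2^{2j}\cdot}\|_{L^1(0,t)}\lesssim (c_1 2^{2j})^{-1}$, one gets for each $j$
$$\|u_j\|_{L^\infty_t L^p}+\sqrt{c_1}\,2^{j}\|u_j\|_{L^2_t L^p}\lesssim \|u_{0,j}\|_{L^p}+\frac{1}{\sqrt{c_1}}\,2^{-j}\|f_{2,j}\|_{L^2_t L^p},$$
with constants independent of $c_1$. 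Multiplying by $2^{js}$ and summing over $j$ yields \eqref{maximal22}. I would read the $f_2$ in the statement as the source (or the relevant piece of a splitting $f=f_1+f_2$), the $f_1$-piece having been absorbed through \eqref{maximal11}.

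The only genuinely delicate ingredient is the localized $L^p$-coercivity of $-\Delta$ on dyadic blocks, i.e.\ $\int(-\Delta u_j)|u_j|^{p-2}u_j\gtrsim 2^{2j}\|u_j\|_{L^p}^p$, or equivalently the exponential $L^p$-decay of the frequency-truncated heat semigroup with rate proportional to $c_1 2^{2j}$. This is standard but is exactly what forces $p\geq2$ and is the single point one must cite carefully (from \cite[Chapter 2]{HJR}); everything else is routine Littlewood--Paley bookkeeping combined with Lemma \ref{SimpliCarre} and Young's inequality, and all $c_1$-dependence is kept explicit so that the constant in \eqref{maximal22} is independent of $c_1$.
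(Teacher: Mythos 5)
Your proof is correct, and there is in fact no in-paper argument to compare it against: the paper states Lemma \ref{maximalL1L2} without proof in the appendix of classical lemmas, treating it as a standard maximal-regularity estimate for the damped heat equation of the type found in \cite[Chapter 2]{HJR} and in Danchin's works. Your route is exactly the expected one — the dyadic $L^p$ energy inequality using the generalized Bernstein coercivity $\int(-\Delta u_j)\,|u_j|^{p-2}u_j\gtrsim 2^{2j}\|u_j\|_{L^p}^p$ (which is indeed where $p\geq 2$ is needed) followed by Lemma \ref{SimpliCarre} and summation for \eqref{maximal11}, and the frequency-localized Duhamel formula with Young/Cauchy--Schwarz in time for \eqref{maximal22}, with all $c_1$-dependence tracked explicitly. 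The only caveat concerns the statement rather than your argument: the condition ``for $c_2>0$'' in \eqref{maximal22} should clearly be $c_1>0$ (otherwise $1/\sqrt{c_1}$ is meaningless), and $f_2$ is simply the source term $f$; your reading of both points is the correct one.
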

To deal with nonlinearities in our hybrid $L^2-L^p$ framework with need special product laws. First, we state high-frequency product laws which improves the oen derived in \cite{CBD3}.
\begin{Prop}[High frequencies product law] \label{LPP} 
Let  $2\leq p\leq 4$ and   $p^*\triangleq 2p/(p-2).$ 
For all $s>0$, we have
\begin{align}\label{eq:prod4}
&\|ab\|^{h,\varepsilon}_{\dot B^{s}_{2,1}}\lesssim  \|a\|_{\dot B^{\frac dp}_{p,1}}\|b\|^{h,\varepsilon}_{\dot B^{s}_{2,1}}
+\|b\|_{\dot B^{\frac dp}_{p,1}}\|a\|^{h,\varepsilon}_{\dot B^{s}_{2,1}}+ \|a\|^{\ell,\varepsilon}_{\dot B^{\frac dp}_{p,1}}\|b\|^{\ell,\varepsilon}_{\dot B^{s+\frac{d}{p}-\frac{d}{2}}_{p,1}}
+ \|b\|^{\ell,\varepsilon}_{\dot B^{\frac dp}_{p,1}}\|a\|^{\ell,\varepsilon}_{\dot B^{s+\frac{d}{p}-\frac{d}{2}}_{p,1}},
\\&\label{eq:prod42}
\|ab\|^{h,\varepsilon}_{\dot B^{s}_{2,1}}\lesssim  \|a\|_{\dot B^{\frac dp-1}_{p,1}}\|b\|^{h,\varepsilon}_{\dot B^{s+1}_{2,1}}
+\|b\|_{\dot B^{\frac dp-1}_{p,1}}\|a\|^{h,\varepsilon}_{\dot B^{s+1}_{2,1}}+ \|a\|^{\ell,\varepsilon}_{\dot B^{\frac dp}_{p,1}}\|b\|^{\ell,\varepsilon}_{\dot B^{s+\frac{d}{p}-\frac{d}{2}}_{p,1}}
+ \|b\|^{\ell,\varepsilon}_{\dot B^{\frac dp}_{p,1}}\|a\|^{\ell,\varepsilon}_{\dot B^{s+\frac{d}{p}-\frac{d}{2}}_{p,1}},
\\&\label{eq:prod422}
\|ab\|^{h,\varepsilon}_{\dot B^{s}_{2,1}}\lesssim  \|a\|_{\dot B^{\frac dp}_{p,1}}\|b\|^{h,\varepsilon}_{\dot B^{s}_{2,1}}
+\|b\|_{\dot B^{\frac dp-1}_{p,1}}\|a\|^{h,\varepsilon}_{\dot B^{s+1}_{2,1}}+ \|a\|^{\ell,\varepsilon}_{\dot B^{\frac dp}_{p,1}}\|b\|^{\ell,\varepsilon}_{\dot B^{s+\frac{d}{p}-\frac{d}{2}}_{p,1}}
+ \|b\|^{\ell,\varepsilon}_{\dot B^{\frac dp}_{p,1}}\|a\|^{\ell,\varepsilon}_{\dot B^{s+\frac{d}{p}-\frac{d}{2}}_{p,1}}.
\end{align}
For $s=d/2$, we have
\begin{align}
    \label{eq:prod433}
   \|ab\|^{h,\varepsilon}_{\dot B^{\frac{d}{2}}_{2,1}}\lesssim   \|a\|_{\dot{B}^{\frac{d}{p}}_{p,1}}\|(b^{h,\varepsilon},b^{\ell})\|_{ \dot{B}^{\frac{d}{2}}_{2,1}}+\|b\|^{m,\varepsilon}_{\dot{B}^{\frac{d}{p}-1}_{p,1}}\|a\|_{ \dot{B}^{\frac{d}{p}}_{p,1}}+\|a\|^{\ell,\varepsilon}_{\dot{B}^{\frac{d}{p}}_{p,1}}\|b\|^{m,\varepsilon}_{\dot{B}^{\frac{d}{p}}_{p,1}}.
\end{align}
\end{Prop}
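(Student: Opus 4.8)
\textbf{Proof plan for Proposition \ref{LPP}.} The plan is to treat each of the four estimates \eqref{eq:prod4}--\eqref{eq:prod433} via Bony's paraproduct decomposition $ab=T_ab+T_ba+R(a,b)$ combined with the hybrid frequency splitting adapted to the threshold $J_\varepsilon$. The uniformity in $\varepsilon$ is a \emph{structural} consequence of the decomposition, not something that requires extra work: once each nonlinear piece is assigned to a precise norm indexed by $\ell,\varepsilon$ (sum of low and medium frequencies) or $h,\varepsilon$ (high frequencies), the constants are those of the underlying harmonic-analysis estimates, which are $\varepsilon$-independent. So the real content is bookkeeping: deciding, for each of the three pieces $T_ab$, $T_ba$, $R(a,b)$, and for each of the two arguments restricted to low-medium or high frequencies, which product or embedding law applies so that the output lands in the advertised high-frequency space $\dot B^s_{2,1}$.

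First I would establish \eqref{eq:prod4}. Write $ab$ using Bony's decomposition and further split $a=a^{\ell,\varepsilon}+a^{h,\varepsilon}$ and similarly for $b$. The terms $T_{a^{h,\varepsilon}}b$, $T_{b^{h,\varepsilon}}a$, and the high-frequency remainder contributions are handled by the standard rule that $T$ maps $L^\infty\times\dot B^s_{2,1}\to\dot B^s_{2,1}$ and the remainder $R$ maps $\dot B^{\frac dp}_{p,1}\times\dot B^s_{2,1}\to\dot B^s_{2,1}$ for $s>0$ (using $\dot B^{\frac dp}_{p,1}\hookrightarrow L^\infty$), yielding the first two terms on the right-hand side of \eqref{eq:prod4}. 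The genuinely new terms are those where \emph{both} factors are in low-medium frequencies but the product has a nontrivial high-frequency component: because of the spectral support of the paraproduct/remainder, such a component exists only near the threshold $j\approx J_\varepsilon$, and there one pays an $L^p\to L^2$ embedding of the type used in \eqref{R-E412} (Bernstein on a bounded band of frequencies), which costs a shift from $\dot B^{s+\frac dp-\frac d2}_{p,1}$ to $\dot B^s_{2,1}$; this produces the last two terms. Here I would reuse verbatim the argument structure of \eqref{R-E412}: the band $|j-J_\varepsilon|\le N_0$ is finite, so summation is harmless, and the condition $2\le p\le 4$ guarantees $p^*=2p/(p-2)\ge 2$ so that the intermediate $L^{p}$--$L^{p^*}$ Hölder step closes, exactly as in the treatment of $T_{v^\varepsilon}\nabla q^h$ in Section \ref{nonlinearLf}.

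Estimates \eqref{eq:prod42} and \eqref{eq:prod422} are obtained by the same scheme with the derivative count shifted: one uses the product rule in the form $T$ and $R$ mapping $\dot B^{\frac dp-1}_{p,1}\times\dot B^{s+1}_{2,1}\to\dot B^s_{2,1}$ (so one factor is measured one derivative lower, the other one derivative higher), which is licit since $\dot B^{\frac dp-1}_{p,1}\hookrightarrow\dot B^0_{d,1}$ and the Hölder/Bernstein interplay from the $d\ge3$, $p<d$ hypotheses is available; \eqref{eq:prod422} is the mixed version where $a$ carries the gain. Finally \eqref{eq:prod433} is the endpoint $s=d/2$: here the low-medium-frequency output terms must be reorganized because $\dot B^{\frac d2}_{2,1}$ is the critical algebra index, so one writes $b=b^{m,\varepsilon}+b^{\ell}+b^{h,\varepsilon}$ and uses $\dot B^{\frac dp}_{p,1}\times\dot B^{\frac d2}_{2,1}\to\dot B^{\frac d2}_{2,1}$ for the $(b^{h,\varepsilon},b^\ell)$ part, a medium-frequency Bernstein estimate (the last inequality of \eqref{eq:lowhfBernstein}) to convert $\dot B^{\frac dp-1}_{p,1}$ gains on $b^{m,\varepsilon}$ into the stated norm, and the low$\times$medium interaction term for the third summand. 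The main obstacle — and the only place needing care rather than routine bookkeeping — is ensuring that in the low-medium$\times$low-medium interactions producing high-frequency output, the frequency band where this happens is genuinely confined near $J_\varepsilon$ and independent of $\varepsilon$ in width, so that the resulting Bernstein loss is a fixed power and the $\ell^1$-in-$j$ summation does not generate an $\varepsilon$- or $K,k$-dependent constant; this is precisely the point where the hybrid Besov formalism of Section \ref{sec:funcframe} does its job, and I would cite \cite[Chapter 2]{HJR} together with the computation pattern of \eqref{R-E412} rather than redo it.
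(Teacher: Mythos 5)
Your treatment of the terms where at least one factor sits in high frequencies is fine and matches the paper (paraproduct with an $L^\infty$ bound on the low-frequency factor, via $\dot B^{\frac dp}_{p,1}\hookrightarrow L^\infty$). The gap is in the one step you yourself flag as the only delicate one: the low-medium $\times$ low-medium interaction $a^{\ell,\varepsilon}b^{\ell,\varepsilon}$ whose high-frequency output lives in a band near $J_\varepsilon$. You propose to close it by "an $L^p\to L^2$ embedding of the type used in \eqref{R-E412} (Bernstein on a bounded band of frequencies)". No such tool exists: Bernstein inequalities only raise the Lebesgue exponent, so for $p>2$ there is no way to pass from $L^p$-based to $L^2$-based norms on a frequency band, and the embedding $\dot B^{s+\frac dp-\frac d2}_{p,1}\hookrightarrow\dot B^{s}_{2,1}$ you implicitly invoke goes in the forbidden direction. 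The natural substitute, H\"older $\|fg\|_{L^2}\le\|f\|_{L^{p^*}}\|g\|_{L^p}$, does not close either: $\|\dot S_{j}a^{\ell,\varepsilon}\|_{L^{p^*}}$ is \emph{not} controlled by $\|a\|^{\ell,\varepsilon}_{\dot B^{\frac dp}_{p,1}}$ (only by $\|a\|_{\dot B^{0}_{p^*,1}}$, which the very low frequencies ruin), and putting $\dot S_j a$ in $L^\infty$ instead forces the other block into $L^2$, again a reverse Bernstein. The analogy with \eqref{R-E412} is misleading because there the output is an $L^2$-based \emph{low}-frequency norm and the block factor is measured in an $L^p$-based norm reachable by upward Bernstein; here the configuration is reversed.

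What the paper actually does for this term (following \cite{XuZhang2023}) is different and is the idea your plan is missing: first gain a derivative from the high-frequency localization of the output, $\|a^{\ell,\varepsilon}b^{\ell,\varepsilon}\|^{h,\varepsilon}_{\dot B^{s}_{2,1}}\lesssim\|\nabla(a^{\ell,\varepsilon}b^{\ell,\varepsilon})\|^{h,\varepsilon}_{\dot B^{s-1}_{2,1}}$, distribute the gradient onto the low-frequency factors, and apply Bony a second time to $\nabla a^{\ell,\varepsilon}\,b^{\ell,\varepsilon}$. The extra derivative is precisely what makes $\|\nabla a^{\ell,\varepsilon}\|_{\dot B^{\frac d{p^*}-1}_{p^*,1}}\lesssim\|a\|^{\ell,\varepsilon}_{\dot B^{\frac dp}_{p,1}}$ a legitimate embedding (using $p\le 4$, i.e.\ $p\le p^*$), so the $L^{p^*}$--$L^p$ H\"older closes and produces the last two terms of \eqref{eq:prod4}; the remaining piece $T_{b^{\ell,\varepsilon}}\nabla a^{\ell,\varepsilon}$ requires a commutator estimate $[\dot\Delta_j,\dot S_{j'-1}b]$ with a Young-type inequality near the threshold, where the factors $2^{-J_\varepsilon}$ compensate the Bernstein losses. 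The same commutator mechanism, with the gain $\varepsilon^{d/p^*}$ and the restriction $p\le 2d/(d-2)$, is what generates the third term $\|a\|^{\ell,\varepsilon}_{\dot B^{\frac dp}_{p,1}}\|b\|^{m,\varepsilon}_{\dot B^{\frac dp}_{p,1}}$ in \eqref{eq:prod433}; your sketch for $s=d/2$ does not account for it. So the proposal, as written, would not yield the stated right-hand sides for \eqref{eq:prod4}--\eqref{eq:prod433}; you need the derivative-gain plus commutator argument (or an equivalent substitute) at exactly the point you dismissed as routine bookkeeping.
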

\begin{proof} 
We recall the so-called Bony decomposition 
(first introduced by Bony in \cite{Bony})
for the product 
of two tempered distributions $f$ and $g$:
$$ab=T_ab+T'_ba\with T_ab\triangleq\sum_{j\in\Z}\dot S_{j-1}a\,\ddj b\andf
T'_ba\triangleq \sum_{j\in\Z}\dot S_{j+2}b\,\ddj a.$$
Using this decomposition 
 and further splitting  $a$ and $b$ into low and high frequencies,
 we get
 $$ ab=T'_{b}a^{h,\varepsilon}+T_{a}b^{h,\varepsilon}+T'_{\dot{B}^{h,\varepsilon}}a^{\ell,\varepsilon}+T_{a^{h,\varepsilon}}b^{\ell,\varepsilon}+a^{\ell,\varepsilon} b^{\ell,\varepsilon}.$$
 All the terms in the right-hand side, except for the last one, may be bounded following the computations done in \cite{CBD3}. We have
 \begin{align}
\label{eq:prodx12}
&\|T'_{b}a^{h,\varepsilon}\|_{\dot B^{s}_{2,1}}\lesssim\|b\|_{L^\infty}\|a\|^{h,\varepsilon}_{\dot B^{s}_{2,1}},
\\&\|T_{a}b^{h,\varepsilon}\|_{\dot B^{s}_{2,1}}\lesssim\|a\|_{L^\infty}\|b\|^{h,\varepsilon}_{\dot B^{s}_{2,1}}. \label{eq:prodx13}
\end{align} 
Since $a^{\ell,\varepsilon}=\dot S_{J_1+1}a$ and $b^{h,\varepsilon}=({\rm Id}-\dot S_{J_1+1})b,$ we see that 
$$
T'_{\dot{B}^{h,\varepsilon}} a^{\ell,\varepsilon}=  \dot S_{J_1+2}b^{h,\varepsilon}\,\dot\Delta_{J_1+1} a^{\ell,\varepsilon}.$$
Consequently, as 
$\dot S_{J_1+2}b^{h,\varepsilon}=(\dot\Delta_{J_1-1}+\dot\Delta_{J_1}+\dot\Delta_{J_1+1})b^{h,\varepsilon},$
$$\|T'_{\dot{B}^{h,\varepsilon}} a^{\ell,\varepsilon}\|_{\dot B^{s}_{2,1}}\lesssim
\|\dot S_{J_1+2}b^{h,\varepsilon}\|_{L^2}\|\dot\Delta_{J_1+1}a^{\ell,\varepsilon}\|_{L^\infty} \lesssim \|b\|^{h,\varepsilon}_{\dot B^{s}_{2,1}}\|a\|_{L^\infty} 
 .$$
 Similarly, we have
 $$\|T_{a^{h,\varepsilon}} b^{\ell,\varepsilon}\|_{\dot B^{s}_{2,1}}\lesssim \|a\|^{h,\varepsilon}_{\dot B^{s}_{2,1}}\|b^{\ell,\varepsilon}\|_{L^\infty} 
 .$$
Next, we deal with the term $a^{\ell,\varepsilon} b^{\ell,\varepsilon}$ as in \cite{XuZhang2023}. We have
\begin{align*}
\|a^{\ell,\varepsilon} b^{\ell,\varepsilon}\|_{\dot B^{s}_{2,1}}^{h,\varepsilon}&\lesssim
\|\nabla (a^{\ell,\varepsilon} b^{\ell,\varepsilon})\|_{\dot B^{s-1}_{2,1}}^{h,\varepsilon}
\\& \lesssim 
\|\nabla a^{\ell,\varepsilon}\:b^{\ell,\varepsilon}\|_{\dot B^{s-1}_{2,1}}^{h,\varepsilon}+
\|a^{\ell,\varepsilon} \nabla b^{\ell,\varepsilon}\|_{\dot B^{s-1}_{2,1}}^{h,\varepsilon}.
\end{align*}
By symmetry, we only deal with $\nabla a^{\ell,\varepsilon} \: b^{\ell,\varepsilon}.$ We have
\begin{align*}
  \|\nabla a^{\ell,\varepsilon}\:b^{\ell,\varepsilon}\|_{\dot B^{s-1}_{2,1}}^{h,\varepsilon}\leq \|T'_{\nabla a^{\ell,\varepsilon}}\:b^{\ell,\varepsilon}\|_{\dot B^{s-1}_{2,1}}^{h,\varepsilon}+\|T_{\dot{B}^{\ell,\varepsilon}}\nabla a^{\ell,\varepsilon}\|_{\dot B^{s-1}_{2,1}}^{h,\varepsilon}
\end{align*}
Then, standard paraproduct estimates give
\begin{align*}
    \|T'_{\nabla a^{\ell,\varepsilon}}\:b^{\ell,\varepsilon}\|_{\dot B^{s-1}_{2,1}}^{h,\varepsilon} &\lesssim \|\nabla a^{\ell,\varepsilon}\|_{\dot B^{\frac{d}{p^*}-1}_{p^*,1}}\|b^{\ell,\varepsilon}\|_{\dot B^{s+\frac dp -\frac d2}_{p,1}}
    \\&\lesssim \| a\|_{\dot B^{\frac{d}{p}}_{p,1}}^{\ell,\varepsilon}\|b\|_{\dot B^{s+\frac dp -\frac d2}_{p,1}}^{\ell,\varepsilon}.
\end{align*}
Concerning $T_{\dot{B}^{\ell,\varepsilon}}\nabla a^{\ell,\varepsilon}$, we have
\begin{align*}    \|T_{\dot{B}^{\ell,\varepsilon}}\nabla a^{\ell,\varepsilon}\|_{\dot B^{s-1}_{2,1}}^{h,\varepsilon} &\lesssim \sum_{j\geq J_\varepsilon, |j-j'|\leq 1}2^{j(s-1)} \|S_{j'-1}b^{\ell,\varepsilon} \ddj \dot{\Delta}_{j'} \nabla a^{\ell,\varepsilon}\|_{L^2}
    \\&+ \sum_{j\geq J_\varepsilon, |j-j'|\leq 4}2^{j(s-1)} \|[\ddj,S_{j'-1}b]\dot{\Delta}_{j'}\nabla a^{\ell,\varepsilon}\|_{L^2}
    \\&= \cR_1 +\cR_2.
\end{align*}
For $\cR_1$, we have
\begin{align*}
    \cR_1 \leq \|b^{\ell,\varepsilon}\|_{L^\infty}\|\nabla a\|^{h,\varepsilon}_{\dot{B}^{s-1}_{2,1}}.
\end{align*}
For $\cR_2$, using commutator estimates yields
\begin{align} \nonumber
     \sum_{j\geq J_\varepsilon, |j-j'|\leq 4}2^{j(s-1)} \|&[\ddj,S_{j'-1}b]\dot{\Delta}_{j'}\nabla a^{\ell,\varepsilon}\|_{L^2}\lesssim 
     \sum_{j'\geq J_\varepsilon, |j-j'|\leq 4}2^{j(s-1)} \|\nabla b^{\ell,\varepsilon}\|_{L^\infty}\|\dot{\Delta}_{j'}a^{\ell,\varepsilon}\|_{L^2}
     \\&+  \sum_{|j-j'|\leq 4, J_\varepsilon> j' \geq J_\varepsilon-4}2^{j(\frac{d}{2}-\frac{d}{p}-1)}2^{j'(s+\frac{d}{p}-\frac{d}{2})}2^{(j-j')(s+\frac{d}{p}-\frac{d}{2})}\|\nabla b^{\ell,\varepsilon}\|_{L^{p^*}}\|\dot{\Delta}_{j'}a^{\ell,\varepsilon}\|_{L^p} \nonumber
     \\&\lesssim \|\nabla b^{\ell,\varepsilon}\|_{L^\infty}\|a\|^{h,\varepsilon}_{\dot{B}^{s-1}_{2,1}}+2^{(\frac{d}{p^*}-1)J_\varepsilon}\|\nabla b^{\ell,\varepsilon}\|_{L^{p^*}}\|a\|^{\ell,\varepsilon}_{\dot{B}^{s+\frac{d}{p}-\frac{d}{2}}_{p,1}}, \nonumber
\end{align}
where we used that $d/p^*=d/2-d/p$ and $d/2-d/p-1\leq 0$.
Then, employing $\dot B^{\frac dp-\frac d{p*}}_{p,1}\hookrightarrow L^{p^*}$, as $p\leq p^*$,
we have
\begin{align} \nonumber
2^{(\frac{d}{p^*}-1)J_\varepsilon}\|\nabla b^{\ell,\varepsilon}\|_{L^{p^*}} &\lesssim    2^{(\frac{d}{p^*}-1)J_\varepsilon}\|b\|^{\ell,\varepsilon}_{\dot{B}^{\frac dp-\frac d{p*}+1}_{p,1}}
\\&\nonumber \lesssim    2^{(\frac{d}{p^*}-1)J_\varepsilon} 2^{(1-\frac{d}{p^*})J_\varepsilon}\|b\|^{\ell,\varepsilon}_{\dot{B}^{\frac dp}_{p,1}}
\\&\lesssim \|b\|^{\ell,\varepsilon}_{\dot{B}^{\frac dp}_{p,1}}. \nonumber
\end{align}
Gathering these estimates, we obtain
\begin{align*}
  \|\nabla a^{\ell,\varepsilon}\:b^{\ell,\varepsilon}\|_{\dot B^{s-1}_{2,1}}^{h,\varepsilon}\lesssim
\| a\|_{\dot B^{\frac{d}{p}}_{p,1}}^{\ell,\varepsilon}\|b\|_{\dot B^{s+\frac dp -\frac d2}_{p,1}}^{\ell,\varepsilon}+\| b^{\ell,\varepsilon}\|_{\dot{B}^{\frac{d}{p}}_{p,1}}\|a\|^{h,\varepsilon}_{\dot{B}^{s}_{2,1}}+\|b^{\ell,\varepsilon}\|_{\dot{B}^{\frac{d}{p}}_{p,1}}\|a\|^{\ell,\varepsilon}_{\dot{B}^{s+\frac{d}{p}-\frac{d}{2}}_{p,1}}.
\end{align*}
Symmetrically, for $a^{\ell,\varepsilon} \nabla b^{\ell,\varepsilon}$, we obtain 
\begin{align*}
  \|\nabla b^{\ell,\varepsilon}\:a^{\ell,\varepsilon}\|_{\dot B^{s-1}_{2,1}}^{h,\varepsilon}\lesssim
\| b\|_{\dot B^{\frac{d}{p}}_{p,1}}^{\ell,\varepsilon}\|a\|_{\dot B^{s+\frac dp -\frac d2}_{p,1}}^{\ell,\varepsilon}+\| a^{\ell,\varepsilon}\|_{\dot{B}^{\frac{d}{p}}_{p,1}}\|b\|^{h,\varepsilon}_{\dot{B}^{s}_{2,1}}+\|a^{\ell,\varepsilon}\|_{\dot{B}^{\frac{d}{p}}_{p,1}}\|b\|^{\ell,\varepsilon}_{\dot{B}^{s+\frac{d}{p}-\frac{d}{2}}_{p,1}}.
\end{align*}
Gathering the above estimates yields \eqref{eq:prod4}.
To prove \eqref{eq:prod42} and \eqref{eq:prod422}, we modify \eqref{eq:prodx12} and \eqref{eq:prodx13} by
\begin{align}
\label{eq:prodx14}
&\|T'_{b}a^{h,\varepsilon}\|_{\dot B^{s}_{2,1}}\lesssim\|b\|_{\dot{B}^{-1}_{\infty,\infty}}\|a\|^{h,\varepsilon}_{\dot B^{s+1}_{2,1}},
\\&\|T_{a}b^{h,\varepsilon}\|_{\dot B^{s}_{2,1}}\lesssim\|a\|_{\dot{B}^{-1}_{\infty,\infty}}\|b\|^{h,\varepsilon}_{\dot B^{s+1}_{2,1}}. \label{eq:prodx15}
\end{align}
Then, using $B^{\frac{d}{p}-1}_{p,1}\hookrightarrow B^{-1}_{\infty,\infty}$ concludes the proof of \eqref{eq:prod42}. To prove \eqref{eq:prod433}, we decompose $ab$ as
\begin{align}
ab=T_{a}b^{m,\varepsilon}+T'_{\dot{B}^{m,\varepsilon}}a+ab^{\ell}+ab^{h,\varepsilon}.\end{align}
Using standard product law yields
\begin{align}\label{R-E620}
\|ab^{h,\varepsilon}+ab^{\ell}\|^{h}_{\dot{B}^{\frac{d}{2}}_{2,1}}\lesssim \|a\|_{\dot{B}^{\frac{d}{p}}_{p,1}}\|(b^{h,\varepsilon},b^{\ell})\|_{ \dot{B}^{\frac{d}{2}-1}_{2,1}}.
\end{align}
Employing \eqref{R-E955b}, we have
\begin{align}\label{R-E6345}
\|T'_{\dot{B}^{m,\varepsilon}}a\|^{h}_{\dot{B}^{\frac{d}{2}}_{2,1}}\lesssim \|b^{m,\varepsilon}\|_{\dot{B}^{\frac{d}{p}}_{p,1}}\|a\|_{ \dot{B}^{\frac{d}{p}}_{p,1}}.
\end{align}
For $T_{a}b^{m,\varepsilon}$, we  perform a more refined analysis. We have
\begin{align}\label{R-E64745}
    \ddj (T_{a}b^{m,\varepsilon})&=\ddj \left(\sum_{j'\in \Z} S_{j'-1} a\dot{\Delta}_{j'}b^{m,\varepsilon}\right)
    \\&= \sum_{j', |j-j'|\leq 1} S_{j'-1} a\dot{\Delta}_{j'}\ddj b^{m,\varepsilon}+ \sum_{j',|j-j'|\leq 4} [\ddj,S_{j'-1} a]\dot{\Delta}_{j'}b^{m,\varepsilon}. \nonumber
\end{align}
 When $j'\geq J_\varepsilon$, the first and second terms can be treated as \eqref{R-E620}.
When $j'\leq J_1$ and $J\geq J_1$, we have
\begin{align}\label{R-E65}
   2^{js} \|[\ddj,S_{j'-1} a]\dot{\Delta}_{j'}b^{m,\varepsilon} \|_{L^2}&\leq 2^{-J_\varepsilon}2^{j(s+1)}\|[\ddj,S_{j'-1} a]\dot{\Delta}_{j'}b^{m,\varepsilon} \|_{L^2}
   \\& \lesssim \varepsilon \|\nabla S_{j'-1} a\|_{L^{p^*}}2^{js}\|\dot{\Delta}_{j'}b^{m,\varepsilon} \|_{L^p}, \nonumber
\end{align}
where we have used a Young-like inequality as in \cite{XUest}. Therefore, for $j'\leq J_\varepsilon$ and $s=\frac d2$, we have
\begin{align}
    \sum_{j\geq J_\varepsilon} \sum_{j',|j-j'|\leq4}2^{j(\frac d2)} \|[\ddj,S_{j'-1} a]\dot{\Delta}_{j'}b^{m,\varepsilon} \|_{L^2}& \lesssim \varepsilon\|\nabla a\|^{\ell,\varepsilon}_{\dot{B}^{\frac{d}{p}-\frac{d}{p^*}}_{p,1}}\|b^{m,\varepsilon}\|_{\dot{B}^{\frac{d}{2}}_{p,1}}
    \\& \lesssim \varepsilon\varepsilon^{\frac{d}{p^*}-1}\|a\|^{\ell,\varepsilon}_{\dot{B}^{\frac{d}{p}}_{p,1}}\|b^{m,\varepsilon}\|_{\dot{B}^{\frac{d}{2}}_{p,1}} \nonumber
     \\& \lesssim \varepsilon^{\frac{d}{p^*}}\|a\|^{\ell,\varepsilon}_{\dot{B}^{\frac{d}{p}}_{p,1}}\varepsilon^{\frac{d}{p}-\frac{d}{2}}\|b\|^{m,\varepsilon}_{\dot{B}^{\frac{d}{p}}_{p,1}}, \nonumber
\end{align}
since $\frac{d}{p}+\frac{d}{p^*}=\frac{d}{2}.$ The proof of Proposition \ref{LPP} is concluded.
\end{proof}
We now state low-frequency product laws.
\begin{Prop}[Low-frequency product law]\label{prop:PL-low}
We have
\begin{align}
&\label{PL:lowhigh2}\|fg\|^\ell_{\dot{B}^{\frac{d}{2}-2}_{2,1}}\leq \|f\|_{\dot{B}^{\frac{d}{p}-1}_{p,1}}\|g\|_{\dot{B}^{\frac{d}{p}-1}_{p,1}},
    \\
\label{PL:lowhigh1}
    &\|fg\|^\ell_{\dot{B}^{\frac{d}{2}-2}_{2,1}}\leq \|f\|_{\dot{B}^{\frac{d}{p}-2}_{p,1}}\|g\|_{\dot{B}^{\frac{d}{p}}_{p,1}}+\|f\|^\ell_{\dot{B}^{\frac{d}{2}}_{2,1}}\|g\|_{\dot{B}^{\frac{d}{p}-2}_{p,1}}+\|f\|^h_{\dot{B}^{\frac{d}{p}-2}_{p,1}}\|g\|_{\dot{B}^{\frac{d}{p}-1}_{p,1}},
    \\\label{PL:lowhigh3} &\|fg\|^\ell_{\dot{B}^{\frac{d}{2}-2}_{2,1}}\leq \|f\|_{\dot{B}^{\frac{d}{p}-2}_{p,1}}\|g\|_{\dot{B}^{\frac{d}{p}}_{p,1}}+\|f\|^\ell_{\dot{B}^{\frac{d}{2}-1}_{2,1}}\|g\|_{\dot{B}^{\frac{d}{p}-1}_{p,1}}+\|f\|^h_{\dot{B}^{\frac{d}{p}-2}_{p,1}}\|g\|_{\dot{B}^{\frac{d}{p}-1}_{p,1}},
\\\label{PL:lowhigh4}
    &\|fg\|^\ell_{\dot{B}^{\frac{d}{2}-1}_{2,1}}\leq \|f\|_{\dot{B}^{\frac{d}{p}-1}_{p,1}}\|g\|_{\dot{B}^{\frac{d}{p}}_{p,1}}+\|f\|^\ell_{\dot{B}^{\frac{d}{2}-1}_{2,1}}\|g\|_{\dot{B}^{\frac{d}{p}}_{p,1}}+\|f\|^h_{\dot{B}^{\frac{d}{p}-1}_{p,1}}\|g\|_{\dot{B}^{\frac{d}{p}-1}_{p,1}}.
\end{align}
\end{Prop}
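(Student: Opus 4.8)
\textbf{Proof proposal for Proposition \ref{prop:PL-low}.}
The plan is to prove each of the four estimates by Bony decomposition, splitting both factors into low and high frequencies and tracking which piece of the product falls into the low-frequency regime $j\le J_0$. The unifying principle is that a low-frequency output $(fg)^\ell$ forces a frequency cancellation: in the paraproduct $T_fg$ and the remainder $R(f,g)$ only interactions producing frequencies $\lesssim 2^{J_0}$ survive, so one may freely trade a few derivatives via Bernstein's inequality (Proposition \ref{prop:Bernstein}) and use the Sobolev-type embeddings $\dot B^{\frac dp}_{p,1}\hookrightarrow L^\infty$, $\dot B^{\frac dp-1}_{p,1}\hookrightarrow \dot B^{0}_{d,1}\hookrightarrow L^d$, and $\dot B^{\frac{d}{p^*}}_{p^*,1}\hookrightarrow L^\infty$ with $1/p+1/p^*=1/2$ that are already used in Section \ref{nonlinearLf} (around \eqref{R-E412} and \eqref{R-E27}). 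The two analytic inputs \eqref{R-E955b} and \eqref{R-E955c} — the $L^p\to L^2$-type bounds for $T$ and $R$ — are exactly the right tools; most of the work is bookkeeping to see which of them applies and with what parameter $s$.

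First I would prove \eqref{PL:lowhigh2}, which is the cleanest: write $fg = T_fg + T'_gf$ (Bony), and apply \eqref{R-E955b} with $s=\frac dp-1$ to each term, noting $s-1+\frac d2-\frac dp = \frac d2-2$, so that $\|T_fg\|_{\dot B^{\frac d2-2}_{2,1}}\lesssim\|f\|_{\dot B^{\frac dp-1}_{p,1}}\|g\|_{\dot B^{\frac dp-1}_{p,1}}$ and symmetrically for $T'_gf$; for the remainder one checks $s=\frac dp-1 > 1-\min(\frac dp,\frac{d}{p'})$ in the admissible range $p<d$, $d\ge 3$, so \eqref{R-E955c} gives the same bound. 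Since the right-hand side has no $\ell$ restriction, no further splitting is needed. For \eqref{PL:lowhigh1}, \eqref{PL:lowhigh3} and \eqref{PL:lowhigh4} I would decompose $g=g^\ell+g^h$ (and in \eqref{PL:lowhigh1}, \eqref{PL:lowhigh3} also $f=f^\ell+f^h$ where the inhomogeneous term $\|f\|^\ell_{\dot B^{d/2}_{2,1}}$ or $\|f\|^\ell_{\dot B^{d/2-1}_{2,1}}$ appears), then: (i) the $f^\ell g^\ell$ piece is handled in $L^2\times L^2$ using the $\ell$-norm on one factor and $\dot B^{\frac dp-2}_{p,1}\hookrightarrow\dot B^{\frac d2-\frac dp-2+\frac d2\cdot 0}$-type embeddings — more precisely $\|f^\ell g^\ell\|^\ell_{\dot B^{\frac d2-2}_{2,1}}\lesssim 2^{J_0\cdot 0}\|f^\ell\|_{\dot B^{\frac d2}_{2,1}}\|g\|_{L^\infty}$ via $L^\infty\cdot\dot B^{\frac d2-2}_{2,1}\to\dot B^{\frac d2-2}_{2,1}$ and $L^\infty\hookleftarrow\dot B^{\frac dp}_{p,1}$, Bernstein absorbing the discrepancy between $\frac dp-2$ and $\frac dp$; (ii) the piece involving $g^h$ is treated by the spectral-localization trick from \eqref{R-E412}: only a bounded band of frequencies $|j-J_0|\le N_0$ contributes to $(T_fg^h)^\ell$, on which Hölder in $L^d\times L^{d^*}$ or $L^4\times L^4$ (according to whether $p\le\min(d,d^*)$ or $d\le p\le 4$) together with the embeddings above yields the $\|f\|_{\dot B^{\frac dp-1}_{p,1}}\|g^h\|_{\dot B^{\frac dp-1}_{p,1}}$-type term; (iii) the remaining mixed term $f^h g^\ell$ (or the low-low remainder) is handled symmetrically. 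The parameter choices are forced by homogeneity — counting regularity indices immediately pins down which exponent $s$ to feed into \eqref{R-E955b}–\eqref{R-E955c}.

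The main obstacle I anticipate is the $f^\ell g^\ell$ contribution in \eqref{PL:lowhigh1} and \eqref{PL:lowhigh3}, where one factor is measured in an $L^2$-based $\ell$-norm and the other in an $L^p$-based norm with $p\ge 2$: naively estimating this product of two low-frequency, differently-integrable pieces loses the sharp powers of $K$ unless one is careful to put the full $\dot B^{\frac d2}_{2,1}$ (resp.\ $\dot B^{\frac d2-1}_{2,1}$) $\ell$-norm on the $L^2$ factor and only $L^\infty$ on the other, absorbing all leftover derivatives by Bernstein in the low-frequency regime. This is exactly the mechanism behind the estimate $\|a^{\ell,\varepsilon}b^{\ell,\varepsilon}\|^{h,\varepsilon}_{\dot B^s_{2,1}}$ in the proof of Proposition \ref{LPP} (the $\nabla$-trick plus the Young-like commutator bound), and I would reuse that template, now in the low-frequency output regime, where it is in fact simpler since no commutator is needed — the low-frequency cut-off already confines the interactions to a bounded band. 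Once that term is under control, the remaining pieces follow from the same three-step pattern, and collecting them gives \eqref{PL:lowhigh1}–\eqref{PL:lowhigh4}. \qed
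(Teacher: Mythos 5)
Your handling of \eqref{PL:lowhigh2} is fine and is essentially the classical argument (the paper dismisses it as such), but your plan for \eqref{PL:lowhigh1}--\eqref{PL:lowhigh4} has a genuine gap, located exactly where you anticipated trouble: the low-low piece $f^\ell g^\ell$ that your double splitting creates. The inequality you assert in step (i), $\|f^\ell g^\ell\|^\ell_{\dot B^{d/2-2}_{2,1}}\lesssim\|f^\ell\|_{\dot B^{d/2}_{2,1}}\|g\|_{L^\infty}$, is not a valid product law here: the mapping $L^\infty\cdot\dot B^{d/2-2}_{2,1}\to\dot B^{d/2-2}_{2,1}$ holds only for the paraproduct $T_gf$, while the remainder of two low-frequency factors at output regularity $d/2-2\le 0$ (recall $d=3,4$ are admitted) needs the genuine Besov information on $g$, not just $L^\infty$; a modulated-bump test ($f=g=\cos(2^{j_1}x_1)\phi(x/R)$ with $R2^{j_1}\gg1$) violates your inequality for $d=3$. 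Even the paraproduct part would require $\|f\|^\ell_{\dot B^{d/2-2}_{2,1}}$, which is \emph{not} controlled by $\|f\|^\ell_{\dot B^{d/2}_{2,1}}$: low-frequency Bernstein (Proposition \ref{prop:Bernstein}) bounds higher indices by lower ones, never the reverse, and the sum over $j\le J_0$ is infinite. For the same reason your closing remark that the low-frequency cut-off "confines the interactions to a bounded band" is wrong; the bounded-band phenomenon occurs only when a \emph{high}-frequency factor produces low-frequency output, which is precisely the paper's $(T_gf^h)^\ell$ term treated as in \eqref{R-E412}.

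The bookkeeping in your steps (ii)--(iii) also fails to land on the stated right-hand sides: terms such as $\|f\|_{\dot B^{d/p-1}_{p,1}}\|g^h\|_{\dot B^{d/p-1}_{p,1}}$, or the symmetric $\|f^h\|_{\dot B^{d/p-1}_{p,1}}\|g\|_{\dot B^{d/p-1}_{p,1}}$, cannot be absorbed into \eqref{PL:lowhigh1} or \eqref{PL:lowhigh3}, where the high-frequency part of $f$ is measured only at the weaker regularity $d/p-2$ (high-frequency Bernstein again goes the wrong way) and no full $\dot B^{d/p-1}_{p,1}$ norm of $f$ is available. The paper's proof avoids both problems by never splitting $g$ and never forming $f^\ell g^\ell$: it writes $fg=T_fg+R(f,g)+T_gf^\ell+T_gf^h$, estimates $T_fg$ and $R(f,g)$ unsplit via the bilinear mappings $T,R:\dot B^{d/p-2}_{p,1}\times\dot B^{d/p}_{p,1}\to\dot B^{d/2-2}_{2,1}$ (valid for $p<d$, $d\ge3$), bounds $T_gf^\ell$ by $\|g\|_{\dot B^{d/p-2}_{p,1}}\|f\|^\ell_{\dot B^{d/p}_{p,1}}$ (respectively $\|g\|_{\dot B^{d/p-1}_{p,1}}\|f\|^\ell_{\dot B^{d/p-1}_{p,1}}$), which the correct-direction low-frequency Bernstein turns into the middle terms with $\|f\|^\ell_{\dot B^{d/2}_{2,1}}$ or $\|f\|^\ell_{\dot B^{d/2-1}_{2,1}}$, and only for $(T_gf^h)^\ell$ invokes the band-limited H\"older argument ($L^d\times L^{d^*}$ or $L^4\times L^4$) to produce $\|f\|^h_{\dot B^{d/p-2}_{p,1}}\|g\|_{\dot B^{d/p-1}_{p,1}}$. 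You should reorganize your argument along these lines; as written, step (i) and the index bookkeeping are real gaps, not merely presentational ones.
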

\begin{proof}
    Estimate \eqref{PL:lowhigh2} is classical. To prove estimate \eqref{PL:lowhigh1} and \eqref{PL:lowhigh3}, we use Bony's paraproduct decomposition to rewrite $fg$ as
\begin{equation}\label{R-E955f} fg=T_{f}v^\varepsilon+R(f,g)+T_{g}f^\ell+T_{g}f^{h}.\end{equation}
Using that $R$ and $T$ map $\dot{B}^{\frac{d}{p}-2}_{p,1}\times \dot{B}^{\frac{d}{p}}_{p,1}$ to $\dot{B}^{\frac{d}{2}-2}_{2,1}$, if $p<d$ and $d\geq3$, we have
 $$
\|T_{f}g\|^{\ell}_{\dot{B}^{\frac d2-2}_{2,1}} \lesssim \|f\|_{\dot{B}^{\frac dp-2}_{p,1}}\|g\|_{\dot{B}^{\frac dp}_{p,1}} \andf
\|R(f,g)\|^{\ell}_{\dot{B}^{\frac d2-2}_{2,1}} \lesssim  \|f\|_{\dot{B}^{\frac dp-1}_{p,1}}\|g\|_{\dot{B}^{\frac dp-1}_{p,1}}.
$$
Similarly, and with the fact that $T$ maps $\dot{B}^{\frac{d}{p}-1}_{p,1}\times \dot{B}^{\frac{d}{p}-1}_{p,1}$ to $\dot{B}^{\frac{d}{2}-2}_{2,1}$, we have
\begin{align}
\|T_{g}f^\ell\|^{\ell}_{\dot{B}^{\frac d2-2}_{2,1}} \lesssim \|g\|_{\dot{B}^{\frac dp-2}_{p,1}}\|f\|^\ell_{\dot{B}^{\frac dp}_{p,1}} 
\andf \|T_{g}f^\ell\|^{\ell}_{\dot{B}^{\frac d2-2}_{2,1}} \lesssim \|g\|_{\dot{B}^{\frac dp-1}_{p,1}}\|f\|^\ell_{\dot{B}^{\frac dp-1}_{p,1}}.
\end{align}
In order to handle the term with $T_{g}f^h$, we observe that owing to the spectral cut-off, there exists a universal integer $N_0$ such that
$$\Big(T_{g}f^{h}\Big)^{\ell}=\dot{S}_{J_{0}+1}\Big(\sum_{|j-J_0|\leq N_{0}}\dot{S}_{j-1}g\dot{\Delta}_{j}\nabla q^{h}\Big).$$
Hence $\|T_{g}f^{h}\|^{\ell}_{\dot{B}^{\frac d2-2}_{2,1}}\approx 2^{J_0(\frac d2-2)}\sum_{|j-J_0|\leq N_{0}}\|\dot{S}_{j-1}g\dot{\Delta}_{j}f^{h}\|_{L^2}$. 
Then, if $2\leq p \leq \min (d,d^*)$, for $|j-J_0|\leq N_{0}$, we have
\begin{equation*}
\begin{aligned}
2^{J_0(\frac d2-2)}\|\dot{S}_{j-1}g\dot{\Delta}_{j}f\|_{L^2}&\lesssim \|\dot{S}_{j-1}g\|_{L^d}\Big(2^{j(\frac{d}{d^*}-2)}\|\dot{\Delta}_{j}f^{h}\|_{L^{d^{*}}}\Big)\\
&\lesssim \|g\|_{\dot{B}^{0}_{d,1}}\|f^{h}\|_{\dot{B}^{\frac {d}{d*}-2}_{d*,\infty}}
\lesssim \|g\|_{\dot{B}^{\frac dp-1}_{p,1}}\|f^{h}\|_{\dot{B}^{\frac dp-2}_{p,1}},
\end{aligned}
\end{equation*}
where we have used the embeddings $\dot{B}^{\frac dp-1}_{p,1}\hookrightarrow \dot{B}^{0}_{d,1}\hookrightarrow L^{d}$ and $\dot{B}^{\frac dp-2}_{p,1}\hookrightarrow\dot{B}^{\frac dp-2}_{p,\infty}\hookrightarrow\dot{B}^{\frac {d}{d*}-2}_{d*,\infty}$.  If $d\leq p\leq 4$, then it holds that
\begin{equation}\nonumber
\begin{aligned}
2^{J_0(\frac d2-2)}\|\dot{S}_{j-1}g\dot{\Delta}_{j}f^{h}\|_{L^2}&\lesssim\Big(2^{j\frac d4}\|\dot{S}_{j-1}g\|_{L^4}\Big)\Big(2^{j(\frac d4-2)}\|\dot{\Delta}_{j}f^{h}\|_{L^4}\Big)\\ &\lesssim 2^{J_0}\Big(2^{j(\frac dp-1)}\|\dot{S}_{j-1}g\|_{L^p}\Big)\Big(2^{j(\frac dp-2)}\|f^{h}\|_{L^p}\Big) \lesssim\|g\|_{\dot{B}^{\frac dp-1}_{p,1}}\|f^{h}\|_{\dot{B}^{\frac dp-2}_{p,1}}.
\end{aligned}
\end{equation}
Hence, we deduce that
\begin{equation}\label{R-E4}
\begin{aligned}
&\|T_{g}f^{h}\|^{\ell}_{\dot{B}^{\frac d2-2}_{2,1}}\lesssim \|f^{h}\|_{\dot{B}^{\frac dp-2}_{p,1}}\|g\|_{\dot{B}^{\frac dp-1}_{p,1}},
\end{aligned}
\end{equation}
    which concludes the proof of \eqref{PL:lowhigh1} and \eqref{PL:lowhigh3}. Proving \eqref{PL:lowhigh4} follows the same lines replacing $d/p-2$ by $d/p-1$.
\end{proof}

Then, we state a hybrid composition estimate from  \cite{XuZhang2023}.
\begin{Prop}[\cite{XuZhang2023}]
    Let $f(u)$ be a smooth function satisfying $f(0)=0$. If $u^{h,\varepsilon}\in B^s_{2,1}$ and $u^{\ell,\varepsilon}\in B^{\frac{d}{p}}_{p,1}$ for $s>1$ and $s\geq \frac d2$,
     $2\leq p \leq 4$ if $d=1$ and $2\leq p \leq \min\{4,\frac{2d}{d-2}\}$ if $d\geq2$, then we have $f(u)^{h,\varepsilon}\in B^{s}_{2,1}$ and there is a positive constant $C$ independent of $\varepsilon$ such that
    \begin{align}\label{prop:comphf}     \|f(u)\|_{\dot{B}^{s}_{2,1}}^{h,\varepsilon}\leq C\left(1+\|u\|^{\ell,\varepsilon}_{\dot{B}^{\frac{d}{p}}_{p,1}}+2^{\left(\frac{d}{2}-s\right)J_\varepsilon}\|u\|^{h,\varepsilon}_{\dot{B}^{\frac{d}{2}}_{2,1}}\right)\left(\|u\|^{\ell,\varepsilon}_{\dot{B}^{s+\frac{d}{p}-\frac{d}{2}}_{p,1}}+\|u\|^{h,\varepsilon}_{\dot{B}^{s}_{2,1}}\right).
    \end{align}
\end{Prop}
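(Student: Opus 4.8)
The plan is to reduce the composition to a product via the fundamental theorem of calculus and then to invoke the high-frequency product law of Proposition~\ref{LPP}, the genuinely new ingredient being the careful treatment of the frequency interface at $j\sim J_\varepsilon$. Since $f$ is smooth with $f(0)=0$, write $f(u)=u\,g(u)$ with $g(y):=\int_0^1 f'(\tau y)\,d\tau$ smooth. The structural observation that guides everything is that the second factor $\|u\|^{\ell,\varepsilon}_{\dot B^{s+\frac dp-\frac d2}_{p,1}}+\|u\|^{h,\varepsilon}_{\dot B^{s}_{2,1}}$ in \eqref{prop:comphf} is exactly the hybrid $\dot B^{s}$-norm of $u$ (the indices $s+\frac dp-\frac d2$ and $s$ share the same scaling exponent), while the first factor is, up to the harmless weight $2^{(\frac d2-s)J_\varepsilon}\le1$ in front of its high-frequency piece, the hybrid critical norm of $u$ plus $1$. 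Thus \eqref{prop:comphf} is the hybrid counterpart of the classical bound $\|f(u)\|_{\dot B^{s}}\lesssim C(\|u\|_{L^\infty})\|u\|_{\dot B^{s}}$, and I would prove it along the lines of the usual $L^p$ composition estimate, tracking the two scales of integrability.

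First I would use the telescoping decomposition $f(u)=\sum_{k\in\Z}\big(f(\dot S_{k+1}u)-f(\dot S_{k}u)\big)=\sum_{k\in\Z}\dot\Delta_{k}u\cdot m_{k}$ with $m_{k}:=\int_0^1 f'\big(\dot S_{k}u+\tau\dot\Delta_{k}u\big)\,d\tau$, which avoids any circularity since $f$ now enters only through the uniformly bounded multipliers, $\|m_{k}\|_{L^\infty}\le C(\|u\|_{L^\infty})$ — and under the running smallness assumption $X^\varepsilon(t)\ll1$ the quantity $\|u\|_{L^\infty}$ is controlled, so this constant is absorbed into the first factor. Applying $\dot\Delta_{j}$ with $j\ge J_\varepsilon-1$, I would split the sum over $k$ at the threshold. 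For $k\ge J_\varepsilon$ the block $\dot\Delta_{k}u\,m_{k}$ is estimated directly in $L^2$ with weight $2^{ks}$, using $\|m_k\|_{L^\infty}$ and, when $m_k$ itself must be decomposed, the $L^2$-based product estimate \eqref{eq:prod4}. For $k<J_\varepsilon$ the block is spectrally supported in $|\xi|\lesssim 2^{J_\varepsilon}$, hence feeds $\dot\Delta_{j}f(u)$ only for $j$ in a bounded neighbourhood of $J_\varepsilon$; there I would turn the available $L^p$-type control of $u^{\ell,\varepsilon}$ into an $L^2$-type norm through the Bernstein inequalities of Proposition~\ref{prop:Bernstein}, and this passage is precisely what generates both the weight $2^{(\frac d2-s)J_\varepsilon}$ and the term $\|u\|^{h,\varepsilon}_{\dot B^{\frac d2}_{2,1}}$ that dominates the interface block after the change of integrability.

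The $L^p$-side is routine: $g$ maps the algebra $\dot B^{\frac dp}_{p,1}\hookrightarrow L^\infty$ into itself, and more generally $\|g(u^{\ell,\varepsilon})\|_{\dot B^{\sigma}_{p,1}}\lesssim C(\|u\|_{L^\infty})\|u^{\ell,\varepsilon}\|_{\dot B^{\sigma}_{p,1}}$ for $\sigma>0$, which controls the low-frequency multiplier factors produced by the product-law splitting of $u\,g(u)$. Assembling the blocks, summing the resulting series in $j$, and using the threshold identity $2^{J_\varepsilon}\sim k/\varepsilon$ to bookkeep the powers of $\varepsilon$ then yields \eqref{prop:comphf}; the restriction $2\le p\le\min(4,\tfrac{2d}{d-2})$ is inherited from the embeddings and $L^4$/$L^{p^*}$ bounds underlying Proposition~\ref{LPP} (typically $\dot B^{\frac dp-1}_{p,1}\hookrightarrow L^d$ and $\dot B^{\frac dp}_{p,1}\hookrightarrow L^\infty$). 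I expect the main obstacle to be exactly this interface bookkeeping at $j\sim J_\varepsilon$: one must verify that moving from the $L^p$-based low and medium regimes to the $L^2$-based high regime costs only the stated factor $2^{(\frac d2-s)J_\varepsilon}$ — harmless since $s\ge \frac d2$ — with no hidden negative power of $\varepsilon$, so that the constant $C$ is genuinely independent of $\varepsilon$.
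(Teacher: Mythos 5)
First, note that the paper itself gives no proof of this proposition: it is quoted verbatim from \cite{XuZhang2023}, so there is no in-paper argument to compare your route against; your proposal has to stand on its own, and as written it contains a genuine gap. The central problematic step is your claim that for $k<J_\varepsilon$ the block $\dot\Delta_k u\, m_k$ is ``spectrally supported in $|\xi|\lesssim 2^{J_\varepsilon}$'' and therefore only feeds $\dot\Delta_j f(u)$ for $j$ in a bounded neighbourhood of $J_\varepsilon$. This is false: while $\dot S_k u+\tau\dot\Delta_k u$ is band-limited, the multiplier $m_k=\int_0^1 f'(\dot S_k u+\tau\dot\Delta_k u)\,d\tau$ is a nonlinear function of it and hence has spectrum at all frequencies, so low blocks $k<J_\varepsilon$ do contribute to every output frequency $j\geq J_\varepsilon$. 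Handling exactly these low-block/high-output interactions is the heart of any Meyer-type composition estimate (classically via $\|\dot\Delta_j(m_k\dot\Delta_k u)\|\lesssim 2^{-jM}\|\nabla^M(m_k\dot\Delta_k u)\|$ with $M>s$, producing summable factors $2^{(k-j)M}$), and in the hybrid setting it is precisely these terms that generate the shifted index $s+\frac dp-\frac d2$, the weight $2^{(\frac d2-s)J_\varepsilon}$, and the restriction $p\leq\min(4,\frac{2d}{d-2})$. By discarding them you have skipped the part of the proof that the statement is actually about; the same issue silently affects your treatment of $J_\varepsilon\leq k<j$, where $\|m_k\|_{L^\infty}$ alone gives the divergent factor $2^{(j-k)s}$ and the derivative-gain mechanism is again indispensable.

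Two further points would need repair even after fixing the above. First, Proposition \ref{prop:Bernstein} cannot ``turn the available $L^p$-type control of $u^{\ell,\varepsilon}$ into an $L^2$-type norm'': the inequalities there only shift the regularity index at fixed integrability, and Bernstein raises integrability, never lowers it; the conversion from $L^p$-based low/medium information to the $L^2$-based high-frequency norm must go through the product structure (two $L^p$ factors landing in $L^{p/2}$ with $p/2\leq 2$, then Bernstein $L^{p/2}\to L^2$ on a frequency-localized piece), which is exactly where $p\leq4$ enters in the proof of Proposition \ref{LPP}. Second, your fallback decomposition $f(u)=u\,g(u)$ combined with \eqref{eq:prod4} is circular for the top-order term, since the product law returns $\|g(u)\|^{h,\varepsilon}_{\dot B^{s}_{2,1}}$, a composition norm of the same strength as the one being estimated (and $g(0)=f'(0)\neq0$ in general, so $g(u)$ is not even in a homogeneous Besov space without subtracting the constant); likewise, invoking the bootstrap smallness $X^\varepsilon(t)\ll1$ to control $\|u\|_{L^\infty}$ imports an assumption that is not part of the proposition, whose constant must depend only on $f$ and the norms appearing on the right-hand side.
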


Finally, we state a composition estimate that was proved in \cite{CBD3}.
\begin{Lemme}[\cite{CBD3}]
\label{CP}Let $p\in[2,4]$ and $s>0$. Define $p^*\triangleq 2p/(p-2).$  For $j\in\Z,$ 
denote $$\cI_1\triangleq \int_{\R^d}\ddj(w \cdot \nabla z)z_j.$$
There exists a constant $C$ depending only on $s,$ $p,$ $d,$ such that
$$\displaylines{
\left(2^{js}\cI_1\right)\leq C c_j\|z_j\|_{L^2}\Bigl(\norme{\nabla w}_{ B^{\frac dp}_{p,1}}\norme{z}^{h,\varepsilon}_{{B}^{s}_{2,1}}
+ \norme{z}^{\ell,\varepsilon}_{ B^{\frac{d}{p}}_{p,1}}\norme{w}^{\ell,\varepsilon}_{\dot{B}^{s}_{p,1}}+\norme{z}_{\dot{B}^{\frac dp}_{p,1}}\norme{w}^h_{ B^{s}_{2,1}}
+ \norme{z}^{\ell,\varepsilon}_{\dot{B}^{\frac dp}_{p,1}}\norme{w}^{\ell,\varepsilon}_{ B^{\frac{d}{p}}_{p,1}}\Bigr),}$$
where $(c_j)_{j\geq J_\varepsilon}$ is a sequence such that $\sum_{j\geq J_\varepsilon}c_j=1$ and we recall that
$$\norme{f}^{\ell,\varepsilon}_{\dot{ B}^{s}_{p,1}}=\norme{f}^{\ell}_{\dot{ B}^s_{p,1}}+\norme{f}^{m,\varepsilon}_{\dot{ B}^s_{p,1}}.$$
\end{Lemme}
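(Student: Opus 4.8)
The plan is to separate the genuine transport part of $\cI_1$ from a commutator, and then treat the commutator by Bony's decomposition while carefully tracking the hybrid $L^2$--$L^p$ structure and the threshold $J_\varepsilon$. First I would write $\ddj(w\cdot\nabla z)=w\cdot\nabla z_j+[\ddj,w\cdot\nabla]z$, so that
\begin{equation*}
\cI_1=\int_{\R^d}(w\cdot\nabla z_j)\,z_j+\int_{\R^d}\big([\ddj,w\cdot\nabla]z\big)z_j.
\end{equation*}
The first integral equals $-\tfrac12\int_{\R^d}\div w\,|z_j|^2$ after an integration by parts, hence is bounded by $\norme{\div w}_{L^\infty}\norme{z_j}_{L^2}^2$; the critical embedding $\dot B^{\frac dp}_{p,1}\hookrightarrow L^\infty$ gives $\norme{\div w}_{L^\infty}\lesssim\norme{\nabla w}_{\dot B^{\frac dp}_{p,1}}$, and since $j\geq J_\varepsilon$ I may write $2^{js}\norme{z_j}_{L^2}=c_j\norme{z}^{h,\varepsilon}_{B^{s}_{2,1}}$ with $\sum_{j\geq J_\varepsilon}c_j=1$. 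This already produces the first term on the right-hand side. For the second integral, Hölder's inequality extracts the prefactor $\norme{z_j}_{L^2}$, so the whole lemma reduces to the commutator bound $2^{js}\norme{[\ddj,w\cdot\nabla]z}_{L^2}\lesssim c_j\,(\text{the four factors})$.

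Next I would expand the commutator using $w\cdot\nabla z=T_{w^k}\partial_k z+T_{\partial_k z}w^k+R(w^k,\partial_k z)$ and subtracting the identical decomposition of $w\cdot\nabla z_j$. This yields three pieces: the \emph{genuine commutator} $\sum_k[\ddj,T_{w^k}]\partial_k z$, a para-remainder $\ddj T_{\partial_k z}w^k-T_{\partial_k z_j}w^k$, and a remainder $\ddj R(w^k,\partial_k z)-R(w^k,\partial_k z_j)$. I would then split each factor as $w=w^{\ell,\varepsilon}+w^h$ and $z=z^{\ell,\varepsilon}+z^h$, exploiting that the output is localized at $2^j$ with $j\geq J_\varepsilon$. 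The genuine commutator $[\ddj,S_{j'-1}w^k]\partial_k\dot\Delta_{j'}z$ (with $|j-j'|\leq 4$) is handled by the first-order Taylor expansion of the kernel $h_j$ of $\ddj$: using $|S_{j'-1}w(x-y)-S_{j'-1}w(x)|\leq|y|\,\norme{\nabla w}_{L^\infty}$, the bound $\norme{\,|\cdot|h_j}_{L^1}\simeq 2^{-j}$, and Bernstein $\norme{\partial_k\dot\Delta_{j'}z}_{L^2}\lesssim 2^{j'}\norme{\dot\Delta_{j'}z}_{L^2}$, one gets $\lesssim\norme{\nabla w}_{L^\infty}\norme{\dot\Delta_{j'}z}_{L^2}$; summing against $2^{js}$ over $j'\simeq j\geq J_\varepsilon$ recovers the first factor $\norme{\nabla w}_{B^{\frac dp}_{p,1}}\norme{z}^{h,\varepsilon}_{B^{s}_{2,1}}$.

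The two cross-type contributions come from the paraproduct pieces. The interactions in which the $w$-factor must carry frequency $\simeq 2^j\geq 2^{J_\varepsilon}$, hence belongs to $w^h$, are estimated by placing the $z$-factor in $L^\infty$ through $\dot B^{\frac dp}_{p,1}\hookrightarrow L^\infty$ and $w$ in $L^2$, producing the third factor $\norme{z}_{\dot B^{\frac dp}_{p,1}}\norme{w}^h_{B^{s}_{2,1}}$. The genuinely new difficulty, and what I expect to be the \textbf{main obstacle}, is the low--low remainder $R(w^{\ell,\varepsilon},\nabla z^{\ell,\varepsilon})$ projected onto frequencies $\geq J_\varepsilon$: since $w^{\ell,\varepsilon}$ and $z^{\ell,\varepsilon}$ have no individual high-frequency content, this high-frequency output arises only from remainder interactions of comparable medium frequencies near $J_\varepsilon$. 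Here one bounds $\norme{\dot\Delta_i w^{\ell,\varepsilon}\,\tilde{\dot\Delta}_i\nabla z^{\ell,\varepsilon}}_{L^{p/2}}\leq\norme{\dot\Delta_i w^{\ell,\varepsilon}}_{L^p}\norme{\tilde{\dot\Delta}_i\nabla z^{\ell,\varepsilon}}_{L^p}$ and then invokes the Bernstein embedding $L^{p/2}\hookrightarrow L^2$ with gain $2^{j(\frac{2d}{p}-\frac d2)}$, which requires $p/2\leq 2$; this is precisely where the hypothesis $p\in[2,4]$ is used. Distributing the regularity $s$ onto $w$ or onto $z$ in the geometric summation of the remainder yields the two remaining $(\ell,\varepsilon)\times(\ell,\varepsilon)$ factors $\norme{z}^{\ell,\varepsilon}_{B^{\frac dp}_{p,1}}\norme{w}^{\ell,\varepsilon}_{\dot B^{s}_{p,1}}$ and $\norme{z}^{\ell,\varepsilon}_{\dot B^{\frac dp}_{p,1}}\norme{w}^{\ell,\varepsilon}_{B^{\frac dp}_{p,1}}$. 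The delicate point is to check that the Bernstein loss $2^{j(\frac{2d}{p}-\frac d2)}$ is exactly balanced by the geometric decay in the remainder sum and the regularity indices, and that the resulting constant is independent of $\varepsilon$ (equivalently of $J_\varepsilon$); the remaining para-remainder estimates are routine by comparison with the standard commutator lemma of \cite{CBD3}.
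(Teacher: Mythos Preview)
The paper does not supply a proof of this lemma; it is merely stated with the attribution ``proved in \cite{CBD3}'' and used as a black box. So there is no paper-proof to compare against.

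That said, your sketch is the standard and correct route for such hybrid commutator estimates, and it is precisely the approach one finds in the cited reference: peel off the pure transport term by writing $\ddj(w\cdot\nabla z)=w\cdot\nabla z_j+[\ddj,w\cdot\nabla]z$, integrate by parts on the first piece, and expand the commutator via Bony's decomposition with a high/low splitting of both factors relative to $J_\varepsilon$. You correctly identify the genuine commutator piece $[\ddj,S_{j'-1}w]\partial_k\dot\Delta_{j'}z$ as handled by the first-order Taylor trick on the localisation kernel, the high-$w$ pieces as producing the $\norme{z}_{\dot B^{d/p}_{p,1}}\norme{w}^h_{B^{s}_{2,1}}$ term, and---most importantly---the low-low remainder as the place where the Bernstein embedding $L^{p/2}\hookrightarrow L^2$ forces the restriction $p\le4$. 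The balance between the Bernstein loss $2^{j(2d/p-d/2)}$ and the remainder geometric sum is exactly the computation that makes the constant $\varepsilon$-free, and your recognition of this as the main obstacle is on point. Nothing is missing from your outline.
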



 %

\vspace{5mm}
\bibliographystyle{abbrv}

\bibliography{Ref.bib}

\vfill \footnotesize

(T. Crin-Barat) \textsc{Chair for Dynamics, Control, Machine Learning and Numerics, Alexander Von Humboldt- Professorship, Department of
Mathematics, Friedrich-Alexander-Universität Erlangen-Nürnberg, 91058 Erlangen, Germany}

E-mail address: {\tt timothee.crin-barat@fau.de}

\medbreak

(S. Kawashima) \textsc{Faculty of Science and Engineering, Waseda University, Tokyo 169-8555, Japan}

E-mail address: {\tt kawashima.shuichi.541@m.kyushu-u.ac.jp}
\medbreak
\par\nopagebreak
\noindent

(J. Xu) \textsc{Department of Mathematics, Nanjing University of Aeronautics and
Astronautics, Nanjing, 211106, P. R. China}

E-mail address: {\tt jiangxu\underline{~}79math@yahoo.com, jiangxu\underline{~}79@nuaa.edu.cn}



\end{document}